\newtheoremstyle{case}{}{}{}{}{}{:}{ }{}
\theoremstyle{case}
\newtheorem{axiom}{Axiom}
\newtheorem{theorem}{Theorem}
\newtheorem{observation}{Observation}
\newtheorem{definition}{Definition}
\newtheorem{lemma}{Lemma}
\newtheorem{corollary}{Corollary}
\newtheorem*{modified axiom 6}{Modified Axiom 6}
\newtheorem*{modified def 4}{Modified Definition 4}
\newtheorem*{modified axiom 31}{Modified Axiom 31}
\newtheorem*{modified thm 41}{Modified Theorem 41}
\newtheorem*{modified thm 42}{Modified Theorem 42}
\newtheorem*{observation def 18}{Observation about Definition 18}
\newcommand{\R}{\mathbb{R}}
\begin{document}  	
	
\title{A Finite, Feasible, Quantifier-Free Foundation for Constructive Geometry}
\author{John Burke}

\maketitle

\begin{abstract}
	
	In this paper we will develop an axiomatic foundation for the geometric study of straight edge, protractor, and compass constructions, which while being related to previous foundations such as \cite{EuclidElements}, \cite{HilbertFoundation}, \cite{TarskiSystem}, \cite{Beeson}, and \cite{PaArticle}, will be the first to have all axioms written and all proofs conducted in quantifier-free first order logic. All constructions within the system will be justified to be feasible by basic human faculties. No statement in the system will refer to infinitely many objects and one can posit an interpretation of the system which is in accordance to our free, creative process of geometric constructions. We are also able to capture analogous results to Euclid's work on non-planar geometry in Book XI of \textit{The Elements}. 
	
	This paper primarily builds on the work of Suppes in \cite{SuppesAxioms} and \cite{SuppesFinite} and draws from Beeson's work in \cite{Beeson}. By further developing Suppes' work on parallel line segments, we are able to develop analogs to most theorems about parallel lines without assuming an equivalent to Euclid's Fifth Postulate which we deem as introducing non-feasible constructions. In \cite{Beeson} Besson defines the characteristics such a geometric foundation should have to called constructive. This work satisfies these characteristics. Additionally this work would be considered constructive as Suppes defined it (see \cite{SuppesConstructive} and \cite{PaArticle}). 
\end{abstract}

\tableofcontents

\section{Introduction}

In \cite{SuppesAxioms} Suppes introduced a set of quantifier-free axioms for constructive affine planar geometry. The main features of this axiomatic system are a relation for when points are colinear and two constructions which had interpretations relating to the doubling of a line segment and finding the midpoint of a line segment. From these primitive concepts Suppes was able to define when two line segments are parallel. The great advantage of this axiomatic system is that from a finite set of points, to start with, one can prove theorems about the possible finite configurations one can create using the basic geometric tools of halving and doubling a line segment. Furthermore, this axiomatic system allows one to prove theorems about the parallel relation between line segments created from these constructions. It is compelling that from such basic tools one would be able to mathematically discuss concepts such as parallel line segments. Additionally, In \cite{SuppesFinite} Suppes was able to give a finite constructive model for the axioms in which points are given rational number coordinates. It is natural to ask, as Van Bendegem did in his article \textit{Finistism in Geometry} \cite{VBFinitism}, if this affine theory can be expanded all the way into a full-fledged geometrical theory. The objective of this work is to expand Suppes' theory to create a finite, feasible, constructive axiomatic theory which is an analog to Hilbert's axiomatic system presented in his work \textit{Grundlagen der Geometrie} (tr. \textit{The Foundations of Geometry}) \cite{HilbertFoundation}.

\subsection{Foundations of Geometry}

Geometry is one of the oldest form of mathematics. During the reign of Ptolemy I (323–283 BCE), an Alexandrian Greek name Euclid wrote a text called \textit{The Elements} which was one of the first well presented (and enduring) texts collecting the vast amount of geometric knowledge up to that time in his part of the world. The most important feature of the work was that all geometric statements were proved from a small list of assumptions. While almost all of the assumptions were `indisputable' truths about points, lines and angles in space and magnitudes of these objects, one of the assumptions called Euclid's fifth postulate, or the parallel postulate, was not as obviously true. The postulate states that if two lines are drawn which are intersected by a third line, called a transversal, in such a way that the sum of the measures of the two inner angles on the same side of the the transversal is less than two right angles, then the two lines inevitably must intersect each other on that side if extended far enough (see Figure \ref{euclidfifth1}). Once one has made relatively intuitive interpretations for the meaning of the geometric objects such as points, line and angles, one realizes that this postulate asserts a claim that is not realistically testable or observable in all instances. One can construct examples where the length of the line segment from $a$ to $b$, see Figure \ref{euclidfifth1}, would be larger than any magnitude one could construct or traverse. This is done by simply making the interior angles deviate from being supplemental by extraordinarily small amounts (see Figure \ref{euclidfifth2}). In fact this issue was pointed out by Proclus in his \textit{Commentary on Euclid's Elements} around 700 hundred years after Euclid lived. Proclus writes

\begin{quotation}
	 ``This [fifth postulate] ought even to be struck out of the Postulates altogether; for it is a theorem involving many difficulties which Ptolemy, in a certain book, set himself to solve, and it requires for the demonstration of it a number of definitions as well as theorems. And the converse of it is actually proved by Euclid himself as a theorem. It may be that some would be deceived and would think it proper to place even the assumption in question among the postulates ... . So in this case the fact that, when the right angles are lessened, the straight lines converge is true and necessary; but the statement that, since they converge more and more as they are produced, they will sometime meet is plausible but not necessary, in the absence of some argument showing that this is true in the case of straight lines. For the fact that some lines exist which approach indefinitely, but yet remain non-secant, although it seems improbable and paradoxical, is nevertheless true and fully ascertained with regard to other species of lines [for example curves like the hyperbola that has asymptotes]. May not then the same thing be possible in the case of straight lines that happens in the case of the lines referred to? Indeed, until the statement in the Postulate is clinched by proof, the facts shown in the case of other lines may direct our imagination the opposite way. And, though the controversial arguments against the meeting of the straight lines should contain much that is surprising, is there not all the more reason why we should expel from our body of doctrine this merely plausible and unreasoned (hypothesis)?''
 
\end{quotation}

It is evident that Euclid also had some amount of misgivings about this postulate given the fact that he choose to hold off using this assumption, when it would have been useful, early on in the text.

\begin{figure}[h!]
	\begin{picture}(216,120)
	\put(75,0){\includegraphics[scale=.9]{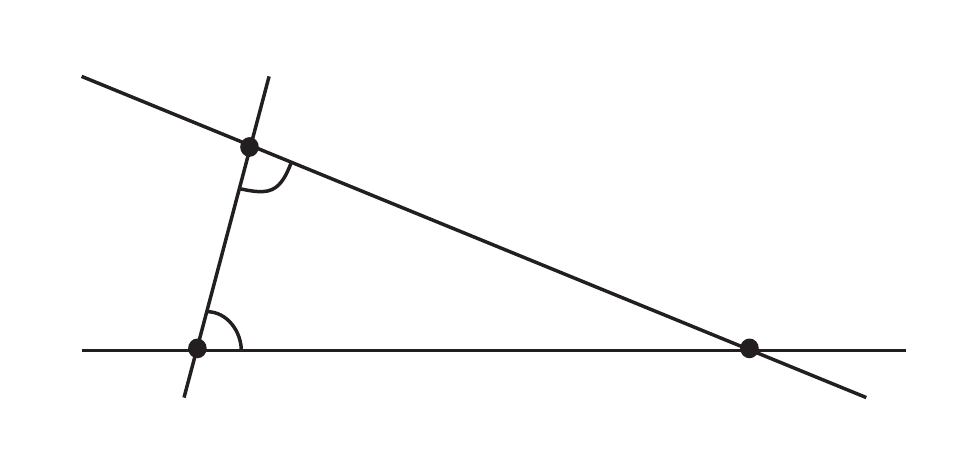}}
	\put(127,20){$a$}
	\put(269,32){$b$}
	\end{picture}
	\caption{}
	\label{euclidfifth1}
\end{figure}

\begin{figure}[h!]
	\begin{picture}(216,120)
	\put(75,0){\includegraphics[scale=.9]{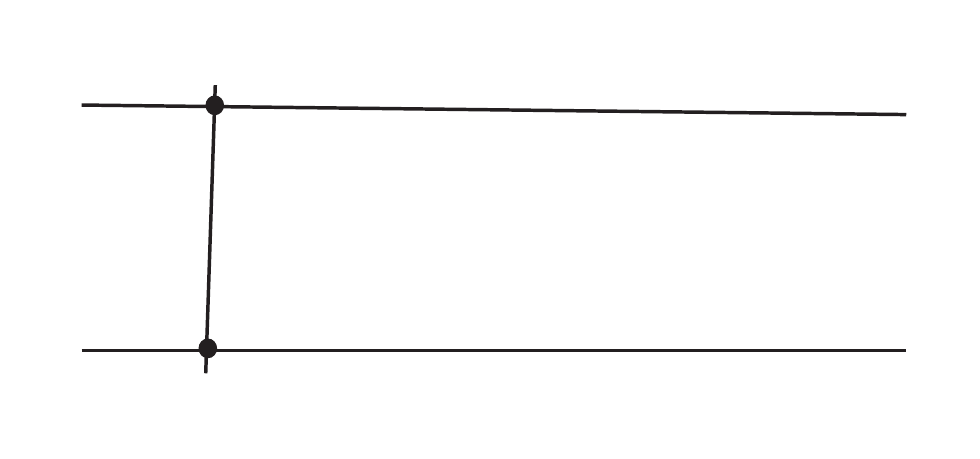}}

	\end{picture}
	\caption{Two non-parallel lines whose point of intersection will be very far to the right of the transversal.}
	\label{euclidfifth2}
\end{figure}

In 1890's Hilbert developed a set of axioms for Euclidean geometry. Unlike Euclid, Hilbert started with undefined terms such as points, lines and angles and defined relations that these terms can satisfy between each other. He then listed axioms which these terms and relations must satisfy. Hilbert also used a more modern and less constructive axiom about parallel lines. His axiom simply stated that given a line, $\ell$, and a point, $p$, not on that line, there is one and only one line, $\ell'$, through $p$ that does not intersect the original line. Using classical logic one can prove that this statement is equivalent to Euclid's fifth postulate. It is important to note that Hilbert also added an axiom of completeness to his set of assumptions. This axiom dealt with the inability to make extensions point-wise of any line. From this Hilbert was able to show that the only set-theoretic model of his axiomatic system was the Real Cartesian plain. 

Later in the 1920's Tarski developed a set of axioms for what he called elementary geometry. Important characteristics of his axiomatic set was that the only undefined term was `point'. All linear structure was dealt with via the colinear relation among points. Angle congruence was supplanted with only speaking about a triple of points as a triangle and using a clever 5-segment axiom instead of the more familiar Side-Angle-Side triangle congruence. 

In \cite{Beeson}, Beeson developed constructive versions of Tarski's axiomatic system some of these versions had quantifier-free axioms while others did not. The two main works that this current work is influenced by are \cite{Beeson} by Beeson and \cite{SuppesAxioms} by Suppes. They both developed axioms for constructive geometry. It is important to point out here that they are using the term constructive geometry in two very different ways. Suppes would define constructive axioms for geometry as a set of quantifier-free axioms that have operations which have intuitive interpretations as geometric constructions such as finding the intersection of two non parallel lines or finding the midpoint of a line segment. In fact, Suppes along with Moler in \cite{SuppesConstructive} were the first to development such a set of geometric axioms. Many others have done related work and Pambuccian summed up these works in \cite{PaArticle}. Meanwhile Beeson would define a constructive theory (which potentially could be axiomatized using constructions/functions instead of quantifiers) as a theory where the method of proof using this axioms involves using only intuitionistic logic. 

\subsection{Models and Geometric Spaces}

When considering the viable models to interpret these axiomatic systems one can create set-theoretic models in which the points are interpreted as tuples of numbers called coordinates. In \cite{SuppesFinite} Suppes defines a constructive model in which one starts with a finite number of coordinates and then is able to give coordinates to the points which are produced by applying constructions to elements of the original collection. This model is finite because one is only allowed to produce one new point at a time. Additionally, each formula in the quantifier-free language only discusses finitely many terms at a time. Some of Beeson's axiomatic systems are quantifier-free while others are not. Note that a geometric system containing an axiom with an existential quantifier which asserts the existence of a point between any two points would only have set theoretic models which contain infinitely many points. Beeson in fact only considers infinite set-theoretic models for all of his axiomatic systems. It is important to point out that although some of Beeson's axiomatic system are quantifier free he never stipulates that the language of the theory is quantifier free. Thus he freely uses definitions that involve existential quantifiers.

In his text \textit{New Foundation for Physical Geometry: The Theory of Linear Structures} \cite{MaudlinBook}, Maudlin make a distinction between metaphorical and geometrical spaces. In the category of metaphorical spaces Maudlin would place topological groups such as the integers, the real number line, $\R^2$ and $\R^3$. In the category of geometrical spaces Maudlin would place 2 and 3 dimensional Euclidean space and Minkowski space as well as Riemannian geometries. This may at first seem like a nonsensical distinction to some mathematicians and physicists given that many would identify 2-dimensional Euclidean space with $\R^2$, but in fact $\R^2$ is a set-theoretic model of the theory of Euclidean space (using Hilbert's axioms for instance). $\R^2$ has more structure than Euclidean space. The point $(0,0)$ called the origin has an elevated importance among all of the points in the space $\R^2$. Given a point/element of $\R^2$ you can discuss properties of it such as if one of its coordinates is negative or positive or rational or irrational. One can perform arithmetical operations to two given points/elements of $\R^2$. The points of 2-dimensional Euclidean space have none of this structure. All points are intrinsically identical. The space is homogeneous and isotropic. Most physicists would agree that we live in a three dimensional euclidean space (note the three spacial dimensions of spacetime are not curved, to the best of our knowledge, hence space is euclidean), but most would not agree that we live in the set of all ordered triples of real numbers called $\R^3$. In geometrical spaces the structure between points is intrinsic, but for metaphorical spaces the structure between points is emergent from other arithmetic structure between elements. Maudlin says the the following: 
\begin{quotation}
	``Through most of the history of mathematics, mathematicians would never have thought of even comparing a fundamentally arithmetic (and set-theoretic) object such as $\R^2$ with a fundamental geometical object such as Euclidean space. So confusion between the two, which is now so prevalent, is relatively recent in historical vintage.''
\end{quotation}

He goes on to say:
\begin{quotation}
	``Even if it is possible to use a metaphorical space such a the space of ordered triples of real numbers to represent a geometrical space, this imposes a screen of mathematical representation between us and the object in which we are interested.''
\end{quotation}
Interestingly Hilbert also felt the need to circumvent the use of real numbers coordinates in his \textit{Grundlagen der Geometrie}. He wrote that ``The present geometrical investigation seeks to uncover which axioms, hypotheses or aids are necessary for the proof of a fact in elementary geometry ...'' Hilbert referred to this as ``purity in the methods of proof''. 

The ancient Greeks made this distinction between the rigorous study of the actual physical space we live in, Geometry, and the rigorous study of numbers, Artihmetic. They referred to geometrical magnitudes as \textit{megethos} and numbers as \textit{arithmos}. There is a fascinating history dating from the ancient Greeks up to Dedekind in the nineteenth century where the supremacy of geometry was supplanted by arithmetic culminating in the modern concept of a set and the foundational theory of sets. The works of \textit{Greek Mathematical Thought and the Origins of Algebra} \cite{KlineBookGreek} by Jacob Klien and \textit{Mathematical Thoughts from Ancient to Modern Times} \cite{KlineBook} by Morris Kline are wonderful resources to better understanding the ancient Greek philosophy of mathematics and how from the seventeenth through the nineteenth century the arithmetizing of space and all of mathematics took hold. 

As for this work, we will focus on the study of geometric space (not metaphorical space). We will claim that there is a significant and important distinction between geometry and arithmetic. We do not simply want to have a finite, feasible foundation for arithmetic and then attempt to arithmetize geometry with it. In fact the need to have arithmetic accomplish the feat of being a foundation to geometry might be seen as a reason for introducing extraordinarily nonfinitistic means to arithmetic (for instance the need for irrational numbers as distances between coordinatized points). Therefore we will not present any arithmetical models of our theory. The only model to be considered will be the real-world physical phenomenon of using tools to construct points and intuitive relations between these points.


\subsection{Feasible Constructions}

We hold that all construction must be feasible. By feasible we simply mean that the intuitive interpretation of the construction is executable (by simple tools) for all circumstances where it is applicable. We introduce seven undefined constructions in our system. We introduce a midpoint construction (like Suppes did) which produces a point that is understood to be equidistant and colinear with two given points. This construction is executable in the real-world by folding a string or straight edge whose ends are first held down at the original two points. We also introduce a segment extension construction which extends a given line segment in one of two directions by a length of some other given segment. This is executable using a marked straight edge. Once we have define `sides' of a line there is an angle transport (on same side of segment) construction. Given triples $abc$ and $def$ this construction produces a point $x$ such that angle $dex$ is congruent to $abc$, segment $ex$ is congruent to $bc$, and $x$ and $f$ are on the same side of $de$. (See Figure \ref{atofigure}) This is executable using a marked protractor and marked straight edge. In Section \ref{CompassConstructions}, we introduce a circle-circle intersection construction which produces a point of intersection between two circles if one circle contains a point inside and a point outside of the other circle. This construction is executable with a compass. We also introduce a construction called the crossbow construction. Given a line segment $bd$ and two other points $a$ and $c$ which are on opposite sides of $bd$ The construction produces a point between $a$ and $c$ which is also colinear with $b$ and $d$. (See Figure \ref{crossbowfigure}) This construction is also executable with a straight edge. Since the length of segment from $b$ to the constructed point is not known prior to the construction, one might worry that the length of the straight edge needed to construct the point might be many magnitudes greater than any constructed up to that point especially as $a$ and $c$ get farther and farther away from $b$. This concern is a serious one for a feasible foundation. Fortunately within our system it is possible to prove that the line segment from $b$ to this constructed point is `less than' $ba$ or $bc$ where `less than' is defined as it would be in Hilbert's system. The proof of this fact is sketched out in Section \ref{Parallelsection} Thus there is no worry of not being able to produce a straight edge with a suitable length to create the desired point of intersection with segment $ac$. Lastly, we introduce a construction we call the orthogonal construction which given three points $a$, $b$ and $c$, where $a \not= b$ and $c \not= b$, produces a point $o$ such that $oba$ and $obc$ are right angles  [see Figure \ref{}]. To demonstrate the feasibility of such a construction we designed a tool called the orthogonator which is sketched and discussed in section \ref{SoildGeo}.

\subsection{Undefined Constructions}

As in most axiomatic system where constructions take the place of existential quantifiers, there are circumstances where one could theoretically apply a construction to a collection of points in which the intuitive real-world construction would not apply. Two examples in our system would be to apply the crossbow construction where the points $a$ and $c$ are on the same side of $bd$ or to apply the angle transport construction where $f$ is colinear with $d$ and $e$ or where $a$, $b$, and $c$ are colinear (and thus do not form an angle in our system). Formally, in our theory, there are no statements that can be proved about constructions applied to these non-intuitive settings. Thus one could easily ignore these applications as being outside the scope of the theory and allow the constructions to be undefined in these instances. One could also choose to define the construction in some trivial unimportant way such as letting the crossbow construction in the application described above produce the point $b$. This later choice need only be made if one finds it philosophically important that all constructions are formally defined `everywhere'. Mathematically either choice will not affect the work contended in future sections. Having said that, our stance philosophically is to leave the construction undefined.

\subsection{Constructive Geometry}

We claim that our system is constructive in two ways. As has been mentioned already our axiomatic system and the theory developed from these axioms is completely in the formal language of quantifier-free first-order logic. In the place of existential quantifiers we will introduce functions which will be referred to as constructions since they have interpretations of physical constructions an individual can carry out in the physical-world. This implies that our system is constructive as stipulated by Suppes.

In \cite{Beeson} Beeson defined the properties a constructive theory of geometry should satisfy. He claimed that by including certain stability axioms the only significant difference between a classical theory of geometry and a constructive theory was that in a constructive theorem one is not able to make case distinctions when constructing the object in an existential claim. Thus one should always have uniform (case-free) constructions. We in fact only need uniform constructions to achieve our goals. Because of this, we claim, without stipulating the stability axioms, that our theory is constructive as Beeson defines it.

\subsection{Sides of a Line Segment and Angle Orientation}




Hilbert's system satisfy what is called the plane separation property. One defines two points not on a given line as being on the same side of that line if the segment between those two points does not intersect the given line. One then defines these points as being on opposite sides of that line if the segment between them does intersect the line. In a modern set-theoretic framework the property claims that all of the points of the plane not on a line can be separated into two equivalence classes called sides. One will note that our crossbow construction from earlier is related to these concepts. One will also notice that it would not be possible to smoothly transition the totality of these concepts over to a quantifier-free language. Having said that Tarski and others were able to define a relation of four points for when $a$ and $b$ are on the same or opposite side(s) of a segment $cd$  \cite{Beeson}. Both of these relations were defined using only the between relation and an existential quantifier. It is impressive that Tarski and others were able to avoid using universal quantifiers when defining sides of a line segment. Having said that we will not follow Tarski and others on this issue. Although the truth values of same/opposite side(s) relations can be determined by simply verifying the order relations between points, we have two issues with incorporating these relations. First we want our entire theory to be quantifier-free. Secondly incorporating these relations will take us too far away form a Hilbert-style theory. Hilbert was able to prove many theorems from only his order, incidence, and congruence axioms without relying on a parallel postulate. This part of the theory that excludes the parallel postulate is referred to as a Hilbert Plane. Tarski theory, on the other hand uses an axiom equivalent to the Euclid's fifth postulate and a line-circle continuity axiom (related to compass constructions) in order to obtain similar results. Given that we have major issues with traditional axioms about parallel lines and wish to hold off on involving compass construction, we have chosen to go a different rout. 

One way that we could mimic a Hilbert-style axiomatic system would be to simply introduce an undefined relation for two points being on the same side of a line segment as was done by Greenberg in \cite{GreenbergBook}. One could then include several axioms to insure that this relation had the necessary properties. We have philosophical issue with this method. Having an undefined `same side' relation might lead one to think we are relying on an intuition about how lines partition the plane into two disjoint regions. This intuition is not acceptable for our goals. We will therefore aim to develop the concept of sides of a line from a foundation more in line with our goals. We can all agree that an individual is able to determine if two points $a$ and $b$ are on the 'same side' of line segment $cd$. So how does one determine this. We conclude that the method actually boils down to some intuitive grasp of orientation and not the infinite plane being partitioned by the line through $c$ and $d$. Anyone who has studied elementary trigonometry has been exposed to concept that angles in the plane can be positively or negatively oriented. We thus introduce an undefined same orientation relation. Given two non-colinear ordered triples $abc$ and $def$ the relation has a truth value of true when both ordered angles are either positively oriented or both negatively oriented. By introducing axioms pertaining to this relation we are able to fully develop a relation of `same side' from which we are able to prove many analogous results to Hilbert. Additionally all of these assumptions only pertain to the orientation of angles in triangles or pertain to points involved in feasible constructions. We find this a more philosophically appealing foundation to build from. This is also of interest in a purely mathematical sense. We have not found any previous works where angle orientation is treated in synthetic geometry on its own or as it relates to sides of a line. 

\subsection{Distinction for Beeson's Work}

Suppes' approach formally codifies the everyday reality that humans only manipulate finite objects in finite ways. We claim that it may be possible to interpret Beeson's axiomatic systems which use skolem functions as being constructively finite in a similar way. Given this claim, one might suppose that a path to a finite, feasible, constructive foundation for synthetic geometry would be best forged by conducing a deeper study of Beeson's work. We hold that although some of Beeson's systems have the potential to be constructive and finite, in the sense that they only speak about finitely many objects and finitely many construction there are features of his systems that are not feasible. His use of axioms which are equivalent to Euclid's fifth postulate being a prime example. 

The current work is distinct from Beeson's work is three important ways. First we claim that constructions producing a point of intersection for two non-parallel line (segments) are not feasible. Thus we will build on the work of Suppes to codify the concept of parallel line segments without such a construction. Secondly we will be developing our system as an analog to Hilbert's axiomatic system. Thirdly, while some of Beeson's axiomatic systems have quantifier-free axioms, Beeson still uses definition which involve existential quantifiers. This means that the over all theory is not quantifier-free. Our system and the theory developed from it is completely in quantifier-free first-order logic as was Suppes'.   

\subsection{Our Parallel Axiom}

As the long history of failed attempts to prove Euclid's fifth postulate and the discovery of non-euclidean geometry at the end of the nineteenth century would convince you of, one must assume a somewhat non-trivial feature of space that is not derived from simpler concepts of order, incidence, congruence of segments and angles, and basic constructions using straightedge, compass, or protractor. We have chosen to follow the work of Suppes and assume that the midpoint construction satisfies the property of bisymmetry. Formally this axiom states that for any four points $a$, $b$, $c$, and $d$ we have 	$mid(mid(a,b),mid(c,d))=mid(mid(a,c),mid(bd))$ where $mid(-,-)$ is the midpoint construction. We would like to explain why we believe this axiom does not violate our desire for a finite, feasible, constructive foundation.  

It is easy to see why this axiom is finite and constructive given that it only discusses four points and a construction. So we must ask ourselves if it is feasible. As was stated early our only requirement for the quantifier-free theory to be feasible is for the constructions to have standard interpretation which can be performed by human faculties. Obviously the midpoint construction is feasible. Additionally we strive for the concepts and objects of our system be accessible in terms of constructions that can be performed. Unlike Euclid's fifth postulate which makes claims that can not be verified by human experience, for any four points one can actually verify if the midpoint construction satisfies the bisymmetry condition in that instance. Thus our theory makes a assumption about space which could actually be experimentally tested leading to probabilistic confidence in the theory while also being potentially falsifiable.


\subsection{Number of Axioms}

In total our system has 36 axioms. It is natural for quantifier-free axiomatic system to have more axioms than their first-order logic counterparts [See \cite{SuppesInfinitesimalAnalysis} for an example]. Traditionally Hilbert's axiom for the Hilbert plane are listed thirteen in total, but this is misleading since many of them would be presented as separate axioms in a more modern treatment. For example Hilbert's first congruence axiom discusses not only the existence of points on either sides of a particular point on given line, but also states that the line segment congruence relation is reflexive. In a modern system this would be two separate axioms. By our counting Hilbert's axiomatic system for the Hilbert plane totals 19 axioms. Furthermore if one chooses to have a plane separation axiom instead of a Pasch's axiom, as Greenberg has in \cite{GreenbergBook}, the total would go up to 20. Our system would take 29 axioms to codify an appropriate analog to the Hilbert plane.  This is surely more, but there are instances where having a system with the only objects being points is actually more efficient. For instance we have no need for an analog to Hilbert's first incidence axiom which states that any two points determine a line. Fortunately, our system only needs one additional axiom to extend our analog of the Hilbert plane to a analog of Hilbert's treatment of the Euclidean plane sans continuity axioms. (Note that Hilbert's axioms make no reference to compass constructions or circle continuity.) In the end we are pleased that our system is still able to be presented with such economy given that it is presented in such a minimalist and restrictive language.

\subsection{Outline of Paper}

Section 2 covers Hilbert's order axioms except for Pasch's axiom, Section 3 discusses Hilbert's incidence axioms, and Section 5 discusses his congruence axioms. In Section 5 we introduce the extension and angle transport constructions. In Section 4 we focus on Pasch's axiom and plane separation properties. In this section we introduce our crossbow construction and angle orientation relation. We define and prove properties of sides of a line and also prove an analog to Hilbert's line separation property. In Section 6 we point out results from Hilbert's theory which directly translate over to our system. In Section 7 we point out a few results which do not translate over as effectively. In particular we introduce a midpoint construction and uniform constructions for erecting a perpendicular segment and dropping a perpendicular segment. By the end of Section 7 we have fully develop an analog to Hilbert's theory of a Hilbert plane. In section 8 we introduce concept of parallel line segment developed by Suppes. We then expand on Suppes' affine plane to develop a theory for a flat geometric plane. It is in this section that we introduce our parallel axiom to take the place of traditional parallel postulates. Given that we do not have the full might of a traditional parallel postulate, there are results of Hilbert's pertaining to parallel lines for which we do not have analogs. Having said that, there are many results pertaining to parallel lines that we do have analogs of. For example we have analogs of the Alternate Interior Angle Theorem and its converse. We are also able to prove that parallel line segments are equidistant and that extensions of non-parallel lines either come closer together or move farther apart. Using these properties of parallel lines we are able to define and prove properties about convex quadrilaterals and also define a uniform construction for trisecting a segment. Lastly in this section we sketch out a proof of our earlier claim that our crossbow construction is feasible. In Section 9 we introduce a circle-circle intersection construction, we assume a version of circle-circle continuity, and prove uniform constructions for versions of line-circle continuity and segment-circle continuity. In Section 10 we define a relation for when four points are coplanar. We then develop analogs to many solid geometry results of Hilbert's and Euclid's. We also discuss the modifications needed to fully incorporate the planar results of the earlier sections into this new context. Lastly, we prove many results pertaining to sides of a plane.


\section{Axioms of Order}

Hilbert introduced an undefined \textit{between} relation for three points, said `$b$ is between $a$ and $c$', which has the interpretation of stating that $b$ is on the interior of the line segment with $a$ and $c$ as endpoints. We will introduce the undefined relation $B(a,b,c,)$. 


\subsection{Hilbert's Order Axiom 1}

Hilbert's first order axiom states that for three distinct points, $a$, $b$, and $c$, on a line, if $b$ is between $a$ and $c$, then $b$ is between $c$ and $a$. 

We thus introduce the following two axioms: 

\begin{axiom}\label{betweenuniquepoint}
	$B(a,b,c) \rightarrow a \not= b$ and $a \not= c$
\end{axiom}

\begin{axiom}\label{betweenreverse}
	$B(a,b,c) \rightarrow B(c,b,a)$
\end{axiom}

Note that from these two axioms one can also prove that if $b$ is between $a$ and $c$, then $b \not= c$. 

\subsection{Hilbert's Order Axiom 2} \label{sectionorderaxiom2}

Hilbert's second order axiom states that given two points $a$ and $b$ there exists a point $c$  such that $b$ is between $a$ and $c$ 

Our axiomatic system will include a construction for segment extension. This construction, denoted $ext(ab,cd)$, will be understood to be a point constructed so that $B(a,b,ext(ab,cd))$ and the line segment from $b$ to $ext(ab,cd)$ is congruent to the line segment from $c$ to $d$. (Line segment congruence will be defined in the next section.) There we will introduce an axiom which states these properties of this construction. 

For the time being, we can satisfy Hilbert's axiom by noting that $ext(ab,ab)$ will function as the needed point $c$. 

\subsection{Hilbert's Order Axiom 3}

Hilbert's third order axiom states that given three distinct points on a line, one and only one lies between the other two. We thus introduce the following axiom. 

\begin{axiom}\label{onlyonebetween}
	$\lnot(B(a,b,c) \wedge B(a,c,b))$
\end{axiom}

Note if $B(a,b,c)$, then $B(c,b,a)$. Therefore $\lnot B(a,c,b)$, $\lnot B(b,c,a)$, $\lnot B(c,a,b)$, and $\lnot B(b,a,c)$. 

\subsection{Hilbert's Order Axiom 4}

Hilbert's fourth order axiom is known as Pasch's axiom. We will be devoting all of section \ref{Pasch} to this topic as well as the closely related topic of plane separation. For now we will turn our attention to Hilbert's axioms of incidence. 
	
\section{Axioms of Incidence}

Hilbert's axiomatic system for planar geometry refers to two classes of objects (points and lines). His system would maybe best be formalized today by using the language of set theory and second order logic. To define the relations among these two types of objects Hilbert had two incidence axioms describing how points and lines can be 'on' and 'contain' each other. 

Our axiomatic system in the tradition of Tarski will only have one type of object (points) all aspects of linearity will have to be dealt with by a relation of colinearity among three points. 

We now give a definition for the colinearity relation, defined $L(-,-,-)$, of three points:

\begin{definition}
	$L(a,b,c) \equiv B(a,b,c) \lor B(a,c,b) \lor B(b,a,c)$
\end{definition}

Note by Axiom \ref{betweenreverse} , $a$, $b$, and $c$ are also colinear when $B(c,b,a)$, $B(b,c,a)$, and $B(c,a,b)$. By Axiom \ref{betweenuniquepoint}, if $L(a,b,c)$, then $a$, $b$, and $c$ are all distinct points. Lastly one can prove that if $L(a,b,c)$, then any permutation of $a$, $b$, and $c$ is still colinear. 

\subsection{Hilbert's Incidence Axiom 1}

Hilbert's first incidence axiom states for any two points there is a line that `contains' them. Again since we do not have the goal of formalizing lines as objects, we will not be introducing an analogous axiom. Our system will only have an intuitive interpretation of line segments with two endpoints.
	
\subsection{Hilbert's Incidence Axiom 2}

Hilbert's second incidence axiom states that for every two points there exists no more than one line that `contains' them. We will need a modified version of this axiom. In particular we will not want that points $a$, $b$, and $c$ are co-linear and points $a$, $b$, and $d$ are co-linear while $a$, $c$, and $d$ are not co-linear. 

Thus we have the following Axiom: 

\begin{axiom}\label{uniqueline}
$L(a,b,c) \wedge L(a,b,d) \rightarrow L(a,c,d)$ 
\end{axiom}

From this axiom one can prove that any distinct triple of $a$, $b$, $c$, or $d$ is colinear. Thus if two linear triples share two points then any (distinct) triple formed from those four points is colinear. 

\subsection{Hilbert's Incidence Axiom 3}

Hilbert's third incidence axiom states that every line contains at least two points and that there are at least three points not all on the same line. 

Note that co-linearity is defined among three distinct points. In section \ref{congruence} we will define congruence of lines segments (defined by two endpoints). Because of this, our system will not allow for the discussion of any linear structures for less than two points. 

In order to codify the second point we will be follow Beeson and Suppes by having an axiom that states that there are three distinct constants which are not colinear. First, though, we will defined a new relation which states three points are distinct and non-colinear.  

\begin{definition}
	$T(a,b,c) \equiv \lnot L(a,b,c) \wedge a \not= b \wedge b \not= c \wedge a \not= c$
\end{definition}

This definition simply states that $a$, $b$, and $c$ form a triangle.

Next we introduce the following axiom. 

\begin{axiom} \label{3noncolinearpoints}
	$ T(\alpha, \beta, \gamma)$
\end{axiom}






\section{Pasch's Axiom, Plane Separation, and Angle Orientation}\label{Pasch}

Hilbert's fourth order axiom is known as Pasch's axiom. It states that if $a$, $b$, and $c$ are three distinct non-colinear points and $\ell$ is a line that does not pass through any of the three points, but does intersect the line segment connecting $a$ and $b$, then $\ell$ contains a point between $b$ and $c$ or $a$ and $c$. In other words, if a line intersects the interior of one side of a triangle (and does not intersect any of the vertices), then it must interest the interior of exactly one of the two remaining sides. Figure \ref{paschaxiom} illustrates this concepts. Pasch's axiom assures that the axiomatic system is one of planar geometry and not a higher dimensional geometry.

\begin{figure}[h!]
	\begin{picture}(216,130)
	\put(15,0){\includegraphics[scale=.7]{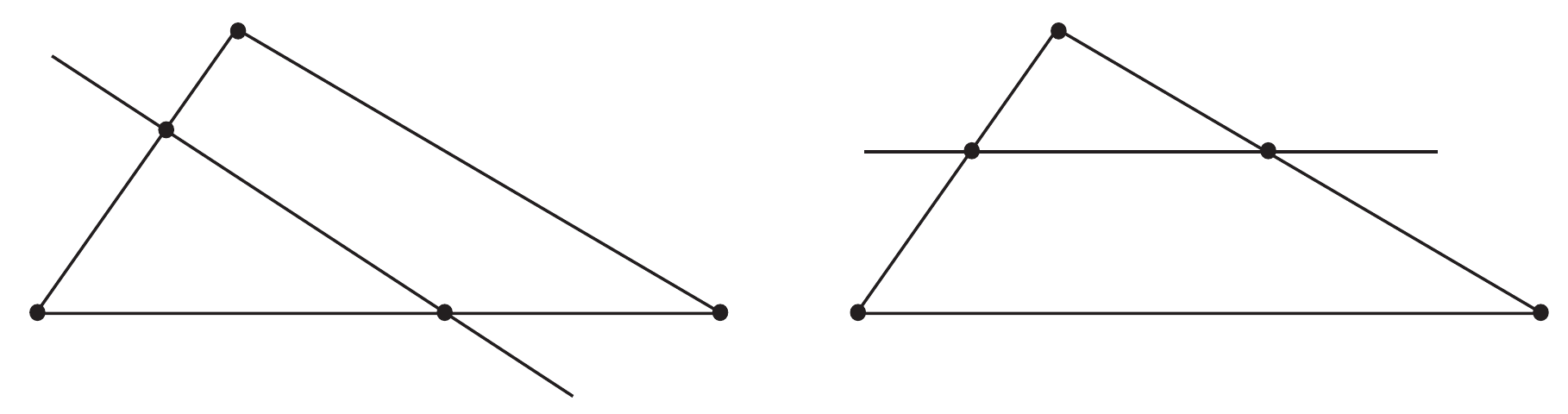}}
	\put(15,80){$\ell$}
	\put(210,60){$\ell$}
	\put(207,20){$a$}
	\put(15,20){$a$}
	\put(60,90){$b$}
	\put(253,90){$b$}
	\put(377,30){$c$}
	\put(183,30){$c$}
	\end{picture}
	\caption{Pasch's axiom}
	\label{paschaxiom}
\end{figure}

As the reader will note, that it is not a simple task to capture this axiom in the a formal language that does not allow for the mentioning of lines. Tarski's did this very thing by defining two subversions of Pasch's axiom called inner Pasch and outer Pasch. (The original version of Tarski's axioms assumed both inner and outer Pasch. It was later proved by Gupta in \cite{Gupta} that outer Pasch could be proved from inner Pasch.)

\subsection{Plane Separation}

There is a property of the Euclidean plane that states that every line divided the points on the plane not on the line into two sets called sides having the following properties: If $a$ and $b$ are on different sides of the line, then the line segment between $a$ and $b$ contains a point on the line and if $a$ and $b$ are on the same side of the line then the line segment between $a$ and $b$ contains no points of the line.

In some presentation of Hilbert's system (in \cite{GreenbergBook} for example) a plane separation axiom was assumed and Pasch's `axiom' was shown to be a result of this assumption. As has been stated, Hilbert assumed Pasch's axiom. In Hartshorne's text \cite{HartshorneBook} one can find a proof of the plane separation property based off of Hilbert's original system. The proof relies heavily on Pasch's axiom. The proof shows that the relation `same side' is an equivalence relation for the points not on a given line. He then shows that if points $a$ and $b$ are not on the same side of some line and $b$ and $c$ are also not on the same side then $a$ and $c$ are on the same side. This shows that there are only two equivalence classes.

Since Tarski's system does not allow for the discussion of sets or lines for that matter, some work must be done to translate this ideas over in an analogous ways. Since we will not be following Tarski et al in their methods, we will try to avoid going too deep into this topic. We will simply point out a few important details about their work. They define `opposite side' as a 4-ary relation. They then define `same side' by having a witness point on the other side of the line. One is then able to prove a plane separation theorem which states that if $a$ and $b$ are on the same side and $a$ and $c$ are on opposite sides then $b$ and $c$ are on opposite sides. See \cite{Beeson} for details.

We will be taking a very different track from Hilbert and Tarski et al. First, we will not be assuming any version of Pasch's axiom. We will instead introduce a relation whose interpretation is understood as two angles having the same orientation as well as one new construction. From axioms about this relation and construction we will be be able to prove analogous statements to Hilbert's plane separation axiom and Pasch's axiom. 


\subsection{Angle Orientation and the Crossbow Construction} \label{angleorientationsec}


We desire to formally capture humans' intuitive understanding of orientation of the plane without resorting to concepts such as an infinite line dividing the whole plane into two disjoint regions. We thus choose to have an undefined relation for when two non-colinear triples of points (two angles) have the same orientation. Since we are not able to discuss angles as Hilbert would, we will have an undefined 6-ary relation for two triples of points.

By $SameOrientation(a,b,c,d,e,f)$ we will interpret that the rotation from segment $ab$ to segment $bc$ has the same orientation as the rotation from segment $de$ to segment $ef$. See Figure \ref{sameorientation}. For ease of reading we will use the notation $SO(abc,def)$ instead of $SameOrientation(a,b,c,d,e,f)$. 

\begin{figure}[h!] 
		\begin{picture}(200,130)
		\put(35,0){\includegraphics[scale=1]{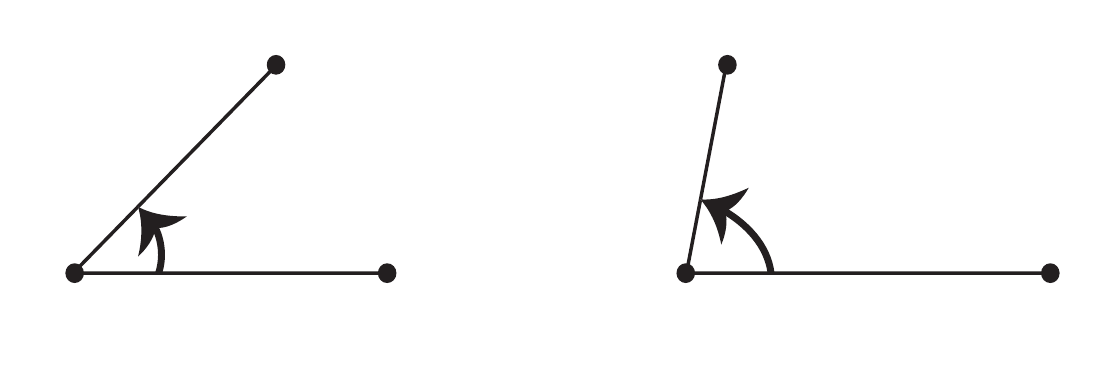}}
		\put(45,25){$b$}
	\put(152,25){$a$}
	
	\put(222,25){$e$}
	\put(344,25){$d$}
	\put(118,92){$c$}
	\put(248,86){$f$}
		\end{picture}
		\caption{Diagrammatic representation of angles $abc$ and $def$ having same orientation}
\label{sameorientation}
\end{figure}

We have quite a few axioms pertaining to this relation. nearly all have interpretations that are elementary about the relationship between line segments and angle orientation. 

\begin{axiom}\label{SOtriangle}
	$SO(abc,def) \rightarrow T(a,b,c) \wedge T(d,e,f)$
\end{axiom}

This axiom simply states that if two angles have the same orientation then they are both distinct non-colinear triples. This allows us to not have to deal with non-distinct or colinear triples. Much like Hilbert, we will only be discussing angles less than a straight angle. 

\begin{axiom}\label{SOreflexive}
	$SO(abc,abc)$
\end{axiom}

\begin{axiom}\label{SOtrans}
	$SO(abc,def) \wedge SO(abc,ghi) \rightarrow SO(abc,ghi)$
\end{axiom}

These two axioms assure that the relation is reflexive, symmetric, and transitive. 

We now define a relation, $SameDirection(a,b,c)$, shortened $SD(a,b,c)$, for when two points of a colinear triple are on the same side of the line in relation to the third point. This is analogous to Hilbert's concept of a ray.  

\begin{definition} \hspace{.1in}
	
	$SD(a,b,c) \equiv B(a,b,c) \lor B(a,c,b) \lor (a \not= b \wedge c=b) \lor (a=b \wedge c \not= b)$
\end{definition}

The following axioms states that extending or shortening a side of an angle does not change it's orientation. This takes the place of Hilbert's use of rays when defining angles. 

\begin{axiom}\label{SOray}
	$T(a,b,c) \wedge SD(b,a,e)  \rightarrow SO(abc, ebc)$
\end{axiom}

We now define two angles having opposite orientation to mean that they do not have the same orientation and are both distinct non-colinear triples. 

\begin{definition} \label{OOdef}
$OO(abc,def) \equiv \lnot SO(abc,def) \wedge T(a,b,c) \wedge T(d,e,f)$ [See Figure \ref{oppositeorientation}.]
\end{definition}

\begin{figure}[h!]
	\begin{picture}(216,130)
	\put(35,0){\includegraphics[scale=1]{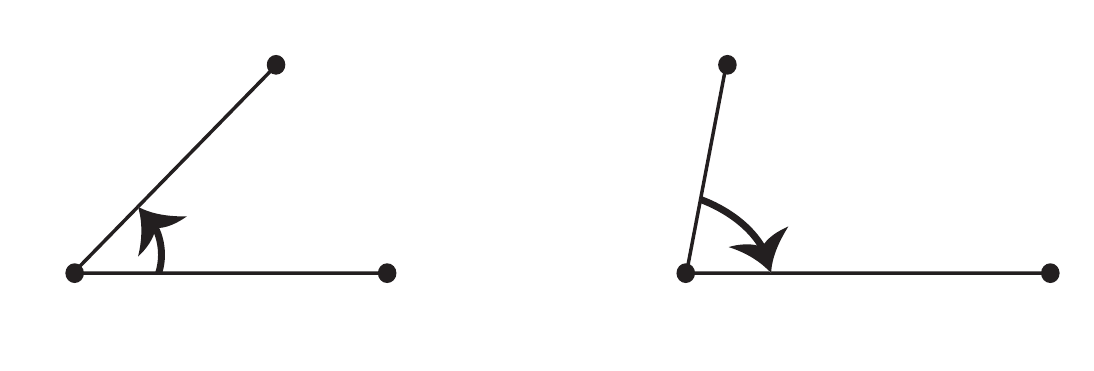}}
	\put(45,25){$b$}
\put(152,25){$a$}

\put(222,25){$e$}
\put(344,25){$d$}
\put(118,92){$c$}
\put(248,86){$f$}
	\end{picture}
	\caption{Diagrammatic representation of angles $abc$ and $def$ having opposite orientation}
	\label{oppositeorientation}
\end{figure}

Given the transitivity and symmetry properties of the \textit{SameOrientation} relation we can prove the following theorem. 

\begin{theorem}\label{SOOOOO}
	$SO(abc,def) \wedge OO(def,ghi) \rightarrow OO(abc,ghi)$
\end{theorem}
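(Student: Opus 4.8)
The plan is to prove the theorem by pure propositional manipulation of the defining conditions, using only the algebraic properties of $SO$ guaranteed by Axioms \ref{SOreflexive} and \ref{SOtrans} (reflexivity, symmetry, transitivity), together with Axiom \ref{SOtriangle} and Definition \ref{OOdef}. No geometry is needed beyond what these axioms already encode.

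First I would unpack the hypotheses. From $OO(def,ghi)$ and Definition \ref{OOdef} I extract the three conjuncts $\lnot SO(def,ghi)$, $T(d,e,f)$, and $T(g,h,i)$. From $SO(abc,def)$ and Axiom \ref{SOtriangle} I obtain $T(a,b,c) \wedge T(d,e,f)$, in particular $T(a,b,c)$. Thus two of the three conjuncts required for $OO(abc,ghi)$, namely $T(a,b,c)$ and $T(g,h,i)$, are already in hand.

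The only remaining task is to establish $\lnot SO(abc,ghi)$. I would argue this directly: assume $SO(abc,ghi)$ and derive a contradiction with $\lnot SO(def,ghi)$. From $SO(abc,def)$, symmetry of $SO$ gives $SO(def,abc)$; combining this with the assumed $SO(abc,ghi)$ via transitivity of $SO$ yields $SO(def,ghi)$, contradicting the conjunct obtained above. Hence $\lnot SO(abc,ghi)$, and then Definition \ref{OOdef} applied to $\lnot SO(abc,ghi) \wedge T(a,b,c) \wedge T(g,h,i)$ gives $OO(abc,ghi)$, as desired.

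I do not expect a genuine obstacle here; the proof is entirely formal and, since it only invokes $\lnot$-introduction rather than double-negation elimination, it is valid in quantifier-free intuitionistic logic as well. The single point that merits care is that symmetry and transitivity of $SO$ are not themselves primitives but consequences of Axioms \ref{SOreflexive} and \ref{SOtrans} (symmetry by instantiating the transitivity-style axiom against a reflexive instance, transitivity by a further such instantiation); if these consequences have not already been recorded as a lemma, they should be cited explicitly or re-derived in line before being used.
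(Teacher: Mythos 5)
Your proof is correct and follows exactly the route the paper intends: the paper offers no written proof beyond the remark that the theorem follows from the symmetry and transitivity of $SO$, and your argument is precisely the formal unpacking of that remark via Definition \ref{OOdef} and Axiom \ref{SOtriangle}. Your closing caveat is well taken — Axiom \ref{SOtrans} as printed reads $SO(abc,def) \wedge SO(abc,ghi) \rightarrow SO(abc,ghi)$, a tautology whose conclusion is surely meant to be $SO(def,ghi)$ — so symmetry and transitivity do need to be derived (or the axiom corrected) before being invoked, exactly as you note.
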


The following axiom states that switching the initial and terminal side of an angle changes its orientation. 

\begin{axiom}\label{OOreverse}
	$OO(abc,cba)$
\end{axiom}

The next axiom states that there is parity among angles orientation. This axiom is a critical property to build toward plane separation. 

\begin{axiom}\label{OOOOSO}
	$OO(abc,def) \wedge OO(def,ghi) \rightarrow SO(abc,ghi)$
\end{axiom}

One can prove the following lemma which states given two angles with same (opposite) orientations if one changes the initial and terminal sides of both angles the two resulting angles will have the same (opposite) orientation. 

\begin{lemma}\label{SOOOreverseboth}
	$SO(abc,def) \rightarrow SO(cba,fed)$ and 
	
	$OO(abc,def) \rightarrow OO(cba,fed)$
\end{lemma}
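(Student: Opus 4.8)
The plan is to obtain both implications purely from the ``reversal'' axiom \ref{OOreverse} (swapping the initial and terminal sides of a single angle flips its orientation) together with the parity statements already available, namely Axiom \ref{OOOOSO} and Theorem \ref{SOOOOO}. The guiding idea is that $OO(abc,cba)$ and $OO(def,fed)$ both hold in the relevant situations, so reversing \emph{both} angles amounts to composing two single-angle reversals, and two reversals return the original orientation. I would first record that $OO$ is symmetric: this follows because $SO$ is symmetric (stated just after Axiom \ref{SOtrans}) and $T$ is invariant under permutation of its arguments (since $L$ is, and the disequalities in the definition of $T$ are symmetric), so Definition \ref{OOdef} is symmetric in its two triples. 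I would also note that whenever the hypothesis of the lemma holds, Axiom \ref{SOtriangle} (in the $SO$ case) or Definition \ref{OOdef} (in the $OO$ case) already gives $T(a,b,c)$ and $T(d,e,f)$, so $OO(abc,cba)$ and $OO(def,fed)$ are genuine instances of Axiom \ref{OOreverse} rather than vacuous ones.

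For the first implication, assume $SO(abc,def)$. I would chain this with $OO(def,fed)$ via Theorem \ref{SOOOOO} to get $OO(abc,fed)$. Then, using $OO(cba,abc)$ (symmetric form of Axiom \ref{OOreverse}) together with $OO(abc,fed)$, Axiom \ref{OOOOSO} yields $SO(cba,fed)$, which is the desired conclusion.

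For the second implication, assume $OO(abc,def)$. Combining $OO(cba,abc)$ with $OO(abc,def)$ through Axiom \ref{OOOOSO} gives $SO(cba,def)$; then chaining $SO(cba,def)$ with $OO(def,fed)$ via Theorem \ref{SOOOOO} gives $OO(cba,fed)$. Neither step is a real obstacle; the only thing that needs care is the bookkeeping of which triple plays which role in Axiom \ref{OOOOSO} and Theorem \ref{SOOOOO}, and the preliminary remark that $OO$ is symmetric, since without it the reversal axiom cannot be fed into these composition rules in the orientation required.
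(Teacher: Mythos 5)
Your proof is correct, and since the paper states this lemma without proof, your derivation supplies exactly the intended argument: compose the single-angle reversal $OO(abc,cba)$ with the parity rules (Axiom \ref{OOOOSO} and Theorem \ref{SOOOOO}), using Axiom \ref{SOtriangle} (resp.\ Definition \ref{OOdef}) to guarantee the triangle hypotheses so the reversal instances are non-vacuous. One small simplification: $OO(cba,abc)$ is already a literal instance of Axiom \ref{OOreverse} under the renaming $a\mapsto c$, $c\mapsto a$, so the preliminary symmetry remark about $OO$, while true, is not actually needed for either chain.
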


We now introduce some axioms dealing with the interplay between angle orientation and order. 

\begin{axiom}\label{intersectOO}
	$B(a,b,c) \wedge T(a,c,d) \rightarrow OO(dba,dbc)$ [See Figure \ref{intersectionoppositeorientation}.]
\end{axiom}

\begin{figure}[h!] 
	\begin{picture}(216,80)
	\put(105,0){\includegraphics[scale=.8]{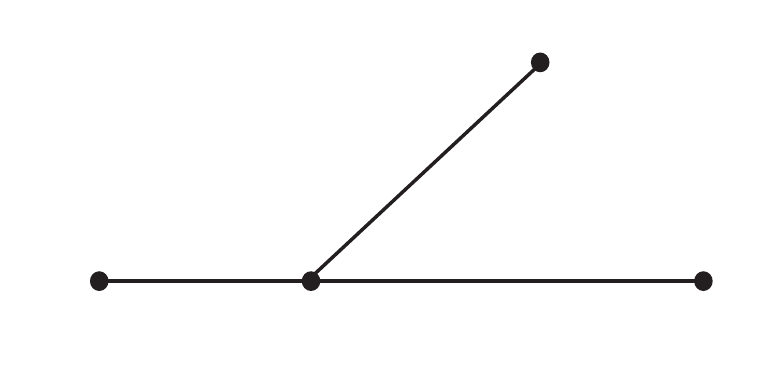}}
	\put(115,15){$a$}
	\put(276,15){$c$}
	\put(180,12){$b$}
	\put(230,80){$d$}
	\end{picture}
	\caption{}
\label{intersectionoppositeorientation}
\end{figure}

Given a line segment $db$, $OO(dba,dbc)$ is our proto-version of stating $a$ and $c$ are on `opposite sides' of $db$. Thus the previous axiom states something analogous to the idea that if a line segment contains one endpoint between $a$ and $c$ then $a$ and $c$ are on opposite sides of the line segment. (A fully flushed out definition of opposite sides of a line segment will be discussed in subsection \ref{sidesofline}.)

The next theorem shows that there is some relationship between colinearality and angle orientation.



\begin{theorem}\label{OOline}
	$L(a,b,c) \wedge OO(abd,abe) \rightarrow OO(cbd,cbe)$ [See Figure \ref{intersectopposite2}]
\end{theorem}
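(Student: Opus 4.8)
The plan is to split on where $c$ sits on the line through $a$ and $b$: either $B(a,b,c)$, or else $a$ and $c$ lie on the same ray out of $b$, i.e. $SD(b,a,c)$. In the first case the triples $abd$ and $cbd$ will turn out to have \emph{opposite} orientation, and in the second case the \emph{same} orientation; either way the relation between $d$ and $e$ is carried over correctly, and the conclusion follows from the equivalence-relation properties of $SO$ (Axioms \ref{SOreflexive}, \ref{SOtrans}) and the parity properties of $OO$ (Axioms \ref{OOreverse}, \ref{OOOOSO}, Theorem \ref{SOOOOO}, Lemma \ref{SOOOreverseboth}).

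First I would record the standing hypotheses and the triangle predicates. From $OO(abd,abe)$ and Definition \ref{OOdef} we have $\lnot SO(abd,abe)$, $T(a,b,d)$, and $T(a,b,e)$; from $L(a,b,c)$, the points $a,b,c$ are distinct and every permutation of them is colinear. I claim $T(c,b,d)$: indeed $c\neq b$ and $b\neq d$ are immediate, $c\neq d$ (else $L(a,b,d)$ would contradict $T(a,b,d)$), and $\lnot L(c,b,d)$ since $L(c,b,d)$ together with $L(c,b,a)$ would, by the remark following Axiom \ref{uniqueline}, force the colinear triple $L(a,b,d)$, again contradicting $T(a,b,d)$. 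The identical argument gives $T(c,b,e)$, and, in the case $B(a,b,c)$, also $T(a,c,d)$ and $T(a,c,e)$ (using the permutation $L(a,c,b)$ of $L(a,b,c)$). By the consequences of Axiom \ref{onlyonebetween} the three betweenness alternatives in $L(a,b,c)$ are mutually exclusive, so we may split into: (i) $B(a,b,c)$; (ii) $\lnot B(a,b,c)$, whence $B(a,c,b)$ or $B(b,a,c)$, and in either subcase $SD(b,a,c)$ holds directly from its definition.

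In Case (ii), Axiom \ref{SOray} applied to $T(a,b,d) \wedge SD(b,a,c)$ gives $SO(abd,cbd)$, and similarly $SO(abe,cbe)$. If we had $SO(cbd,cbe)$, then chaining $SO(abd,cbd)$, $SO(cbd,cbe)$, $SO(cbe,abe)$ through symmetry and transitivity of $SO$ would yield $SO(abd,abe)$, contradicting $\lnot SO(abd,abe)$; hence $\lnot SO(cbd,cbe)$, and with $T(c,b,d)$, $T(c,b,e)$ this is exactly $OO(cbd,cbe)$. In Case (i), Axiom \ref{intersectOO} applied to $B(a,b,c) \wedge T(a,c,d)$ gives $OO(dba,dbc)$, and the $OO$-clause of Lemma \ref{SOOOreverseboth} turns this into $OO(abd,cbd)$; symmetrically $OO(abe,cbe)$. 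Suppose toward a contradiction that $SO(cbd,cbe)$. Then, using symmetry of $SO$ and $OO$, Theorem \ref{SOOOOO} applied to $SO(cbe,cbd)$ and $OO(cbd,abd)$ yields $OO(cbe,abd)$, i.e. $OO(abd,cbe)$; combining this with $OO(cbe,abe)$ via Axiom \ref{OOOOSO} gives $SO(abd,abe)$, a contradiction. Hence $\lnot SO(cbd,cbe)$, and again $OO(cbd,cbe)$.

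The main obstacle is that there is no uniform argument: the two geometric configurations behave oppositely (apex-vertex orientation is preserved when $a,c$ share a ray from $b$ but reversed when $b$ separates them), so the proof must recognize that Axiom \ref{SOray} is the right tool for the same-ray case while the Axiom \ref{intersectOO}/Lemma \ref{SOOOreverseboth} pair handles the separated case, and then thread the $SO$/$OO$ algebra carefully through each. The surrounding bookkeeping — distinctness and the triangle predicates — is routine given Axiom \ref{uniqueline}.
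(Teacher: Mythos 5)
Your proof is correct and follows essentially the same route as the paper's: the same case split into $B(a,b,c)$ versus $SD(b,a,c)$, with Axiom \ref{SOray} handling the same-ray case and Axiom \ref{intersectOO} the separated case, followed by the same $SO$/$OO$ algebra (you phrase the final steps by contradiction where the paper argues directly). Your explicit use of Lemma \ref{SOOOreverseboth} to convert $OO(dba,dbc)$ into $OO(abd,cbd)$ is a detail the paper elides, so if anything your version is slightly more complete.
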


\begin{figure}[h!] 
	\begin{picture}(216,110)
	\put(105,0){\includegraphics[scale=.8]{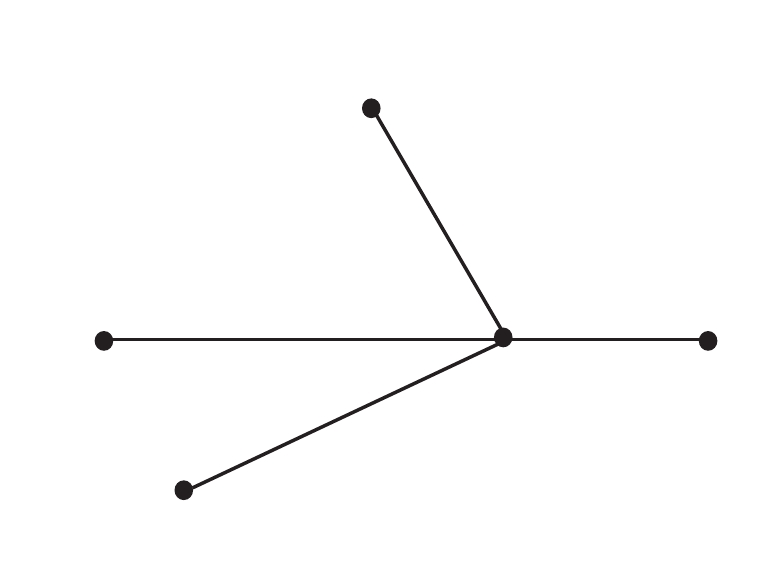}}
	\put(115,50){$a$}
	\put(276,50){$c$}
	\put(220,42){$b$}
	\put(200,105){$d$}
	\put(135,15){$e$}
	\end{picture}
	\caption{}
\label{intersectopposite2}
\end{figure}

\begin{proof}
	Since $L(a,b,c)$, either $B(a,b,c)$ or $SD(b,a,c)$. If $SD(b,a,c)$, then by Axiom \ref{SOray} and Theorem \ref{SOOOOO} we have $OO(cbd,cbe)$. Since $L(a,b,c)$, $a\not= c$. Since $OO(abd,abe)$, $T(a,b,d)$ and thus $a \not= d$ and $d \not= c$. Since $a$, $c$, and $d$ are all distinct, $T(a,c,d)$ otherwise $L(a,c,d)$. This would imply $L(a,b,d)$ by Axiom \ref{uniqueline}, but $T(a,b,d)$. Similarly $T(a,c,e)$. Thus if $B(a,b,c)$ then $OO(abd,cbd)$ and $OO(abe,cbe)$ by Axiom \ref{intersectOO}. Since $OO(abd,cbd)$ and $OO(abd,abe)$, we can infer $SO(cbd,abe)$ by Axiom \ref{OOOOSO}. And lastly, since $OO(abe,cbe)$, $OO(cbd,cbe)$ by Theorem \ref{SOOOOO}.
\end{proof}

We now define the relation that states when a point is in the interior of an angle. 

\begin{definition}
	$Int(d, abc) \equiv SO(cbd,cba) \wedge SO(abd,abc)$ [See Figure \ref{intdef}.]
\end{definition}

\begin{figure}[h!]
	\begin{picture}(216,80)
	\put(110,0){\includegraphics[scale=1]{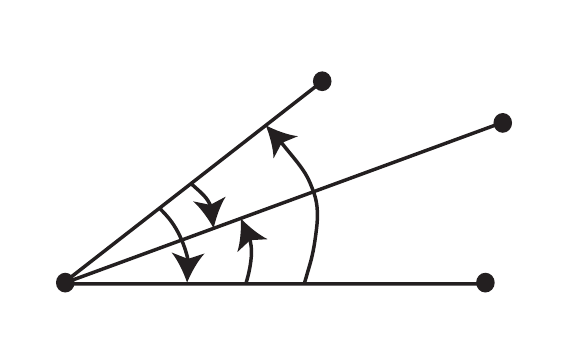}}
	\put(257,50){$d$}
	\put(210,70){$c$}
	\put(258,12){$a$}
	\put(117,12){$b$}
	\end{picture}
	\caption{}
	\label{intdef}
\end{figure}

From the definition one can prove the following theorem. 

\begin{theorem}\label{intOO}
	$Int(d,abc) \rightarrow OO(dba,dbc)$
\end{theorem}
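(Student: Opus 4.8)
The plan is to unwind the definition of $Int(d,abc)$ and feed the two $SO$ facts it provides into the orientation-reversal lemmas already established, so that the hypothesis of Theorem \ref{OOline} or Axiom \ref{intersectOO} can be invoked. Concretely, $Int(d,abc)$ gives us both $SO(cbd,cba)$ and $SO(abd,abc)$. The goal is $OO(dba,dbc)$, which by Definition \ref{OOdef} amounts to showing $\lnot SO(dba,dbc)$ together with $T(d,b,a)$ and $T(d,b,c)$. The triangle conditions come for free: by Axiom \ref{SOtriangle} applied to $SO(cbd,cba)$ we get $T(c,b,d)$ and $T(c,b,a)$, hence $T(a,b,d)$ after permuting vertices (permutations of a triangle are still a triangle, since $L$ is permutation-invariant and the distinctness clauses are symmetric), and similarly $SO(abd,abc)$ yields $T(a,b,d)$ and $T(a,b,c)$, giving $T(c,b,d)$ as well; reversing to $T(d,b,a)$ and $T(d,b,c)$ is then immediate.

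The substantive content is the negation $\lnot SO(dba,dbc)$. First I would use Lemma \ref{SOOOreverseboth} on the two hypotheses: from $SO(cbd,cba)$ we obtain $SO(dbc,abc)$, and from $SO(abd,abc)$ we obtain $SO(dba,cba)$. Now suppose toward a contradiction that $SO(dba,dbc)$. Chaining with transitivity/symmetry (Axiom \ref{SOtrans}), $SO(dba,dbc)$ and $SO(dbc,abc)$ give $SO(dba,abc)$; combined with $SO(dba,cba)$ this yields $SO(abc,cba)$ (again by Axiom \ref{SOtrans} and symmetry). But Axiom \ref{OOreverse} says $OO(abc,cba)$, i.e.\ $\lnot SO(abc,cba)$, a contradiction. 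Hence $\lnot SO(dba,dbc)$, and together with the triangle facts this is exactly $OO(dba,dbc)$.

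I expect the main obstacle — really the only place one must be careful — is bookkeeping the vertex permutations and the direction of the reversals so that the $SO$ chains actually connect: Lemma \ref{SOOOreverseboth} flips \emph{both} triples simultaneously, so one has to confirm that $SO(cbd,cba)\Rightarrow SO(dbc,abc)$ really lines up with the terms appearing elsewhere, and that $T(a,b,c)$-style conclusions survive the permutations needed to reach $T(d,b,a)$ and $T(d,b,c)$. There is also a small alternative route worth mentioning: since $Int(d,abc)$ plausibly forces $d$, together with $a$ and $c$, into a configuration where Axiom \ref{intersectOO} applies, one might instead try to produce a point $x$ with $B(a,x,c)$ and $d$, $b$, $x$ positioned so that $OO(dba,dbc)$ drops out of \ref{intersectOO} and \ref{OOline}; but this seems to require constructing the witness $x$, whereas the purely algebraic $SO$-chain argument above avoids any construction and stays within the quantifier-free fragment, so that is the approach I would commit to.
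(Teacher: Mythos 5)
Your proof is correct and uses essentially the same ingredients as the paper's: Axiom \ref{OOreverse}, the symmetry/transitivity of $SO$, and Lemma \ref{SOOOreverseboth}. The only difference is presentational — the paper chains forward through Theorem \ref{SOOOOO} ($SO \wedge OO \rightarrow OO$) to reach $OO(abd,cbd)$ and then reverses both triples, while you inline that theorem by unfolding $OO$ into $\lnot SO$ plus the triangle conditions and arguing by contradiction; both routes are sound.
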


\begin{proof}
	By Axiom \ref{OOreverse} we have $OO(abc,cba)$. Since we have $SO(abd,abc)$, by Theorem \ref{SOOOOO} we obtain $OO(abd,cba)$. Since we also have $SO(cba,cbd)$, by the same theorem we now have $OO(abd,cbd)$. By applying Lemma \ref{SOOOreverseboth} above we can infer $OO(dba,dbc)$.
	
\end{proof}

We now wish to introduce our first construction. First we will define the non-strict colinear relation which will make the first axiom about this construction significantly easier to state. 

\begin{definition} \label{nonstrictcolinear}
	$\widetilde{L}(a,b,c) \equiv L(a,b,c) \lor a=b \lor a=b \lor a=c$
\end{definition}

Do note that $\widetilde{L}(a,b,b)$ and $\widetilde{L}(a,a,a)$ are always true. 

Our first construction is called the crossbar construction. It is denoted $crossbow(d,a,b,c)$ which will be write as $cb(d,abc)$ for ease of reading. There is only one axiom pertaining to this construction. This axiom states that if $a$ and $c$ are on `opposite sides' of $bd$, then the constructed point $cb(d,abc)$ is non-strict colinear with $b$ and $d$ and is between $a$ and $c$. Note that the assumption of the axiom guaranties that $b \not= d$. See Figure \ref{crossbowfigure}.

\begin{axiom}\label{crossbow}
	$OO(dba,dbc) \rightarrow \widetilde{L}(b, cb(d,abc), d) \wedge B(a,cb(d,abc), c)$
\end{axiom}

\begin{figure}[h!] 
	\begin{picture}(216,110)
	\put(20,0){\includegraphics[scale=.7]{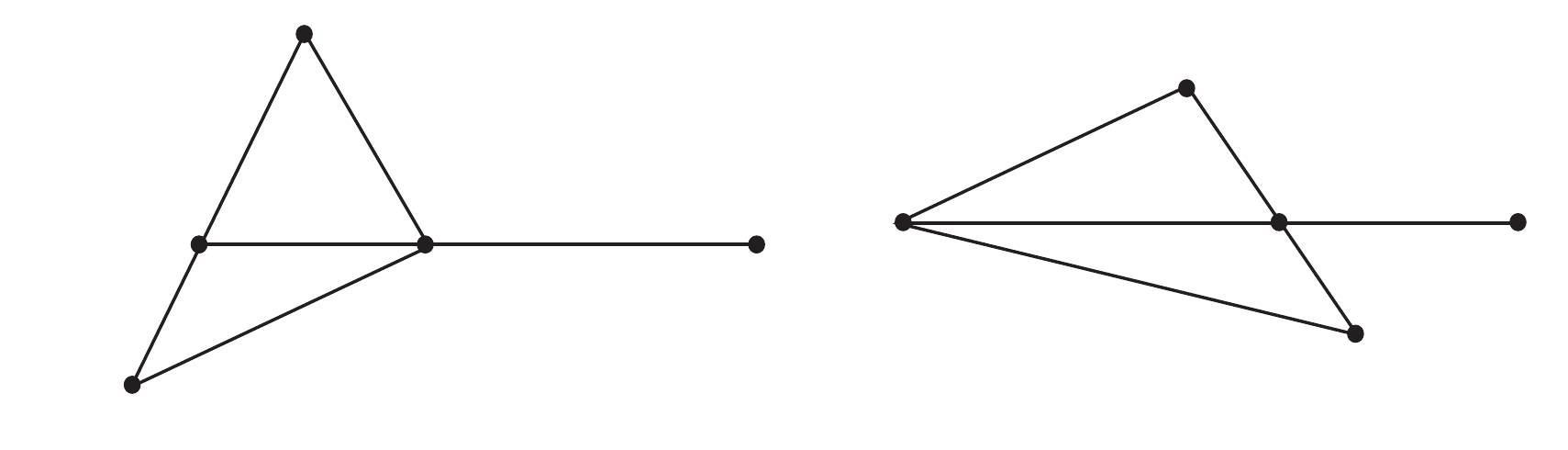}}
	\put(15,50){$cb(d,abc)$}
	\put(80,95){$a$}
	\put(275,85){$a$}
	\put(302,55){$cb(d,abc)$}
	\put(115,50){$b$}
	\put(211,50){$b$}
	\put(185,50){$d$}
	\put(360,50){$d$}
	\put(325,20){$c$}
	\put(37,10){$c$}
	\end{picture}
	\caption{}
\label{crossbowfigure}
\end{figure}

This axiom gives us an analog of the concept that if $a$ and $c$ are on opposite sides of $bd$ then the line $bd$ intersects the interior of $ac$.

The next theorem states that given a triangle $abc$ with a point $d$ between $a$ and $c$ we have that $d$ is in the interior of angle $abc$. 

\begin{theorem}\label{betweensideint}
	$T(a,b,c) \wedge B(a,d,c) \rightarrow Int(d,abc)$ 
\end{theorem}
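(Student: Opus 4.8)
The goal is to establish the two conjuncts of $Int(d,abc)$, namely $SO(cbd,cba)$ and $SO(abd,abc)$. I would first observe a symmetry: the hypotheses $T(a,b,c)$ and $B(a,d,c)$ are carried by the substitution $a\leftrightarrow c$ to $T(c,b,a)$ (a permutation of $T(a,b,c)$) and $B(c,d,a)$ (Axiom \ref{betweenreverse}), and the two conjuncts are interchanged by that substitution; so it suffices to prove $SO(abd,abc)$. Before the main argument I would record the routine auxiliary facts: from $B(a,d,c)$ we get $L(a,d,c)$ and, via Axiom \ref{betweenuniquepoint}, that $a,d,c$ are distinct; $b\neq d$ and $b\neq a$ and $b\neq c$ because $b$ equal to any of them would force $L(a,b,c)$; and $T(a,b,d)$ holds, since $L(a,b,d)$ together with $L(a,d,c)$ would give $L(a,b,c)$ by Axiom \ref{uniqueline}, contradicting $T(a,b,c)$.

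The main step is a proof by contradiction using the crossbow construction. Suppose $\lnot SO(abd,abc)$; since $T(a,b,d)$ and $T(a,b,c)$ have been established, Definition \ref{OOdef} yields $OO(abd,abc)$. Now match this against the hypothesis of Axiom \ref{crossbow}: its left side $OO(dba,dbc)$, with the axiom's ``$d,b,a,c$'' instantiated as our ``$a,b,d,c$'', is exactly $OO(abd,abc)$. Axiom \ref{crossbow} then produces the point $p:=cb(a,dbc)$ with $\widetilde{L}(b,p,a)$ and $B(d,p,c)$.

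It remains to extract a contradiction from these two facts. From $B(d,p,c)$ we get $L(d,p,c)$ and $p\neq c$, $p\neq d$; combined with $L(a,d,c)$, the two colinear triples $\{d,p,c\}$ and $\{a,d,c\}$ share the two points $d,c$, so (by the consequence of Axiom \ref{uniqueline} noted after its statement) every triple among $a,d,c,p$ is colinear. If $p=a$ then $B(d,a,c)$, which together with $B(a,d,c)$ contradicts Axiom \ref{onlyonebetween} once both are rewritten with Axiom \ref{betweenreverse} to share the endpoint $c$. If $p\neq a$, then first $p\neq b$ (else $b$ would be colinear with $a$ and $c$, since $B(d,b,c)$ and $L(a,d,c)$ give $L(a,b,c)$, contradicting $T(a,b,c)$), so $\widetilde{L}(b,p,a)$ forces $L(b,p,a)$; but then the colinear triples $\{a,b,p\}$ and $\{a,c,p\}$ (the latter from the previous line, as $a,c,p$ are distinct) share the two points $a,p$, forcing $L(a,b,c)$ — again contradicting $T(a,b,c)$. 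In all cases $OO(abd,abc)$ is impossible, hence $SO(abd,abc)$; by the symmetry noted above, $SO(cbd,cba)$ as well, so $Int(d,abc)$.

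The main obstacle is recognizing that the crossbow axiom is the right instrument here: none of the orientation axioms let one change the vertex of an angle, so a direct computation transferring information from the betweenness at $d$ to the angles at $b$ seems blocked, whereas negating $SO(abd,abc)$ turns the statement into an $OO$ hypothesis that the crossbow converts into an intersection point of line $ab$ with the open segment $dc$ — a point that cannot exist, since line $ab$ meets the closed segment $ac$ only at the endpoint $a$, and the forced collapse $p=a$ collides with $B(a,d,c)$. The remaining work is the (fiddly but routine) bookkeeping with Axiom \ref{uniqueline} to pin down which lines $p$ lies on; this is also, as far as the excerpt goes, the first substantive deployment of the crossbow construction.
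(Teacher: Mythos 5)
Your proof is correct and takes essentially the same route as the paper's: negate a conjunct of $Int(d,abc)$, turn it into an $OO$ statement, apply the crossbow axiom to produce an intersection point, and show via Axiom \ref{uniqueline} bookkeeping that this point forces $L(a,b,c)$, contradicting $T(a,b,c)$. The only cosmetic difference is that you detail the conjunct $SO(abd,abc)$ (crossbow applied to segment $ab$, yielding a point of $\overline{dc}$ on line $ab$) and dispatch the other by the $a\leftrightarrow c$ symmetry, whereas the paper details $SO(cbd,cba)$ and declares the remaining case similar.
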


\begin{proof}
	Note that given the assumptions all four point $a$, $b$, $c$, and $d$ are distinct. In particular if $b=d$ then $\lnot T(a,b,c)$. Suppose $\lnot T(a,b,d)$, then $L(a,b,d)$. But since $L(a,d,c)$ also, we obtain $L(a,b,c)$. This is a contradiction. Thus $T(a,b,d)$ and by similar arguments $T(c,b,d)$.
	
	Suppose $\lnot Int(d,abc)$, then $\lnot SO(cbd,cba)$ or $\lnot SO(abd,abc)$. Given the final sentence of the previous paragraph, if $\lnot SO(cbd,cba)$ then $OO(cbd,cba)$ Thus, $B(a, cb(c,abd), d)$. Since $L(a,d,c)$, we have $L(a, cb(c,abd), c)$. Because of this we know that $cb(c,abd) \not= c$. Since we also have $L(b, cb(c,abd), c)$  or $cb(c,abd) = b$, we can conclude $L(a,b,c)$. This is a contradiction. Similarly, supposing $\lnot SO(abd,abc)$ leads to a contradiction. Thus $Int(d,abc)$.  
\end{proof}

We are now able to prove our analog to Hilbert's Crossbar Theorem. The following theorem states that if $d$ is in the interior of angle $abc$, then $cb(d,abc)$ is the same direction from $d$ as $b$. 

\begin{theorem}\label{crossbar}
	$Int(d,abc) \rightarrow SD(b, cb(d,abc), d)$ [See Figure \ref{crossbartheorem}]
\end{theorem}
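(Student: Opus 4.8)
The plan is to funnel everything through the crossbow axiom and then run a short case split on the non-strict colinearity it produces. Write $x = cb(d,abc)$. First unpack the hypothesis: by the definition of $Int$, the assumption $Int(d,abc)$ gives $SO(cbd,cba)$ and $SO(abd,abc)$, and by Axiom \ref{SOtriangle} it also gives $T(a,b,c)$ and $T(a,b,d)$; in particular $b \neq d$, $a \neq b$, $a \neq c$, $a \neq d$. Next, Theorem \ref{intOO} converts $Int(d,abc)$ into $OO(dba,dbc)$, which is precisely the hypothesis of Axiom \ref{crossbow}. Applying that axiom yields $\widetilde{L}(b,x,d)$ together with $B(a,x,c)$, and these two facts are the entire raw material for the rest of the argument.

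Now split on $\widetilde{L}(b,x,d)$; since $b \neq d$, the remaining possibilities are $b = x$, $x = d$, or $L(b,x,d)$. The case $b = x$ is impossible: it would make $B(a,x,c)$ read $B(a,b,c)$, contradicting $T(a,b,c)$. The case $x = d$ is immediate, since then $SD(b,x,d)$ is $SD(b,d,d)$, and the disjunct ``$b \neq x \wedge d = x$'' of the definition of $SD$ holds because $b \neq d$. So the real content is the case $L(b,x,d)$: here $b,x,d$ are distinct and colinear, and by the definitions of $L$ and $SD$, proving $SD(b,x,d)$ amounts to excluding $B(x,b,d)$.

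To rule out $B(x,b,d)$ I would argue by contradiction. On one hand, from $T(a,b,c)$ and $B(a,x,c)$, Theorem \ref{betweensideint} gives $Int(x,abc)$, hence $SO(abx,abc)$; combining this with $SO(abd,abc)$ from $Int(d,abc)$ and the fact that $SO$ is an equivalence relation (Axioms \ref{SOreflexive}, \ref{SOtrans}) yields $SO(abx,abd)$. On the other hand I claim $T(x,d,a)$: the three points are pairwise distinct ($x \neq d$ in this case, $x \neq a$ from $B(a,x,c)$, $d \neq a$ from $T(a,b,d)$), and if they were colinear then, since $B(x,b,d)$ gives $L(x,b,d)$ and $B(a,x,c)$ gives $L(a,x,c)$, two applications of Axiom \ref{uniqueline} (first through the shared pair $\{x,d\}$, then through $\{a,x\}$) would force $L(a,b,c)$, against $T(a,b,c)$. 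With $B(x,b,d)$ and $T(x,d,a)$ in hand, Axiom \ref{intersectOO} delivers $OO(abx,abd)$, i.e. $\lnot SO(abx,abd)$ by Definition \ref{OOdef} — contradicting the previous sentence. Hence $\lnot B(x,b,d)$, so $B(b,x,d) \lor B(b,d,x)$, and therefore $SD(b,x,d)$.

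The main obstacle is exactly this last case $L(b,x,d)$: the crossbow axiom only places $x$ on the line $bd$ and between $a$ and $c$, and one genuinely needs the orientation/parity machinery — the observation (via Theorem \ref{betweensideint}) that $x$ is itself an interior point of angle $abc$, together with Axiom \ref{intersectOO} — to exclude $x$ lying on the opposite side of $b$ from $d$. The colinearity bookkeeping needed to certify $T(x,d,a)$ is the fiddliest part, but it is routine given Axiom \ref{uniqueline}.
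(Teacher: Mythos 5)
Your proposal is correct and follows essentially the same route as the paper's own proof: reduce to the case $L(b,x,d)$, exclude $B(x,b,d)$ by deriving both $SO(abx,abd)$ (via Theorem \ref{betweensideint} and transitivity) and $OO(abx,abd)$ (via $T(x,d,a)$ and Axiom \ref{intersectOO}). You are merely more explicit than the paper about ruling out $b=x$ and about the colinearity bookkeeping behind $T(x,d,a)$, which the paper leaves as ``one can show.''
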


\begin{proof}
Let $cb(d,abc)=x$. If $x=d$ we are done. Suppose $x \not= d$. Since $Int(d,abc)$ we have $d \not= b$. By Axiom \ref{crossbow} we know $\widetilde{L}(b,x,d)$. We can infer $L(b,x,d)$. This implies that either $SD(b,x, d)$ or $B(x,b, d)$ (See Theorem \ref{lineseparation1}.) Suppose $B(b,x, d)$. One can show $T(x,d,a)$. Thus by Axiom \ref{intersectOO} we obtain $OO(abx, abd)$. Since $Int(d,abc)$, we have $SO(abd,abc)$. By Theorem \ref{betweensideint}, we know $Int(x,abc)$ and thus $SO(abx,abc)$. Using the symmetry and transitivity of the same orientation relation we can infer $SO(abx, abd)$. This is a contradiction. Thus $SD(b,x, d)$.
\end{proof}

\begin{figure}[h!]
	\begin{picture}(216,80)
	\put(110,0){\includegraphics[scale=.8]{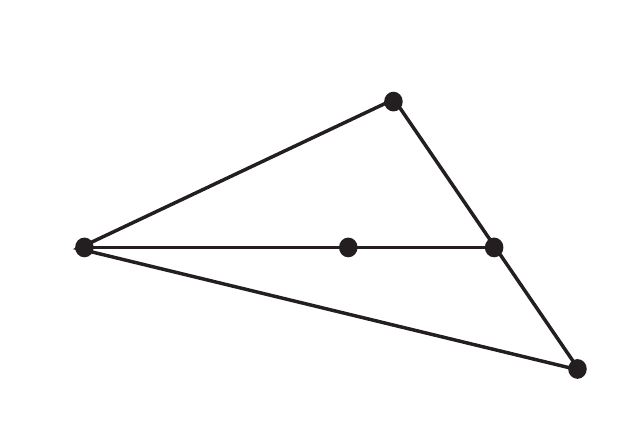}}
	\put(230,40){$cb(d,abc)=x$}
	\put(120,40){$b$}
	\put(190,50){$d$}
	\put(205,80){$a$}
	\put(250,10){$c$}
	\end{picture}
	\caption{}
	\label{crossbartheorem}
\end{figure}

The next theorem states that if angle $cbd$ and angle $cba$ have the same orientation and $a$ and $d$ are not on the same 'ray', then either $d$ is interior to angle $abc$ or $a$ is interior to angle $dbc$. This theorem will be used when angle inequalities are defined.

\begin{theorem}\label{SOint}
	$SO(cbd,cba) \wedge T(a,d,b) \rightarrow Int(d,abc) \lor Int(a,dbc)$
\end{theorem}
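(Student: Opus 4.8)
The plan is to use the crossbow construction together with Axiom~\ref{intersectOO} and Theorem~\ref{OOline} to split into the appropriate cases. The hypothesis $SO(cbd,cba)$ says $d$ is on the "$cb$-side" like $a$; the hypothesis $T(a,d,b)$ tells us $a$, $d$, $b$ are distinct and non-colinear, so in particular $a$ and $d$ cannot be equal and cannot be on the same ray from $b$ in the degenerate sense that $SD(b,a,d)$ holds with $a$ between $b$ and $d$ or vice versa --- wait, actually $SD(b,a,d)$ is still possible with $B(b,a,d)$ or $B(b,d,a)$; the point $T(a,d,b)$ rules out is that $b,a,d$ are colinear at all. So I would first record that $a$, $b$, $d$ are pairwise distinct and non-colinear, hence $OO(dba,dbc)$ or $SO(dba,dbc)$ is the relevant dichotomy for the pair $a,c$ relative to segment $bd$.

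First I would consider the line segment $bd$ and ask on which side $a$ and $c$ fall. Since $T(a,b,d)$ holds, the relation $OO(abd,\,\cdot\,)$ is well-defined. I expect the key move is: consider $cb(a,dbc)$ or rather work with segment $bd$ and points $a$, $c$. If $OO(dba,dbc)$, then by Axiom~\ref{crossbow} there is a point $x = cb(d,?\,)$... more carefully, I would apply the crossbow construction to get the point where ray/segment $bd$ meets segment $ac$, and show via Theorem~\ref{betweensideint} that this intersection point is interior to the relevant angle, then transport the orientation hypotheses along it. Concretely: set $x = cb(d,abc)$ if $OO(dba,dbc)$; then $B(a,x,c)$ and $\widetilde L(b,x,d)$, so $L(b,x,d)$ (using $b\ne d$). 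Then $x$ is on ray $bd$ from $b$ (one of two directions), and combining $SO(cbd,cba)$ with the orientation of $x$ relative to $ab$ and $cb$ via Theorem~\ref{betweensideint} applied to triangle with the between-point $x$, one gets $Int(d,abc)$ in the case $x$ lies on the same side as $d$, and $Int(a,dbc)$ in the case $x$ lies on the same side as $a$ --- exhausting the two directions along line $bd$ from $b$.

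The main obstacle I anticipate is the case analysis on the position of the crossbow intersection point $x$ along the line through $b$ and $d$: one must show that either $SD(b,x,d)$ (so $x$ is "on the $d$ side," forcing $d$ interior to $abc$ via $Int(x,abc)$ from Theorem~\ref{betweensideint} plus Theorem~\ref{crossbar}-style reasoning pulling $d$ into the interior) or $B(x,b,d)$ (so $x$ is on the opposite ray, and then $b$ lies between $x$ and $d$, which by Axiom~\ref{intersectOO} flips an orientation and should instead place $a$ interior to $dbc$). Handling the subcase $x = b$ or $x = d$ separately and checking that $OO(dba,dbc)$ really does hold (rather than $SO(dba,dbc)$, which would need a different, probably symmetric, argument exchanging the roles of $a$ and $d$) is where the bookkeeping gets delicate; I would lean on Lemma~\ref{SOOOreverseboth}, Theorem~\ref{SOOOOO}, and Axiom~\ref{OOOOSO} to keep the orientation algebra under control, and on the definition of $Int$ together with Theorem~\ref{OOline} to convert between statements about segment $bd$ and statements about the sides $ab$, $cb$ of the target angles.
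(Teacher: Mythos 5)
There is a genuine gap here: the proposal never actually derives either disjunct. Both of the hard steps are deferred — ``one gets $Int(d,abc)$ in the case $x$ lies on the same side as $d$, and $Int(a,dbc)$ in the case $x$ lies on the same side as $a$'' is asserted without any argument, and the sub-case analysis on whether $SD(b,x,d)$, $B(x,b,d)$, $x=b$, or $x=d$ is explicitly left as ``delicate bookkeeping'' that is not carried out. Worse, the detour through the crossbow construction imports exactly the difficulties of the Crossbar Theorem (Theorem~\ref{crossbar}), whose proof in the paper itself relies on orientation algebra of the kind you are trying to avoid; and in your branch $SO(dba,dbc)$ you say a ``different, probably symmetric, argument'' is needed, when in fact that branch is already finished: unfolding Definition of $Int$, $Int(a,dbc) \equiv SO(cba,cbd) \wedge SO(dba,dbc)$, and the first conjunct is the hypothesis. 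So the one case you could close instantly you defer, and the case you attempt you do not close.

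The paper's proof is a four-line computation in the orientation calculus with no constructions and no betweenness. Observe that $Int(d,abc)$ and $Int(a,dbc)$ each have the hypothesis $SO(cbd,cba)$ as one conjunct, so the theorem reduces to proving $SO(abd,abc) \lor SO(dba,dbc)$. If $SO(abd,abc)$ holds you are done. If not, then since $T(a,b,c)$ and $T(a,b,d)$ (the former from $SO(cbd,cba)$ via Axiom~\ref{SOtriangle}, the latter from the hypothesis $T(a,d,b)$), Definition~\ref{OOdef} gives $OO(abd,abc)$; combining with $OO(abc,cba)$ (Axiom~\ref{OOreverse}) and parity (Axiom~\ref{OOOOSO}) yields $SO(abd,cba)$, then transitivity with the hypothesis $SO(cbd,cba)$ yields $SO(abd,cbd)$, and Lemma~\ref{SOOOreverseboth} yields $SO(dba,dbc)$, i.e.\ $Int(a,dbc)$. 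Your instinct to case-split on the orientation of $a$ versus $c$ relative to $bd$ could also be made to work by the same algebra (from $OO(dba,dbc)$ one gets $OO(abd,cbd)$, then $OO(abd,cba)$, then $SO(abd,abc)$), but the geometric machinery of crossbow points, $\widetilde{L}$, and betweenness is not needed and, as written, does not get you to either conclusion.
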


\begin{proof}
	If $SO(abd,abc)$, then $Int(d,abc)$ and we are done. Suppose $\lnot SO(abd, abc)$. Since $SO(cbd,cba)$, we have $T(c,b,d)$ and $T(c,b,a)$. Since $T(c,b,a)$, we have $T(a,b,c)$. We have also assumed $T(a,b,d)$, thus $OO(abd,abc)$. Thus $SO(abd,cba)$. Since we assume $SO(cbd,cba)$, we now have $SO(abd,cbd)$. So $SO(dba,dbc)$. Therefore $Int(a, dbc)$. 
\end{proof}

We now introduce our last axiom pertaining to angle orientation. 

\begin{axiom} \label{triangleOO}
	$T(a,b,c) \rightarrow OO(abc,bac)$ [See Figure \ref{triangleorientation}.]
\end{axiom}

\begin{figure}[h!] 
	\begin{picture}(216,115)
	\put(117,0){\includegraphics[scale=.8]{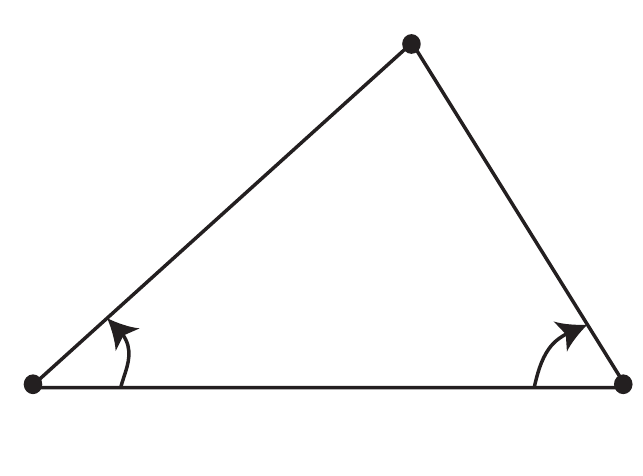}}
	
	\put(117,13){$a$}
	\put(264,13){$b$}
	\put(210,103){$c$}
	
	\end{picture}
	\caption{}
\label{triangleorientation}
\end{figure}

This axiom  is the final assumption needed to obtain our analogs to plane separation. The following theorem shows that if two points are on the `same side' of a line segment, then no point can be co-linear with the line segment and be between the two points. 

\begin{theorem}\label{SOnointersection}
	$SO(cbd,cba) \wedge B(a,x,d) \rightarrow \lnot \widetilde{L}(b,x,c)$ [See Figure \ref{sameorientationnointersect} ]
\end{theorem}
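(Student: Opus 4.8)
The plan is a proof by contradiction: assume $SO(cbd,cba)$, $B(a,x,d)$, and $\widetilde{L}(b,x,c)$ all hold, and derive a contradiction of the form $OO(cba,cbd)\wedge SO(cba,cbd)$. First I would unpack the hypotheses. By Axiom \ref{SOtriangle}, $SO(cbd,cba)$ gives $T(c,b,d)$ and $T(c,b,a)$, so in particular $b\neq c$; hence by Definition \ref{nonstrictcolinear} the hypothesis $\widetilde{L}(b,x,c)$ amounts to one of $x=b$, $x=c$, or $L(b,x,c)$. By Axiom \ref{betweenuniquepoint} and the remark after Axiom \ref{betweenreverse}, $B(a,x,d)$ makes $a,x,d$ pairwise distinct with $L(a,x,d)$. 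The recurring workhorse for the various non-colinearity side conditions will be the consequence of Axiom \ref{uniqueline} that two colinear triples sharing two distinct points force every distinct triple among the four points to be colinear; each use will produce $L(a,b,c)$ and contradict $T(c,b,a)$.

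The case $x=b$ is dispatched directly with Axiom \ref{intersectOO}: here $B(a,b,d)$ holds, so $L(a,b,d)$; if also $L(a,d,c)$ then (sharing $a,d$) we get $L(a,b,c)$, contradicting $T(c,b,a)$, so $T(a,d,c)$; then Axiom \ref{intersectOO}, with the between-triple $a,b,d$ and apex witness $c$, yields $OO(cba,cbd)$. For the two remaining cases I would first establish $T(a,b,d)$: if $L(a,b,d)$ held, then together with $L(a,x,d)$ (sharing $a,d$) we would get $L(a,b,x)$ — the three are distinct since $x\neq a$, $x\neq b$, $a\neq b$ — and combining $L(a,b,x)$ either with $L(b,x,c)$ or with $x=c$ forces $L(a,b,c)$, a contradiction; so $T(a,b,d)$.

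With $T(a,b,d)$ and $B(a,x,d)$ in hand, Theorem \ref{betweensideint} applied to triangle $abd$ with $x$ between $a$ and $d$ gives $Int(x,abd)$, and Theorem \ref{intOO} gives $OO(xba,xbd)$. If $x=c$ this is already $OO(cba,cbd)$. If instead $L(b,x,c)$, then $L(x,b,c)$ by permutation-invariance of $L$, and Theorem \ref{OOline} (sliding the first ray endpoint at vertex $b$ from $x$ to $c$) again yields $OO(cba,cbd)$.

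In every case we reach $OO(cba,cbd)$, hence $\lnot SO(cba,cbd)$ by Definition \ref{OOdef}; but $SO(cbd,cba)$ together with symmetry of $SO$ (noted after Axiom \ref{SOtrans}) gives $SO(cba,cbd)$, a contradiction. Therefore $\lnot\widetilde{L}(b,x,c)$, as required. I expect the only genuinely geometric step — the rest being routine discharge of incidence and order side conditions — to be recognizing that the ``opposite orientation'' information extracted from the crossing point $x$ of segment $ad$ naturally lives at apex $x$, not at $b$, so it must be transported along the line through $b$, $x$, $c$ down to vertex $b$; Theorem \ref{OOline} is precisely the vehicle for this transport.
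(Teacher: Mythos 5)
Your proposal is correct and follows essentially the same route as the paper's proof: derive $OO(xba,xbd)$ from $Int(x,abd)$ via Theorem \ref{betweensideint}, transport it to vertex $b$ along $L(b,x,c)$ using Theorem \ref{OOline}, and contradict $SO(cbd,cba)$, with the degenerate possibilities $x=b$, $x=c$, and $L(a,b,d)$ disposed of by Axiom \ref{intersectOO} and incidence arguments. The only difference is organizational — you fold the $x=c$ case into the main interior-angle argument rather than treating it as a separate preliminary as the paper does — which is a harmless (arguably slightly cleaner) rearrangement.
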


\begin{figure}[h!] 
	\begin{picture}(216,130)
	\put(30,0){\includegraphics[scale=.8]{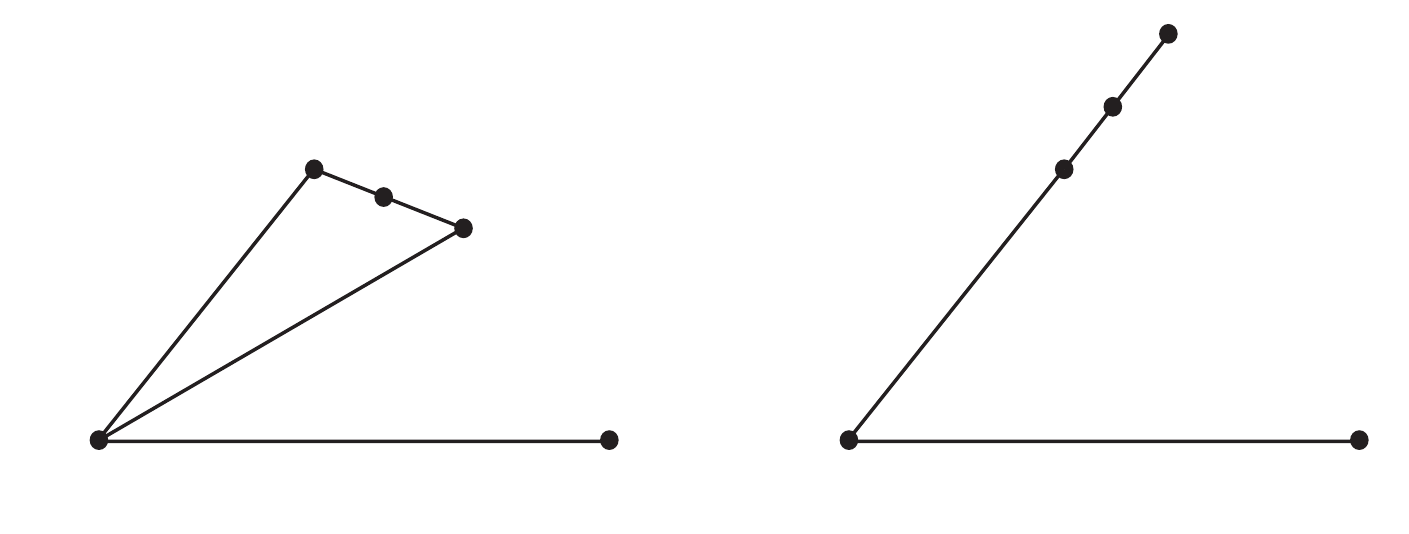}}
	\put(175,12){$c$}
	\put(45,12){$b$}
	\put(94,86){$a$}
	\put(122,79){$x$}
	\put(141,68){$d$}
	
	\put(348,12){$c$}
	\put(218,12){$b$}
	\put(266,84){$a$}
	\put(278,98){$x$}
	\put(290,115){$d$}

	\end{picture}
	\caption{}
\label{sameorientationnointersect}
\end{figure}

\begin{proof}
	Start by noting that $T(c,b,d)$ and $T(c,b,a)$. We will first show that $x \not= b$ and $x \not= c$. Suppose $x =b$. Then $B(a,b,d)$. If $L(a,d,c)$, then $L(b,c,d)$ which contradicts $T(c,b,d)$. Thus we can infer $T(a,d,c)$. By Axiom \ref{intersectOO} we have $OO(cbd,cba)$. This is a contradiction.
	
	Now suppose $x = c$. Then $B(a,c,d)$. If $L(a,d,b)$, then $L(b,c,d)$ which contradicts $T(c,b,d)$. Thus we can infer $T(a,d,b)$. By Axiom \ref{intersectOO} we have $OO(bcd,bca)$ and by an application of Axiom \ref{triangleOO} we have $OO(cbd,cba)$ This is a contradiction. Thus $x \not= b$ and $x \not= c$. If we suppose $\widetilde{L}(b,x,c)$ we can then infer $L(b,x,c)$. We will show that this assumption leads to a contradiction. There are two cases to consider. 
	
	Case 1: Suppose $T(a,b,d)$. By Theorem \ref{betweensideint}, we have $Int(x, abd)$ and thus $OO(xba,xbd)$. Since we are supposing $L(b,x,c)$, By Theorem \ref{OOline}, we have $OO(cba,cbd)$. This is a contradiction.

	Case 2: Suppose $\lnot T(a,b,d)$. Since $T(c,b,d)$, $T(c,b,a)$ and $a \not= d$, we have that $a$, $b$, and $d$ are all distinct. Thus $L(a,b,d)$. Since $L(a,x,d)$ also, we have $L(b,a,x)$. And since $L(b,x,c)$, we have $L(a,b,c)$. This is a contradiction. 
	
\end{proof} 

We now turn to proving our analog to Pasch's axiom. But first we prove a lemma.

\begin{lemma}\label{Paschlemma}
	$T(a,b,c) \wedge B(a,d,b) \wedge T(a,b,e) \wedge T(e,d,c) \rightarrow OO(edb,edc) \lor OO(edc,eda)$ [See Figure \ref{paschlemmafigure}.]
\end{lemma}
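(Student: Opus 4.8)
We are given $T(a,b,c)$, $B(a,d,b)$, $T(a,b,e)$, and $T(e,d,c)$, and we must prove $OO(edb,edc) \lor OO(edc,eda)$. Intuitively: $d$ lies strictly between $a$ and $b$ on segment $ab$, the point $e$ is off the line $ab$, and $c$ is off the line through $e$ and $d$. Looking from $e$ through $d$, the ray $da$ and the ray $db$ point in opposite directions along line $ab$; so $a$ and $b$ are on opposite sides of line $ed$; so $c$, being somewhere, must be on the same side as one of them, and on the opposite side of the other. That is exactly the disjunction we want.

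**Plan.** The plan is to work entirely with the orientation relation relative to the segment $ed$, treating $OO(edx, edy)$ as ``$x$ and $y$ on opposite sides of $ed$''. First I would establish the triangle hypotheses needed to invoke the order/orientation axioms: from $B(a,d,b)$ and $T(a,b,e)$ we have $T(e,d,a)$ and $T(e,d,b)$ (if $L(e,d,a)$ then, since $L(a,d,b)$, Axiom~\ref{uniqueline} forces $L(e,d,b)$ hence $L(a,b,e)$, contradicting $T(a,b,e)$; symmetrically for $b$), and the hypothesis $T(e,d,c)$ is given outright. With $B(a,d,b)$ and $T(e,d,a)$ in hand, Axiom~\ref{intersectOO} (in the form with vertex $d$, applying the reversal built into that axiom — note $B(a,d,b)$ gives $B(b,d,a)$ too) yields $OO(eda, edb)$: $a$ and $b$ are genuinely on opposite sides of $ed$.

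**Main step — the case split on $c$.** Now consider the triple $edc$. We know $T(e,d,c)$, so $OO(edc, edc)$ is meaningless but the relation $SO(edc, eda)$ is a legitimate proposition: either it holds or it fails. If it fails, then since both $T(e,d,c)$ and $T(e,d,a)$ hold, Definition~\ref{OOdef} gives $OO(edc, eda)$, which is the right disjunct and we are done. If instead $SO(edc, eda)$ holds, then combined with $OO(eda, edb)$ — equivalently $OO(edb, eda)$ after using the symmetry packaged through Axiom~\ref{SOtrans}/Theorem~\ref{SOOOOO} — Theorem~\ref{SOOOOO} ($SO \wedge OO \Rightarrow OO$) gives $OO(edc, edb)$, and then $OO(edb, edc)$ by symmetry of the ``opposite orientation'' relation (which follows from Axiom~\ref{SOtrans} applied to the negated $SO$, together with Axiom~\ref{SOtriangle}); that is the left disjunct. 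Either way the disjunction holds.

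**Expected obstacle.** The genuinely delicate part is the bookkeeping in the first step: confirming that $T(e,d,a)$ and $T(e,d,b)$ really do follow (the uniqueness-of-line argument via Axiom~\ref{uniqueline}, using that $a \neq d$, $d \neq b$, $a \neq b$ come from $B(a,d,b)$ and Axiom~\ref{betweenuniquepoint}), and verifying that Axiom~\ref{intersectOO} applies in exactly the orientation needed — its statement is $B(a,b,c) \wedge T(a,c,d) \rightarrow OO(dba,dbc)$, so I must match ``$a$'' to an endpoint, ``$b$'' (its vertex variable) to $d$, ``$c$'' to the other endpoint, and ``$d$'' to $e$, which requires $T(a,b,e)$-style triangle data in the form $T(a,d,e)$ — again available from the line-uniqueness argument. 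Once the correct instantiation of Axiom~\ref{intersectOO} is pinned down, the rest is a mechanical application of Theorem~\ref{SOOOOO} and the symmetry of $OO$, so no real difficulty remains after that. I do not anticipate needing the crossbow construction or Pasch-type reasoning here; this lemma is purely about the three-way orientation relationship among $a$, $b$, $c$ as seen from the segment $ed$.
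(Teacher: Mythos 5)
Your proof is correct and follows essentially the same route as the paper's: establish the relevant triangle facts, use Axiom~\ref{intersectOO} to get $OO(eda,edb)$, and then case-split on which side of $ed$ the point $c$ lies, closing each case with Theorem~\ref{SOOOOO} (the paper splits on $OO(edb,edc)$ versus its negation rather than on $SO(edc,eda)$, but this is the mirror image of your split). The only quibble is bookkeeping: the instantiation of Axiom~\ref{intersectOO} that yields $OO(eda,edb)$ needs the premise $T(a,b,e)$ (which is a hypothesis), not $T(a,d,e)$ as you state, though the latter is also true and your derivation of $T(e,d,a)$ is still needed for the $\lnot SO(edc,eda)$ branch.
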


\begin{figure}[h!]
	\begin{picture}(216,170)
	\put(0,-10){\includegraphics[scale=.8]{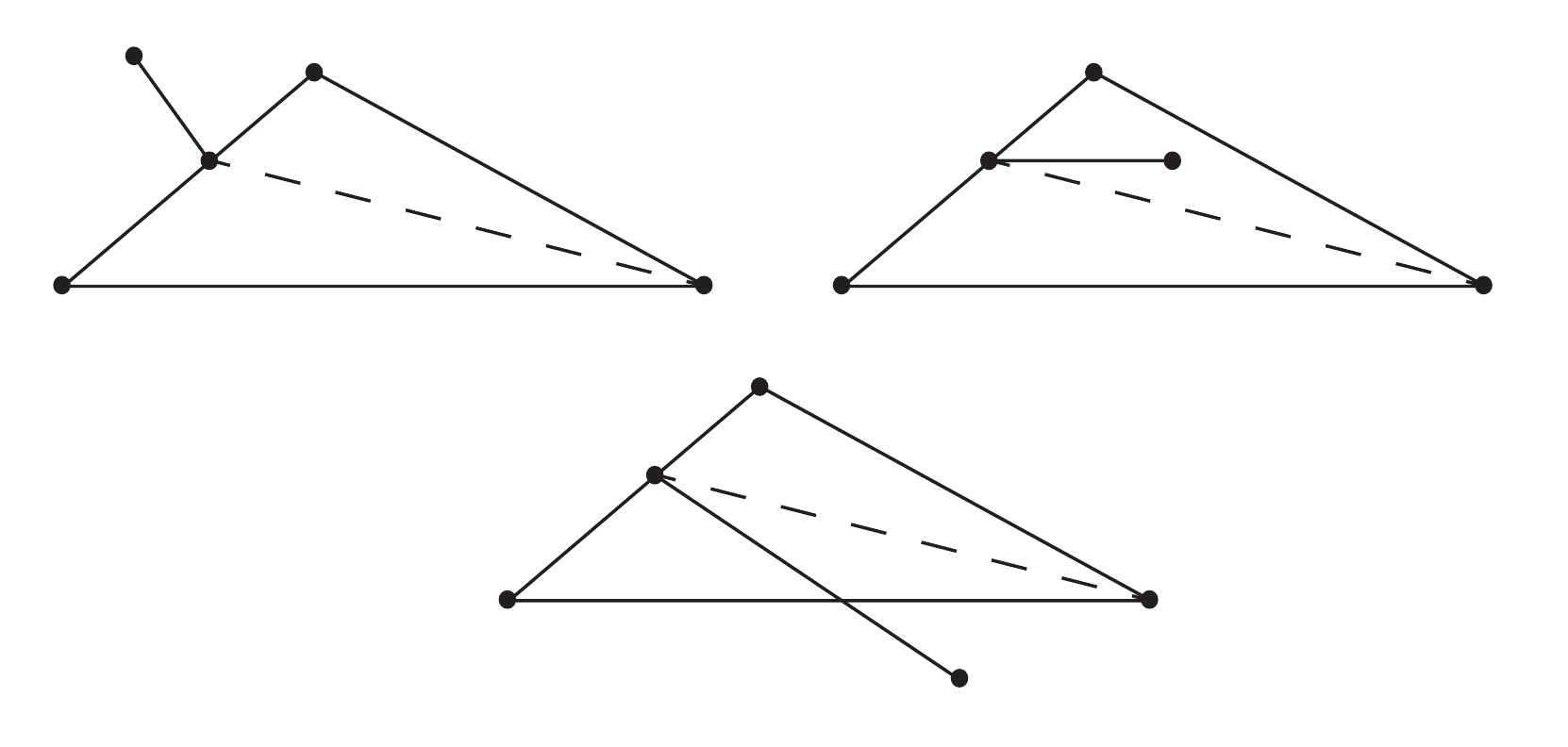}}
	\put(120,15){$a$}
	\put(173,75){$b$}
	\put(280,15){$c$}
	\put(150,55){$d$}
	\put(225,2){$e$}
	\put(10,92){$a$}
	\put(63,152){$b$}
	\put(170,92){$c$}
	\put(37,132){$d$}
	\put(22,150){$e$}
	\put(200,92){$a$}
	\put(253,152){$b$}
	\put(360,92){$c$}
	\put(227,132){$d$}
	\put(280,135){$e$}
	\end{picture}
	\caption{}
	\label{paschlemmafigure}
\end{figure}

\begin{proof}
		
	We first want to show $T(e,d,b)$. One can show that $e$, $d$, and $b$ are all distinct. Thus if $\lnot T(e,d,b)$, then $L(e,d,b)$. But since $L(a,d,b)$, we would have $L(a,e,b)$. This is a contradiction. Thus $T(e,d,b)$. 
	
	Now if $OO(edb,edc)$ we are done. So suppose $\lnot OO(edb,edc)$. Since $T(e,d,b)$ and $T(e,d,c)$, and $\lnot OO(edb,edc)$, we have $SO(edb,edc)$. By Axiom \ref{intersectOO}, we have $OO(ebd,eda)$. Therefore by Theorem  \ref{SOOOOO}, we obtain $OO(edc,eda)$. 
\end{proof}



	
	
	

This next lemma states something analogous to the idea that a line can't intersect the interior of three sides of a triangle. It is used in the proof of a theorem in Section \ref{SoildGeo}. 

\begin{lemma}\label{orientlemma3}
	$T(a,b,c) \wedge B(a,d,b) \wedge B(b,e,c) \wedge B(a,f,c) \rightarrow T(d,e,f)$
\end{lemma}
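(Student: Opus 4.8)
The plan is to assume for contradiction that $\lnot T(d,e,f)$. Since the hypotheses $B(a,d,b)$, $B(b,e,c)$, and $B(a,f,c)$ together with $T(a,b,c)$ force $d$, $e$, $f$ to be three distinct points (each lies strictly between two of the three vertices of a genuine triangle, so e.g. $d=e$ would put $d$ on both line $ab$ and line $bc$, forcing $L(a,b,c)$, and similarly for the other coincidences), the negation of $T(d,e,f)$ gives $L(d,e,f)$. The goal is then to derive a contradiction from the existence of a line through three points, one interior to each side of triangle $abc$ — essentially the ``a line cannot meet the interiors of all three sides'' principle, which we must extract from the orientation axioms rather than from Pasch (which we do not have).

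The key tool will be Axiom \ref{intersectOO} applied at each of the three ``cut'' points, converting the betweenness facts into opposite-orientation statements, together with Theorem \ref{OOline} (colinear base points preserve the opposite-orientation relation) and the parity axioms \ref{OOOOSO} and \ref{SOtrans}/Theorem \ref{SOOOOO}. Concretely, first I would set up the three auxiliary triangles $T(d,f,?)$, etc.: from $B(a,d,b)$ and a suitable vertex we get $OO$-relations at $d$; from $B(b,e,c)$ we get $OO$-relations at $e$; from $B(a,f,c)$ we get $OO$-relations at $f$. Picking the third point of each angle to be $e$ (or $f$, or $d$) so that the three apex points are the three points of the supposedly colinear triple, I would use $L(d,e,f)$ and Theorem \ref{OOline} to transport each local opposite-orientation fact to a common base point, then chain them via Axiom \ref{OOOOSO} (three $OO$'s compose to an $SO$) and Theorem \ref{SOOOOO}. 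Tracking one orientation class around the triangle — much as in the proof of Lemma \ref{Paschlemma} and Theorem \ref{SOnointersection} — should produce a statement of the form $SO(\xi,\xi')\wedge OO(\xi,\xi')$, the desired contradiction. Along the way I would invoke Lemma \ref{Paschlemma} itself with appropriate substitutions, since it already packages the ``two sides out of three'' dichotomy that is the crux of Pasch-type arguments.

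The main obstacle I anticipate is purely bookkeeping: to apply Theorem \ref{OOline} and Axiom \ref{intersectOO} one must first verify, at each of the points $d$, $e$, $f$, that the relevant triples are genuine triangles (non-colinear and distinct), and there are several such triples ($T(d,e,b)$, $T(e,f,c)$, $T(f,d,a)$, and mixed ones). Each of these follows from Axiom \ref{uniqueline} by the same pattern used repeatedly above — if the triple were colinear it would, combined with one of the given betweenness/colinearity facts, collapse line $bc$ or line $ab$ onto line $ac$ and contradict $T(a,b,c)$ — but doing this cleanly for all the triples, and choosing the third vertices of the angles consistently so the orientation chain actually closes up, is where the care is needed. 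A secondary subtlety is that $L(d,e,f)$ only tells us the points are colinear, not in what order; Theorem \ref{OOline} is exactly designed to be order-agnostic, so I would lean on it rather than case-splitting on the betweenness order of $d$, $e$, $f$, keeping the argument uniform and case-free in the spirit of the paper.
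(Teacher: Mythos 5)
Your overall strategy is the paper's: show $d$, $e$, $f$ are distinct, conclude $L(d,e,f)$ from $\lnot T(d,e,f)$, and then derive a parity contradiction of the form $SO(\xi,\xi') \wedge OO(\xi,\xi')$ by chaining Axiom \ref{intersectOO} with Theorem \ref{SOOOOO} and Axiom \ref{OOOOSO} at a common apex. The distinctness and auxiliary-triangle bookkeeping you describe is exactly what the paper does.

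The one place your plan as stated would stall is your intention to stay ``order-agnostic'' about $d$, $e$, $f$ by leaning on Theorem \ref{OOline}. The contradiction cannot be closed without at some point applying Axiom \ref{intersectOO} to the colinear triple $d$, $e$, $f$ \emph{itself} — in the paper's proof this is the step $B(d,f,e) \rightarrow OO(cfe,cfd)$ — and that application requires knowing which of the three points lies between the other two. Theorem \ref{OOline} only lets you slide the far endpoint of the base segment along a line while the apex and the two compared points stay fixed; it does not tell you the betweenness order of $d$, $e$, $f$, so it cannot substitute for this step. The paper resolves this with a harmless ``without loss of generality let $B(d,f,e)$,'' justified by the symmetry of the hypotheses (each of $d$, $e$, $f$ is interior to one side of the triangle), and then anchors the entire orientation chain at the middle point $f$: from $Int(d,afb)$ and $Int(e,cfb)$ (via Theorem \ref{betweensideint}) one gets $SO(afd,afb)$ and $SO(cfe,cfb)$, combines these with $OO(afb,cfb)$ and $OO(cfe,cfd)$ to conclude $SO(afd,cfd)$, and contradicts $OO(afd,cfd)$. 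Note this is a case split in the metatheoretic proof, not in a construction, so it does not offend the paper's case-freeness requirements. Also, Lemma \ref{Paschlemma} is not needed anywhere in the argument; the chain closes using only \ref{betweensideint}, \ref{intersectOO}, \ref{SOOOOO}, and \ref{OOOOSO}.
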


\begin{proof}
	First we will show that $d$, $e$, and $f$ are distinct. If $d=e$ then $B(a,e,b)$. Since $L(b,e,c)$ and $L(b,e,a)$, we have $L(a,b,c)$. This is a contradiction. Thus $e \not= d$. In similar fashion we can show the $d \not = f$ and $e \not= f$. Furthermore we can show that all points $a$, $b$, $c$, $d$, $e$, and $f$ are all distinct. 
	We can show $T(a,f,b)$. If $\lnot T(a,f,b)$, then $L(a,f,b)$. This would imply $L(a,b,c)$ which is a contradiction. Likewise we can show that $T(c,f,b)$, $T(c,d,e)$ and $T(a,d,c)$.
	
	Suppose $\lnot T(d,e,f)$. Since $d$, $e$, and $f$ are distinct, we have $L(d,e,f)$.  Since $L(d,e,f)$ without loss of generality we can say $B(d,f,e)$. 
	Since $T(a,f,b)$ and $B(a,d,b)$ by Theorem \ref{betweensideint} we have $Int(d, afb)$ and thus $SO(afd,afb)$. Likewise we have $Int(e,cfb)$ and thus $SO(cfe, cfb)$. By axiom \ref{intersectOO}, we have $OO(afb,cfb)$. Since $SO(afd,afb)$ and $OO(afb,cfb)$ by Theorem \ref{SOOOOO} we have $OO(afd, cfb)$. Since $SO(cfe, cfb)$ and $OO(afb,cfb)$ by Theorem \ref{SOOOOO} we have $OO(cfe, afb)$. Since $B(d,f,e)$ we have $OO(cfe,cfd)$ via an application of Axiom \ref{intersectOO}. Thus by Axiom \ref{OOOOSO} we have $SO(afd, cfd)$. But by Axiom \ref{intersectOO} we know $OO(afd,cfd)$. This is a contradiction. Thus $T(d,e,f)$. 
\end{proof}

We now state our version of Pasch's axiom as a theorem:

\begin{theorem}\label{Paschthm}
	Let $A \equiv T(a,b,c) \wedge B(a,d,b) \wedge T(a,b,e) \wedge T(e,d,c)$, 
	
	$P \equiv \widetilde{L}(d, cb(e,adc),e) \wedge B(a, cb(e,adc),c)$, 
	
	$Q \equiv \widetilde{L}(d, cb(e,bdc),e) \wedge B(b, cb(e,bdc),c)$,
	
	$R \equiv   B(b,x,c) \rightarrow \lnot \widetilde{L}(e,d,x)$, and
	
	$S \equiv B(a,x,c) \rightarrow \lnot \widetilde{L}(e,d,x)$, then

 $A \rightarrow (P \wedge R) \lor (Q \wedge S) $, 
	
	

\end{theorem}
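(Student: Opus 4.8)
The plan is to read $OO(edb,edc)$ and $OO(edc,eda)$ as the statements that, respectively, the vertices $b,c$ and the vertices $a,c$ lie on opposite sides of the line through $e$ and $d$, and to split the argument according to which of these holds. The anchor is the following orientation fact: from $A$ we have $B(a,d,b)$ and $T(a,b,e)$, so Axiom \ref{intersectOO}, applied with apex $e$, intermediate point $d$, and endpoints $a,b$, gives $OO(eda,edb)$ -- i.e.\ $a$ and $b$ are on opposite sides of line $ed$. By Definition \ref{OOdef} this also records $T(e,d,a)$ and $T(e,d,b)$, which together with $T(e,d,c)$ from $A$ keeps every later step legitimate. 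Then Lemma \ref{Paschlemma}, whose hypothesis is exactly $A$, supplies the dichotomy $OO(edb,edc)\lor OO(edc,eda)$, which I will handle uniformly.

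Suppose first $OO(edb,edc)$. Feeding this to Axiom \ref{crossbow} with pivot $e$ and triple $b,d,c$ produces exactly $\widetilde{L}(d,cb(e,bdc),e)\wedge B(b,cb(e,bdc),c)$, i.e.\ $Q$. For $S$, combine $OO(eda,edb)$ (from the anchor) with $OO(edb,edc)$ via Axiom \ref{OOOOSO} to obtain $SO(eda,edc)$, so that $a$ and $c$ are on the \emph{same} side of line $ed$; then instantiate Theorem \ref{SOnointersection} with the line through $e,d$, test points $a,c$, and the free point $x$, so that any $B(a,x,c)$ forces $\lnot\widetilde{L}(e,d,x)$ -- which is $S$. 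Symmetrically, if instead $OO(edc,eda)$ (equivalently $OO(eda,edc)$, since $OO$ is symmetric by Definition \ref{OOdef} and symmetry of $SO$), then Axiom \ref{crossbow} with pivot $e$ and triple $a,d,c$ gives $P$, while combining $OO(edb,eda)$ with $OO(eda,edc)$ via Axiom \ref{OOOOSO} gives $SO(edb,edc)$, which fed into Theorem \ref{SOnointersection} (line $ed$, test points $b,c$, free point $x$) yields $R$. Thus the first disjunct gives $Q\wedge S$, the second gives $P\wedge R$, the dichotomy is exhaustive, and $A\rightarrow(P\wedge R)\lor(Q\wedge S)$ follows.

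No step is conceptually hard -- the whole argument is essentially one application each of Axiom \ref{intersectOO}, Lemma \ref{Paschlemma}, Axiom \ref{crossbow}, Axiom \ref{OOOOSO}, and Theorem \ref{SOnointersection} per case. The real labor, and the main place to be careful, is bookkeeping: tracking which of $a,b,c$ occupies which slot of the six-place orientation relation at each step, and repeatedly invoking symmetry of $SO$, symmetry of $OO$, and permutation-invariance of $\widetilde{L}$ (the latter from Definition \ref{nonstrictcolinear}; note its printed form is missing the disjunct $b=c$, but under the evidently intended symmetric reading the matchings above are exact) to align hypotheses with the cited results. A secondary subtlety worth stating explicitly is \emph{why} the two outputs are $P\wedge R$ and $Q\wedge S$ rather than, say, $P\wedge Q$: the non-intersection halves $R$ and $S$ come out of Theorem \ref{SOnointersection} precisely because the complementary pair of vertices was shown to lie on the same side of line $ed$, so an intersection there is impossible.
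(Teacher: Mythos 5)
Your proposal is correct and follows essentially the same route as the paper: Lemma \ref{Paschlemma} for the dichotomy $OO(edb,edc)\lor OO(edc,eda)$, Axiom \ref{crossbow} for the constructive halves, and the anchor $OO(eda,edb)$ from Axiom \ref{intersectOO} combined via Axiom \ref{OOOOSO} and Theorem \ref{SOnointersection} for the non-intersection halves. In fact your case-to-conjunct assignment ($OO(edb,edc)\Rightarrow Q\wedge S$ and $OO(edc,eda)\Rightarrow P\wedge R$) is the correct one, whereas the paper's proof text transposes the labels $P,R$ with $Q,S$ across the two cases; since the conclusion is the disjunction of both conjunctions, the theorem is unaffected.
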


\begin{proof}
	Assuming $A$, Lemma \ref{Paschlemma} tells us that $OO(edb,edc)$ or $OO(edc,eda)$. If $OO(edb,edc)$, then by Axiom \ref{crossbow} we have $P$. As was pointed out in the proof of the previous lemma, by Axiom \ref{intersectOO} we know that $OO(eda,edb)$. Thus by Axiom \ref{SOOOOO} and the previous lemma we have $SO(eda,edc)$. Therefore by Theorem \ref{SOnointersection} we have $R$. If $OO(edc,eda)$ by similar reasoning we have $Q$ and $S$


	
	
\end{proof}

\subsection{Sides of a Line Segment} \label{sidesofline}

Up to this point we have using the relation $SO(abc,abd)$ to be our proto-definition for $c$ and $d$ being on the `same side' of segment $ab$. It is quite obvious that more work still needs to be done. Although we were able to prove quite a few results about the crossbow construction and the interior of angles, we have not explicitly shown that if $c$ and $d$ are on the `same side' of $ab$, then $c$ and $d$ are on the `same side' of $ba$. Furthermore we would desire that if points $a$, $b$, $c$, and $d$ were colinear then $e$ and $f$ being on the `same side' of $ab$ would imply that $e$ and $f$ were on the `same side' of $cd$. We can accomplish this. 

First we will define a same side relation, $SameSide(c,d,a,b)$, shortened to $SS(c,d,ab)$.

\begin{definition}
	$SS(c,d,ab) \equiv SO(abc,abd)$
\end{definition}

The first issue pointed in the previous paragraph is addressed by part three of the following theorem. From the reflexivity of the angle orientation relation we can prove part 1 of the theorem below. By the symmetry of angle orientation we can prove part 2 and by using the Axiom \ref{triangleOO} we can prove part 3. Part 4 can be proved from the transitivity property of the same orientation relation

\begin{theorem} \label{sideslinethm1}\hspace{1in}
	\begin{enumerate}
		\item $T(a,b,c) \rightarrow SS(c,c,ab)$
		\item $SS(c,d,ab) \rightarrow SS(d,c,ab)$
		\item $SS(c,d,ab) \rightarrow SS(c,d,ba)$
		\item $SS(c,d,ad) \wedge SS(d,e,ab) \rightarrow SS(c,e,ab)$
	\end{enumerate}
\end{theorem}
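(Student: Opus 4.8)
The plan is to treat the four clauses separately, each being a short unpacking of $SS(c,d,ab)\equiv SO(abc,abd)$ followed by one structural fact about the same-orientation relation. Clause 1 unfolds to $SS(c,c,ab)\equiv SO(abc,abc)$, which is the reflexivity of Axiom \ref{SOreflexive}; the hypothesis $T(a,b,c)$ supplies the triangle condition that Axiom \ref{SOtriangle} attaches to any instance of $SO$. Clause 2 unfolds to $SO(abc,abd)\rightarrow SO(abd,abc)$, i.e.\ the symmetry of $SO$ guaranteed by Axioms \ref{SOreflexive} and \ref{SOtrans}.

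For clause 3 I would first extract $T(a,b,c)$ and $T(a,b,d)$ from $SS(c,d,ab)$ via Axiom \ref{SOtriangle}, then apply Axiom \ref{triangleOO} to obtain $OO(abc,bac)$ and $OO(abd,bad)$. Combining $SO(abc,abd)$ with $OO(abd,bad)$ through Theorem \ref{SOOOOO} gives $OO(abc,bad)$; feeding $OO(bac,abc)$ (the symmetric form of $OO(abc,bac)$) and $OO(abc,bad)$ into Axiom \ref{OOOOSO} then yields $SO(bac,bad)$, which is exactly $SS(c,d,ba)$.

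Clause 4 is where the wording matters, and I would address it \emph{exactly as stated}. The first hypothesis unfolds to $SS(c,d,ad)\equiv SO(adc,add)$. By Axiom \ref{SOtriangle} any true instance of $SO(adc,add)$ would force $T(a,d,d)$, but $T(a,d,d)$ demands $d\neq d$ and is therefore never satisfied. Consequently $SS(c,d,ad)$ is unsatisfiable, the antecedent $SS(c,d,ad)\wedge SS(d,e,ab)$ can never hold, and the implication is vacuously true. This is the complete content of clause 4 under its literal reading, and no transitivity argument is required.

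I would, however, record that the segment $ad$ in the antecedent is almost certainly a typographical slip for $ab$, since the preceding paragraph advertises clause 4 as following \emph{from the transitivity property of the same orientation relation}. Under the corrected reading $SS(c,d,ab)\wedge SS(d,e,ab)\rightarrow SS(c,e,ab)$ the statement unfolds to $SO(abc,abd)\wedge SO(abd,abe)\rightarrow SO(abc,abe)$, which is the transitivity of $SO$ and follows from Axiom \ref{SOtrans} after one application of symmetry to align the repeated middle triple. The main obstacle here is thus interpretive rather than computational: one must notice that, taken verbatim, the hypothesis $SS(c,d,ad)$ is degenerate and collapses the clause to a vacuity, and keep that literal reading distinct from the transitivity statement the text evidently means to assert.
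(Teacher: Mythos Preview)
Your proposal is correct and follows essentially the same approach as the paper, which only sketches the proof by pointing to reflexivity for part 1, symmetry for part 2, Axiom \ref{triangleOO} for part 3, and transitivity for part 4. You supply the missing detail for part 3 (chaining $OO(abc,bac)$ and $OO(abd,bad)$ through Theorem \ref{SOOOOO} and Axiom \ref{OOOOSO}) and, more notably, you catch that the literal hypothesis $SS(c,d,ad)$ in part 4 is degenerate; the paper silently treats it as $SS(c,d,ab)$ and invokes transitivity, so your reading of the intended statement and its proof match the paper exactly.
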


To address the second concern pointed out in the first paragraph of this section we  have the following theorem.
\begin{theorem} \label{sideslinethm2}
	Given $a \not= b$, $c \not= d$, $\widetilde{L}(a,b,c)$, and $\widetilde{L}(b,c,d)$, if $SS(e,f,ab)$ then $SS(e,f,cd)$. 
\end{theorem}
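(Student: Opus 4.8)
The plan is to show that ``being on the same side of a segment'' depends only on the line carrying that segment, and then to walk from $ab$ to $cd$ along that line by a short sequence of elementary moves. The first thing to nail down is that $\widetilde{L}(a,b,c)\wedge\widetilde{L}(b,c,d)\wedge a\neq b\wedge c\neq d$ really does place $a,b,c,d$ on one common line $\ell$ (so that the line through any two distinct of them is $\ell$); this follows from Axiom \ref{uniqueline} and its stated corollary about linear triples sharing two points, once one enumerates the few ways two of the four points can coincide. Note also that, since $SS(e,f,ab)=SO(abe,abf)$ is assumed true, Axiom \ref{SOtriangle} gives $T(a,b,e)$ and $T(a,b,f)$, so $e$ and $f$ lie off $\ell$ and are distinct from each of $a,b,c,d$; this automatically supplies the non-degeneracy the conclusion $SO(cde,cdf)$ needs, so the only real work is relocating the orientation statement.

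The core is two ``elementary move'' lemmas describing how $SO(pue,puf)$ behaves when the non-vertex point $p$ of the angle is replaced by another point $q$ of $\ell$, the shared middle entry $u$ acting as a pivot (with $u\neq p$, since $T(p,u,e)$). \emph{Move A (same ray):} if $SD(u,p,q)$, then Axiom \ref{SOray} gives $SO(pue,que)$ and $SO(puf,quf)$, so by Axioms \ref{SOreflexive} and \ref{SOtrans} we get $SO(pue,puf)\to SO(que,quf)$. \emph{Move B (reflection):} if $B(p,u,q)$, then the line through $p,u$ equals the line through $p,q$ (Axiom \ref{uniqueline}), so $e,f$ lie off it and $T(p,q,e)$, $T(p,q,f)$ hold; Axiom \ref{intersectOO} then yields $OO(eup,euq)$, hence $OO(pue,que)$ by Lemma \ref{SOOOreverseboth}, and likewise $OO(puf,quf)$; chaining $SO(pue,puf)$ with these two $OO$-facts through Theorem \ref{SOOOOO} and then Axiom \ref{OOOOSO} gives $SO(que,quf)$. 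Since any $q$ on $\ell$ with $q\neq u$ satisfies $q=p$, $SD(u,p,q)$, or $B(p,u,q)$ (by the line-separation dichotomy, Theorem \ref{lineseparation1}, applied with apex $u$ to the colinear triple $p,u,q$), Moves A and B combine to: for all points $p,q$ of $\ell$ distinct from $u$, $SO(pue,puf)\leftrightarrow SO(que,quf)$.

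With this in hand the conclusion follows by a short chain: $SS(e,f,ab)$, then swap to $SS(e,f,ba)$ via Theorem \ref{sideslinethm1}(3), then move the non-vertex point $b$ to $d$ about the pivot $a$ to get $SS(e,f,da)$, then swap to $SS(e,f,ad)$, then move the non-vertex point $a$ to $c$ about the pivot $d$ to get $SS(e,f,cd)$. The coincidence cases ($d=a$, $d=b$, $a=c$, or several at once) are dispatched directly, since then $cd$ shares an endpoint with $ab$ and a single move, a single swap, or nothing at all suffices. I expect the main obstacle to be Move B: getting the non-degeneracy side conditions exactly right so that Axiom \ref{intersectOO} applies, and assembling the mixed $SO$/$OO$ facts in the precise argument patterns demanded by Theorem \ref{SOOOOO} and Axiom \ref{OOOOSO}. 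The remaining difficulty is pure bookkeeping --- carrying out the coincidence case analysis entirely in the quantifier-free language, and verifying at the outset that the weak-colinearity hypotheses genuinely force all four points onto a single line.
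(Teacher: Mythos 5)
Your proposal is correct and follows essentially the same route as the paper's proof: the paper also reduces everything to the two elementary moves (replacing the non-vertex point along the same ray via Axiom \ref{SOray}, and across the vertex via Axiom \ref{intersectOO}/Theorem \ref{OOline} with the $SO$/$OO$ chaining rules), swaps endpoints via Axiom \ref{triangleOO} (Theorem \ref{sideslinethm1}(3)), and handles the coincidence configurations by explicit case analysis. The only differences are cosmetic --- you factor the two moves into reusable lemmas and traverse $ab \to ba \to da \to ad \to cd$, whereas the paper traverses $ab \to cb \to bc \to dc \to cd$ and redoes the move arguments inline in each of its seven cases.
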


\begin{proof}

Case 1)	Let $a=c$ and $b=d$. Then we are done.

Case 2) Let $a=d$ and $b=c$. This can be proved by an application of part 3 of the previous theorem.

Case 3) Let $a=c$ and $b \not= d$. If $B(b,a,d)$ (see left side of Figure \ref{sideslinethm2figure}), then $SS(e,f,ab)$ implies $SO(abe,abf)$ which in turn implies $SO(dbe,dbf)$ by Axiom \ref{SOray}. Using Axiom \ref{triangleOO} we can infer $SO(bde,bdf)$ and this implies $SO(cde,cdf)$. Thus by definition we have $SS(e,f,cd)$. If $\lnot B(b,a,d)$, we have $SD(a,b,d)$ (see theorem \ref{lineseparation1}). (See right side of figure \ref{sideslinethm2figure}.) If $SS(e,f,ab)$, then we have $SO(abe,abf)$. This implies $SO(bae,baf)$ by axiom \ref{triangleOO}. From this we can obtain $SO(bce,bcf)$ and then $SO(dce,dcf)$ by Axiom \ref{SOray}. Thus we have $SS(e,f,cd)$. 
	
\begin{figure}[h!]
	\begin{picture}(216,95)
	\put(45,0){\includegraphics[scale=.8]{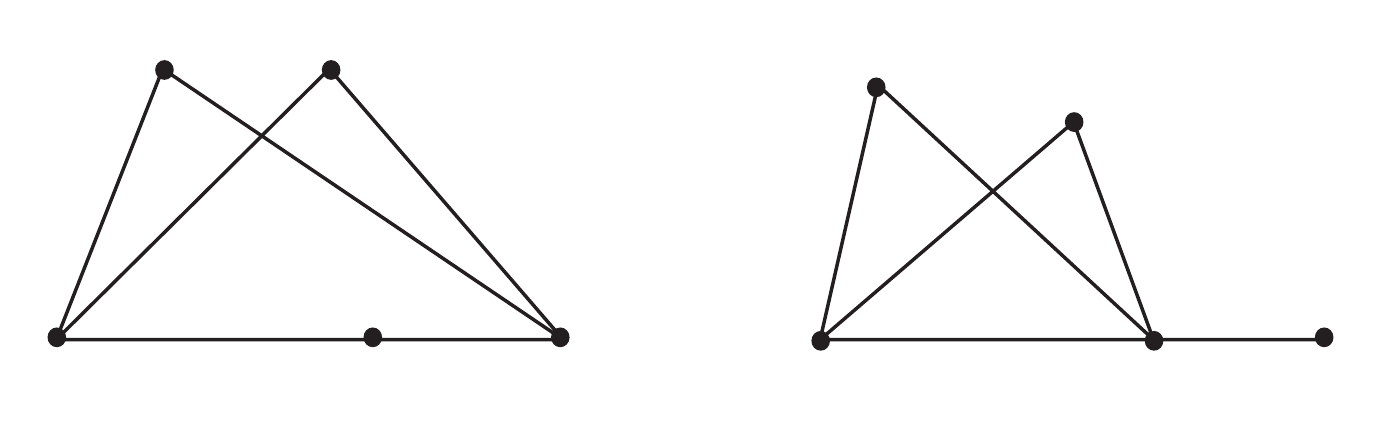}}
	\put(118,13){$a=c$}
	\put(50,13){$b$}
	\put(175,13){$d$}
	\put(74,85){$e$}
	\put(112,85){$f$}
	
	\put(220,13){$a=c$}
	\put(310,9){$b$}
	\put(352,13){$d$}
	\put(238,81){$e$}
	\put(285,73){$f$}
\end{picture}
	\caption{}
	\label{sideslinethm2figure}
\end{figure}

	The next three cases follow from similar methods.

Case 4) Let $a=d$ and $b \not= d$.

Case 5) Let $b=c$ and $a \not= d$.

Case 6) Let $b=d$ and $a \not= c$.

Case 7) Let $a$, $b$, $c$, and $d$ all be distinct. Suppose $SD(b,a,c)$. Since $SS(e,f,ab)$ we then have $SO(abe,abf)$ which will imply $SO(cbe,cbf)$ by Axiom \ref{SOray}. By Axiom \ref{triangleOO} we obtain $SO(bce,bcf)$. 
	
	Now suppose $\lnot SD(b,a,c)$. We can infer that $B(a,b,c)$ (see Theorem \ref{lineseparation1}). Since $SS(e,f,ab)$, we have $SO(abe,abf)$. By Theorem \ref{OOline} we $SO(cbe,cbf)$ and by Axiom \ref{triangleOO} we have $SO(bce,bcf)$ also.
	
	Since $SO(bce,bcf)$, if $SD(c,b,d)$ also then we have $SO(dce,dcf)$. If $\lnot SD(c,b,d)$, then $B(d,c,b)$. Since we have $SO(bce,bcf)$, by theorem \ref{OOline} we obtain $SO(dce,dcf)$ also. We can then can infer $SS(e,f,dc)$ and by applying Theorem \ref{sideslinethm1} we have $SS(e,f,cd)$.

\end{proof}


	

We can define an opposite sides relation in a similar fashion. This relation can be denoted $OppositeSides(c,d,a,b)$, but as with other relations will be shortened to $OS(c,d,ab)$. 

\begin{definition}
	$OS(c,d,ab) \equiv OO(abc,abd)$ 
\end{definition}

One can then prove the following theorem. 

\begin{theorem}
	$T(a,b,c) \wedge T(a,b,d) \wedge \lnot SS(c,d,ab) \rightarrow OS(c,d,ab)$
\end{theorem}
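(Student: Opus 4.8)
The plan is to unwind the definitions and reduce everything to a statement about the angle-orientation relation. By \textbf{Definition~\ref{OOdef}}, $OS(c,d,ab) \equiv OO(abc,abd)$, and $OO(abc,abd) \equiv \lnot SO(abc,abd) \wedge T(a,b,c) \wedge T(a,b,d)$. By the definition of $SS$, the hypothesis $\lnot SS(c,d,ab)$ is exactly $\lnot SO(abc,abd)$. So, assuming the hypotheses $T(a,b,c)$, $T(a,b,d)$, and $\lnot SO(abc,abd)$, we must conclude $OO(abc,abd)$, i.e., the conjunction $\lnot SO(abc,abd) \wedge T(a,b,c) \wedge T(a,b,d)$.

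The proof is then essentially immediate: the first conjunct $\lnot SO(abc,abd)$ is the unfolded form of the hypothesis $\lnot SS(c,d,ab)$; the second conjunct $T(a,b,c)$ is a hypothesis; and the third conjunct $T(a,b,d)$ is a hypothesis. First I would state the three unfoldings (using the definitions of $SS$, $OS$, and $OO$), then observe that all three conjuncts needed for $OO(abc,abd)$ are directly available, and conclude $OS(c,d,ab)$. No axioms beyond the listed definitions are needed.

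There is no real obstacle here — the content of the theorem is entirely bookkeeping about how ``opposite side'' was set up to be the exact negation of ``same side'' \emph{relative to a pair of fixed non-degenerate triangles}. The only thing to be careful about is making explicit that $OO$ is not simply $\lnot SO$ but carries the two triangle conditions with it, which is why the hypotheses $T(a,b,c)$ and $T(a,b,d)$ appear in the statement; once those are in hand, the negation transfers directly. I would therefore present the argument in two or three sentences:

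\begin{proof}
By the definition of the $SS$ relation, the hypothesis $\lnot SS(c,d,ab)$ is the statement $\lnot SO(abc,abd)$. Combining this with the hypotheses $T(a,b,c)$ and $T(a,b,d)$, we obtain $\lnot SO(abc,abd) \wedge T(a,b,c) \wedge T(a,b,d)$, which is precisely $OO(abc,abd)$ by Definition~\ref{OOdef}. By the definition of the $OS$ relation, this is $OS(c,d,ab)$.
\end{proof}
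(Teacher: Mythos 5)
Your proof is correct and is exactly the definitional unfolding the paper intends (the paper states this theorem without proof, as it follows immediately from the definitions of $SS$, $OS$, and $OO$). Nothing further is needed.
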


There are analogous versions of the two main theorems of this subsection where the same side relation is replaced with the opposite sides relation. We omit the proofs. 

\begin{theorem} \label{oppositesidethm} \hspace{1in}
	\begin{enumerate}
		\item $OS(c,d,ab) \rightarrow OS(d,c,ab)$
		\item $OS(c,d,ab) \rightarrow OS(c,d,ba)$
		\item $OS(c,d,ad) \wedge OS(d,e,ab) \rightarrow SS(c,e,ab)$
	\end{enumerate}
\end{theorem}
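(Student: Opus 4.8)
The plan is to prove all three parts by mirroring the proofs of Theorem~\ref{sideslinethm1}: unwinding the definition of $OS$ and Definition~\ref{OOdef} reduces each claim to a one- or two-line manipulation of the orientation axioms. (I read part~3 as $OS(c,d,ab) \wedge OS(d,e,ab) \rightarrow SS(c,e,ab)$, the opposite-sides analog of part~4 of Theorem~\ref{sideslinethm1}; the ``$ad$'' subscript looks like the same transcription slip that appears there.)

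For part~1, I would simply observe that $OS(c,d,ab)$ unpacks to $\lnot SO(abc,abd) \wedge T(a,b,c) \wedge T(a,b,d)$, and that the \textit{SameOrientation} relation is symmetric (Axioms~\ref{SOreflexive} and~\ref{SOtrans}), so $\lnot SO(abd,abc)$ holds as well; together with $T(a,b,d) \wedge T(a,b,c)$ this is exactly $OO(abd,abc)$, i.e. $OS(d,c,ab)$.

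For part~2, I would use the triangle-orientation axiom to carry out the $ab \mapsto ba$ swap, just as in the proof of part~3 of Theorem~\ref{sideslinethm1}. From $OS(c,d,ab)$ we have $T(a,b,c)$ and $T(a,b,d)$, so Axiom~\ref{triangleOO} gives $OO(abc,bac)$ and $OO(abd,bad)$. By part~1 already proved, $OO(bac,abc)$; combining $OO(bac,abc)$ with $OO(abc,abd)$ through the parity Axiom~\ref{OOOOSO} yields $SO(bac,abd)$. Feeding $SO(bac,abd)$ together with $OO(abd,bad)$ into Theorem~\ref{SOOOOO} gives $OO(bac,bad)$, which is precisely $OS(c,d,ba)$. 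For part~3, the claim unwinds to $OO(abc,abd) \wedge OO(abd,abe) \rightarrow SO(abc,abe)$, an immediate instance of Axiom~\ref{OOOOSO}; the triangle side-conditions packaged into $SS(c,e,ab)$ come for free since $SO(abc,abe)$ already carries $T(a,b,c) \wedge T(a,b,e)$.

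The only step that needs any care is part~2, where one must route the two applications of the parity machinery in the correct order --- first $OO \wedge OO \Rightarrow SO$, then $SO \wedge OO \Rightarrow OO$ --- and keep the triples aligned so that the shared middle arguments cancel; everything else is bookkeeping on the definitions, and I do not anticipate a genuine obstacle.
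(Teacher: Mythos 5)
Your proposal is correct: the paper omits the proof of this theorem (stating only that it is analogous to Theorem~\ref{sideslinethm1}), and your argument is exactly the intended one --- symmetry of $SO$ for part~1, Axiom~\ref{triangleOO} routed through Axiom~\ref{OOOOSO} and Theorem~\ref{SOOOOO} for part~2, and a direct instance of Axiom~\ref{OOOOSO} for part~3. Your reading of the ``$ad$'' in part~3 as a transcription slip for ``$ab$'' is also the right one, matching the parallel slip in Theorem~\ref{sideslinethm1}.
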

 
\begin{theorem} \label{oppositesidethm2}
	Given $a \not= b$, $c \not= d$, $\widetilde{L}(a,b,c)$, and $\widetilde{L}(b,c,d)$, if $OS(e,f,ab)$ then $OS(e,f,cd)$. 
\end{theorem}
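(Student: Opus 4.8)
The plan is to reduce Theorem \ref{oppositesidethm2} to its same-side counterpart, Theorem \ref{sideslinethm2}, by an argument by contradiction, exploiting the fact (the unlabeled theorem immediately preceding Theorem \ref{oppositesidethm}, stating $T(a,b,c)\wedge T(a,b,d)\wedge\lnot SS(c,d,ab)\rightarrow OS(c,d,ab)$) that for two triangles on a common base the same-side and opposite-side relations are complementary. Unwinding the definitions, $OS(e,f,ab)$ is $OO(abe,abf)$, which by Definition \ref{OOdef} packages three facts: $T(a,b,e)$, $T(a,b,f)$, and $\lnot SO(abe,abf)$. What must be produced is $OS(e,f,cd)=OO(cde,cdf)$, i.e.\ $T(c,d,e)$, $T(c,d,f)$, and $\lnot SO(cde,cdf)$.

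First I would establish the two new triangle conjuncts $T(c,d,e)$ and $T(c,d,f)$. This is pure colinearity bookkeeping: from $a\neq b$, $c\neq d$, $\widetilde{L}(a,b,c)$, $\widetilde{L}(b,c,d)$ and Axiom \ref{uniqueline} one shows that whenever $e$ is colinear with $c$ and $d$ it is also colinear with $a$ and $b$, contradicting the $T(a,b,e)$ clause; the required distinctness of $c,d,e$ (and of $c,d,f$) falls out of the same computation. One must split into the obvious sub-cases according to which of $a,b,c,d$ coincide — the four single-coincidence cases and the all-distinct case, exactly as in the proof of Theorem \ref{sideslinethm2} — but each sub-case is a one- or two-line chase through Axiom \ref{uniqueline} and the coincidence clauses of Definition \ref{nonstrictcolinear}.

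With $T(c,d,e)$ and $T(c,d,f)$ in hand, I would suppose toward a contradiction that $\lnot OS(e,f,cd)$. By the complementarity theorem cited above (instantiated with base $cd$), this yields $SS(e,f,cd)$. Now apply Theorem \ref{sideslinethm2} with the base pairs $(a,b)$ and $(c,d)$ interchanged: its hypotheses $c\neq d$, $a\neq b$, $\widetilde{L}(c,d,a)$, $\widetilde{L}(d,a,b)$ follow from our hypotheses together with the permutation-invariance of $\widetilde{L}$ (immediate from that of $L$) and, once more, Axiom \ref{uniqueline} in the same coincidence-case split as before. Its conclusion is $SS(e,f,ab)$, i.e.\ $SO(abe,abf)$, which flatly contradicts the $\lnot SO(abe,abf)$ clause of $OS(e,f,ab)$. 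Hence $OS(e,f,cd)$, and since $OS(e,f,cd)$ just is $OO(cde,cdf)$, the triangle conjuncts come along for free.

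I expect the only genuine work to be the two bouts of coincidence-case bookkeeping — the triangle conjuncts $T(c,d,e)$, $T(c,d,f)$ and the transferred non-strict colinearities $\widetilde{L}(c,d,a)$, $\widetilde{L}(d,a,b)$ needed to invoke Theorem \ref{sideslinethm2} — neither of which introduces a new geometric idea; they are the very case splits already carried out (or labelled ``similar methods'') in the proof of Theorem \ref{sideslinethm2}. A fully parallel alternative, avoiding the contradiction step, is to mirror that proof move for move, replacing each appeal to Axiom \ref{SOray}, Axiom \ref{triangleOO} or Theorem \ref{OOline} by its effect on an opposite-orientation pair: Theorem \ref{OOline} is already phrased for $OO$, while the ray-extension and vertex/endpoint-swap moves preserve $OO$ by combining Theorem \ref{SOOOOO} with Axioms \ref{OOOOSO} and \ref{OOreverse}. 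Either route is routine given the orientation machinery already developed, which is why the statement can be offered with proof omitted.
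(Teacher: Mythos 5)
Your argument is correct, and it is worth noting that the paper itself offers no proof here --- it simply remarks that the opposite-side analogues of Theorems \ref{sideslinethm1} and \ref{sideslinethm2} hold and omits the details, the implicit suggestion being that one re-runs the seven-case proof of Theorem \ref{sideslinethm2} with the orientation moves adapted to $OO$ (your ``fully parallel alternative''). Your primary route is genuinely different and more economical: rather than redoing the case analysis, you treat Theorem \ref{sideslinethm2} as a black box, establish the conjuncts $T(c,d,e)$ and $T(c,d,f)$ directly from Axiom \ref{uniqueline}, and then derive $\lnot SO(cde,cdf)$ by observing that $SO(cde,cdf)$ would, via the reversed instantiation of Theorem \ref{sideslinethm2} (with the pairs $(a,b)$ and $(c,d)$ interchanged, whose hypotheses $\widetilde{L}(c,d,a)$ and $\widetilde{L}(d,a,b)$ you rightly flag as needing the same coincidence bookkeeping), yield $SO(abe,abf)$ against the hypothesis. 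The only caution is in the phrasing: assuming $\lnot OS(e,f,cd)$ and invoking the complementarity theorem to extract $SS(e,f,cd)$ passes through a double-negation elimination on $SO$, which the paper's professed constructive stance would rather avoid; since $OO$ is \emph{defined} as $\lnot SO$ plus the triangle conjuncts, you should instead assume $SO(cde,cdf)$ outright, derive the contradiction, and conclude $\lnot SO(cde,cdf)$ --- an intuitionistically clean refutation that makes the complementarity theorem unnecessary. With that cosmetic repair, either of your two routes is a complete proof, and the reduction route has the advantage of not duplicating the case split already carried out for the same-side version.
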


\subsubsection{Reasons for Using Angle Orientation}

It is possible that the reader may be asking themselves why we did not just introduce a 'same side' relation to start with and avoid a discussion of angle orientation. The reader may also wonder if the number of axioms assumed would be minimized and the road to proving Pasch's axiom would simpler if we could just assume a small set of assumptions about the same side relations to obtain the plain separation property and then prove Pasch's axiom as a theorem in the style of Greenberg in \cite{GreenbergBook}.  They may think that this would lead to not having need for a construction such as the crossbar construction. One may wonder if such a strategy would be more economical. We would reply that in fact one is not able to greatly reduce the number of axioms assumed and constructions used by introducing a `same side' relation to start with. 


If one were to try to obtain the plane separation properties and Pasch's axiom simply from an undefined `same same' relation they would in fact have to assume quite a few axioms. They would have to assume that the relation had the three properties stated in Theorem \ref{sideslinethm1}. They would have to define an `opposite side' relation. They would have to assume that if $a$ and $b$ were on the same side and $b$ and $c$ were on opposite sides then $a$ and $c$ would be on opposite sides. They would have to assume an axiom analogous to Theorem \ref{SOnointersection} which would interpret the property that lines do not intersect a line segment between two points on the same side of it. They would have to assume an axiom like Axiom \ref{intersectOO} which states that if a line intersects a segment then the endpoints of the segment are on opposite sides. They would have to introduce a crossbow-like construction and assume an axiom like Axiom \ref{crossbow} to produce the point for where a line intersects the interior of a segment with points on opposite sides of it. They would have to assume an axiom like Theorem \ref{sideslinethm2} in order to preserve the sides of two line segments when they are colinear. They would then need to prove Pasch's axiom and then also prove the Crossbar Theorem. Because of this we see no formal reason why having an undefined 'same side' relation would be a great improvement.  

\subsection{Line Separation} \label{lineseparation}

An important result in any presentation of Hilbert's system is proving a line separation property. In \cite{HartshorneBook} Hartshorne defines line separation as the property that the set of points on a line $\ell$ not equal to a particular point $a$ on the line can be divided into two sets called sides such that $b$ and $c$ are on the same side of $a$ if and only if $a$ is not in the segment $bc$ and $b$ and $c$ are on different sides of $a$ if and only if $a$ is in the segment $bc$. 

We already have relations which codify the idea of $b$ and $c$ being on the same or opposite sides of $a$. The relation $B(b,a,c)$ will take the place of $b$ and $c$ being on `opposite sides' of $a$ and $SD(a,b,c)$ will take the place of $b$ and $c$ being on the `same side' of $a$.

It can be shown that if $L(a,b,c)$ then $\lnot SD(a,b,c)$ implies $B(b,a,c)$ and $\lnot B(b,a,c)$ implies $SD(a,b,c)$. 

\begin{theorem} \label{lineseparation1} \hspace{1in}
	\begin{enumerate}
		\item $L(a,b,c)) \wedge \lnot SD(a,b,c) \rightarrow B(b,a,c)$
		\item 	$L(a,b,c) \wedge \lnot  B(b,a,c) \rightarrow SD(a,b,c)$
	\end{enumerate}
\end{theorem}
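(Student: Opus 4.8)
The plan is to prove both parts by simply unfolding the definitions of the colinearity relation $L(-,-,-)$ and the ``same direction'' relation $SD(-,-,-)$; once these are expanded, each implication reduces to pure propositional reasoning, with at most a cosmetic appeal to Axiom \ref{betweenreverse}. I would treat the two parts separately but in parallel, and in fact prove them directly rather than by contraposition.

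For part (1) I would assume $L(a,b,c)$ and $\lnot SD(a,b,c)$. By definition $L(a,b,c)$ is $B(a,b,c) \lor B(a,c,b) \lor B(b,a,c)$, while $\lnot SD(a,b,c)$ yields in particular $\lnot B(a,b,c)$ and $\lnot B(a,c,b)$, since those are literally the first two disjuncts in the definition of $SD$. Eliminating the first two alternatives from the disjunction for $L$ leaves exactly $B(b,a,c)$, which is the desired conclusion.

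For part (2) I would assume $L(a,b,c)$ and $\lnot B(b,a,c)$. Unfolding $L(a,b,c)$ and discarding the disjunct $B(b,a,c)$ leaves $B(a,b,c) \lor B(a,c,b)$; each of these is one of the first two disjuncts in the definition of $SD(a,b,c)$, so $SD(a,b,c)$ holds in either case.

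I do not expect any genuine obstacle here: the definitions of $L$ and $SD$ were chosen precisely so that their disjuncts line up argument-for-argument with the formulas appearing in the theorem, so the whole argument is a short exercise in unfolding and case elimination. The only point requiring care is the bookkeeping of which orderings of $B$ appear verbatim in the definition of $L$ versus which are obtained through Axiom \ref{betweenreverse}; with the definitions exactly as stated, no such reversal is even needed. If desired, one could add the remark that $L(a,b,c)$ forces $a$, $b$, and $c$ to be distinct (via Axioms \ref{betweenuniquepoint} and \ref{betweenreverse}), but this is not used in either implication.
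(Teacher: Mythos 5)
Your proposal is correct and matches the paper's own argument: part (1) is proved there by exactly the same elimination of the first two disjuncts of $L(a,b,c)$ using $\lnot SD(a,b,c)$, and part (2) is dismissed in the paper as a straightforward unfolding of definitions, which is precisely what you supply.
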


\begin{proof}
	To prove part 1 note that $\lnot SD(a,b,c)$ implies that $\lnot B(a,b,c)$ and $\lnot B(a,c,b)$. Since we have $L(a,b,c)$ and $\lnot B(a,c,b)$, $\lnot B(a,b,c)$ we can conclude $B(b,a,c)$. 
	
	Part 2 is proved via a straight forward proof based on definitions. 
\end{proof}

Hilbert's line separation property is captured by the following theorem. 

\begin{theorem} \label{lineseparationthm} \hspace{1in}
	\begin{enumerate}
		\item $SD(a,b,b)$ 
		\item $SD(a,b,c) \rightarrow SD(a,c,b)$
		\item $SD(a,b,c) \wedge SD(a,c,d) \rightarrow SD(a,b,d)$
		\item $SD(a,b,c) \wedge B(d,a,c) \rightarrow B(d,a,b)$
		\item $B(b,a,c) \wedge B(d,a,c) \rightarrow SD(a,b,d)$
	\end{enumerate}
\end{theorem}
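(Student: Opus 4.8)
The plan is to treat parts (1) and (2) directly from the definition of $SD$, and to reduce parts (3)--(5) --- which carry the real content, since they amount to betweenness-transitivity facts that are not consequences of the order axioms \ref{betweenuniquepoint}--\ref{uniqueline} alone --- to statements about angle orientations at a fixed apex lying off the common line. For part (1), when $a\neq b$ the third disjunct $a\neq b\wedge b=b$ of the definition of $SD(a,b,b)$ holds outright. For part (2), unwinding both $SD(a,b,c)$ and $SD(a,c,b)$ shows they carry the same disjuncts under the evident matching ($B(a,b,c)$ with $B(a,b,c)$, $B(a,c,b)$ with $B(a,c,b)$, and ``$c=b$'' with ``$b=c$'' together with the attached inequality), so (2) is a short case check.

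For the remaining parts I would first isolate two helper facts. The \emph{sliding fact}: if $T(b,a,e)$ and $SD(a,b,c)$, then $SO(eab,eac)$ --- proved by applying Axiom \ref{SOray} with $a$ as vertex, $e$ as the fixed side, and $b,c$ as the two endpoints being moved along their common ray from $a$, which gives $SO(bae,cae)$, followed by reversing both angles via Lemma \ref{SOOOreverseboth}. The \emph{crossing fact}: if $B(b,a,c)$ and $T(b,c,e)$, then $OO(eab,eac)$ --- this is Axiom \ref{intersectOO} after renaming. To use either I need an apex $e$ not collinear with, and distinct from, the points in play: since all those points lie on one line and, by Axioms \ref{3noncolinearpoints} and \ref{uniqueline}, the constants $\alpha,\beta,\gamma$ cannot all lie on that line, one of $\alpha,\beta,\gamma$ serves as $e$, and then every triple formed from $e$ and two points of the line is a triangle.

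With $e$ fixed, parts (3), (4), (5) proceed the same way; after disposing of the degenerate cases in which two of $a,b,c,d$ coincide (each immediate from, or vacuous by, the definition of $SD$ together with Axioms \ref{betweenuniquepoint} and \ref{uniqueline}, reading part (4) for $a\neq b$), assume $a,b,c,d$ distinct and collinear. For (3): the sliding fact turns $SD(a,b,c)$ and $SD(a,c,d)$ into $SO(eab,eac)$ and $SO(eac,ead)$, hence $SO(eab,ead)$ by symmetry and transitivity of $SO$ (Axioms \ref{SOreflexive} and \ref{SOtrans}); were $SD(a,b,d)$ false, Theorem \ref{lineseparation1} would give $B(b,a,d)$, whence $OO(eab,ead)$ by the crossing fact, contradicting $SO(eab,ead)$ via Definition \ref{OOdef}. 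For (4): the sliding fact gives $SO(eab,eac)$ and the crossing fact applied to $B(d,a,c)$ gives $OO(ead,eac)$, so $OO(eab,ead)$ by Theorem \ref{SOOOOO} (using symmetry of $OO$); were $B(d,a,b)$ false, Theorem \ref{lineseparation1} would give $SD(a,d,b)$, hence $SO(ead,eab)$ by the sliding fact, a contradiction. For (5): the crossing fact applied to $B(b,a,c)$ and to $B(d,a,c)$ gives $OO(eab,eac)$ and $OO(ead,eac)$, so $SO(eab,ead)$ by Axiom \ref{OOOOSO}; were $SD(a,b,d)$ false, Theorem \ref{lineseparation1} would give $B(b,a,d)$, hence $OO(eab,ead)$ by the crossing fact, a contradiction.

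The main obstacle is conceptual rather than computational: these five assertions look purely ordinal, but the order axioms by themselves do not yield betweenness transitivity, so one is forced to route through Pasch-type content --- here repackaged as the orientation axioms together with the off-line apex $e$ (one could instead invoke the Pasch theorem \ref{Paschthm}, but the direct orientation argument is shorter). The points that still need care are: the existence of $e$, a small incidence argument from Axioms \ref{3noncolinearpoints} and \ref{uniqueline}; keeping the orientation bookkeeping straight when passing between ``same direction along the line'' and ``same side of the line $ea$'' (the angle reversals in the sliding fact, and the symmetry of $OO$); and checking the coincidence cases against the four disjuncts of the definition of $SD$.
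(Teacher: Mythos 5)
Your proof is correct, and its overall shape matches the paper's: both choose an auxiliary point $e$ (one of $\alpha,\beta,\gamma$) off the common line via Axioms \ref{3noncolinearpoints} and \ref{uniqueline}, translate betweenness at $a$ into opposite orientation via Axiom \ref{intersectOO}, and close parts (3)--(5) by playing the transitivity/parity machinery (Axiom \ref{SOtrans}, Axiom \ref{OOOOSO}, Theorem \ref{SOOOOO}) against Theorem \ref{lineseparation1}. The genuine difference lies in the key sub-lemma converting $SD(a,b,c)$ into a same-orientation statement. The paper places the vertex of its angles at the auxiliary point, i.e.\ it proves $SO(a\gamma b, a\gamma c)$; since Axiom \ref{SOray} only permits sliding a point along a ray emanating from the \emph{vertex}, that axiom is unavailable there, and the paper instead argues by contradiction, supposing $OO(a\gamma b,a\gamma c)$ and using the crossbow construction (Axiom \ref{crossbow}) together with incidence reasoning to force either $B(b,a,c)$ or $L(\gamma,b,c)$. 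Your ``sliding fact'' puts the vertex at $a$ instead, proving $SO(eab,eac)$, which makes Axiom \ref{SOray} directly applicable (after the reversal Lemma \ref{SOOOreverseboth}); this is shorter, avoids the crossbow construction entirely, and replaces an indirect argument with a direct one. You also handle part (4) by the same uniform scheme, where the paper derives it from the other parts, and your explicit restriction of part (4) to $a\neq b$ correctly flags a degenerate case (for $a=b$ the hypothesis $SD(a,a,c)$ is satisfiable while the conclusion $B(d,a,a)$ cannot hold by Axiom \ref{betweenuniquepoint}) that the paper's statement silently ignores.
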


\begin{figure}[h!]
	\begin{picture}(216,90)
	\put(0,0){\includegraphics[scale=.6]{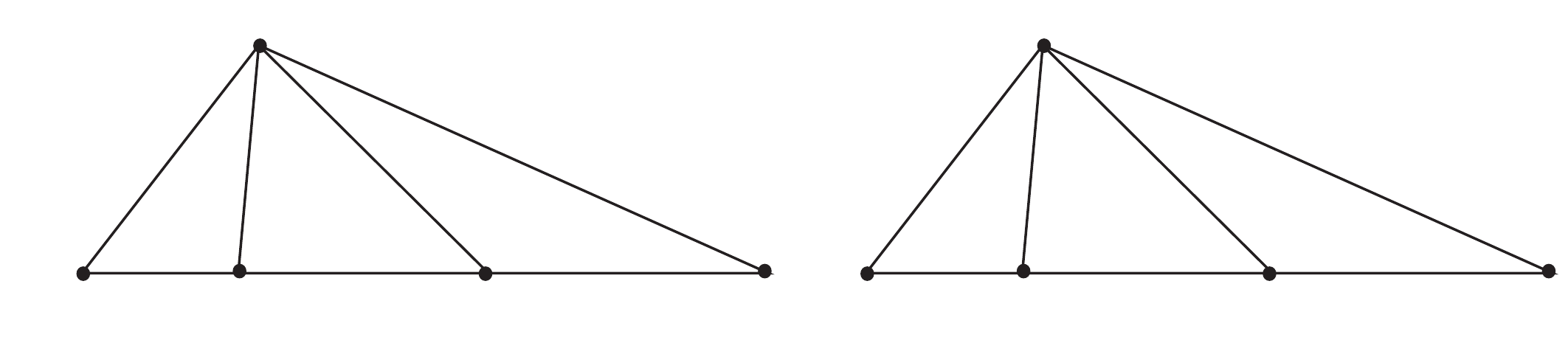}}
	\put(112,11){$c$}
	\put(52,9){$b$}
	\put(15,11){$a$}
	\put(180,11){$d$}
	\put(52,73){$\gamma$}
	
	\put(200,9){$d$}
	\put(237,9){$b$}
	\put(297,11){$a$}
	\put(365,11){$c$}
	\put(237,73){$\gamma$}

	\end{picture}
	\caption{}
	\label{lineseparationthm2figure}
\end{figure}

Parts 1 and 2 follow directly from the definition of the same direction relation. The first three parts show that the two points being of the `same side' of $a$ is a reflexive, symmetric, and transitive relation. One can use the symmetric and transitive properties along with the previous theorem to prove part 4. Thus we will only prove parts 3 and part 5. 

\begin{proof}
	
	By Axiom \ref{3noncolinearpoints} we know that at least one of $\alpha$, $\beta$, or $\gamma$ are not colinear with $a$ and $b$. Without loss of generality let $T(a,b,\gamma)$. The assumptions of both part 3 and part 5 imply $L(a,b,c)$ and $L(a,c,d)$. From this it can be shown that $\gamma$ is not colinear with any pair of points among $a$, $b$, $c$, or $d$. 
	
	To prove part 3 (see right side of Figure \ref{lineseparationthm2figure})note that $SD(a,b,c)$ implies $\lnot B(b,a,c)$ by the previous theorem. We now wish to show that $\lnot B(b,a,c)$ implies $SO(a \gamma b, a \gamma c)$. Suppose $\lnot SO(a \gamma b, a \gamma c)$, then by the last sentence in the previous paragraph we can infer $OO(a \gamma b, a \gamma c)$. We will show that this leads to a contradiction. Given $OO(a \gamma b, a \gamma c)$ we can infer certain properties about $cb(a, b \gamma c)$. By axiom \ref{crossbow} we know $B(b, cb(a, b \gamma c), c)$. Now if $cb(a, b \gamma c) = \gamma$, then $L(b, \gamma, c)$. This is a contradiction. Thus $cb(a, b \gamma c) \not= \gamma$. If $cb(a, b \gamma c) \not= a$, then $L(\gamma, cb(a, b \gamma c), a)$. We already know $L(b, cb(a, b \gamma c), c)$ and $L(a,b,c)$. By applying axiom \ref{uniqueline} repeatedly we can show $L(b,a,cb(a, b \gamma c))$, followed by $L(\gamma, b, cb(a, b \gamma c))$, then followed by $L(\gamma, b, c)$. This is a contradiction. Thus we can infer that $SO(a \gamma b, a \gamma c)$. By similar methods we can show that $SD(a,c,d)$ implies $SO(a \gamma c, a \gamma d)$. Thus by axiom \ref{SOtrans} we have $SO(a \gamma b, a \gamma d)$. We want to show that this implies $\lnot B(b,a,d)$. Suppose $B(b,a,d)$, then by axiom \ref{intersectOO} we could infer $OO(a \gamma b, a \gamma d)$. This is a contradiction. Thus $\lnot B(b,a,d)$. By the previous theorem this implies $SD(a,b,d)$. 
	
	To prove part 5 (see left side of Figure \ref{lineseparationthm2figure}) note that $B(b,a,c)$ implies $OO(a \gamma b, a \gamma c)$ and $B(d,a,c)$ implies $OO(a \gamma d, a \gamma c)$. By Axiom \ref{OOOOSO} we can infer $SO(a \gamma b, a \gamma d)$. This in turn implies $\lnot B(b,a,d)$ which implies $SD(a,b,d)$ by the previous theorem. 
\end{proof}

\section{Axioms of Congruence} \label{congruence}

Hilbert introduced two congruence relations. One relation was the congruence of line segments. The interpretation was that two line segments were congruent if they had the same length. The other relation was for two angles being congruent. Angles for Hilbert were formed by two rays that shared an endpoint. All angles were less than a straight angle. For us an angle is an ordered triple of three non-colinear points. Tarski was able to avoid discussing angles all together by a use of ``five-segment'' axiom that took the place of the Side-Angle-Side triangle congruence axiom used by Hilbert. In \cite{Beeson} it was shown that a Hilbert style angle congruence can be derived from Tarski's axioms. 

\subsection{Hilbert's Congruence Axiom 1}

In his first congruence axiom Hilbert states two important properties. First is that every line segment is congruent to itself. 

We introduce a relation for line segment congruence $C(a,b,c,d)$ which may be shortened as $C(ab,cd)$. Its interpretation will be that the line segment $ab$ is the same length as the line segment $cd$. The following axiom captures Hilbert's reflexive properties as well a stating that segment congruence is unaffected by reversing endpoints. 

\begin{axiom} \label{segcongreflexiveswitchendpoints}
	$C(ab,ab)$ and $C(ab,ba)$
\end{axiom}

It is important to point out that we will be allowing for the congruence of null segments such as $aa \cong bb$. In Hilbert's system segment congruence is only referred to when discussing distinct pairs of points. In \cite{HartshorneBook} Hartshorne follows this convention. Because of this, our definition of segment inequality will differ slightly for that of Hilbert and in turn Hartshorne. There are pros and cons to either convention, but we have chosen our because it will aid in defining and proving results about compass constructions later on. (It appears that Hartshornes section of circle continuity is in fact flawed in that it's defining of circle-circle continuity and this definition's application in the proof of line-circle continuity because the two different version of segment inequality are conflated.)

The second property of Hilbert's first congruence axiom is that given a ray with endpoint $a$ and a line segment $cd$, there is a unique point $x$ on the ray such that $ax$ is congruent to $cd$. 

Back in subsection \ref{sectionorderaxiom2} we stated that we would be introducing an extension construction $ext(a,b,c,d)$ (which may be shortened to $ext(ab,cd)$). The constructed point $ext(ab,cd)$ will be interpreted as being a point that extends the line segment $ab$ by the length of the line segment $cd$ in the direction from $a$ to $b$. These properties are formally codified in the following axiom [see Figure \ref{extfigure}]. 

\begin{axiom} \label{extcong}
	$a \not= b \rightarrow C(b,ext(ab,cd),c,d)$
\end{axiom}

\begin{axiom} \label{extbetween}
	$a \not= b \wedge c \not= d \rightarrow B(a, b, ext(ab,cd))$
\end{axiom}

\begin{figure}[h!]
	\begin{picture}(216,80)
	\put(102,0){\includegraphics[scale=.9]{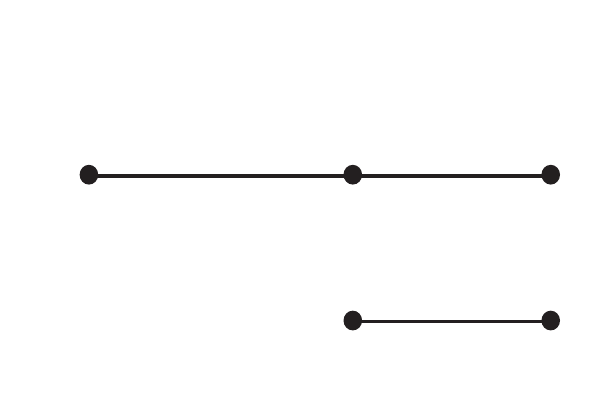}}
	\put(190,10){$c$}	
	\put(245,10){$d$}

	\put(120,45){$a$}	
	\put(190,45){$b$}	
	\put(245,45){$ext(ab,cd)$}
	
	\end{picture}
	\caption{}
	\label{extfigure}
\end{figure}

One will note that the condition $a \not= b$ is necessary since we conclude that $B(a, b, ext(ab,cd))$. Also one would not be able to make sense of a direction from $a$ to $b$ when $a =b$. We will be allowing extensions of non-null segments by null segments.  

We will now define a new construction from the extension. This construction will have a central importance in later sections about parallel line segments. 

\begin{definition} \label{doubdef}
	If $a \not= b$, then $doub(a,b) \equiv ext(ab,ab)$. Also $doub(a,a) = a$.
\end{definition}

We will call this construction the doubling construction. Its interpretation is a point that extends the line segment $ab$, in the direction from $a$ to $b$, by the length of $ab$ if $a \not= b$. In the case $a = b$ the construction is simply the point $a$. 

We now define a construction which will help satisfy the second property of Hilbert's first congruence axiom. 

\begin{definition} \label{layoffdef}
	$lf(ab,cd) \equiv ext(doub(b,a),a,c,d)$ [See Figure \ref{layofffigure}]
\end{definition}

\begin{figure}[h!]
	\begin{picture}(216,80)
	\put(70,0){\includegraphics[scale=.9]{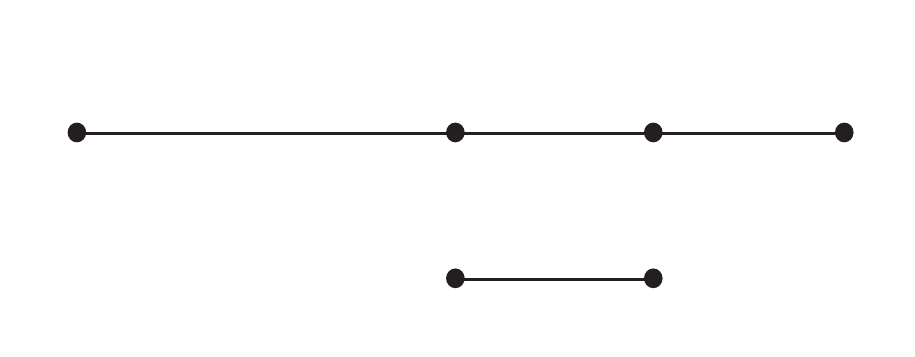}}
	\put(185,10){$c$}	
	\put(240,10){$d$}
	
	\put(80,45){$doub(b,a)$}	
	\put(185,45){$a$}	
	\put(220,64){$ext(doub(ba),a,c,d)$}
	\put(290,45){$b$}
	
	\end{picture}
	\caption{}
	\label{layofffigure}
\end{figure}

This construction will be called the layoff construction. One can easily verify $C(a,lf(ab,cd),c,d)$. One can also verify $SD(a,lf(ab,cd), b)$ by using results about line separation in subsection \ref{lineseparation}.

We want the points constructed by the extension construction to be unique in terms of order and congruence properties so we introduce the following axiom. 

\begin{axiom} \label{extunique}
	$(B(a,b,x) \lor b=x) \wedge C(bx,cd) \rightarrow x = ext(ab,cd)$
\end{axiom} 

This last definition and last axiom give us a layoff construction with analogous properties to Hilbert's first congruence axiom. 

We can now define an inequality relation between two line segments. 

\begin{definition} \label{segmentinequalitydef}
	$ab < cd \equiv B(c,lf(cd,ab),d) \lor (a=b \wedge c \not= d)$ 
	
	

\end{definition}

 It should be pointed out that by the uniqueness of the extension construction, if $B(a,b,c)$, then $ab < ac$. 

In the future we will have need of the following theorem. We omit the proof.  

\begin{theorem}
	$ab< cd \wedge cd< ef \rightarrow ab<ef$
\end{theorem}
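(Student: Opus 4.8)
The plan is to run the standard Hilbert-style argument for transitivity of segment inequality, after first clearing away the degenerate cases produced by the null-segment disjunct of Definition \ref{segmentinequalitydef}. First I would split on whether $a=b$. If $a=b$, then by the second disjunct of Definition \ref{segmentinequalitydef} it suffices to produce $e\not=f$; but $cd<ef$ already forces this, since either $c=d$ (whence $e\not=f$ by the corresponding disjunct) or $B(e,lf(ef,cd),f)$ (whence $e\not=f$ by Axiom \ref{betweenuniquepoint}). So I may assume $a\not=b$. Then the first disjunct of $ab<cd$ must hold, i.e.\ $B(c,lf(cd,ab),d)$, which forces $c\not=d$ by Axiom \ref{betweenuniquepoint}; hence the first disjunct of $cd<ef$ holds, i.e.\ $B(e,lf(ef,cd),f)$, which forces $e\not=f$. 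Abbreviate $p:=lf(cd,ab)$ and $q:=lf(ef,cd)$. Unwinding Definitions \ref{doubdef}, \ref{layoffdef} and applying Axiom \ref{extcong} gives $C(c,p,a,b)$ and $C(e,q,c,d)$, and we have $B(c,p,d)$ and $B(e,q,f)$ in hand. The goal is to prove $B(e,lf(ef,ab),f)$.

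The heart of the argument is to construct a witness point $x$ with $B(e,x,f)$ and $C(e,x,a,b)$, and then identify $x=lf(ef,ab)$ using the uniqueness of the extension construction. I would obtain $x$ by laying the segment $cp$ off from $e$ along the ray toward $f$ (equivalently, toward $q$). The three claims to check are: (i) $C(e,x,a,b)$, which follows by transitivity of segment congruence from $C(e,x,c,p)$ and $C(c,p,a,b)$; (ii) $B(e,x,q)$, the statement that a strictly shorter segment laid off from the common endpoint $e$ inside $eq\cong cd$ lands strictly inside, which follows from $B(c,p,d)$ and $C(e,q,c,d)$ by segment addition; and (iii) $B(e,x,f)$, obtained from (ii) and $B(e,q,f)$ by betweenness transitivity, which is itself derivable from Axioms \ref{betweenuniquepoint}--\ref{onlyonebetween} together with the line-separation results of Theorem \ref{lineseparationthm} (one uses part~3 to get $SD(e,x,f)$ and then Axiom \ref{onlyonebetween} plus part~4 to exclude the degenerate orderings $x=e$, $x=f$, $B(e,f,x)$). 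Finally, since $B(e,x,f)$ and $B(f,e,doub(f,e))$ (the latter by Axiom \ref{extbetween}, reversed via Axiom \ref{betweenreverse}) yield $B(doub(f,e),e,x)$ by Theorem \ref{lineseparationthm}(4), Axiom \ref{extunique} applied to $C(e,x,a,b)$ forces $x=ext(doub(f,e),e,a,b)=lf(ef,ab)$; hence $B(e,lf(ef,ab),f)$, i.e.\ $ab<ef$, and together with the $a=b$ case this finishes the proof.

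The main obstacle is that steps (i) and (ii) rest on the transitivity of segment congruence and on segment additivity --- the analogues of Hilbert's second and third congruence axioms --- which are introduced only after the present subsection on Congruence Axiom~1, and this is presumably why the proof is deferred here. Everything else is routine: the null-segment bookkeeping at the start is the only genuinely new wrinkle, and the betweenness juggling in (iii) and in the final identification of $x$ stays entirely within the order and line-separation toolkit already built. An essentially equivalent packaging, worth noting, is to first prove the identity $lf(ef,ab)=lf(eq,ab)$ --- which is available right now, purely from Axiom \ref{extunique} together with Theorem \ref{lineseparationthm}, since $doub(f,e)$ and $doub(q,e)$ lie on the same side of $e$, opposite to the ray on which both layoffs are placed --- thereby reducing the whole statement to establishing $ab<eq$ from $ab<cd$ and $C(e,q,c,d)$, i.e.\ the right-invariance of segment inequality under congruence; this is once again exactly the place where the later congruence axioms enter.
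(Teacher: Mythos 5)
Your proof is correct, and in fact the paper simply states this theorem with ``We omit the proof,'' so there is nothing to compare against: your argument (split off the null-segment disjunct, lay $cp$ off from $e$ toward $f$, use congruence transitivity, segment addition, and Axiom \ref{extunique} to identify the witness with $lf(ef,ab)$, and derive the needed betweenness transitivity from Theorem \ref{lineseparationthm}) is exactly the standard Hilbert-style argument one would supply here. Your observation that the proof must borrow the congruence-transitivity and segment-addition axioms from later subsections correctly explains why the paper defers (indeed omits) it.
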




\subsection{Hilbert's Congruence Axiom 2}

The following axiom is identical to Hilbert's second congruence axiom. 

\begin{axiom} \label{segcongtrans}
	$C(ab,cd) \wedge C(ab,ef) \rightarrow C(cd,ef)$
\end{axiom}

From Axiom \ref{segcongreflexiveswitchendpoints} and Axiom \ref{segcongtrans} we see that line segment congruence is reflexive, symmetric, and transitive. 

\subsection{Hilbert's Congruence Axiom 3}

Hilbert's third congruence axiom states that if $ab$ and $bc$ are two segments on the same line that have no points in common other than $b$ and $de$ and $ef$ are two segments one the same line that have no points in common other than $e$, then if $ab$ is congruent to $de$ and $bc$ is congruent to $ef$, we have $ac$ is congruent to $df$. This is know as the addition of segments axiom.  We capture this axiom as follows. 

\begin{axiom}
	$B(a,b,c) \wedge B(d,e,f) \wedge C(ab,de) \wedge C(bc,ef) \rightarrow C(ac,df)$
\end{axiom}

\subsection{Hilbert's Congruence Axiom 4}

Hilbert's fourth congruence axiom deals with the construction of congruent angles. First we will introduce an angle congruence relation. The relation will be notated $AC(a,b,c,d,e,f)$ (shortened to $AC(abc,def)$ for readability). In order to avoid having to speak about the congruence of colinear or non distinct triples will have the following axiom.

\begin{axiom} \label{anglecongnoncolinear}
	$AC(abc,def) \rightarrow T(a,b,c) \wedge T(d,e,f)$
\end{axiom}

Hilbert's fourth axiom states that given an angle $abc$ and a line $de$, then on a given side of the line $de$ there is a unique ray $dx$ such that angle $edx$ is congruent to angle $abc$. [Hilbert's original axiom also included properties about the interior of angles. We will be following the convention of Hartshorne and not included this. We have done the necessary work to codify the interior of angles in Section \ref{angleorientationsec}] We capture this axiom by introducing a construction. Our angle transport (same side) construction will be denoted $ats(a,b,c,d,e,f)$ (shortened to $ats(abc,def)$). We interpret this constructed point to have two important properties. We want that the angle formed from $d$ to $e$ to $ats(abc,def)$ to be congruent with the angle $abc$. We also want that the constructed point is on the same side of $de$ as $f$. The following axiom captures these properties as well as requiring that the segment $bc$ is congruent to the segment from $e$ to $ats(abc,def)$. 

\begin{figure}[h!]
	\begin{picture}(216,80)
	\put(70,0){\includegraphics[scale=.9]{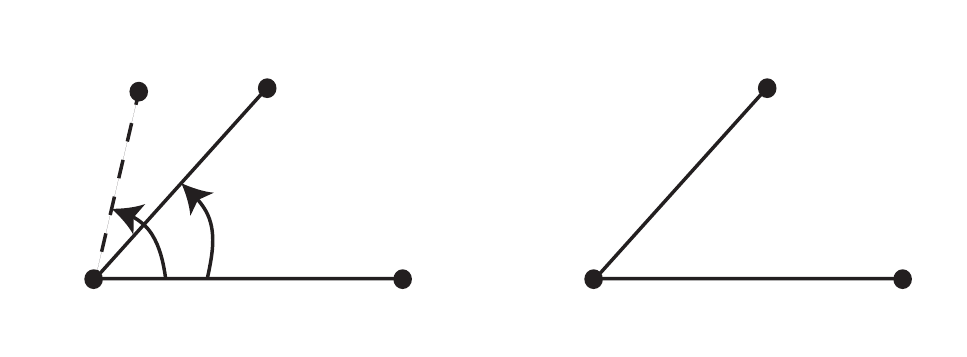}}
	\put(177,10){$d$}	
	\put(85,10){$e$}	
	\put(130,75){$ats(abc,def)$}	
	\put(110,65){$f$}	
	
	\put(305,10){$a$}	
	\put(217,10){$b$}	
	\put(275,68){$c$}
	
	\end{picture}
	\caption{}
	\label{}
\end{figure}

\begin{axiom} \label{atsproperties}
		$T(a,b,c) \wedge T(d,e,f) \rightarrow AC(a,b,c,d,e,ats(abc,def)) \wedge$
	
		 $SS(ats(abc,def),f,de) \wedge C(b,c,e,ats(abc,def))$.
\end{axiom}

	

In order to fully capture Hilbert's axiom we will need to define an angle transport opposite side construction . 

\begin{definition} \label{atodef}
	$ato(abc,def) \equiv ats(a,b,c,d,e,doub(f,e))$  [See Figure \ref{atofigure}]
\end{definition}

\begin{figure}[h!] 
	\begin{picture}(216,120)
	\put(70,0){\includegraphics[scale=.9]{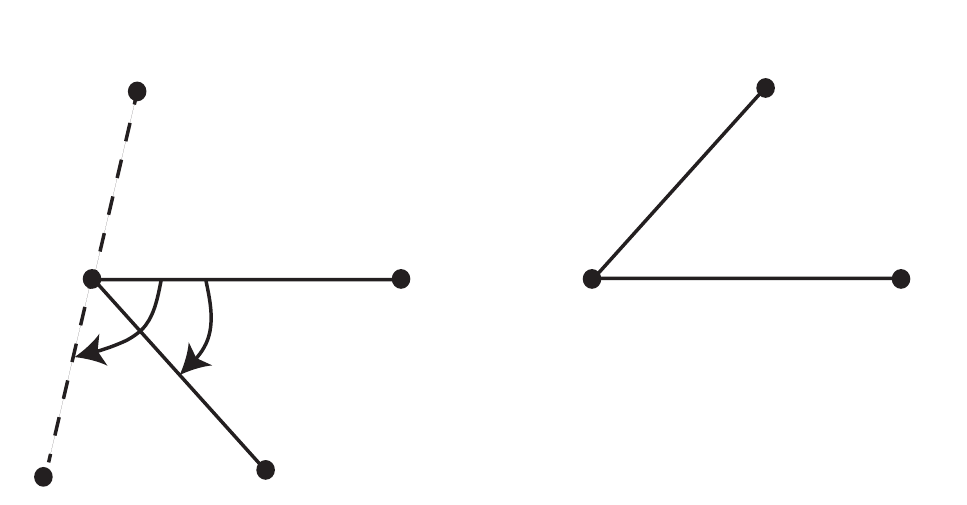}}
	\put(177,48){$d$}	
	\put(83,50){$e$}	
	\put(145,5){$ats(abc,def)$}	
	\put(85,3){$doub(f,e)$}	
	\put(110,105){$f$}	
	
	\put(305,48){$a$}	
	\put(217,48){$b$}	
	\put(275,106){$c$}
	
	\end{picture}
	\caption{}
\label{atofigure}
\end{figure}

By axiom \ref{intersectOO}, if $T(d,e,f)$, then $OS(doub(f,e),f,de)$

Lastly we need to guaranty that angle construction are unique up to segment congruence, angle congruence, and angle orientation. The following axiom does this. 

\begin{axiom} \label{atsunique}
	 $AC(abc,dex) \wedge SS(f,x,de) \wedge C(bc,ex) \rightarrow x = ats(abc,def)$
\end{axiom}

From these axioms one can prove the following theorem. This theorem can be interpreted as saying that two congruent angles that have a shared initial side and both of their terminal sides on the same side of the shared initial side have their terminal sides lying on the same ray.

 \begin{theorem} \label{conganglessamesidesameray}
	$AC(abc,abd) \wedge SS(c,d,ab) \rightarrow SD(b,c,d)$
\end{theorem}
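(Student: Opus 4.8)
The statement to prove is $AC(abc,abd) \wedge SS(c,d,ab) \rightarrow SD(b,c,d)$. The plan is to use the uniqueness axiom for the angle transport construction (Axiom \ref{atsunique}) as the main lever, but first I must repair a mismatch: Axiom \ref{atsunique} requires a congruence of segments $C(bc,ex)$ in its hypothesis, whereas here we are only told the angles $abc$ and $abd$ are congruent, with no control on the length of $bd$. So the first step is to replace $d$ by a point $d'$ on ray $bd$ with $bd'$ congruent to $bc$; concretely, set $d' \equiv lf(bd,bc)$, the layoff construction from Definition \ref{layoffdef}. By the remarks following that definition we have $C(b,d',b,c)$ (equivalently $C(bc,bd')$ after using symmetry from Axiom \ref{segcongreflexiveswitchendpoints}) and $SD(b,d',d)$.

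The second step is to show that this substitution preserves everything needed. Since $SD(b,d',d)$, extending or shortening along the ray does not change angle orientation: using Axiom \ref{SOray} (together with Theorem \ref{SOOOOO} / the symmetry and transitivity of $SO$) we get $SO(abd,abd')$, hence $SS(d,d',ab)$, and then by Theorem \ref{sideslinethm1}(4) combined with the hypothesis $SS(c,d,ab)$ we obtain $SS(c,d',ab)$, i.e. $SS(d',c,ab)$ after part (2). We also need $AC(abc,abd')$: this should follow because congruent angles are unchanged when a side is extended or shortened along its ray — this is the angle-congruence analog of Axiom \ref{SOray}. If such a statement is not explicitly available as a prior result, I would instead argue directly: by Axiom \ref{atsproperties} the point $ats(abc,abc)$ has $AC(a,b,c,a,b,ats(abc,abc))$, and one pins down $ats(abc,abc) = c$ via Axiom \ref{atsunique} using reflexivity of $SS$ (Theorem \ref{sideslinethm1}(1)) and of $C$; the genuinely needed fact is just that $AC$ is reflexive and that layoff along a ray is an $AC$-preserving move, which is part of the intended reading of Hilbert's fourth congruence axiom as captured here.

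The third step applies Axiom \ref{atsunique} with the roles $AC(abc, abd') \wedge SS(c,d',ab) \wedge C(bc, bd')$ in hand (reading the "$x$" of that axiom as $d'$, the "$def$" as "$abc$", and the side-witness "$f$" as $c$, since $SS(d',c,ab)$ says $d'$ is on the $c$-side of $ab$). This yields $d' = ats(abc,abc)$. But by Axiom \ref{atsproperties} applied to the pair $(abc,abc)$ and the uniqueness argument sketched above, $ats(abc,abc) = c$ as well, so $d' = c$. Since $d' = lf(bd,bc)$ satisfies $SD(b,d',d)$, we conclude $SD(b,c,d)$, which is the claim.

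I expect the main obstacle to be the bookkeeping around which argument slot of Axiom \ref{atsunique} plays which role — in particular making sure the "same side" witness and the base ray are set up consistently — together with confirming that "$AC$ is reflexive" and "layoff along a ray preserves $AC$" are either available as stated earlier or are immediate from Axiom \ref{atsproperties} and Axiom \ref{atsunique}. If the latter causes friction, the cleanest route is to prove the auxiliary lemma $T(a,b,c) \wedge SD(b,c,c') \rightarrow AC(abc,abc')$ first (mirroring how Axiom \ref{SOray} handles orientation), and then the uniqueness step goes through verbatim. Everything else is routine manipulation of the $SD$, $SS$, $SO$, and $C$ relations using the results already established in Sections \ref{Pasch}, \ref{lineseparation}, and \ref{congruence}.
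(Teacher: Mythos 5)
Your proposal is correct, and since the paper states this theorem without proof I can only say it is surely the intended argument: lay off $d'=lf(bd,bc)$ on the ray $bd$ so that $C(bd',bc)$ and $SD(b,d',d)$, transfer the hypotheses to $d'$, use Axiom \ref{atsunique} to identify both $d'$ and $c$ with $ats(abc,abc)$, and read $SD(b,c,d)$ off from $SD(b,d',d)$. The one point you flag as possible friction --- whether ``layoff along a ray preserves $AC$'' is available --- resolves itself: that statement is exactly Axiom \ref{samerayanglecongruence}, which appears in the paper immediately \emph{after} the theorem, as do the reflexivity axiom $AC(abc,abc)$ (needed to pin down $ats(abc,abc)=c$) and the transitivity axiom for $AC$ (needed to pass from $AC(abc,abd)$ and $AC(abd,abd')$ to $AC(abc,abd')$). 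So no auxiliary lemma is required, but you are right to be suspicious of the paper's phrase ``from these axioms'': the theorem as placed actually depends on three axioms stated later in the same subsection, and your proof makes that dependence explicit. Your handling of the orientation side is also right --- Axiom \ref{SOray} only rewrites the initial side of an angle, so you need Lemma \ref{SOOOreverseboth} to flip the triples before and after applying it in order to get $SO(abd,abd')$, which your appeal to the symmetry/transitivity machinery covers.
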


We can fully capture Hilbert's properties of angle transportation and angle congruence by additionally assuming the following axiom. This axiom states that an angle constructed by shortening or extending the terminal side of an angle is congruent to the original angle.

\begin{axiom} \label{samerayanglecongruence}
	$SD(b,c,d) \wedge T(a,b,c) \rightarrow AC(abc,abd)$
\end{axiom}

We will now take a quick aside in order to define inequality relations for angles. 

\begin{definition}
	$abc < def \equiv Int(ats(abc,def), def)$ 
	
	
\end{definition}

Note if  $Int(d, abc)$, then $abd < abc$. 

For the proof of the feasibility of the crossbow construction we have need of the following Theorem.

\begin{theorem} \label{angleinequalitytransthm}
	If $abc < def$ and $def < ghi$, then $abc < ghi$. 
\end{theorem}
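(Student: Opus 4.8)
The plan is to transport the angle $abc$ into the ambient triangle $ghi$ and show the transported ray lies inside $\angle ghi$ by "nesting" it inside the transported copy of $\angle def$. Throughout, set $p := ats(abc,def)$ and $q := ats(def,ghi)$. By the definition of the angle inequality the two hypotheses say precisely $Int(p,def)$ and $Int(q,ghi)$, and Axiom~\ref{atsproperties} additionally records $AC(abc,dep)$, $C(bc,ep)$, $SS(p,f,de)$ and $AC(def,ghq)$, $C(ef,hq)$, $SS(q,i,gh)$. (Note $T(g,h,q)$ holds by Axiom~\ref{anglecongnoncolinear}, so $\angle ghq$ is a legitimate angle.) The goal $abc<ghi$ unwinds to $Int(ats(abc,ghi),ghi)$.

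First I would pin down $ats(abc,ghi)$ relative to the frame $ghq$. Write $r:=ats(abc,ghi)$; by Axiom~\ref{atsproperties} it satisfies $AC(abc,ghr)$, $SS(r,i,gh)$, $C(bc,hr)$. Since $SS(q,i,gh)$ as well, symmetry and transitivity of the same-side relation (Theorem~\ref{sideslinethm1}) give $SS(r,q,gh)$, so by the uniqueness Axiom~\ref{atsunique} we get $r=ats(abc,ghq)$; likewise, using reflexivity of $AC$ (Axiom~\ref{samerayanglecongruence}) and $C(bc,hr)$, one checks $r=ats(ghr,ghq)$. Consequently "$Int(r,ghq)$" is literally the assertion $abc<ghq$ (equivalently $ghr<ghq$). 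I would then derive $Int(r,ghq)$ from the hypothesis $abc<def$ together with $AC(def,ghq)$: an angle inequality is preserved when the larger angle is replaced by a congruent one. This principle is the "interior compatibility" that Hartshorne and the present paper choose not to axiomatize but to derive, and its proof uses that angle congruence is transitive (the analog of Hilbert's fifth congruence axiom); applied to the congruence realized by $q=ats(def,ghi)$, it carries the interior ray $ep$ of $\angle def$ to the ray $hr$ of $\angle ghq$.

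It remains to prove the nesting step: $Int(r,ghq)\wedge Int(q,ghi)\rightarrow Int(r,ghi)$. Writing out the definition of $Int$, the component $SO(ghr,ghi)$ is immediate from $SO(ghr,ghq)$, $SO(ghq,ghi)$ and transitivity of $SO$ (Axiom~\ref{SOtrans}). For the remaining component I would use the Crossbar Theorem (Theorem~\ref{crossbar}) together with Axiom~\ref{crossbow} and Theorem~\ref{intOO} to realize the interior ray $hq$ as an honest point $q^{*}$ with $B(g,q^{*},i)$ and $SD(h,q^{*},q)$, then realize $hr$ as a point $r^{*}$ with $B(g,r^{*},q^{*})$ and $SD(h,r^{*},r)$; betweenness transitivity gives $B(g,r^{*},i)$, Theorem~\ref{betweensideint} gives $Int(r^{*},ghi)$, and the ray-invariance of $Int$ (Axiom~\ref{SOray} and Lemma~\ref{SOOOreverseboth}) transfers this to $Int(r,ghi)$. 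Combined with the previous paragraph this yields $Int(r,ghi)$, i.e. $abc<ghi$.

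The main obstacle is the nesting step: it is the analog of Hilbert's proposition on the ordering of the rays through a common vertex, and — in contrast to the bookkeeping with the $ats$-uniqueness axioms — it genuinely exercises the crossbar construction and betweenness transitivity, with a little care needed each time one passes between a ray and a chosen point on it. A secondary point to watch is the appeal, in the transport step, to preservation of angle inequality under angle congruence, which (unlike everything else used) rests on angle-congruence transitivity rather than on material from the excerpt alone.
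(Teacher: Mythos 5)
Your proposal is correct and fills in exactly the argument the paper gestures at: the paper omits this proof entirely, saying only that it ``boils down to using methods of proof about angle orientation,'' and your reduction to (i) relocating $ats(abc,ghi)$ into the frame $ghq$ via Axiom~\ref{atsunique}, (ii) transferring the hypothesis $Int(p,def)$ across the congruence $AC(def,ghq)$, and (iii) the nesting step $Int(r,ghq)\wedge Int(q,ghi)\rightarrow Int(r,ghi)$ via the crossbar machinery, Theorem~\ref{betweensideint}, and $SO$-transitivity is a faithful execution of that plan. The one sub-step you leave as a black box --- that $abc<def$ together with $AC(def,ghq)$ yields $abc<ghq$ --- is indeed the only piece not reducible to the Section~\ref{angleorientationsec} toolkit alone (it needs angle transport and triangle congruence in the style of Hartshorne's ordering-of-angles propositions, which the paper imports wholesale in its Section 6), and you identify its dependencies correctly; note only that the reflexivity of $AC$ you invoke is the paper's unlabeled axiom $AC(abc,abc)$, not Axiom~\ref{samerayanglecongruence}.
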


We omit the proof of this lemma given that one only has to check that the the correct line segment is interior to the correct angle. This simply boils down to using methods of proof about angle orientation that were covered extensively in Section \ref{angleorientationsec}.


In Hilbert's fourth congruence axiom Hilbert also states that every angle is congruent to itself. We in addition will also need to state that angle congruence is unaffected by reversing the order of triples. 

\begin{axiom}
	$AC(abc,abc) \wedge AC(abc,cba)$
\end{axiom}

\subsection{Hilbert's Congruence Axiom 5}
  
Hilbert's fifth congruence axiom is nearly identical to the following axiom. 
  
\begin{axiom}
$AC(abc,def) \wedge AC(abc,ghi) \rightarrow AC(def,ghi)$
\end{axiom} 
      
The last two axioms show that the angle congruence relation is reflexive, symmetric, and transitive. 
  
\subsection{Hilbert's Congruence Axiom 6}

Hilbert's sixth and final congruence axiom is a weak form of the side-angle-side congruence for triangles. The following axiom is identical to Hilbert's sixth congruence axiom. 


\begin{axiom}
	$T(a,b,c) \wedge T(d,e,f) \wedge C(ab,de) \wedge C(ac,df) \wedge AC(bac,edf) \rightarrow  AC(abc,def)$
	
	
\end{axiom}

\section{Sums and Differences, Supplemental Angles, Right Angles, and Triangle Congruences}

In this section we will be referring heavily to Hartshorne's text \cite{HartshorneBook}, in particular sections 8, 9 and 10. At this time we have captured (without the notion of a line) what is referred to as a Hilbert Plane which is a Euclidean plane without the notion of parallel lines and circles. In Section 8 of \cite{HartshorneBook} Hartshorne develops the concept of the difference of two segments. Note the sum of two segments was discussed above [See \cite{HartshorneBook} Propositions 8.2 and 8.3.] Fortunately he does not present this topic by defining operations on congruence classes of segments. His methods carry over to our system Thus his proofs that the differences of congruent segments are congruent can be incorporated into our system. In section 9 Hartshorne develops the sum and difference of angles. [See \cite{HartshorneBook} Proposition 9.4 and Exercise 9.1.] In a similar fashion his concepts and results that the sum and difference of congruent angle are congruent carry over to our system with simple modifications. In particular one important modification needed is using the SameDirection relation instead of the concept of rays. In this work we have not needed to invoke use of these results thus we not be explicitly developing them here. 

In the same section Hartshorne also defines and then develops the properties of supplemental angles, vertical angles, right angles, and in Section 10 he proves the various triangle congruence theorems such as Side-Angle-Side, Angle-Side-Angle and Side-Side-Side. We will define appropriate versions of supplemental angles, vertical angles, and right angles. From these definitions one can directly carry over Hartshorne's theorems about these topics as well as the triangle congruence theorems into corresponding proofs for our system. Thus we will not be developing the basic properties of supplemental, vertical and right angles or the basic triangle congruence theorems. We will however provide a uniform construction of a right angle, the uniform construction of erecting a perpendicular, and the uniform construction of dropping a perpendicular in the next section.

\begin{definition}
	If $B(a,b,c)$ and $T(a,c,d)$ we say angle $dba$ and angle $dbc$ are supplementary. 
\end{definition}

\begin{figure}[h!]
	\begin{picture}(216,30)
	\put(130,0){\includegraphics[scale=.9]{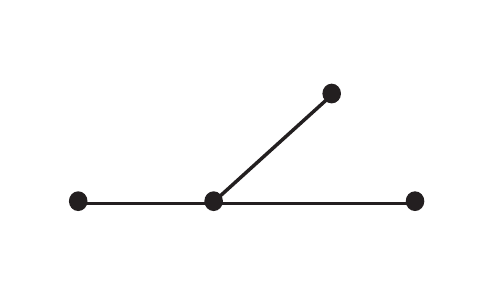}}
	\put(220,48){$d$}	
	\put(240,18){$c$}	
	\put(142,18){$a$}	
	\put(180,14){$b$}	
	\end{picture}
	\caption{}
	\label{}
\end{figure}

\begin{definition}
	If $B(a,c,b)$, $B(d,c,e)$ and $T(a,c,d)$ we say angle $acd$ is vertical to angle $bce$. 
\end{definition}

\begin{figure}[h!]
	\begin{picture}(216,100)
	\put(115,0){\includegraphics[scale=.9]{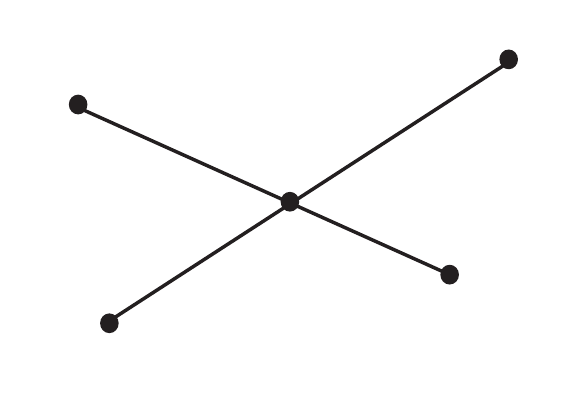}}
	\put(188,56){$c$}	
	\put(235,24){$b$}	
	\put(140,77){$a$}	
	\put(150,14){$d$}	
	\put(238,90){$e$}	
	\end{picture}
	\caption{}
	\label{}
\end{figure}

\begin{definition}
		Angle $abc$ is a right angle if it is congruent to one of its supplements.
	
\end{definition}


In \cite{HartshorneBook} it is proved that supplements of congruent angles are congruent, vertical angles are congruent, and all right angles are congruent.

\section{Midpoints, Perpendiculars, and Angle Bisectors}

From Hilbert's axioms there is a proof stating that given a line segment $ab$ one can construct a point $c$ such that triangle $acb$ is isosceles with $ac$ congruent to $bc$ (Theorem 10.2 in \cite{HartshorneBook}). This construction is a vital ingredient for proving the existence of midpoints and angle bisectors. The construction does have a case distinction depending on the angle congruence relationship between angles $dab$ and $dba$ for some point $d$ which is non-colinear with $a$ and $b$. The three case distinctions are $dab > dba$, $dab < dba$, or $dab \cong dba$. This means that there is a different procedure for constructing the point $c$ depending on this relationship. Because of this, stating such a theorem would be complicated in our system. Additionally, this will violate our goal of have only case free constructions. T





First we state a theorem pertaining to uniformly constructing a point which is our analog to dropping a perpendicular from a point off a segment to that given segment. 

\begin{theorem}\label{dropperpthm}
	Given $T(a,b,c)$ there is a uniform construction of a point $p$ such that $\widetilde{L}(a,p,b)$ and $apc$ or $bpc$ is right. 
\end{theorem}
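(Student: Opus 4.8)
The plan is to carry out the classical ``reflect and intersect'' construction inside our quantifier-free setting. Given $T(a,b,c)$, I would first reflect $c$ across the line through $a$ and $b$ by setting $c' := ato(bac,bac)$, and then take $p := cb(a,c,b,c')$, the crossbow point at which line $ab$ meets the segment $cc'$. The point $p$ is then claimed to be the desired foot.

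First I would check that both constructions are applicable. Since $ato(bac,bac) = ats(b,a,c,b,a,doub(c,a))$ by Definition \ref{atodef}, I need $T(b,a,c)$ (a permutation of the hypothesis) and $T(b,a,doub(c,a))$; the latter holds because $B(c,a,doub(c,a))$ puts $doub(c,a)$ on line $ca$, so colinearity of $b,a,doub(c,a)$ would force $L(a,b,c)$ via Axiom \ref{uniqueline}. Axiom \ref{atsproperties} then yields the three defining properties of $c'$: $AC(bac,bac')$, $C(ac,ac')$, and $SS(c',doub(c,a),ba)$. Because $B(c,a,doub(c,a))$ gives $OS(c,doub(c,a),ba)$ by Axiom \ref{intersectOO}, combining this with the same-side fact and the triangle-orientation axiom together with the $SO$/$OO$ parity results of Section \ref{angleorientationsec} yields $OS(c,c',ab)$, i.e.\ $OO(abc,abc')$. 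This is exactly the hypothesis of Axiom \ref{crossbow}, so applying $cb(a,c,b,c')$ produces a point $p$ with $\widetilde{L}(b,p,a)$ --- hence $\widetilde{L}(a,p,b)$, the first half of the conclusion --- and $B(c,p,c')$.

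For the right-angle half, the key idea is that $p$ lies strictly between $c$ and $c'$, so angle $apc$ and angle $apc'$ are supplementary (once $T(c,c',a)$ is confirmed), while the triangles $apc$ and $apc'$ share the side $ap$, have $C(ac,ac')$, and have congruent angles at $a$; by the side-angle-side axiom this forces $AC(apc,apc')$, and an angle congruent to its own supplement is right by definition. The one genuinely delicate point is establishing $AC(pac,pac')$: if $p$ is on the ray from $a$ through $b$ (the case $p=b$ included, since $SD(a,b,b)$) it follows from $AC(bac,bac')$ by Axiom \ref{samerayanglecongruence} together with reversal and transitivity of $AC$; if instead $B(b,a,p)$, then angle $pac$ and angle $pac'$ are the supplements of angle $bac$ and angle $bac'$, and supplements of congruent angles are congruent (Hartshorne's results, which carry over), so again $AC(pac,pac')$. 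The remaining, degenerate case $p=a$ is handled separately: there $B(c,a,c')$ holds, angle $bac$ and angle $bac'$ are supplementary, and $AC(bac,bac')$ already gives that angle $bac = bpc$ is right. Thus in every case angle $apc$ or angle $bpc$ is right.

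The main obstacle I expect is not any single deep step but the non-degeneracy bookkeeping: one must repeatedly verify that the relevant triples --- $T(a,p,c)$, $T(c,c',a)$, $T(c,c',b)$, $T(p,b,c)$, and so on --- really are triangles, using Axioms \ref{uniqueline} and \ref{betweenuniquepoint} and the line-separation results of subsection \ref{lineseparation}, and one must carefully split the argument by the position of $p$ on line $ab$. Each such verification is routine, but the case split over the position of $p$ (same ray as $b$, opposite ray, $p=a$, or $p=b$) is what makes the write-up lengthy.
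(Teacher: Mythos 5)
Your proposal is correct and follows essentially the same route as the paper: reflect $c$ across $ab$ using the opposite-side angle transport, apply the crossbow construction to obtain the foot $p$ on $ab$ with $B(c,p,c')$, and conclude via Side-Angle-Side that $apc$ is congruent to its supplement $apc'$ (with the degenerate case $p=a$ handled separately, where $bpc$ is the right angle). The only differences are cosmetic --- you build $c'$ by transporting the angle at $a$ rather than at $b$ and anchor the crossbow at $a$ rather than $b$ --- and your write-up is in fact more explicit than the paper's about the non-degeneracy checks and the case split on the position of $p$ along line $ab$.
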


\begin{figure}[h!]
	\begin{picture}(216,110)
	\put(70,0){\includegraphics[scale=.9]{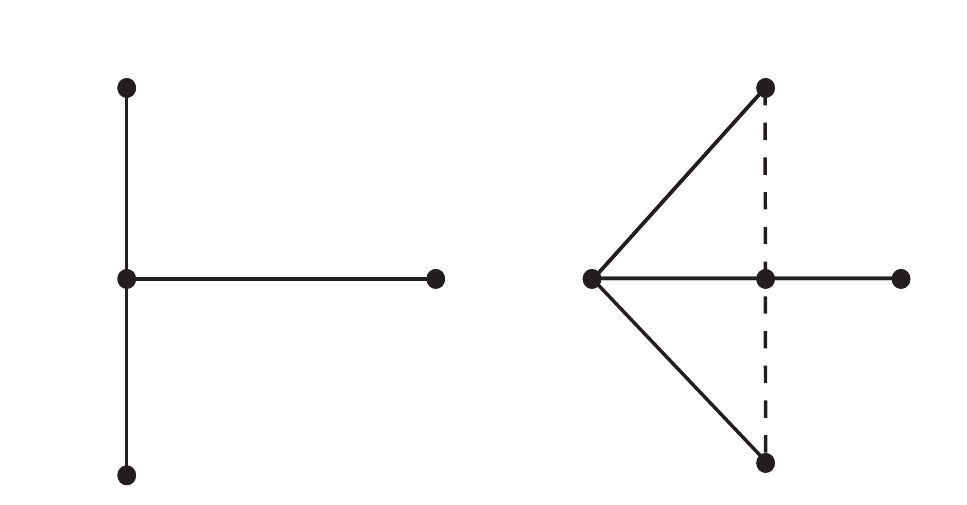}}
	
	\put(70,55){$p=a$}
	\put(90,0){$c'$}
	\put(90,105){$c$}	
	\put(180,45){$b$}	

	\put(212,55){$a$}
	\put(273,65){$p$}
	\put(273,105){$c$}
	\put(273,0){$c'$}
	\put(310,55){$b$}

	\end{picture}
	\caption{}
	\label{droppepfig}
\end{figure}

\begin{proof}
	(Reference Figure \ref{droppepfig}.) Let $T(a,b,c)$. Let $c' = ato(abc,abc)$. (Thus $c'$ is the point resulting from reflecting $c$ over $ab$.) Let $p=(b,cac')$. (Note that there is no requirement for $cac'$ to be an angle.) If $p=a$, then $bpc \cong bpc'$ and $L(c,p,c')$. Thus we can conclude $bpc$ is right. If $p \not= a$, then we can conclude $T(c,a,p)$ since $\widetilde{L}(a,p,b)$. Similarly we have $T(c',a,p)$. Given how $c'$ was constructed, we can conclude that triangle $cap$ is congruent to triangle $c'ap$ by Side-Angle-Side. Since $B(c,p,c')$ we have $apc$ is right. 
\end{proof}

In order to provide uniform constructions of erecting a perpendicular line segment, constructing a isosceles triangle with a given base, bisecting an angle, and others we will need to add only one more undefined construction. We introduce the midpoint construction denoted $mid(a,b)$. Its interpretation is simply a point $c$ that is colinear with $a$ and $b$ where $ac$ is congruent to $bc$ when $a$ and $b$ are distinct. The following axiom captures these properties. Note that we could have replace the conclusion $B(a, mid(a,b), b)$ with  $L(a, mid(a,b), b)$ in the first axiom. 

\begin{axiom}
	$a \not= b \rightarrow B(a, mid(a,b), b) \wedge C(ac,bc)$
\end{axiom}

\begin{axiom}
	$mid(a,a)=a$
\end{axiom}

We can now prove a theorem that states that one can create a right angle. 

\begin{theorem} \label{rightangleconstructionthm}
	Angle $\alpha d \beta$ where $d=   mid(\alpha, lf(\beta \gamma, \alpha \beta))$ is a right angle.
\end{theorem}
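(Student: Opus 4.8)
The plan is to recognize that this construction produces an isosceles triangle together with the midpoint of its base, and then to run the classical argument that the median drawn to the base of an isosceles triangle meets that base at right angles.

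First I would unpack the construction. Set $e := lf(\beta\gamma,\alpha\beta)$ and $d := mid(\alpha,e)$, so the claim is that angle $\alpha d \beta$ is right. From the remarks following Definition \ref{layoffdef} (or, unwinding Definitions \ref{doubdef} and \ref{layoffdef} and applying Axioms \ref{extcong} and \ref{extbetween}) one obtains $C(\beta e,\alpha\beta)$ and $SD(\beta,e,\gamma)$; in particular $e$ lies on the ray from $\beta$ through $\gamma$, so triangle $\alpha\beta e$ is isosceles with apex $\beta$. I would then record the non-degeneracy facts the rest of the proof rests on: $e \neq \beta$ (since $doub(\gamma,\beta)\neq\beta$ by Axiom \ref{extbetween}, whence $B(doub(\gamma,\beta),\beta,e)$ again by Axiom \ref{extbetween}), $e \neq \alpha$, and $\lnot L(\alpha,\beta,e)$ — each of which follows from $SD(\beta,e,\gamma)$ together with $T(\alpha,\beta,\gamma)$: were $\alpha,\beta,e$ colinear, then since $e$ is on line $\beta\gamma$, repeated use of Axiom \ref{uniqueline} would force $L(\alpha,\beta,\gamma)$. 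From the midpoint axiom (applicable since $\alpha\neq e$) we get $B(\alpha,d,e)$ and $C(\alpha d, ed)$; combined with the previous sentence this yields $T(\alpha,\beta,d)$, $T(e,\beta,d)$, and $T(\alpha,e,\beta)$ (checking pairwise distinctness of $\alpha,\beta,d,e$ along the way).

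Next I would compare triangles $\alpha d \beta$ and $e d \beta$. Their sides correspond: $\alpha d \cong e d$ (midpoint), $d\beta \cong d\beta$ (Axiom \ref{segcongreflexiveswitchendpoints}), and $\beta\alpha \cong \beta e$ (from $C(\beta e,\alpha\beta)$ via the reversal and transitivity axioms \ref{segcongreflexiveswitchendpoints} and \ref{segcongtrans}). Invoking the Side-Side-Side congruence theorem — which the paper notes carries over from \cite{HartshorneBook} — the triangles are congruent, so the angles at the vertex $d$ are congruent: $AC(\alpha d\beta, e d\beta)$. Finally, since $B(\alpha,d,e)$ and $T(\alpha,e,\beta)$, the angles $\beta d\alpha$ and $\beta d e$ are supplementary by the definition of supplementary angles; using the axiom that $AC$ is invariant under reversing a triple, $e d\beta$ is thus a supplement of $\alpha d\beta$, and we have just shown $\alpha d\beta$ congruent to it. Hence angle $\alpha d\beta$ is congruent to one of its supplements, i.e. it is a right angle by definition.

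The SSS step and the closing supplementary-angle step are immediate. The part requiring the most care is the middle bookkeeping: verifying that all the relevant triples genuinely form triangles and that no two of $\alpha,\beta,d,e$ coincide, which is where one must chase the betweenness consequences of the extension construction through Definitions \ref{doubdef}–\ref{layoffdef} and repeatedly apply Axiom \ref{uniqueline} to rule out $L(\alpha,\beta,e)$. I expect this non-degeneracy verification, rather than any geometric idea, to be the main obstacle.
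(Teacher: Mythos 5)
Your proof is correct and follows essentially the same route as the paper's: the layoff construction gives an isosceles triangle $\alpha\beta e$ with $\beta\alpha \cong \beta e$, the midpoint $d$ splits it into two triangles congruent by Side-Side-Side, and the resulting congruent supplementary angles at $d$ force $\alpha d\beta$ to be right. The only difference is that you spell out the non-degeneracy bookkeeping (distinctness and non-colinearity of $\alpha,\beta,d,e$) that the paper leaves implicit.
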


\begin{figure}[h!]
	\begin{picture}(216,120)
	\put(85,0){\includegraphics[scale=.9]{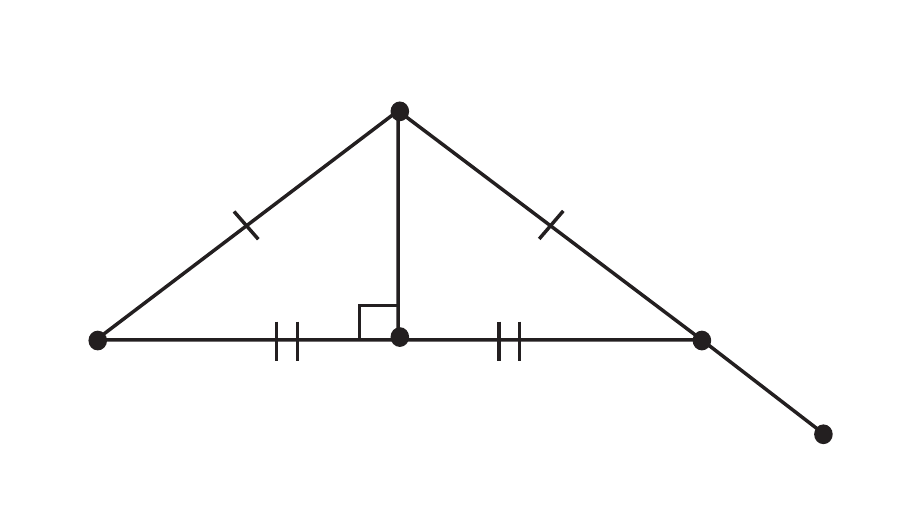}}
	\put(185,30){$d$}	
	\put(270,45){$c$}	
	\put(297,8){$\gamma$}	
	\put(188,108){$\beta$}	
	\put(100,35){$\alpha$}	
	\end{picture}
	\caption{}
	\label{rightangleconstruction}
\end{figure}

\begin{proof}
	Recall that $\alpha$, $\beta$, and $\gamma$ are three constants that by Axiom \ref{3noncolinearpoints} form a triangle. To construct our right angle we will first construct the point $lf(\beta \gamma, \alpha \beta)=c$. We then construct the point $m(\alpha, lf(\beta \gamma, \alpha \beta))=d$. See Figure \ref{rightangleconstruction}. Note that angle $\alpha d \beta$ and angle $cd\beta$ are supplemental. Also triangles $\alpha d \beta$ and angle $cd\beta$ are congruent and by the side-side-side triangle congruence theorem we have that the two supplemental angles which are congruent. Thus angle $\alpha d \beta$ is a right angle. 
\end{proof}

Given this theorem we can erect a perpendicular segment at a given point on a line segment on the same side as some point off of the line segment using our angle transport construction. We can also construct a perpendicular bisecting segment on either side of a given segment by using the midpoint construction and the angle transport construction. Furthermore we can prove that given a line segment $ab$ there is a point $c$ such that $abc$ is a isosceles triangle with base $ab$. This is accomplished by constructing a perpendicular bisector segment of $ab$ and letting $c$ be the endpoint of the bisector that is not the midpoint of $ab$. We can also construct a segment that bisects a given angle by first using the lay off construction to create congruent sides of the angle and then constructing the midpoint of the segment between the two non-shared endpoint of these sides. The segment from the vertex of the original angle to this midpoint can be proved to bisect the original angle [see figure \ref{anglebisector}].

\begin{figure}[h!]
	\begin{picture}(216,80)
	\put(115,0){\includegraphics[scale=.9]{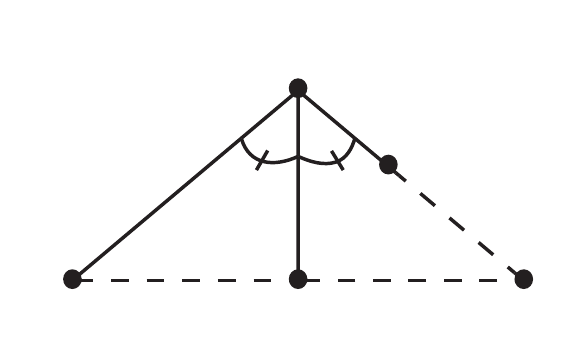}}
	\put(173,5){$m(d,c)$}	
	\put(221,49){$a$}	
	\put(257,10){$lf(ba,bc)$}	
	\put(190,75){$b$}	
	\put(122,15){$c$}	
	\end{picture}
	\caption{}
	\label{anglebisector}
\end{figure}

Lastly it should be noted that in the system devised by Tarski the construction of angle bisetors, midpoints and perpendiculars relied on the constructions involving circles (compass constructions) and a circle circumscription construction which is logically equivalent to Eulcid's parallel postulate. Euclid also invokes compass constructing for some of this constructions. 

\section{Parallelograms and Parallel Segments}\label{Parallelsection}

This section, although modified and extended, is based off of Suppes' work in \cite{SuppesAxioms} which was heavily influenced by the work of Szmielew in \cite{SzmielewBook}. In \cite{SuppesAxioms} Suppes defined what he called a constructive affine plane. The system was quantifier-free and the only objects were points. The primitive constructions were that of finding the midpoint between two points and the (directed) doubling of a segment. From these constructions Suppes was able to define when two line segments were parallel. We will follow Suppes in his definition of parallel lines segments, but it should be noted that that we will be extending may of his results since our system contains the concepts of angle orientation and angle congruence. 

For his constructive affine plane, Suppes assumed axioms pertaining to the relation of three points being colinear and properties of the midpoint and doubling constructions. Suppes used an undefined non-strict linear relation. We defined this relation from a strict linear relation in definition \ref{nonstrictcolinear}. 

All but one of Suppes' axioms about the midpoint and doubling constructions are provable in our system. We list these results in the following theorem. 

\begin{theorem} \hspace{1in}
	
	\begin{enumerate}
		\item $mid(a,a)=a$
		\item $mid(a,b)=mid(b,a)$
		\item $mid(a,b)=mid(a,b')\rightarrow b=b'$
		\item $doub(a,b)=doub(b,a) \rightarrow a=b$
		\item $doub(a,b)=doub(a,b') \rightarrow b=b'$
		\item $doub(a,b)=doub(a',b) \rightarrow a=a'$
		\item $mid(a,doub(a,b))=b$
		\item $\widetilde{L}(a,b,mid(a,b))$
	\end{enumerate}
\end{theorem}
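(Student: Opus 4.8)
The plan is to verify each of the eight items by unwinding the definitions of $mid$, $doub$, $ext$, $lf$, and $\widetilde{L}$, and then invoking the appropriate congruence, order, and line-separation axioms already established. Items (1) and the case $a=b$ of (4) are immediate from the stipulated axioms $mid(a,a)=a$ and $doub(a,a)=a$. For (2), I would argue that $mid(b,a)$ satisfies $B(b,mid(b,a),a)$ and $C(b\,mid(b,a), a\,mid(b,a))$; by Axiom~\ref{betweenreverse} this gives $B(a,mid(b,a),b)$, and then the uniqueness embedded in item (3) (which I prove first, or in tandem) forces $mid(b,a)=mid(a,b)$.

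For the uniqueness-type claims (3), (5), (6), the key tool is the uniqueness of the extension construction, Axiom~\ref{extunique}: if $(B(a,b,x)\lor b=x)\wedge C(bx,cd)$ then $x=ext(ab,cd)$. Item (5), $doub(a,b)=doub(a,b')\Rightarrow b=b'$, and item (6) follow by noting that $b$ (resp.\ $b'$) is characterized among points of the ray by the single congruence/betweenness condition relative to the doubled point, so Axiom~\ref{extunique} pins it down; one must handle the degenerate cases $a=b$ or $a=b'$ separately using Axiom~\ref{betweenuniquepoint}. Item (3) is analogous but phrased in terms of $mid$: given $B(a,m,b)$ with $C(am,bm)$, the point $b$ is recoverable from $a$ and $m$ as the unique extension of $am$ by $am$ lying beyond $m$, i.e.\ $b=ext(am,am)=doub(a,m)$, so $m=mid(a,b)$ determines $b$. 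Item (7), $mid(a,doub(a,b))=b$, is then essentially the statement that $doub$ and $mid$ are mutually inverse: when $a\neq b$, $doub(a,b)=ext(ab,ab)$ satisfies $B(a,b,doub(a,b))$ and $C(b\,doub(a,b),ab)$ by Axioms~\ref{extcong} and \ref{extbetween}, so $b$ is colinear between $a$ and $doub(a,b)$ with $C(ab,b\,doub(a,b))$, which is exactly the defining property of $mid(a,doub(a,b))$; uniqueness (item 3) closes it, and the case $a=b$ is trivial. Item (4) for $a\neq b$: if $doub(a,b)=doub(b,a)$ then $B(a,b,doub(a,b))$ and $B(b,a,doub(a,b))=B(b,a,doub(b,a))$, so both $B(a,b,\cdot)$ and $B(b,a,\cdot)$ hold for the same point, contradicting Axiom~\ref{onlyonebetween} (after using Axiom~\ref{betweenreverse}); hence $a=b$. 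Finally, item (8), $\widetilde{L}(a,b,mid(a,b))$, splits on $a=b$ (then $mid(a,b)=a$ and $\widetilde{L}$ holds trivially by Definition~\ref{nonstrictcolinear}) versus $a\neq b$ (then $B(a,mid(a,b),b)$ gives $L(a,b,mid(a,b))$ after a permutation, hence $\widetilde{L}$).

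The main obstacle I anticipate is not any single deep step but the careful bookkeeping of degenerate cases — $a=b$, $b=x$, null segments — since the constructions are defined piecewise and the uniqueness axioms carry hypotheses like $a\neq b$ or "$B(a,b,x)\lor b=x$". In particular, item (3)'s proof must establish $b=doub(a,m)$ cleanly, which requires knowing $doub$ restricted to a ray is injective (item 5) and that $mid(a,b)\neq a$ when $a\neq b$ (from Axiom~\ref{betweenuniquepoint} applied to $B(a,mid(a,b),b)$); so items (3) and (5) are mildly entangled and should be proved together or in the right order. Everything else is a routine application of Axioms~\ref{betweenreverse}, \ref{onlyonebetween}, \ref{extcong}, \ref{extbetween}, and \ref{extunique} together with the line-separation results of Subsection~\ref{lineseparation}.
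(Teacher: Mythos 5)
The paper itself offers no proof of this theorem (it only remarks that these properties of Suppes' are provable in the system), so there is nothing to compare your route against; judged on its own, your plan is right for items (1), (3), (4), (6), and (8), but it has one genuine soft spot that infects items (2), (5), and (7). All three of those reduce to the claim that a segment has at most one midpoint: two points $m,m'$ with $B(a,m,d)$, $C(am,md)$, $B(a,m',d)$, $C(am',m'd)$ must coincide. You assert that Axiom~\ref{extunique} ``pins it down,'' but that axiom only determines the \emph{output} $ext(ab,cd)$ from the data $a,b,c,d$; it says nothing about recovering the inner point $b$ from $a$ and $ext(ab,cd)$. Writing $b=ext(am,am)$ recovers $b$ from $m$, which is the wrong direction for item (5): from $doub(a,b)=doub(a,b')=d$ you get two bisecting points of $ad$ and no congruence/betweenness statement of the form required by Axiom~\ref{extunique} that has $b$ (rather than $d$) in the output position. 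Relatedly, your appeals to ``the uniqueness embedded in item (3)'' for items (2) and (7) are a category error: item (3) is injectivity of the map $b\mapsto mid(a,b)$, which your $doub(a,mid(a,b))=b$ identity does prove, but it is not the statement that a fixed segment has a unique bisecting point, which is what (2) and (7) actually need.

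The missing lemma is provable, but it takes real work that your sketch does not contain: from $B(a,m,d)$ and $B(a,m',d)$ with $m\neq m'$ you must first order $m$ and $m'$ on the segment (via Axiom~\ref{uniqueline} and the line-separation results, Theorem~\ref{lineseparationthm}), obtain the four-point betweenness fact $B(m,m',d)$ from $B(a,m,m')$ and $B(a,m',d)$, and then derive $am < am' \cong m'd < md \cong am$, contradicting irreflexivity of segment inequality via its transitivity (the theorem following Definition~\ref{segmentinequalitydef}). Once that lemma is in hand, your reductions go through: (5) and (7) follow immediately, and (2) follows either directly or via $b=doub(a,mid(a,b))=doub(a,mid(b,a))$ together with (5). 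So the fix is to state and prove midpoint uniqueness as a standalone lemma before items (2), (5), (7), rather than leaning on Axiom~\ref{extunique} for a job it cannot do.
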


As we said above, there is one crucial axiom of Suppes that we can not prove in our system at this point. At this time we have captured many of the results pertaining to a Hilbert Plane and have thus made no reference to parallel lines. If we wish to fully capture the nature of the plane and in particular the properties of parallel line segments we will need another assumption. Given the objective of this paper, we cannot use an assumption like Euclid's fifth or Playfair's axiom. Based off of the work of Suppes and Szmielew, we will show that by assuming the midpoint construction has the property of bisymmetry (which was called bicommutativity by Suppes and Szmielew) we can capture all the necessary features of parallel line segments. 

We assume the following axiom. 

\begin{axiom}

	$mid(mid(a,b),mid(c,d))=mid(mid(a,c),mid(b,d))$
\end{axiom}


We now define a parallelogram relation for four ordered points. 

\begin{definition}
	Points $a$, $b$, $c$, and $d$ form a parallelogram, denoted $P(a,b,c,d)$, if and only if $mid(a,c) = mid(b,d)$. [See Figure \ref{parallelogramfigure}]
\end{definition}

\begin{figure}[h!]
	\begin{picture}(216,150)
	\put(73,0){\includegraphics[scale=.9]{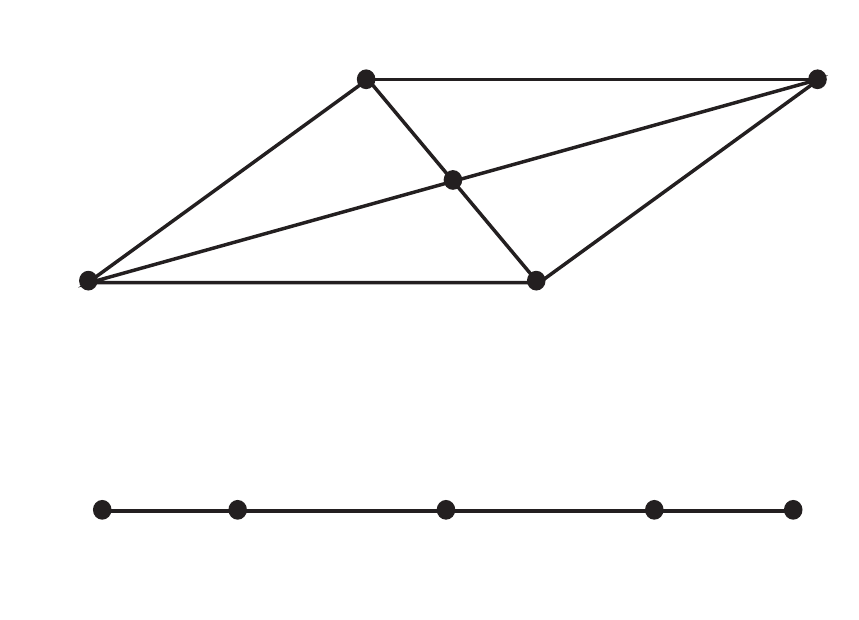}}

	\put(212,77){$a$}	
	\put(90,75){$b$}	
	\put(160,145){$c$}	
	\put(290,140){$d$}	
	\put(200,107){\footnotesize{$m(a,c)$}}	
	\put(155,114){\footnotesize{$m(b,d)$}}

	\put(162,11){\footnotesize{$m(a,c)=m(b,d)$}}	
	\put(95,17){$d$}
	\put(280,15){$b$}		
	\put(133,33){$a$}	
	\put(240,33){$c$}	
	\end{picture}
	\caption{}
	\label{parallelogramfigure}
\end{figure}

Note that we are allowing for flat parallelograms where $a$, $b$, $c$, and $d$ are all pairwise colinear with the midpoint $mid(a,c) = mid(b,d)$. (Which was not allowed by Suppes, but was allowed by Szmielew.) From the above definition one can prove the following results. We reference Suppes' analogous theorems, but do note that he proofs, while containing the necessary ingredients for proving these theorem, would need slight modifications since our definition of the parallelogram relation is not identical to Suppes'.

\begin{theorem} 
$P(a,b,c,d) \rightarrow P(c,b,a,d)$
\end{theorem}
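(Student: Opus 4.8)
The statement to prove is $P(a,b,c,d) \rightarrow P(c,b,a,d)$, which by definition unfolds to: if $mid(a,c) = mid(b,d)$, then $mid(c,a) = mid(b,d)$.

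The plan is to work directly from the definition of the parallelogram relation, which says $P(a,b,c,d)$ holds if and only if $mid(a,c) = mid(b,d)$, and $P(c,b,a,d)$ holds if and only if $mid(c,a) = mid(b,d)$. First I would invoke the commutativity of the midpoint construction, which is item 2 of the theorem listing provable consequences of Suppes' axioms, namely $mid(a,b) = mid(b,a)$; applied with the arguments $a$ and $c$ this gives $mid(a,c) = mid(c,a)$. Then, assuming the hypothesis $mid(a,c) = mid(b,d)$, a single substitution yields $mid(c,a) = mid(b,d)$, which is exactly the condition defining $P(c,b,a,d)$. So the proof is essentially one line: unfold both sides, apply midpoint commutativity in the first argument pair, and chain the equalities.

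The only "obstacle" — and it is a trivial one — is making sure that the commutativity lemma is stated in a form that applies here; since item 2 of the cited theorem is exactly $mid(a,b) = mid(b,a)$ for arbitrary points, there is nothing to check. No use of the bisymmetry axiom is needed for this particular permutation, since swapping $a$ and $c$ in the pair $\{a,c\}$ only touches one of the two midpoints appearing in the definition. I expect the author's proof to be a two-sentence argument of precisely this shape, possibly just citing Theorem item 2 and the definition of $P$.

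In summary, the proof proceeds: (i) by definition, $P(a,b,c,d)$ means $mid(a,c) = mid(b,d)$; (ii) by commutativity of $mid$, $mid(a,c) = mid(c,a)$; (iii) therefore $mid(c,a) = mid(b,d)$; (iv) by definition, this is precisely $P(c,b,a,d)$. No case distinctions arise, and the argument is entirely uniform, consistent with the paper's stated goal of case-free reasoning.
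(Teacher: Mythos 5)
Your proof is correct and is exactly the intended argument: the paper states this result without proof, and the only content is unfolding the definition $P(a,b,c,d) \equiv mid(a,c)=mid(b,d)$ and applying the commutativity $mid(a,c)=mid(c,a)$ from the listed midpoint properties. Nothing further is needed.
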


\begin{theorem} (\cite{SuppesAxioms} Theorem 5)
$b \not= c \wedge P(a,b,c,d) \rightarrow \lnot P(a,c,b,d)$
\end{theorem}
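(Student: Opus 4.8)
The plan is to argue by contradiction using the definition $P(a,b,c,d) \equiv mid(a,c) = mid(b,d)$ together with the bisymmetry axiom and the cancellation properties for $mid$ listed in the theorem preceding the parallelogram definition (in particular part 3, that $mid(a,b) = mid(a,b') \rightarrow b = b'$, and parts 1 and 2, reflexivity-on-diagonal and symmetry of $mid$). Suppose $b \not= c$, $P(a,b,c,d)$, and, for contradiction, $P(a,c,b,d)$. Unpacking the two parallelogram hypotheses gives $mid(a,c) = mid(b,d)$ and $mid(a,b) = mid(c,d)$. The goal is to derive $b = c$ from these two equations, contradicting $b \not= c$.

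The key step is to feed these equalities into the bisymmetry axiom $mid(mid(a,b),mid(c,d)) = mid(mid(a,c),mid(b,d))$. On the left-hand side, since $mid(a,b) = mid(c,d)$, both arguments of the outer $mid$ coincide, so by part 1 of the theorem (applied with $a := mid(a,b)$) the left-hand side collapses to $mid(a,b)$ itself. On the right-hand side, since $mid(a,c) = mid(b,d)$, the outer $mid$ again has two equal arguments and collapses to $mid(a,c)$. Hence $mid(a,b) = mid(a,c)$, and now part 3 of the theorem (cancellation: $mid(a,b) = mid(a,b') \rightarrow b = b'$, with $b' := c$) yields $b = c$, the desired contradiction.

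One subtlety to handle carefully is the degenerate case $a = b$ or $a = c$ (or other coincidences), since $mid$ is defined piecewise with $mid(a,a) = a$; I would check that the chain above still goes through, as the bisymmetry axiom and the cancellation lemma are stated without non-degeneracy hypotheses, so no case split should actually be needed. I expect the main (minor) obstacle to be purely bookkeeping: making sure that each collapse of an outer $mid$ with equal arguments is justified by the correct instance of $mid(x,x) = x$ and that the final cancellation is applied with the arguments in the right order (using symmetry of $mid$ from part 2 if necessary to line up $mid(a,b) = mid(a,c)$ for part 3). This is exactly the kind of routine $mid$-manipulation that Suppes carries out in the cited Theorem 5 of \cite{SuppesAxioms}, so the argument should transfer with only the cosmetic adjustments the authors already flagged.
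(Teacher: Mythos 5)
Your proof is correct and is exactly the intended argument: the paper omits the proof and defers to Suppes' Theorem 5, whose proof is precisely this bisymmetry-plus-collapse-plus-cancellation computation, and all three ingredients you use ($mid(a,a)=a$, symmetry, and the cancellation $mid(a,b)=mid(a,b')\rightarrow b=b'$) are available unconditionally in the paper's system, so no degenerate-case split is needed.
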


\begin{theorem} (\cite{SuppesAxioms} Theorem 9 and 10)
	$P(a,b,c,d) \rightarrow P(b,c,d,a) \wedge P(c,d,a,b) \wedge P(d,a,b,c) \wedge P(c,b,a,d) \wedge P(b,a,d,c) \wedge P(a,d,c,b) \wedge P(d,c,b,a)$
\end{theorem}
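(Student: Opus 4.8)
The plan is to reduce everything to the single defining identity $mid(a,c)=mid(b,d)$ together with the commutativity of $mid$ (part 2 of the Suppes-axioms theorem, $mid(x,y)=mid(y,x)$), and then observe that the eight target configurations split into just two genuinely distinct cases. First I would record that $P(a,b,c,d)$ means $mid(a,c)=mid(b,d)$, and that $P(a,d,c,b)$ means $mid(a,c)=mid(d,b)$; since $mid(b,d)=mid(d,b)$ by commutativity, $P(a,b,c,d)\leftrightarrow P(a,d,c,b)$ is immediate. This already pairs up the list: $P(b,c,d,a)$ with $P(b,a,d,c)$, $P(c,d,a,b)$ with $P(c,b,a,d)$ (the latter being exactly the earlier Theorem $P(a,b,c,d)\to P(c,b,a,d)$), and $P(d,a,b,c)$ with $P(d,c,b,a)$. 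So it suffices to establish the four "cyclic shift" statements $P(b,c,d,a)$, $P(c,d,a,b)$, $P(d,a,b,c)$, and the already-available $P(c,b,a,d)$, and the rest follow by one application of commutativity each.

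Next I would handle $P(c,d,a,b)$: by definition this is $mid(c,a)=mid(d,b)$, which follows from $mid(a,c)=mid(b,d)$ by applying commutativity to both sides. That leaves the two odd cyclic shifts $P(b,c,d,a)$ and $P(d,a,b,c)$, which are the only ones requiring the bisymmetry axiom rather than mere commutativity. For $P(b,c,d,a)$ I must show $mid(b,d)=mid(c,a)$; but we are given $mid(a,c)=mid(b,d)$, and $mid(c,a)=mid(a,c)$ by commutativity, so in fact this too is immediate — and likewise $P(d,a,b,c)$ asserts $mid(d,b)=mid(a,c)$, again just the hypothesis with commutativity applied. So on reflection the bisymmetry axiom is not needed at all for this theorem; every one of the eight conclusions is obtained from $mid(a,c)=mid(b,d)$ by at most two uses of $mid(x,y)=mid(y,x)$, exactly as in Suppes' treatment of the analogous statement.

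The only real subtlety — and the step I would expect to slow things down — is bookkeeping: with eight target tuples it is easy to mis-transcribe which pair of coordinates plays the role of the "diagonal" $\{1,3\}$ versus $\{2,4\}$ in each relabeling, so I would lay out a small table matching each tuple $(w,x,y,z)$ to its defining equation $mid(w,y)=mid(x,z)$ and then verify line by line that each such equation is $mid(a,c)=mid(b,d)$ up to swapping arguments of $mid$. I would also note, as the theorem statement's own cross-reference to Suppes Theorems 9 and 10 suggests, that the proof is a routine adaptation of his argument, the only change being that our parallelogram relation permits degenerate (flat) parallelograms; since the argument never invokes non-degeneracy, nothing needs to be added to accommodate that generality.
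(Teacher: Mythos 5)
Your proof is correct and is essentially the argument the paper intends (the paper itself omits the proof, deferring to Suppes): each of the seven target tuples $(w,x,y,z)$ unwinds to $mid(w,y)=mid(x,z)$, which is the hypothesis $mid(a,c)=mid(b,d)$ up to the commutativity $mid(x,y)=mid(y,x)$ and symmetry of equality, with no appeal to bisymmetry. Your closing observation that the degenerate (flat) case costs nothing is also right, since the definition $P(a,b,c,d)\equiv mid(a,c)=mid(b,d)$ imposes no non-colinearity condition.
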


The following two theorems show that given three distinct non colinear points one can construct a fourth point to form a parallelogram and that this point is unique. 

\begin{theorem} (\cite{SuppesAxioms} Theorem 7)
	$P(a,b,c,doub(b,m(a,c)))$ [See Figure \ref{parallelogramconstruction}.]
\end{theorem}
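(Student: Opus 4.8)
The plan is to reduce the statement directly to part 7 of the preceding theorem (the list of Suppes' provable properties), namely the unconditional identity $mid(a,doub(a,b))=b$. First I would unwind the definition of the parallelogram relation: since $P(a,b,c,d)$ means $mid(a,c)=mid(b,d)$, proving $P\bigl(a,b,c,doub(b,mid(a,c))\bigr)$ is exactly the task of showing
\[
mid(a,c)=mid\bigl(b,\,doub(b,mid(a,c))\bigr).
\]
Abbreviating $m:=mid(a,c)$, this becomes the claim $m=mid(b,doub(b,m))$.

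Now I would simply invoke part 7 of the theorem with its first argument instantiated to the point $b$ and its second argument instantiated to the point $m$: this gives $mid(b,doub(b,m))=m$ directly. Chaining the two equalities, $mid(a,c)=m=mid\bigl(b,doub(b,mid(a,c))\bigr)$, which is precisely the definition of $P\bigl(a,b,c,doub(b,mid(a,c))\bigr)$, so the proof is complete.

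There is essentially no obstacle here; the only subtlety worth flagging is that we rely on part 7 holding with no side conditions. In particular the degenerate case $b=mid(a,c)$ is handled automatically, since then $doub(b,m)=doub(b,b)=b$ by Definition \ref{doubdef} and $mid(b,b)=b=m$ by part 1 of the same theorem, which is consistent with part 7. Note also that no distinctness or non-colinearity hypothesis on $a$, $b$, $c$ is actually needed for the formal statement, even though the surrounding discussion motivates it by the case of three distinct non-colinear points (where the constructed fourth point genuinely completes a non-flat parallelogram). If one preferred not to cite part 7, one could instead derive $mid(b,doub(b,m))=m$ from Axiom \ref{extcong}, Axiom \ref{extbetween}, and the uniqueness Axiom \ref{extunique} applied to $ext(bm,bm)$, but since part 7 is already recorded as provable, citing it is the cleanest route.
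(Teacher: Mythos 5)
Your proof is correct: unwinding the definition of $P$ and instantiating part 7 of the preceding theorem ($mid(a,doub(a,b))=b$) at $b$ and $m=mid(a,c)$ gives exactly the required identity, and your handling of the degenerate case is consistent with Definition \ref{doubdef}. The paper omits the proof entirely (deferring to Suppes' Theorem 7), and your argument is precisely the intended one-line reduction, so there is nothing to add.
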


\begin{figure}[h!]
	\begin{picture}(216,80)
	\put(73,0){\includegraphics[scale=.9]{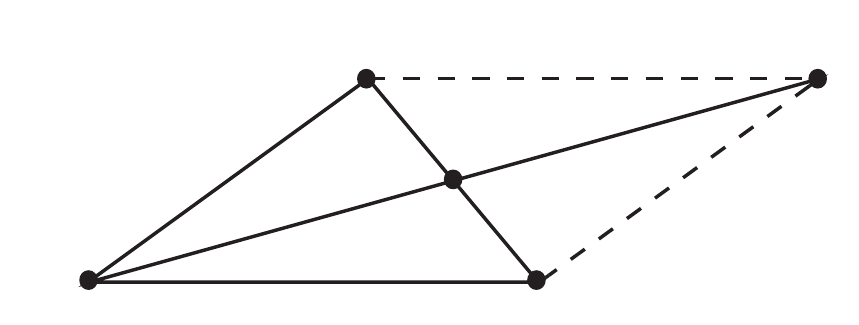}}
	
	\put(212,2){$a$}	
	\put(90,0){$b$}	
	\put(160,70){$c$}	
	\put(290,65){$d(b,m(a,c))$}	
	\put(200,32){\footnotesize{$m(a,c)$}}

	\end{picture}
	\caption{}
	\label{parallelogramconstruction}
\end{figure}

\begin{theorem} (\cite{SuppesAxioms} Theorem 6)
	$P(a,b,c,d) \wedge P(a,b,c,d') \rightarrow d=d'$ 
\end{theorem}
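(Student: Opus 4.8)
The statement to prove is the uniqueness of the fourth vertex of a parallelogram: $P(a,b,c,d) \wedge P(a,b,c,d') \rightarrow d=d'$. The plan is to unwind the definition of the parallelogram relation and reduce everything to a cancellation property of the midpoint construction that has already been established. By definition, $P(a,b,c,d)$ means $mid(a,c) = mid(b,d)$, and $P(a,b,c,d')$ means $mid(a,c) = mid(b,d')$. Chaining these two equalities through transitivity of equality gives $mid(b,d) = mid(b,d')$ directly.

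From here, the key step is to invoke part 3 of the earlier theorem listing the provable Suppes-style midpoint properties, namely $mid(a,b) = mid(a,b') \rightarrow b = b'$ (cancellation on the second argument). Applying this with the roles played by $b$ (fixed first argument) and $d$, $d'$ (the two second arguments) immediately yields $d = d'$, completing the proof. So the entire argument is: expand both hypotheses via the definition of $P$, use transitivity (and symmetry) of equality to obtain $mid(b,d) = mid(b,d')$, then apply midpoint cancellation.

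I do not expect any real obstacle here; the proof is essentially a two-line syntactic manipulation once the definition of $P$ is substituted in. The only thing to be slightly careful about is that the cancellation lemma as stated cancels on the \emph{second} argument of $mid$, so one should make sure the common argument $b$ sits in the first slot — which it does, since $P(a,b,c,\cdot)$ puts the free vertex as the second argument of $mid(b,\cdot)$. If one preferred the common argument in the other slot, part 2 of that same theorem ($mid(a,b) = mid(b,a)$, i.e. symmetry of the midpoint construction) would let one swap arguments first. Either way, the argument closes in a single application of midpoint cancellation, so this is a routine verification rather than a substantive proof, which is presumably why the companion theorems in this block are stated with references to Suppes rather than with full proofs.
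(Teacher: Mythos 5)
Your proof is correct: unwinding the definition $P(a,b,c,d) \equiv mid(a,c)=mid(b,d)$ gives $mid(b,d)=mid(b,d')$, and part 3 of the listed midpoint properties ($mid(a,b)=mid(a,b')\rightarrow b=b'$) cancels the common first argument $b$ to yield $d=d'$. The paper omits the proof, deferring to Suppes, but this is exactly the intended one-step argument.
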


The next theorem will be used to show that the parallel relation between line segments is transitive, but for now it will be stated in terms of parallelograms. 

\begin{theorem} (\cite{SuppesAxioms} Theorem 12)
	$P(a,b,c,d) \wedge P(c,d,e,f) \rightarrow P(a,b,f,e)$ [See Figure \ref{parallelogramtrans}]
\end{theorem}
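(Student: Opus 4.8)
The plan is to prove this purely algebraically from the bisymmetry axiom, using only the commutativity and cancellation properties of $mid$ recorded in the immediately preceding theorem about Suppes' axioms (items 2 and 3). Unwinding the definition of the parallelogram relation, the hypothesis $P(a,b,c,d)$ says $mid(a,c)=mid(b,d)$, the hypothesis $P(c,d,e,f)$ says $mid(c,e)=mid(d,f)$, and the desired conclusion $P(a,b,f,e)$ is the single equation $mid(a,f)=mid(b,e)$. So the whole task reduces to deriving that one equation from the other two together with bisymmetry.

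First I would apply the bisymmetry axiom twice, each time splitting four of the six points into two pairs. One application (with the substitution sending the axiom's four variables to $a,f,c,e$ in order) gives $mid(mid(a,f),mid(c,e)) = mid(mid(a,c),mid(f,e))$; the other (sending them to $b,e,d,f$) gives $mid(mid(b,e),mid(d,f)) = mid(mid(b,d),mid(e,f))$. Next I would observe that the two right-hand sides coincide: $mid(a,c)=mid(b,d)$ by the first hypothesis, and $mid(f,e)=mid(e,f)$ by commutativity of $mid$, so $mid(mid(a,c),mid(f,e)) = mid(mid(b,d),mid(e,f))$. Chaining these three equalities yields $mid(mid(a,f),mid(c,e)) = mid(mid(b,e),mid(d,f))$.

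Finally I would use the second hypothesis $mid(c,e)=mid(d,f)$ to rewrite this as $mid(mid(a,f),m) = mid(mid(b,e),m)$ where $m := mid(c,e)$, and then apply commutativity to put $m$ in the first slot and cancellation (item 3, $mid(x,y)=mid(x,y')\rightarrow y=y'$) to conclude $mid(a,f)=mid(b,e)$, i.e.\ $P(a,b,f,e)$.

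I do not anticipate a genuine obstacle here: the argument is a short computation, and since bisymmetry, commutativity, and cancellation hold for arbitrary points, no case distinctions arise — in particular the flat (degenerate) parallelograms allowed by our definition are handled with no extra work. The only point requiring care is the index bookkeeping in the two bisymmetry applications, so that after invoking the hypotheses the two right-hand sides genuinely match.
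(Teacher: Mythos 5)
Your proof is correct and is essentially the argument the paper intends by citing Suppes' Theorem 12: two applications of the bisymmetry axiom, matching of the right-hand sides via the hypothesis $mid(a,c)=mid(b,d)$ and commutativity, and then cancellation using $mid(c,e)=mid(d,f)$. The index bookkeeping in your two substitutions checks out, and the argument indeed covers flat parallelograms with no extra cases.
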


\begin{figure}[h!]
	\begin{picture}(216,90)
	\put(73,0){\includegraphics[scale=.9]{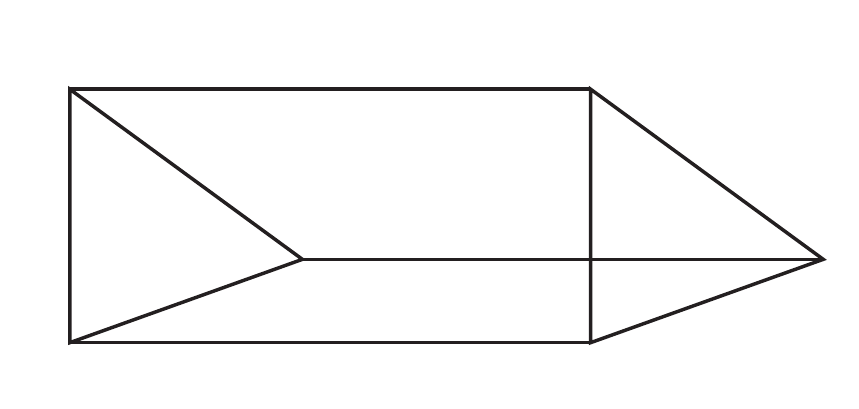}}
	
	\put(225,5){$b$}	
	\put(229,85){$f$}	
	\put(83,12){$a$}	
	\put(83,83){$e$}	
	\put(152,42){$d$}	
	\put(285,42){$c$}

	\end{picture}
	\caption{}
	\label{parallelogramtrans}
\end{figure}

We now define a relation for when two line segments are parallel. 

\begin{definition} (\cite{SuppesAxioms} Definition 3)
	$ ab \parallel cd \equiv T(a,b,c) \wedge c \not= d  \wedge \widetilde{L}(c,doub(b,mid(a,c)),d) $
	
	
\end{definition}

\begin{figure}[h!]
	\begin{picture}(216,80)
	\put(73,0){\includegraphics[scale=.9]{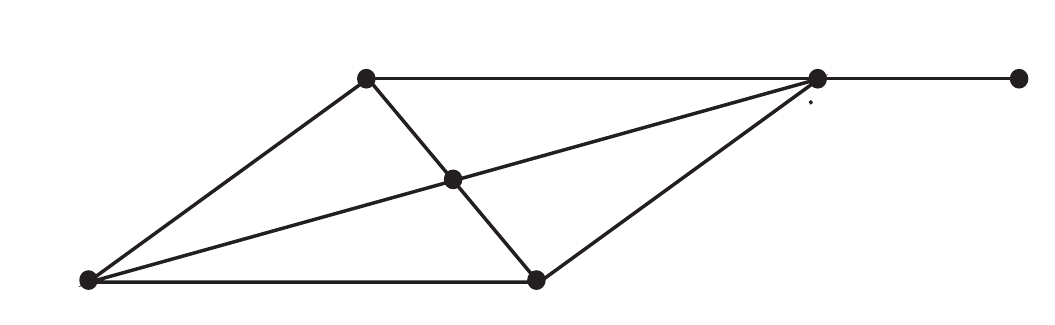}}
	
	\put(212,2){$a$}	
	\put(90,0){$b$}	
	\put(160,70){$c$}	
	\put(260,70){$d(b,m(a,c))$}	
	\put(200,32){\footnotesize{$m(a,c)$}}
	\put(342,66){$d$}

	\end{picture}
	\caption{}
	\label{}
\end{figure}

Note the definition does not allow for $ab \parallel cd$ if $\widetilde{L}(a,b,c)$. 

In Theorem 16 of \cite{SuppesAxioms}, Suppes lists a set of properties of parallel line segments without providing proofs. We have unfortunately not been able to produce proofs for all of these properties from his system. Fortunately are able to prove these properties in our system with the aid of angle congruence. 

\begin{theorem} \label{parallelsegthm} \hspace{1in}
	\begin{enumerate}
		\item $T(a,b,c) \wedge P(a,b,c,d) \rightarrow ab \parallel cd \wedge bc \parallel ad$
		\item If $ab \parallel cd$, then $a$, $b$, $c$, and $d$ are all distinct.
		\item $ab \parallel cd \rightarrow cd \parallel ab$
		\item $ab \parallel cd \rightarrow ba \parallel cd$
		\item $ab \parallel pq \wedge ab \parallel rs \wedge T(p,q,r) \rightarrow pq \parallel rs$
	\end{enumerate}
\end{theorem}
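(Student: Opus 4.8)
The plan is to prove the five parts in an order that lets the later parts lean on the earlier ones. For part 1, I would start from $P(a,b,c,d)$, which by definition means $mid(a,c)=mid(b,d)$. To get $ab\parallel cd$ I need $T(a,b,c)$ (given), $c\neq d$, and $\widetilde{L}(c,doub(b,mid(a,c)),d)$. The point $d$ itself is the unique fourth parallelogram vertex, so by the construction theorem $P(a,b,c,doub(b,mid(a,c)))$ together with uniqueness (\cite{SuppesAxioms} Theorem 6) forces $d=doub(b,mid(a,c))$; then $\widetilde{L}(c,d,d)$ holds trivially, and $c\neq d$ follows because $T(a,b,c)$ prevents the parallelogram from degenerating (if $c=d$ then $mid(a,c)=mid(b,c)$, which by injectivity of $mid$ in the second slot gives $a=b$, contradicting $T(a,b,c)$). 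The symmetry theorems on parallelograms ($P(a,b,c,d)\rightarrow P(b,c,d,a)$, etc.) then yield $bc\parallel ad$ by relabeling. For part 2, from $ab\parallel cd$ we immediately get $T(a,b,c)$ (so $a,b,c$ distinct) and $c\neq d$; the remaining inequalities ($a\neq d$, $b\neq d$) I would extract by noting that $d$ lies on the line through $c$ and $doub(b,mid(a,c))$ and using that this line is genuinely different from the line through $a,b$ — if $d$ coincided with $a$ or $b$, repeated application of Axiom \ref{uniqueline} would collapse $a,b,c$ into a colinear triple, contradicting $T(a,b,c)$.

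For part 3, the symmetric statement $ab\parallel cd\rightarrow cd\parallel ab$, the natural route is: from $ab\parallel cd$ and part 1's converse direction, recover the parallelogram $P(a,b,c,d)$ (since $d=doub(b,mid(a,c))$ makes $mid(a,c)=mid(b,d)$), then apply the parallelogram symmetry $P(a,b,c,d)\rightarrow P(c,d,a,b)$ and feed that back through part 1 to conclude $cd\parallel ab$. Part 4 ($ab\parallel cd\rightarrow ba\parallel cd$) is similar but slightly more delicate because swapping $a$ and $b$ is not one of the listed parallelogram symmetries; here I would instead argue directly from the definition, using that $mid(a,c)$ and $mid(b,c)$ are related and that $doub(a,mid(b,c))$ lands on the same line through $c,d$ — alternatively, recognize that $ab\parallel cd$ and $ba\parallel cd$ both assert that the line through $c,d$ is "the" parallel to the common line of $a,b$ through $c$, and use uniqueness of the parallel (which itself follows from the parallelogram uniqueness theorem).

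For part 5, transitivity-type statement $ab\parallel pq\wedge ab\parallel rs\wedge T(p,q,r)\rightarrow pq\parallel rs$, I would convert both hypotheses into parallelograms: $ab\parallel pq$ gives $P(a,b,q,p)$ (up to the correct vertex order dictated by part 1 and part 3), and $ab\parallel rs$ gives $P(a,b,s,r)$. Then I would invoke the parallelogram transitivity theorem (\cite{SuppesAxioms} Theorem 12), $P(a,b,c,d)\wedge P(c,d,e,f)\rightarrow P(a,b,f,e)$, after first flipping one parallelogram so the shared side $ab$ is positioned as required, to obtain a parallelogram on $p,q,r,s$, and finally read off $pq\parallel rs$ via part 1 using the hypothesis $T(p,q,r)$ to guarantee nondegeneracy. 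The main obstacle I anticipate is bookkeeping the vertex orders: the parallelogram relation $P(a,b,c,d)$ is sensitive to cyclic order and the parallel relation $ab\parallel cd$ encodes a specific correspondence of endpoints, so translating back and forth between "$ab\parallel cd$" and "$P(\cdot,\cdot,\cdot,\cdot)$" correctly — and making sure the transitivity theorem is applied with compatible orderings — is where the proof could go wrong, and it will require careful use of the full list of parallelogram symmetry theorems together with the uniqueness theorem to pin down which fourth point is which. The degenerate-case hypotheses $T(\cdot,\cdot,\cdot)$ scattered through the statement are exactly what rules out the flat parallelograms that would otherwise break these correspondences.
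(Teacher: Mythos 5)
There is a genuine gap, and it is concentrated in one recurring step: you treat $ab \parallel cd$ as if it were equivalent to $P(a,b,c,d)$, i.e.\ as if $d$ were forced to equal the constructed vertex $doub(b,mid(a,c))$. The definition only asserts $\widetilde{L}(c,\,doub(b,mid(a,c)),\,d)$ with $c \neq d$: the point $d$ may be \emph{any} point, other than $c$, on the line through $c$ and that constructed vertex. Your use of the uniqueness theorem to conclude $d = doub(b,mid(a,c))$ is valid in part~1, where the hypothesis really is $P(a,b,c,d)$, but it is not available in part~3 (``recover the parallelogram $P(a,b,c,d)$'') or in part~5 (``$ab\parallel pq$ gives $P(a,b,q,p)$''). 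This is not a minor bookkeeping issue: the whole content of part~3 is to show that for an \emph{arbitrary} $d$ on the line through $c$ and $x = doub(b,mid(c,a))$, the point $y = doub(d,mid(c,a))$ lands on the line through $a$ and $b$. The paper's proof of part~3 is correspondingly the heavy one — it introduces the shared midpoint $z$, establishes various non-colinearity facts via interiors of angles, and then uses vertical angles, Side-Angle-Side, supplementary angles, angle orientation, and the uniqueness of angle transport (Axiom~\ref{atsunique}) to force $SD(b,y,doub(a,b))$. The authors state explicitly, just before the theorem, that they could not derive these properties in Suppes' affine system and needed angle congruence; so a purely parallelogram-symmetry argument for part~3 is not merely incomplete but is the wrong tool.

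Part~5 inherits the same defect: the correct move (and the paper's) is to form the parallelograms on the \emph{constructed} points $x = doub(b,mid(a,p))$ and $y = doub(b,mid(a,r))$, apply Theorem~\ref{parallelogramtrans} to get $P(p,x,y,r)$, conclude $px \parallel rs$, and only then transfer from the segment $px$ to the segment $pq$ using the colinearity $\widetilde{L}(p,x,q)$ together with part~3 and the extension/truncation result — so part~5 genuinely depends on the hard part~3, which your route does not establish. Your part~4 is salvageable: the first alternative you mention (showing $doub(a,mid(b,c))$ is colinear with $c$ and $d$) is essentially the paper's argument, which gets $P(c,x,c,y)$ from parallelogram transitivity and hence $mid(x,y)=c$, so $L(x,c,y)$ and $\widetilde{L}(y,c,d)$; the second alternative (``uniqueness of the parallel'') is circular in this setting, since no such uniqueness principle for the $\parallel$ relation has been established at this point. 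Parts~1 and~2 are fine in outline and match the paper's (terse) treatment.
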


\textbf{\textit{Change of Notation:}} Before proving this theorem we will make a change of notation for the ease of the reader. We will use $ab \cong cd$ in place of $C(ab,cd)$ and we will use $abc \cong def$ in place of $AC(abc,def)$. 

\begin{proof}
	To prove part 1, one simply has to verify the non-linearity of various triples of points. To prove part 2, one uses the non-colinearity of points to prove that they are distinct. 
	
	To prove part 3 (see Figure \ref{paralletheorempart3} ), let $doub(b,mid(c,a))=x$. Since $ab \parallel cd$, we have $\widetilde{L}(c,x,d)$ and $c \not= d$. Now let $doub(d,mid(c,a))=y$. We need to show that $\widetilde{L}(a,b,y)$ and $T(c,d,a)$ (we already know that $a \not= b$).
	
	 Note that $ac$, $bx$, and $yd$ have a shared midpoint. Let this midpoint be called $z$. Since $T(a,b,c)$, using Theorem \ref{betweensideint} and Axiom \ref{SOray} we can show that $z$ and $x$ are in the interior of $abc$. Thus $T(a,b,z)$. By Theorem \ref{betweensideint} we can also infer that $z$ is in the interior of $cxa$. Thus $T(c,x,a)$ and $T(c,x,z)$. From $T(a,b,z)$ we can infer that $T(a,y,z)$. From $T(c,x,z)$ we can infer that $T(c,d,z)$ and from $T(c,x,a)$ we can infer that $T(c,d,a)$. Further more, if $d \not= x$ and $b \not= y$, then we can show that $T(d,x,z)$ and $T(y,b,z)$. 
	
	Since we know $T(c,d,a)$, if $y=b$, then $\widetilde{L}(a,b,y)$ and we can done. Thus assume $y \not= b$. If $x = d$ by properties of midpoints and extension constructions we can show that $b=y$ which would be a contradiction. Thus we also have $x \not = d$. 
	
	So we have $L(c,x,d)$. Therefore one of these three points must be between the other two. Without loss of generality let $B(c,x,d)$. Note that angles $zxc$ and $zxd$ are supplemental. Using vertical angles, segment congruence, and side-angle-side, one can show that $zby \cong zxd$ and $zba \cong zxc$. Note that since $c$ and $d$ are on opposite sides of $bx$, by axiom \ref{intOO} and results about the same side relation (theorems \ref{sideslinethm1} and \ref{sideslinethm2}), we have that $a$ and $y$ are on opposite sides of $bx$. Thus $OO(zby,zba)$. Note that $zba$ and $zb(doub(a,b))$ are supplemental. Since $zba \cong zxc$ we have $zxd \cong zb(doud(a,b))$. Thus $zby \cong zb(doub(a,b))$ and $y$ and $doub(a,b)$ are on the same side of $bz$. Therefore, by the uniqueness of angle constructions, we have $SD(b,y,doub(a,b))$. Thus $L(a,b,y)$. 
	
	\begin{figure}[h!]
		\begin{picture}(216,125)
		\put(60,0){\includegraphics[scale=.9]{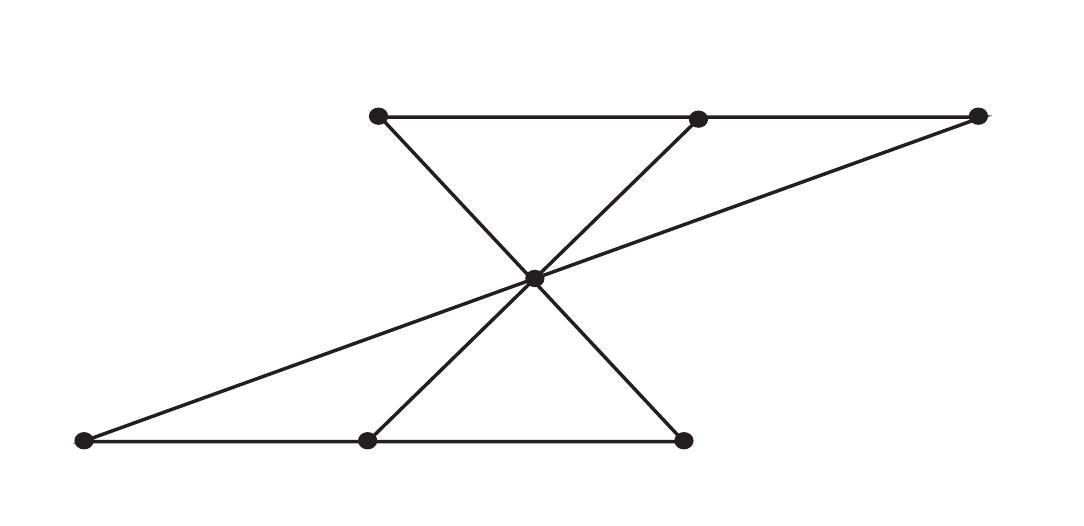}}
		
		\put(240,12){$a$}	
		\put(75,12){$y$}
		\put(150,10){$b$}
		\put(150,107){$c$}	
		\put(243,110){$x$}	
		\put(205,59){\footnotesize{$m(a,c)=z$}}
		\put(320,105){$d$}

		\end{picture}
		\caption{}
		\label{paralletheorempart3}
	\end{figure}
	
	To prove part 4 (See Figure \ref{paralletheorempart4}), let $doub(b,mid(a,c))=x$ and let $doub(a,mid(b,c))=y$. By the assumption $\widetilde{L}(c,x,d)$, $c \not= d$, and $T(a,b,c)$. Note $T(b,a,c)$. We wish to show that $\widetilde{L}(y,c,d)$. By theorem \ref{parallelogramconstruction} $P(c,x,a,b)$ and $P(a,b,y,c)$ and by theorem \ref{parallelogramtrans} we have $P(c,x,c,y)$. Thus $mid(x,y)=mid(c,c)=c$. Thus we can infer that $L(x,c,y)$ and thus $\widetilde{L}(y,c,d)$. 
	
		\begin{figure}[h!]
		\begin{picture}(216,125)
		\put(100,0){\includegraphics[scale=.9]{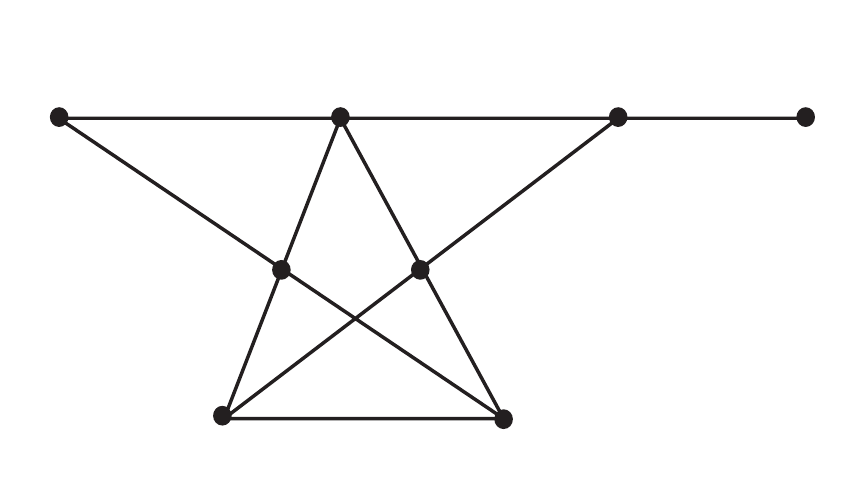}}
		
		\put(235,12){$a$}	
		\put(105,100){$y$}
		\put(150,10){$b$}
		\put(185,100){$c$}	
		\put(255,100){$x$}	
		\put(217,55){\footnotesize{$m(a,c)$}}
		\put(137,53){\footnotesize{$m(b,c)$}}
		\put(310,100){$d$}

		\end{picture}
		\caption{}
		\label{paralletheorempart4}
	\end{figure}
	
	To prove part 5 (see Figure \ref{paralletheorempart5}), let $doub(b,mid(a,p))=x$ and let $doub(b,mid(a,r))=y$. We know that $\widetilde{L}(p,x,q)$ and $\widetilde{L}(r,y,s)$. We are also assuming $T(p,q,r)$. From this last assumption we can infer that $T(x,p,r)$. Using part 2 of this theorem we can also infer that $r \not= s$ and $p \not= q$. 
	
	Since $P(p,x,a,b)$ and $P(a,b,r,y)$ by Theorem \ref{parallelogramconstruction} we have $P(p,x,y,r)$ by theorem \ref{parallelogramtrans}. Since $P(p,x,y,r)$, $T(x,p,r)$, and $r \not= s$  we have $px \parallel rs$. By part 3 we have $rs \parallel px$. Thus $T(s,r,p)$. From this and the fact that $\widetilde{L}(p,x,q)$ and $p \not= q$ we have $rs \parallel pq$ and by part 3 again we have $pq \parallel rs$.

		\begin{figure}[h!]
		\begin{picture}(216,140)
	\put(125,0){\includegraphics[scale=.9]{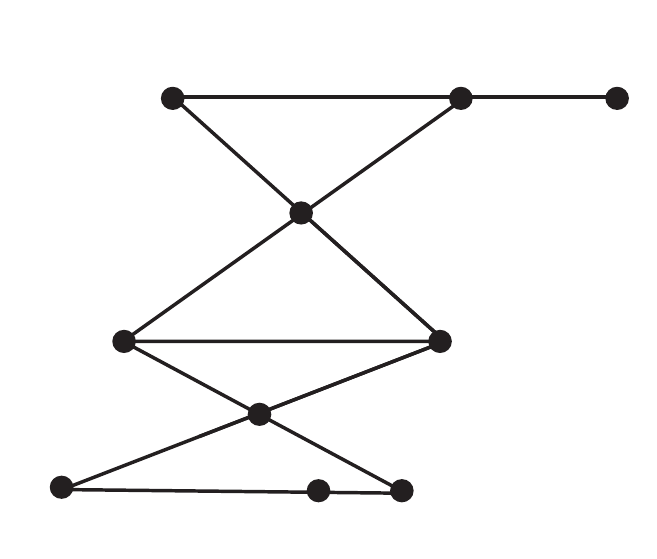}}
	
	\put(235,12){$y$}	
	\put(205,8){$s$}	
	\put(243,50){$a$}	
	\put(147,50){$b$}	
	\put(243,125){$x$}	
	\put(290,125){$q$}		
	\put(165,125){$p$}	
	\put(132,12){$r$}
	\put(213,85){\footnotesize{$m(a,p)$}}
	\put(205,35){\footnotesize{$m(a,r)$}}

	\end{picture}
	\caption{}
	\label{paralletheorempart5}
\end{figure}

\end{proof}

The following theorem shows that given two parallel line segments one can create new pairs of parallel line segments by simply extending or truncating one or both of the original lines. 

\begin{theorem} \label{parallelsegthm2}\hspace{1in}
	
	\begin{enumerate}
		\item 	$ab \parallel cd \wedge \widetilde{L}(c,d,x) \rightarrow ab \parallel cx$
		\item 	$ab \parallel cd \wedge \widetilde{L}(a,b,y) \rightarrow ay \parallel cd$
	\end{enumerate}
\end{theorem}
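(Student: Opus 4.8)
The plan is to prove part 1 by stripping away the definitions until nothing but a pure incidence fact about $\widetilde{L}$ remains, and then to get part 2 almost for free from part 1 together with the symmetry of parallelism. For part 1, set $p = doub(b,mid(a,c))$, so that the hypothesis $ab\parallel cd$ unpacks as $T(a,b,c)$, $c\neq d$, and $\widetilde{L}(c,p,d)$. Since the target $ab\parallel cx$ reuses exactly the point $doub(b,mid(a,c)) = p$ (it depends only on $a,b,c$), and since $T(a,b,c)$ is inherited verbatim, everything reduces to deriving $\widetilde{L}(c,p,x)$ from $\widetilde{L}(c,p,d)$, $\widetilde{L}(c,d,x)$, and $c\neq d$ (plus the side condition $c\neq x$, which must be present for $ab\parallel cx$ to be meaningful at all). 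I would handle this by a short case split on coincidences among $c,p,d,x$: if $p=c$ or $p=d$ or $x=d$ the conclusion is immediate from Definition \ref{nonstrictcolinear} or is literally one of the hypotheses; otherwise $\widetilde{L}(c,p,d)$ together with $c\neq p,\ c\neq d,\ p\neq d$ forces $L(c,p,d)$, and $\widetilde{L}(c,d,x)$ together with $c\neq d,\ c\neq x,\ d\neq x$ forces $L(c,d,x)$, two colinear triples sharing the two points $c$ and $d$; by the remark following Axiom \ref{uniqueline} every distinct triple among $c,p,d,x$ is then colinear, in particular $L(c,p,x)$ (and if instead $p=x$, then $\widetilde{L}(c,p,x)$ is trivial). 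Hence $ab\parallel cx$.

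For part 2 I would avoid any fresh computation with midpoints. From $ab\parallel cd$, part 3 of Theorem \ref{parallelsegthm} gives $cd\parallel ab$. Now apply part 1 to $cd\parallel ab$ with the hypothesis $\widetilde{L}(a,b,y)$: in part 1 it is the \emph{last} endpoint of the \emph{second} segment that moves, which after this substitution is $b\mapsto y$, so we conclude $cd\parallel ay$ (again reading in the side condition $a\neq y$ so that $ay$ is a genuine segment). A second application of part 3 of Theorem \ref{parallelsegthm} then yields $ay\parallel cd$, which is the claim.

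I do not anticipate a real obstacle; the only delicate points are bookkeeping. First, the statement is literally correct only with the implicit side conditions $c\neq x$ in part 1 and $a\neq y$ in part 2 — without them one has, e.g., $\widetilde{L}(c,d,c)$ true while $ab\parallel cc$ is false by definition — so these should be read into the theorem (they are automatic whenever $x$, resp. $y$, is produced by a genuine extension or truncation). Second, the generic case of part 1 uses the derived principle, stated just after Axiom \ref{uniqueline}, that two colinear triples sharing two points make every distinct triple among the four points colinear; a reader preferring to see it in the raw language can re-derive it on the spot by two applications of Axiom \ref{uniqueline} after permuting the triples, using that colinearity is invariant under permutation of its arguments.
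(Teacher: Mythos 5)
Your proposal is correct and follows essentially the same route as the paper: part 1 by unpacking the definition of $\parallel$ (noting that $doub(b,mid(a,c))$ depends only on $a,b,c$) and closing under non-strict colinearity via Axiom \ref{uniqueline}, and part 2 by sandwiching part 1 between two applications of the symmetry in part 3 of Theorem \ref{parallelsegthm}. Your observation that the side conditions $c \neq x$ and $a \neq y$ must be read into the statement is a fair and accurate point that the paper's one-line proof does not mention.
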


\begin{proof}
	Part 1 is easily inferred from the definition of parallel segments and non-strict linearity. Part 2 can be shown by invoking part 3 of the perivous theorem and part 1 of this theorem. 
\end{proof}

Given a line segment $ab$ and another point $c$, the following theorem states that all line segments parallel to $ab$ with endpoint $c$ must be colinear. The proof is simply a reinterpretation of the definition of parallel line segments. 

\begin{theorem}
	
	$T(c,d,doub(b,mid(a,c,))) \rightarrow ab \nparallel cd$
\end{theorem}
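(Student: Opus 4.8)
The plan is to argue by contradiction, treating the statement as a direct unwinding of the definition of the parallel relation. Write $x$ for the constructed point $doub(b,mid(a,c))$, so that the hypothesis reads $T(c,d,x)$ and the desired conclusion is $ab \nparallel cd$. Suppose instead that $ab \parallel cd$. By the definition of the parallel relation (Suppes' Definition 3) this yields in particular $\widetilde{L}(c,x,d)$; unfolding Definition \ref{nonstrictcolinear} with the substitution $a \mapsto c$, $b \mapsto x$, $c \mapsto d$, this means $L(c,x,d)$ holds or else one of a handful of equalities among $c$, $x$, and $d$ holds.

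Next I would invoke the hypothesis $T(c,d,x)$ to discard the degenerate disjuncts. By the definition of $T$, the points $c$, $d$, and $x$ are pairwise distinct, so none of the equalities $c=x$, $c=d$ (nor $x=d$) can occur; hence $\widetilde{L}(c,x,d)$ forces the genuine colinearity $L(c,x,d)$. By the permutation invariance of the colinearity relation $L$ noted just after its definition, $L(c,x,d)$ implies $L(c,d,x)$. But $T(c,d,x)$ also asserts $\lnot L(c,d,x)$, a contradiction. Therefore the assumption $ab \parallel cd$ is untenable, and we conclude $ab \nparallel cd$.

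The proof uses no constructions beyond naming $x = doub(b,mid(a,c))$ and no geometric content beyond the bookkeeping of $\widetilde{L}$ together with the permutation symmetry of $L$. Accordingly I do not expect any real obstacle here; the only care required is to make sure that every degenerate equality packaged into the definition of $\widetilde{L}$ is in fact excluded by the triangle hypothesis $T(c,d,x)$, which it is since $T$ explicitly guarantees pairwise distinctness.
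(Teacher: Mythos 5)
Your proof is correct and is exactly the argument the paper intends: the paper states only that ``the proof is simply a reinterpretation of the definition of parallel line segments,'' and your unwinding of $\widetilde{L}(c,x,d)$ against the pairwise distinctness and non-colinearity packaged in $T(c,d,x)$ is precisely that reinterpretation, spelled out. No gaps.
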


From part 5 of Theorem \ref{parallelsegthm} we can prove the following lemma. 

\begin{lemma} \label{parallelimplieslinear}
	If $ab \parallel cd$ and $ab \parallel rs$ and $pq \nparallel rs$, then 
	$\widetilde{L}(p,q,r)$.
\end{lemma}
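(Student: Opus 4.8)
The plan is to obtain this by contraposition from part~5 of Theorem~\ref{parallelsegthm}. (The first hypothesis should be read as $ab \parallel pq$ rather than $ab \parallel cd$, since $cd$ otherwise plays no role and the conclusion could not follow; the remark preceding the lemma already signals that part~5 of Theorem~\ref{parallelsegthm} is the intended tool.) So I would begin by assuming, toward a contradiction, that $\lnot \widetilde{L}(p,q,r)$, and aim to contradict $pq \nparallel rs$.

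The first step is purely definitional. Unwinding Definition~\ref{nonstrictcolinear}, $\widetilde{L}(p,q,r)$ holds precisely when $L(p,q,r)$ or two of the three points coincide, so $\lnot \widetilde{L}(p,q,r)$ amounts to $\lnot L(p,q,r) \wedge p \neq q \wedge q \neq r \wedge p \neq r$, which is exactly $T(p,q,r)$. Equivalently, $\widetilde{L}(p,q,r) \lor T(p,q,r)$ is a tautology and the two alternatives are mutually exclusive, so the argument can just as well be phrased as a two-case split: in the case $\widetilde{L}(p,q,r)$ there is nothing to prove, and the case $T(p,q,r)$ is the one to rule out.

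The second step is a one-line application: from $ab \parallel pq$, $ab \parallel rs$, and $T(p,q,r)$, part~5 of Theorem~\ref{parallelsegthm} yields $pq \parallel rs$, contradicting the hypothesis $pq \nparallel rs$. Hence the assumption fails and $\widetilde{L}(p,q,r)$ holds. I expect no genuine obstacle here; the only point requiring a moment's care is recognizing that the negation of the non-strict colinearity relation coincides with the triangle relation $T$ (and, in particular, that the distinctness of $p$, $q$, $r$ is automatically available once $\lnot\widetilde{L}(p,q,r)$ is assumed), after which the lemma is immediate from the already-established part~5 of Theorem~\ref{parallelsegthm}.
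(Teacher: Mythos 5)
Your proposal is correct and matches the paper's intended argument: the paper states only that the lemma follows from part~5 of Theorem~\ref{parallelsegthm}, and your contrapositive argument (noting that $\lnot\widetilde{L}(p,q,r)$ is exactly $T(p,q,r)$, then applying part~5 to contradict $pq \nparallel rs$) is precisely that derivation. You are also right that the hypothesis $ab \parallel cd$ is a typo for $ab \parallel pq$.
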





Up to this point we have been allowing for flat parallelograms. In order to refer to traditional results about parallelograms we will now define a non-flat parallelogram. 

\begin{definition}
	$\widehat{P}(a,b,c,d) \equiv P(a,b,c,d) \wedge T(a,b,c)$
\end{definition}

Using common well-known methods, such as triangle congruence theorems, one can prove
the following. 

\begin{theorem} \label{oppositesidesanglesnonflatparallelogram}
	The opposite sides of a non-flat parallelogram are congruent and the opposite angles of a non-flat parallelogram are congruent. (Where opposite sides and opposite angles are defined traditionally.)
\end{theorem}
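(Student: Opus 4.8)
The plan is to prove the statement about non-flat parallelograms by reducing it to standard triangle congruence theorems, which the excerpt has already declared available (Side-Angle-Side, Angle-Side-Angle, Side-Side-Side, carried over from Hartshorne's treatment). Let $\widehat{P}(a,b,c,d)$, so $P(a,b,c,d)$ and $T(a,b,c)$. First I would unwind the definition: $mid(a,c)=mid(b,d)$; call this common point $m$. By Theorem~\ref{parallelsegthm}, part~1, we get $ab \parallel cd$ and $bc \parallel ad$, and by part~2 of that theorem the four points $a,b,c,d$ are all distinct; moreover applying $T(a,b,c)$ together with Axiom~\ref{uniqueline}-style arguments one checks $T(b,c,d)$, $T(c,d,a)$, $T(d,a,b)$ as well, so every vertex triple is a genuine triangle and no three of the four vertices are colinear. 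The claim then is that $ab\cong cd$, $bc\cong ad$ (opposite sides) and $\angle dab \cong \angle bcd$, $\angle abc \cong \angle cda$ (opposite angles).

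For the sides, the key observation is that the diagonal $bd$ passes through $m$, with $B(b,m,d)$ and $B(a,m,c)$ coming from the midpoint axiom, and $C(bm,md)$, $C(am,mc)$ likewise. The angles $\angle amb$ and $\angle cmd$ are vertical angles (since $B(a,m,c)$ and $B(b,m,d)$ with $T(a,m,b)$), hence congruent by the vertical-angle result cited at the end of Section~6. So triangles $amb$ and $cmd$ are congruent by Side-Angle-Side, giving $ab\cong cd$; symmetrically, using the diagonal $ac$ through $m$ and the vertical angles $\angle amd$, $\angle bmc$, triangles $amd$ and $cmb$ are congruent, giving $ad\cong bc$. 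That disposes of the opposite-sides claim.

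For the opposite angles I would use the side result just obtained together with Side-Side-Side. Consider triangles $abc$ and $cda$: they share the side $ac$, and we have just shown $ab\cong cd$ and $bc\cong da$; hence the triangles are congruent by Side-Side-Side, so $\angle abc \cong \angle cda$. Likewise triangles $bad$ and $dcb$ share the side $bd$ with $ba\cong dc$ and $ad\cong cb$, so they are congruent and $\angle bad \cong \angle dcb$. (One must be slightly careful that the congruence of triangles delivers congruence of the correctly-matched angles, but that is exactly how the triangle congruence theorems are stated.) Thus all four desired congruences follow.

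The main obstacle — and it is a minor, bookkeeping obstacle rather than a conceptual one — is verifying that all the triples of points involved (the four vertex triples, the triples $amb$, $cmd$, $amd$, $bmc$, and so on) are genuine non-colinear triangles, since in our quantifier-free setting the triangle congruence theorems and the vertical-angle lemma all carry the hypothesis $T(\cdot,\cdot,\cdot)$. These non-degeneracy facts are proved exactly as in the proof of Theorem~\ref{parallelsegthm}: from $T(a,b,c)$ and $mid(a,c)=mid(b,d)$, if any of the auxiliary triples were colinear, repeated application of Axiom~\ref{uniqueline} would force $a,b,c$ to be colinear, contradicting the hypothesis $T(a,b,c)$ that is built into $\widehat{P}$. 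Once this routine non-degeneracy check is in place, the proof is a direct transcription of the classical argument, so I would state it briefly and refer to Hartshorne's Section~10 for the triangle congruence machinery, exactly as the surrounding text does.
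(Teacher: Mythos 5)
Your proposal is correct and is exactly the argument the paper intends: the paper gives no written proof, saying only that the result follows ``using common well-known methods, such as triangle congruence theorems,'' and your diagonal-through-the-common-midpoint argument (vertical angles plus Side-Angle-Side for the sides, then Side-Side-Side for the angles, with the routine $T(\cdot,\cdot,\cdot)$ non-degeneracy checks) is precisely that standard method. Nothing further is needed.
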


\subsection{Alternate Interior Angle Theorem and its Converse}

\begin{theorem}Converse of the Alternate Interior Angle Theorem
	
\hspace{.5in}	$ab \parallel cd \wedge OS(b,d, ac) \rightarrow cab \cong acd$
\end{theorem}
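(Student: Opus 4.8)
The plan is to introduce the fourth parallelogram vertex $x \equiv doub(b,mid(a,c))$ and to split the target congruence into two links: $cab \cong acx$ (coming from the parallelogram on $a,b,c,x$) and $acx \cong acd$ (coming from the fact that $x$ and $d$ lie on the same ray out of $c$). First I would record the basic data: since $ab \parallel cd$ we have $T(a,b,c)$, $c \neq d$, and $\widetilde{L}(c,x,d)$, and by the theorem constructing the missing vertex of a parallelogram, $P(a,b,c,x)$, hence $\widehat{P}(a,b,c,x)$ is a non-flat parallelogram whose diagonals $ac$ and $bx$ share the midpoint $m \equiv mid(a,c)=mid(b,x)$, with $B(a,m,c)$ and $B(b,m,x)$. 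A short non-degeneracy check — if any of the triples $a,b,x$ or $a,c,x$ were colinear, Axiom \ref{uniqueline} would force $L(a,b,c)$ — gives $T(a,b,x)$, $T(a,c,x)$ and $x \neq c$; together with $c \neq d$ this forces the $L$-disjunct of $\widetilde{L}(c,x,d)$, so $c,x,d$ are distinct and colinear.

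Next I would establish that $x$ and $d$ lie on the same side of $ac$. Since $B(b,m,x)$ with $m$ colinear with $a$ and $c$, Axiom \ref{intersectOO} (applied at $m$, using $T(b,x,a)$) gives $OO(amb,amx)$, i.e. $OS(b,x,am)$; transferring this relation along the line $ac$ via Theorem \ref{oppositesidethm2} yields $OS(b,x,ac)$, that is $OO(acb,acx)$. Combining with the hypothesis $OS(b,d,ac)$, i.e. $OO(acb,acd)$, and the parity Axiom \ref{OOOOSO} (using symmetry of the same-orientation relation, hence of $OO$), I get $SO(acx,acd)$, i.e. $SS(x,d,ac)$. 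Because $c,x,d$ are colinear, Theorem \ref{lineseparation1} gives $SD(c,x,d)$ as soon as $B(x,c,d)$ is excluded; and if $B(x,c,d)$ held, Axiom \ref{intersectOO} (applied at $c$, using $T(x,d,a)$, which again follows from \ref{uniqueline} and $T(a,c,x)$) would give $OO(acx,acd)$, contradicting $SS(x,d,ac)$. So $SD(c,x,d)$, and then Axiom \ref{samerayanglecongruence} with $T(a,c,x)$ delivers $acx \cong acd$.

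For the in-parallelogram link, Theorem \ref{oppositesidesanglesnonflatparallelogram} gives that the opposite sides of $\widehat{P}(a,b,c,x)$ are congruent, namely $ab \cong cx$ and $bc \cong xa$, while $ac$ is a common diagonal (with $ca \cong ac$ by Axiom \ref{segcongreflexiveswitchendpoints}). Hence triangle $abc \cong$ triangle $cxa$ by the Side-Side-Side congruence theorem carried over in the congruence section, so the angles at the corresponding vertices $a$ and $c$ satisfy $bac \cong xca$; reversing both (using $AC(pqr,rqp)$ and transitivity of $AC$) gives $cab \cong acx$. Chaining $cab \cong acx \cong acd$ completes the proof.

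I expect the main obstacle to be bookkeeping rather than geometry: tracking the several $T$-hypotheses needed to legitimately invoke Axioms \ref{intersectOO} and \ref{samerayanglecongruence}, and correctly moving the ``opposite side'' relation from the segment $am$ to the segment $ac$ through Theorem \ref{oppositesidethm2}. The genuinely geometric content — opposite-of-opposite-is-same across the transversal, plus SSS inside the parallelogram — is routine once those degeneracies are dispatched.
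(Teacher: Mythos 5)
Your proof is correct, and it takes a genuinely different (and arguably tighter) route than the paper's. The paper introduces two auxiliary points, $doub(c,mid(a,d))$ on the line of $ab$ and $doub(a,mid(b,c))$ on the line of $cd$, establishes the betweenness facts $B(y,c,d)$ and $B(b,a,x)$ from the side hypotheses, assembles a larger parallelogram via the transitivity theorem for parallelograms, and then reads the desired congruence off the opposite-angle property of that composite parallelogram together with the same-ray angle-congruence axiom. You instead work entirely inside the single basic parallelogram $\widehat{P}(a,b,c,x)$ with $x=doub(b,mid(a,c))$, use its diagonal $ac$ (which is exactly the transversal) to cut it into two SSS-congruent triangles, and reduce the remaining work to showing $SD(c,x,d)$ so that the same-ray axiom transfers $acx$ to $acd$ --- essentially the Euclid I.34 argument adapted to this setting. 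The side-of-line bookkeeping you do (pushing $OO(amb,amx)$ from the half-diagonal $am$ out to $ac$ via the colinear-transfer theorem, then combining with the hypothesis $OS(b,d,ac)$ by parity to get $SS(x,d,ac)$, and excluding $B(x,c,d)$ by another appeal to the intersection axiom) is exactly the kind of verification the paper's proof also has to perform, just localized differently. Your version buys a shorter congruence chain and only one auxiliary point; the paper's version buys reuse of the parallelogram-transitivity and opposite-angle machinery it has already set up. The only things worth tightening are (i) the degenerate possibility $x=d$ hiding in $\widetilde{L}(c,x,d)$, which your argument in fact survives because $SD(c,x,d)$ holds trivially there, and (ii) an explicit note that symmetry of $OO$ follows from symmetry of $SO$, which you do flag.
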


\begin{figure}[h!]
	\begin{picture}(216,140)
	\put(75,0){\includegraphics[scale=.9]{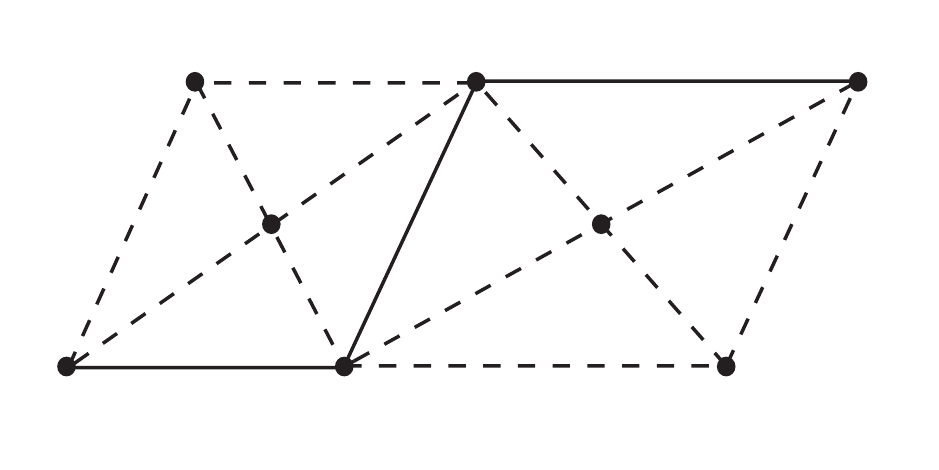}}
	
	\put(265,15){$x$}
	\put(85,15){$b$}	
	\put(160,15){$a$}	
\put(200,105){$c$}	
\put(300,105){$d$}		
\put(130,105){$y$}	


	\end{picture}
	\caption{}
	\label{converseAIAT}
\end{figure}

\begin{proof}
	See Figure \ref{converseAIAT}. Let $doub(c,mid(a,d)) = x$ and let $doub(a, mid(b,c)) = y$. By the definition of parallel segments we know that $L(b,a,x)$. Similarly we know that $L(y,c,d)$. 
	
	Note that $Int(mid(a,y),acy)$ by theorem \ref{betweensideint}. Since $P(a,b,y,c)$, $mid(a,y)=mid(b,c)$ and we can infer that $Int(mid(b,c), acy)$. Thus $Int(b, acy)$ and $SS(b,y,ac)$. Therefore $OS(y,d, ac)$. We can then infer that $B(y,c,d)$ for if not then we would contradict $OS(y,d, ac)$. By a similar argument we can conclude that $B(b,a,x)$. 
	
	We can show $\widehat{P}(b,y,c,a)$ and $\widehat{P}(c,a,x,d)$. From Theorem \ref{parallelogramtrans} we can show $\widehat{P}(b,y,x,d)$. Thus by Theorem \ref{oppositesidesanglesnonflatparallelogram} and Axiom \ref{samerayanglecongruence} $acd \cong axd \cong bxd \cong bx'd \cong bx'c \cong bac \cong cab$. 

\end{proof}

\begin{theorem}Alternate Interior Angle Theorem
	
\hspace{.5in}	$cab \cong acd \wedge OS(b,d,ac) \rightarrow ab \parallel cd$
\end{theorem}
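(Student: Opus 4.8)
The plan is to reduce the goal to a single colinearity statement about the point $x := doub(b,mid(a,c))$ and then feed it to the parallel-segment machinery already in place. First I would extract from the hypotheses that $T(c,a,b)$ and $T(a,c,d)$ (Axiom~\ref{anglecongnoncolinear} applied to $cab \cong acd$), hence $T(a,b,c)$ and, in particular, $c \neq d$. The Suppes parallelogram-construction theorem gives $P(a,b,c,x)$, so part~1 of Theorem~\ref{parallelsegthm} yields $ab \parallel cx$. Since $ab \parallel cx$, Theorem~\ref{parallelsegthm2}(1) (equivalently, the very definition of $\parallel$, which by now only needs $\widetilde{L}(c,x,d)$ on top of $T(a,b,c)$ and $c\neq d$) tells us it suffices to prove $\widetilde{L}(c,x,d)$; so the whole argument is organized around showing that $x$ and $d$ lie on the same ray out of $c$.

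Next I would transport the angle data from $cd$ over to $cx$. The key intermediate fact is $OS(b,x,ac)$. From $x = doub(b,mid(a,c))$ we get $B(b,mid(a,c),x)$ by Axiom~\ref{extbetween} (using $b \neq mid(a,c)$, since $b$ is off line $ac$), and a short colinearity argument via Axiom~\ref{uniqueline} rules out $L(b,x,a)$, giving $T(b,x,a)$. Then Axiom~\ref{intersectOO} produces $OO(a\,mid(a,c)\,b,\ a\,mid(a,c)\,x)$, i.e.\ $OS(b,x,a\,mid(a,c))$, and Theorem~\ref{oppositesidethm2}, used to swap the defining segment $a\,mid(a,c)$ for the colinear $ac$, upgrades this to $OS(b,x,ac)$. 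Now, with $ab \parallel cx$ and $OS(b,x,ac)$ in hand, the Converse of the Alternate Interior Angle Theorem (just proved) gives $cab \cong acx$; combining with the hypothesis $cab \cong acd$ and transitivity of angle congruence yields $acx \cong acd$.

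Finally I would line up the rays. From $OS(b,x,ac)$ and the hypothesis $OS(b,d,ac)$ — two points each opposite $b$ across $ac$ — Theorem~\ref{oppositesidethm}(3) gives $SS(x,d,ac)$. Then Theorem~\ref{conganglessamesidesameray}, applied with vertex $c$, combines $AC(acx,acd)$ with $SS(x,d,ac)$ to give $SD(c,x,d)$, and unwinding the definitions of $SD$ and $\widetilde{L}$ shows $\widetilde{L}(c,x,d)$ in each of the four cases. This is exactly what was needed, so $ab \parallel cd$. The main obstacle — and essentially the only place where genuine work beyond bookkeeping occurs — is establishing $OS(b,x,ac)$, i.e.\ that the diagonal of the constructed parallelogram genuinely crosses line $ac$; this is where the orientation axiom~\ref{intersectOO} and the side-invariance Theorem~\ref{oppositesidethm2} do the heavy lifting, while the rest is routine chaining of named results.
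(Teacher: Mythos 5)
Your proof is correct and follows essentially the same route as the paper's: construct the parallel segment $cx$ through $c$, apply the Converse of the Alternate Interior Angle Theorem to obtain $cab \cong acx$, and use the uniqueness of angle transport on a given side (Theorem \ref{conganglessamesidesameray}) to force $SD(c,x,d)$ and hence $\widetilde{L}(c,x,d)$. The only difference is cosmetic: you name the definitional witness $x = doub(b,mid(a,c))$ directly and derive $OS(b,x,ac)$ from the diagonal $B(b,mid(a,c),x)$ via Axiom \ref{intersectOO} and Theorem \ref{oppositesidethm2}, whereas the paper reaches the same point through the intermediate $y=doub(a,mid(b,c))$ and $x=doub(y,c)$; your side-of-line bookkeeping is, if anything, slightly more direct.
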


\begin{figure}[h!]
	\begin{picture}(216,140)
	\put(75,0){\includegraphics[scale=.9]{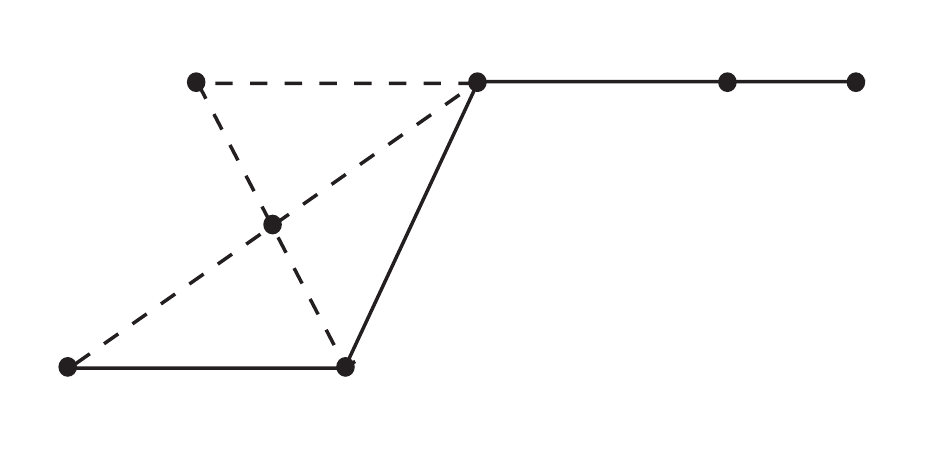}}
	
	\put(85,15){$b$}	
	\put(160,15){$a$}	
	\put(200,105){$c$}	
	\put(300,105){$d$}	
	\put(265,105){$x$}		
	\put(130,105){$y$}	
	

	\end{picture}
	\caption{}
	\label{AIAT}
\end{figure}

\begin{proof}
	See Figure \ref{AIAT}. Let $doub(a,mid(bc))=y$. By arguments similar to one used in the previous proof we can show $SS(b,y,ac)$ and thus $OS(y,d,ac)$. Let $doub(y,c)=x$. We can show $SS(x,d,ac)$. Since $L(y,c,x)$ and $\widehat{P}(a,b,y,c)$ we can infer that $ab \parallel cx$. 
	
	We now what to show that $ab \parallel cd$. We will do this by showing $\widetilde{L}(c,x,d)$ and invoking Theorem \ref{parallelsegthm2}. We have enough to show $OS(b,x,ac)$. Therefore since $ab \parallel cx$ we have $cab \cong acx$ by the previous theorem. Additionally since $cab \cong acd$, we have $acx \cong acd$. Thus $SS(x,d,ac)$ and $acx \cong acd$ and by Theorem \ref{conganglessamesidesameray} we have $SD(c,x,d)$. We can then conclude $\widetilde{L}(c,x,d)$ and we are done. 
	
\end{proof}

In standard elementary geometry the Alternate Interior Angle Theorem is provable without invoking the parallel postulate and the converse of the statement is proved using the original statement and the parallel postulate. Interestingly we needed to invoke our version of the parallel postulate to prove the Alternate Interior Angle Theorem by using its converse to prove it. This shows that there is indeed a striking difference between our definition of parallel line segments and the standard definition of parallel lines.

\subsection{Convex Quadrilaterals}

The property of convexity is an important one in planar geometry. What follows is a definition for convex quadrialterials and a few results pertaining to this definition. 

\begin{definition}
	We say $abcd$ is a convex quadrilateral if $Int(d,abc)$ and $Int(a,bcd)$.
	
\end{definition}

\begin{theorem}
	If $abcd$ is a convex quadrilateral then  $Int(b, cda)$, and $Int(c,dab)$.
\end{theorem}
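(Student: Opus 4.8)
The plan is to exploit the cyclic symmetry already visible in the definition of a convex quadrilateral. By hypothesis $abcd$ convex means $Int(d,abc)$ and $Int(a,bcd)$; the goal is $Int(b,cda)$ and $Int(c,dab)$. The natural strategy is to show that the defining condition is invariant under the cyclic shift $(a,b,c,d)\mapsto(b,c,d,a)$, since applying that shift twice to the hypothesis $Int(a,bcd)$ (together with $Int(d,abc)$) would give exactly $Int(b,cda)$ and $Int(c,dab)$. Concretely, I would first establish the single implication ``$Int(d,abc)\wedge Int(a,bcd)\rightarrow Int(b,cda)$'' and then obtain $Int(c,dab)$ either by re-applying that implication to the rotated quadrilateral $bcda$ (whose convexity we have just half-proved, together with the given $Int(a,bcd)$) or by a symmetric argument.

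The technical heart is the interior relation $Int(d,abc)\equiv SO(cbd,cba)\wedge SO(abd,abc)$, so everything reduces to manipulating the \textit{SameOrientation} relation. First I would unpack the two hypotheses into four $SO$-statements: from $Int(d,abc)$ we get $SO(cbd,cba)$ and $SO(abd,abc)$; from $Int(a,bcd)$ we get $SO(dca,dcb)$ and $SO(bca,bcd)$. The target $Int(b,cda)$ unpacks as $SO(adb,adc)$ and $SO(cdb,cda)$. To bridge these I expect to need: the crossbar theorem (Theorem \ref{crossbar}) to locate the diagonal points, Theorem \ref{betweensideint} to convert ``between'' facts on a diagonal into interior facts, and the parity axioms (Axiom \ref{OOOOSO}, Theorem \ref{SOOOOO}) to combine orientations. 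A clean route: from $Int(d,abc)$ and Theorem \ref{crossbar}, the crossbow point $cb(d,abc)$ lies in the same direction from $d$ as $b$; similarly $Int(a,bcd)$ places $cb(a,bcd)$ in the same direction from $a$ as $c$. These two crossbow points should coincide with the intersection of the diagonals $ac$ and $bd$, giving a point $p$ with $B(a,p,c)$ and $B(b,p,d)$. Once that diagonal-intersection point $p$ is in hand, the four interior assertions all follow from Theorem \ref{betweensideint} applied to the appropriate triangles (e.g.\ $T(c,d,a)$ with $B(c,p,a)$ gives $Int(p,cda)$, and then $SD(d,p,b)$ upgrades this to $Int(b,cda)$ via Axiom \ref{SOray} and the transitivity of $SO$).

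The main obstacle I anticipate is establishing that the two crossbow points genuinely coincide and that both diagonals are ``real'' segments — i.e.\ producing the common intersection point $p$ with both betweenness relations $B(a,p,c)$ and $B(b,p,d)$ — since the axioms only guarantee the crossbow point is non-strictly colinear with the relevant segment and lies between the two ``arms.'' Ruling out the degenerate possibilities (the crossbow point equalling a vertex, or the diagonals failing to be non-colinear triples with the vertices) will require the kind of careful distinctness bookkeeping seen in the proofs of Theorems \ref{betweensideint} and \ref{SOnointersection}, using Axiom \ref{uniqueline} repeatedly to derive contradictions from spurious colinearities. Once $p$ is pinned down, the remainder is routine orientation-chasing of the sort already carried out extensively in Section \ref{angleorientationsec}, so I would not expect further difficulty; and the $Int(c,dab)$ conclusion then comes for free by feeding the rotated quadrilateral back through the same lemma.
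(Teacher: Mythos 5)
Your proposal is correct and follows essentially the same route as the paper: both use the crossbar theorem on each of the two interior hypotheses to obtain the crossbow points on the diagonals, show the two points coincide (the paper derives $L(a,b,d)$ from $x\neq y$ via Axiom \ref{uniqueline}, exactly the distinctness bookkeeping you anticipated), conclude $B(a,x,c)$ and $B(b,x,d)$, apply Theorem \ref{betweensideint} to get $Int(x,cda)$ and $Int(x,dab)$, and upgrade via Axiom \ref{SOray}. The cyclic-symmetry framing in your first paragraph is dispensable, since the single diagonal-intersection point delivers both conclusions at once, which is what the paper does.
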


\begin{proof}

See Figure \ref{convexquad}. Since $Int(d,abc)$ we have $B(a,cb(d,abc),c)$ and $SD(b, cb(d,abc),d)$ by Theorem \ref{crossbartheorem}. Additionally we know $T(d,b,a)$. Let $cb(d,abc)=x$. Since $Int(a,bcd)$ we have $B(b,cb(a,bcd),d)$ and $SD(c,cb(a,bcd),a)$ by Theorem \ref{crossbartheorem}. We also have $T(c,d,a)$. Let $cb(a,bcd)=y$. Suppose $x \not= y$. Then by Axiom \ref{uniqueline} we can show $L(a,b,d)$. This is a contradiction. Thus $x=y$. Therefore $B(b,x,d)$ and $B(a,x,c)$. Since $T(c,d,a)$ and $T(d,a,b)$, by Theorem \ref{betweensideint} we have $Int(x, cda)$ and $Int(x, dab)$. Using Axiom \ref{SOray} we can infer $Int(b, cda)$, and $Int(c,dab)$


\end{proof}

\begin{figure}[h!]
	\begin{picture}(216,100)
	\put(92,0){\includegraphics[scale=.9]{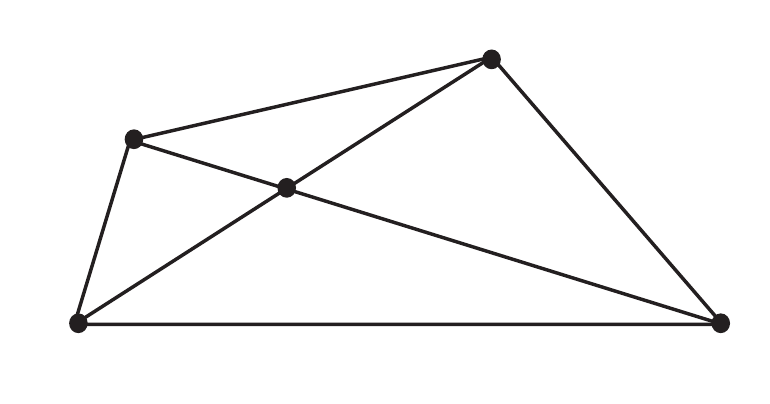}}

	\put(102,15){$b$}	

	\put(118,70){$c$}	

	\put(285,15){$a$}	

	\put(223,90){$d$}

	\put(175,54){$x=y$}

	\end{picture}
	\caption{}
	\label{convexquad}
\end{figure}

We now state two observations that will be helpful in the following section. 

\begin{observation} \label{ObservationQuad1}
	If $abcd$ is a convex quadrilateral then $SS(c,d,ab)$. 
\end{observation}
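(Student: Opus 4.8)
The plan is to unwind the definitions; this observation is essentially immediate. By hypothesis $abcd$ is a convex quadrilateral, which by definition means $Int(d,abc)$ (and also $Int(a,bcd)$, though the latter is not needed here). The definition of the interior relation gives $Int(d,abc) \equiv SO(cbd,cba) \wedge SO(abd,abc)$, so in particular we extract the conjunct $SO(abd,abc)$.

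Next I would invoke the symmetry of the same-orientation relation, which was established from Axioms \ref{SOreflexive} and \ref{SOtrans}, to turn $SO(abd,abc)$ into $SO(abc,abd)$. Since the same-side relation is defined by $SS(c,d,ab) \equiv SO(abc,abd)$, this is exactly the desired conclusion $SS(c,d,ab)$.

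There is no real obstacle here: the statement is a one-line consequence of the definitions of convex quadrilateral, of $Int$, of $SS$, together with symmetry of $SO$. The only point worth stating explicitly in the write-up is that we only use half of the convexity hypothesis (namely $Int(d,abc)$) and that the passage from $SO(abd,abc)$ to $SO(abc,abd)$ is by symmetry.

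\begin{proof}
	Since $abcd$ is a convex quadrilateral, we have $Int(d,abc)$. By the definition of the interior relation this gives $SO(abd,abc)$. By the symmetry of the same-orientation relation (a consequence of Axioms \ref{SOreflexive} and \ref{SOtrans}), we obtain $SO(abc,abd)$. By the definition of the same-side relation this is precisely $SS(c,d,ab)$.
\end{proof}
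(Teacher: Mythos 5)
Your proof is correct and is exactly the definition-unwinding the paper intends: the paper states this as an unproved observation, and your argument (extract $SO(abd,abc)$ from $Int(d,abc)$, apply symmetry of $SO$, and recognize $SO(abc,abd)$ as the definition of $SS(c,d,ab)$) supplies precisely the missing one-line justification. No gaps.
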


\begin{observation} \label{ObservationQuad2}
	If $\widehat{P}(a,b,c,d)$, then $abcd$ is a convex quadrilateral. 
\end{observation}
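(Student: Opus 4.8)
The plan is to reduce the whole statement to the behaviour of the two diagonals of the parallelogram, and then transport "interior of an angle" facts along rays using Axiom \ref{SOray}. Write $\widehat{P}(a,b,c,d)$, so $P(a,b,c,d)$ gives $mid(a,c)=mid(b,d)$; call this common point $m$. First I would clear the non-degeneracy bookkeeping. Here $a\neq c$ is part of $T(a,b,c)$; $b\neq d$ because $b=d$ would force $mid(a,c)=mid(b,b)=b$ and hence $B(a,b,c)$, contradicting $T(a,b,c)$; and $c\neq d$ because $c=d$ would give $mid(c,a)=mid(b,c)=mid(c,b)$ and hence $a=b$ by the cancellation property of $mid$, again contradicting $T(a,b,c)$. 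With $a\neq c$ and $b\neq d$ the midpoint axiom yields $B(a,m,c)$ and $B(b,m,d)$, and in particular $m\notin\{a,b,c,d\}$.

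The core step is proving $Int(d,abc)$. From $T(a,b,c)$ and $B(a,m,c)$, Theorem \ref{betweensideint} gives $Int(m,abc)$, i.e.\ $SO(cbm,cba)$ and $SO(abm,abc)$; by Axiom \ref{SOtriangle} we also get $T(m,b,c)$ and $T(m,b,a)$. Since $B(b,m,d)$ we have $SD(b,m,d)$, so Axiom \ref{SOray} applied to $T(m,b,c)\wedge SD(b,m,d)$ gives $SO(mbc,dbc)$; reversing both triples via Lemma \ref{SOOOreverseboth} gives $SO(cbm,cbd)$, and combining with $SO(cbm,cba)$ through symmetry and Axiom \ref{SOtrans} gives $SO(cbd,cba)$. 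The same manoeuvre with $T(m,b,a)$ gives $SO(abm,abd)$ and hence $SO(abd,abc)$. Together these are exactly $Int(d,abc)$.

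To obtain $Int(a,bcd)$ I would rerun the identical argument after relabelling. By the parallelogram symmetry theorem $P(a,b,c,d)\Rightarrow P(b,c,d,a)$, and a short argument with Axiom \ref{uniqueline} (combine $L(b,m,d)$ coming from $B(b,m,d)$ with $L(a,m,c)$ coming from $B(a,m,c)$, using that $b,c,d$ are distinct) shows $L(b,c,d)$ is impossible, so $T(b,c,d)$ and therefore $\widehat{P}(b,c,d,a)$ holds. Its diagonals meet at the same point $m=mid(b,d)=mid(c,a)$, and both $B(b,m,d)$ and $B(c,m,a)$ hold, so applying the previous paragraph under $(a,b,c,d)\mapsto(b,c,d,a)$ yields $Int(a,bcd)$. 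Since $Int(d,abc)$ and $Int(a,bcd)$ together are the definition of $abcd$ being a convex quadrilateral, we are done. The main obstacle I anticipate is not conceptual but organizational: keeping the orientation-reversal bookkeeping straight when invoking Axiom \ref{SOray} (each angle must be written with the ray being modified listed first, and the swap undone afterwards via Lemma \ref{SOOOreverseboth}), together with discharging the several small non-degeneracy side conditions that the $SO$-axioms require; none of this is deep, but every instance must be checked explicitly.
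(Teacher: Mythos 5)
The paper states this observation without proof, so there is no official argument to compare against; judged on its own, your proof is correct and complete. The reduction to the diagonal intersection point $m=mid(a,c)=mid(b,d)$, followed by Theorem \ref{betweensideint} to get $Int(m,abc)$ and Axiom \ref{SOray} (with the reversal bookkeeping via Lemma \ref{SOOOreverseboth}) to push the interior point out along the ray from $b$ through $m$ to $d$, is exactly the technique the paper itself uses in the proof of part 3 of Theorem \ref{parallelsegthm}, where it shows $z$ and $x$ are interior to $abc$ "using Theorem \ref{betweensideint} and Axiom \ref{SOray}." Your non-degeneracy checks ($b\neq d$, $c\neq d$, and $T(b,c,d)$ via Axiom \ref{uniqueline}) are all needed and correctly discharged, and the relabelling through $P(a,b,c,d)\Rightarrow P(b,c,d,a)$ to obtain $Int(a,bcd)$ closes the argument cleanly.
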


Given $B(b,x,d)$ and $B(a,x,c)$ by using triangle congruence theorems, theorems about vertical angles, and the Alternate Interior Angle Theorem and its converse one can prove the following statements. 

\begin{theorem} \label{convexquadthm} \hspace{1in}
	\begin{enumerate}
		\item If $abcd$ is a convex quadrilateral whose opposite sides are congruent, then $ab \parallel cd$ and $bc \parallel ad$. Thus $\widehat{P}(a,b,c,d)$.
		\item If $abcd$ is a convex quadrilateral whose opposite angles are congruent, then $ab \parallel cd$ and $bc \parallel ad$. Thus $\widehat{P}(a,b,c,d)$.
	\end{enumerate}
\end{theorem}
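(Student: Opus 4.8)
The plan is to work with the diagonal intersection point furnished by the proof of the preceding theorem: for a convex quadrilateral $abcd$ there is a point $x$ with $B(a,x,c)$ and $B(b,x,d)$, and moreover $Int(b,cda)$, $Int(c,dab)$, $T(a,b,c)$ and $T(a,c,d)$ all hold (the latter two because $Int$ unpacks, via Axiom \ref{SOtriangle}, into several $T$-assertions). The first preparatory step is to extract the two ``opposite side'' facts we need: from $Int(c,dab)$ and Theorem \ref{intOO} we obtain $OO(cad,cab)$, hence (after Theorem \ref{oppositesidethm}, parts 1 and 2) $OS(b,d,ac)$; symmetrically, from $Int(d,abc)$ we obtain $OO(dba,dbc)$ and hence $OS(a,c,bd)$. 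Both parts of the theorem will then be proved by producing, via congruent triangles, a pair of congruent alternate interior angles across one diagonal and invoking the Alternate Interior Angle Theorem.

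For part 1, assume $ab \cong cd$ and $bc \cong ad$. Then triangle $abc$ and triangle $cda$ are congruent by Side-Side-Side ($ab \cong cd$, $bc \cong da$, $ca \cong ac$), so $cab \cong acd$; together with $OS(b,d,ac)$ the Alternate Interior Angle Theorem gives $ab \parallel cd$. For the other pair I would either run the symmetric Side-Side-Side argument on triangle $abd$ and triangle $cdb$ (yielding $dbc \cong bda$, then $bc \parallel da$ via $OS(a,c,bd)$), or, more economically, observe that $bcda$ is again a convex quadrilateral (its defining conditions $Int(a,bcd)$ and $Int(b,cda)$ are already in hand) with the same opposite-side congruences, and apply the line just proved to $bcda$; either way, after normalizing with parts 3 and 4 of Theorem \ref{parallelsegthm}, $bc \parallel ad$. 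To upgrade to $\widehat{P}(a,b,c,d)$, set $e := doub(b,mid(a,c))$, so that $\widehat{P}(a,b,c,e)$ by Theorem \ref{parallelogramconstruction} (using $T(a,b,c)$); unravelling the definition of $ab\parallel cd$ gives $\widetilde{L}(c,e,d)$, unravelling $bc\parallel ad$ gives $\widetilde{L}(a,doub(c,mid(a,b)),d)$, and applying part 1 of Theorem \ref{parallelsegthm} to $\widehat{P}(a,b,c,e)$ gives $\widetilde{L}(a,doub(c,mid(a,b)),e)$ as well. Thus $d$ and $e$ both lie on the line through $c$ and $e$ and on the line through $a$ and $doub(c,mid(a,b))$; a short argument with the midpoint cancellation laws and $T(a,b,c)$ shows these two lines are distinct, whence $d=e$ and $\widehat{P}(a,b,c,d)$.

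For part 2, assume the opposite angles are congruent. Since ray $ac$ is interior to $\angle dab$ and to $\angle bcd$ (from $Int(c,dab)$ and $Int(a,bcd)$), and likewise rays $bd$ and $db$ are interior to $\angle abc$ and $\angle adc$, I decompose each corner angle at $x$; combining the triangle angle-sum theorem for the four sub-triangles $axb$, $bxc$, $cxd$, $dxa$ with the vertical- and supplementary-angle relations among the four angles at $x$, routine angle arithmetic (using angle trichotomy to cancel a doubled angle) yields $bac \cong dca$ and $bca \cong dac$. These are precisely the Angle-Side-Angle data for triangle $abc$ and triangle $cda$ (included side $ac \cong ca$), so those triangles are congruent, hence $ab \cong cd$ and $bc \cong ad$, and part 1 finishes the job.

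I expect two points to carry the real weight. In part 1, getting $\widehat{P}(a,b,c,d)$ rather than merely the two parallelisms is the fiddly step: it amounts to the uniqueness of the parallelogram completion, i.e.\ the distinctness-of-two-lines argument above together with the midpoint cancellation laws (parts 3--7 of the theorem following Definition \ref{doubdef}). In part 2 the genuine obstacle is that the conclusion is false in neutral geometry, so the proof must invoke the Euclidean strength of the system, concretely the triangle angle-sum theorem; that theorem is not established in the excerpt but is a standard consequence of the Alternate Interior Angle Theorem (erect, through a vertex, the parallel to the opposite side), so it is available --- still, stating and applying it carefully in the quantifier-free language is where most of the bookkeeping goes.
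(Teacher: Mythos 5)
Your proposal is essentially correct, but it is worth noting that the paper supplies no detailed proof here --- only the one-line indication that the result follows ``given $B(b,x,d)$ and $B(a,x,c)$ by using triangle congruence theorems, theorems about vertical angles, and the Alternate Interior Angle Theorem and its converse.'' That hint points at a route through the diagonal crossing point $x$ which is shorter than yours for the $\widehat{P}(a,b,c,d)$ conclusion: after your Side-Side-Side step gives $xab=cab \cong acd=xcd$, apply Angle-Angle-Side to triangles $axb$ and $cxd$ (using the vertical angles at $x$ and $ab\cong cd$) to get $ax \cong cx$ and $bx \cong dx$; with $B(a,x,c)$ and $B(b,x,d)$ the uniqueness of the extension construction then forces $x = mid(a,c) = mid(b,d)$, so $P(a,b,c,d)$ drops out of the definition directly and both parallelisms follow from part 1 of Theorem \ref{parallelsegthm}. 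This bypasses both the Alternate Interior Angle Theorem (whose proof in this system already consumes the bisymmetry axiom) and your uniqueness-of-completion argument, which, while correct, is the fiddliest part of your write-up (it needs $c \neq e$, $a \neq f$, $e \neq f$ via midpoint cancellation and then two applications of Axiom \ref{uniqueline}). Your derivation of $OS(b,d,ac)$ and $OS(a,c,bd)$ from Theorem \ref{intOO} and Theorem \ref{oppositesidethm} is fine and is needed in either version.

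The one place where your argument has a genuine soft spot is the angle arithmetic in part 2. The angle-sum result is only recorded in this paper as an observation in the \emph{following} subsection, and, more importantly, the system (like Hilbert's) has no angles of two right angles or more, so expressions such as ``the four angles at $x$ sum to four right angles'' or ``$2\alpha_1$'' for obtuse $\alpha_1$ are not directly meaningful; the cancellation cannot literally be ``cancel a doubled angle.'' The fix is to avoid forming large sums altogether: writing $\alpha_1=bac$, $\alpha_2=dac$, $\gamma_1=bca$, $\gamma_2=dca$, the two angle-sum identities for triangles $abc$ and $acd$ together with $abc \cong cda$ give (in the ordered-difference form that Hartshorne's Section 9 machinery supports) that $\alpha_1 > \gamma_2$ iff $\gamma_1 > \alpha_2$, while the hypothesis $dab \cong bcd$ decomposed along the interior ray $ac$ gives $\alpha_1 > \gamma_2$ iff $\alpha_2 > \gamma_1$; trichotomy then forces $\alpha_1 \cong \gamma_2$ and $\gamma_1 \cong \alpha_2$, after which your Angle-Side-Angle step and the reduction to part 1 go through. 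So the approach is sound, but the ``routine angle arithmetic'' must be reorganized as a comparison argument rather than a literal sum-and-cancel.
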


\subsection{The Feasibility of the Crossbow Construction}

In the introduction we claimed that given $a$ and $c$ on opposite sides of $bd$ the line segment from $b$ to $cb(d, abc)$ is less than $ab$ or $cb$. We will now sketch the outline of the proof of this fact. 

We have need for the following lemma.

\begin{lemma} \label{LengthSidesTriangle}
	If $T(a,b,c)$ and $abc < acb$, then $ac < ab$. 
\end{lemma}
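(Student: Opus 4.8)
The plan is to prove the contrapositive-flavored statement via the standard "greater angle subtends greater side" argument, adapted to this quantifier-free setting. Suppose $T(a,b,c)$ and $abc < acb$. I want to show $ac < ab$. By trichotomy of segment length (which follows from the uniqueness of the extension construction together with the line separation results of Section \ref{lineseparation}), exactly one of $ac < ab$, $ab < ac$, or $ac \cong ab$ holds, so it suffices to rule out the latter two.

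First I would dispose of the case $ac \cong ab$. If $ac \cong ab$, then triangle $abc$ is isosceles, and by the base-angle theorem (the isosceles triangle theorem, which is among Hartshorne's results on the Hilbert plane that carry over to our system per Section 6) we get $abc \cong acb$. But $abc < acb$ means $Int(ats(abc,acb), acb)$, and one cannot have both $abc < acb$ and $abc \cong acb$ — this contradicts the fact (recorded after Definition of angle inequality, "Note if $Int(d,abc)$ then $abd < abc$", together with the uniqueness axiom for angle transport, Axiom \ref{atsunique}) that a point interior to an angle determines a strictly smaller angle, never a congruent one. So $ac \not\cong ab$.

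Next, the case $ab < ac$. Here I would use the layoff construction to produce the point $x = lf(ac, ab)$, which lies on segment $ac$ with $ax \cong ab$ and $SD(a,x,c)$; since $ab < ac$ we actually have $B(a,x,c)$. Then triangle $abx$ is isosceles with $ab \cong ax$, so by the base-angle theorem $abx \cong axb$. Now $x$ is between $a$ and $c$, so by Theorem \ref{betweensideint} $x$ is interior to angle $abc$, whence $abx < abc$. On the other side, $axb$ is an exterior angle of triangle $xbc$ at $x$ (since $B(a,x,c)$ gives that $axb$ and $bxc$ are supplementary), so by the exterior angle theorem (again a carried-over Hilbert-plane result) $axb > xcb = acb$. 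Chaining through Theorem \ref{angleinequalitytransthm}: $abc > abx \cong axb > acb$, so $abc > acb$, contradicting the hypothesis $abc < acb$. Hence $ab \not< ac$.

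Having eliminated both alternatives, trichotomy forces $ac < ab$, completing the proof. The main obstacle I anticipate is bookkeeping about \emph{which} side of $ab$ various constructed points land on — in particular verifying $SD$ and $SS$/$OS$ relations so that the exterior angle and interior-point arguments apply cleanly — but all of these are routine applications of the angle-orientation machinery of Section \ref{angleorientationsec} (Axiom \ref{SOray}, Theorem \ref{betweensideint}, Theorem \ref{crossbar}) together with the line separation theorem, Theorem \ref{lineseparationthm}. One should also double-check that the exterior angle theorem and the isosceles triangle theorem are genuinely among the Hilbert-plane results that transfer without a parallel axiom — they are, as both are proved by Hartshorne using only incidence, order, and congruence.
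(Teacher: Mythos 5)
Your proposal is correct and follows essentially the same route as the paper, which simply cites Euclid's Proposition 19 and asserts that its proof (trichotomy, the isosceles base-angle theorem for the congruent case, and the layoff-plus-exterior-angle argument of Proposition 18 for the reversed case) carries over to this system; you have merely written out that standard argument with the appropriate constructions ($lf$, Theorem \ref{betweensideint}, Theorem \ref{angleinequalitytransthm}) made explicit. The only bookkeeping worth noting is that the Exterior Angle Theorem is formally stated later in the paper (Theorem \ref{ExteriorAngleThm}), but as it is a pure Hilbert-plane result there is no circularity.
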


In Euclid's \textit{The Elements} this is Proposition 19. It states that in a triangle the greater side is opposite the greater angle. The proofs of this result and the results needed to prove this result carry over to our system without difficulty. 

The following theorem is the result we need to justify our claim about the feasibility of the crossbow construction. 

\begin{theorem} \label{triangelsegmentinequality}
	If $T(a,b,c)$ and $B(a,x,c)$, then $bx < ba \lor bx < bc$.
\end{theorem}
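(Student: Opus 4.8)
The plan is to reduce the statement to Lemma \ref{LengthSidesTriangle} (the greater side is opposite the greater angle) by comparing angles inside one of the two subtriangles $abx$ or $cbx$. Since $B(a,x,c)$ and $T(a,b,c)$, Theorem \ref{betweensideint} gives $Int(x,abc)$, and in particular both $abx$ and $cbx$ are genuine triangles (one checks $T(a,b,x)$ and $T(c,b,x)$ as in the proof of Theorem \ref{betweensideint}). The key observation is that $bxa$ and $bxc$ are supplementary angles (since $B(a,x,c)$), so one of them is a right angle or obtuse — more precisely, by the theory of supplementary angles carried over from Hartshorne, we cannot have both $bxa < $ (its supplement) and $bxc <$ (its supplement) strictly, so at least one of $bxa$, $bxc$ is greater than or congruent to its supplement, i.e. greater than or congruent to the other one of the pair.

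First I would establish, using the supplementary-angle machinery referenced in Section 5, the dichotomy: either $bxc \geq bxa$ or $bxa \geq bxc$ (as angle inequalities/congruences). Consider the first case, $bxc \geq bxa$. In triangle $abx$, the exterior angle at $x$ (namely $bxc$, which is the supplement of $bxa$ along the line $ac$) is greater than the remote interior angle $bax$ — this is Euclid's exterior angle theorem, again available to us since it is proved in a Hilbert plane without the parallel postulate and all ingredients carry over. Hence $bxc > bax = bac$. Combined with $bxc \geq bxa$ we would like to conclude $bxa$ compares suitably with $bax$; but the cleaner route is: in triangle $bxc$, since $bxc$ is (weakly) the larger of the two supplementary angles, $bxc$ is right or obtuse, so $bxc > bcx$ (the exterior angle theorem applied in triangle $bxc$ gives that the supplement of $bxc$, which is $bxa$, exceeds $bcx$; if $bxc \geq bxa$ then... ) — here I would instead argue directly: $bxc$ right or obtuse forces $bxc$ to be the strictly largest angle of triangle $bxc$ when it is obtuse, or tied-largest when right, so $bxc > bcx$ or we handle the right-angle case separately. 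Then by Lemma \ref{LengthSidesTriangle} applied to triangle $bxc$ (with $bxc > bcx$, so the side opposite $bxc$, namely $bc$, exceeds the side opposite $bcx$, namely... wait) — I must be careful with which side is opposite which angle. In triangle $bxc$ the side opposite vertex $x$ is $bc$ and the side opposite vertex $c$ is $bx$; so $bxc > bcx$ yields $bx < bc$ by Lemma \ref{LengthSidesTriangle}. Symmetrically, in the case $bxa \geq bxc$, the same argument in triangle $abx$ gives $bx < ba$. In either case the conclusion $bx < ba \lor bx < bc$ holds, and the remaining right-angle boundary case ($bxa \cong bxc$) is handled by noting then $bxa$ is right, so in triangle $abx$ the angle $bxa$ is the largest (strictly larger than $bax$ and than $xab$... again being careful: in a right triangle the right angle strictly exceeds each acute angle), giving $bx < ba$ directly.

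The main obstacle I anticipate is the bookkeeping of angle inequalities versus congruences at the boundary (the right-angle case) and making sure the exterior-angle theorem and Lemma \ref{LengthSidesTriangle} are applied with the correct side-angle correspondence inside the subtriangle. Since Lemma \ref{LengthSidesTriangle} as stated requires a \emph{strict} angle inequality $abc < acb$ to conclude $ac < ab$, I would need to ensure that in whichever subtriangle I work, I genuinely obtain a strict inequality between the angle at $x$ and the angle at the opposite base vertex; this is guaranteed because the exterior angle theorem gives a strict inequality ($bxc > bax$ strictly, and then $bxc \geq bxa$ upgrades appropriately), so the strictness is never actually in jeopardy. Everything else — verifying the subtriangles are nondegenerate, that $bxa$ and $bxc$ are supplementary, and the transitivity of angle inequality (Theorem \ref{angleinequalitytransthm}) and segment inequality — is routine given the results already in hand.
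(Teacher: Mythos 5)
Your proof is correct, but it takes a genuinely different route from the paper's. The paper proves this by constructing the parallel to $ac$ through $b$ (via $d = doub(a,mid(b,c))$ and $e = doub(d,b)$), showing $cabd$ is a convex quadrilateral, and transferring the angles at $a$ and $c$ up to $b$ with the converse of the Alternate Interior Angle Theorem before splitting into three cases on the comparison of $xab$ with $xcb$; that argument therefore leans on the bisymmetry (parallel) axiom and the whole parallelogram apparatus of Section 8. You instead work entirely inside the two subtriangles $abx$ and $cbx$: the angles $bxa$ and $bxc$ are supplementary, at least one is $\geq$ the other, the Exterior Angle Theorem (Theorem \ref{ExteriorAngleThm}) gives $bxa > bcx$ (resp.\ $bxc > bax$), and Lemma \ref{LengthSidesTriangle} with the correct side--angle correspondence (which you do get right: in triangle $bxc$ the side opposite $x$ is $bc$ and the side opposite $c$ is $bx$) closes the case. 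This is more elementary in a meaningful sense -- both ingredients are neutral-geometry results, so your proof shows the theorem already holds in the Hilbert-plane fragment, independent of the parallel axiom; what the paper's longer route buys is the Observation recorded immediately after its proof, namely the linear triple $eba, abc, dbc$ realizing the angle-sum of the triangle, which your argument does not produce. The only things you should make explicit are the Hartshorne Section 9 facts you rely on silently: comparability (trichotomy) of two angles, so that one of $bxc \geq bxa$ or $bxa \geq bxc$ holds, and the compatibility of angle inequality with angle congruence needed to chain $bxc \cong bxa$ with $bxa > bcx$ (the paper only states strict transitivity as Theorem \ref{angleinequalitytransthm}). The paper asserts these carry over from Hartshorne, so this is a legitimate reliance, but it is the one place a referee would ask you to point at a specific statement.
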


\begin{figure}
	\begin{picture}(216,80)
	\put(37,0){\includegraphics[scale=.9]{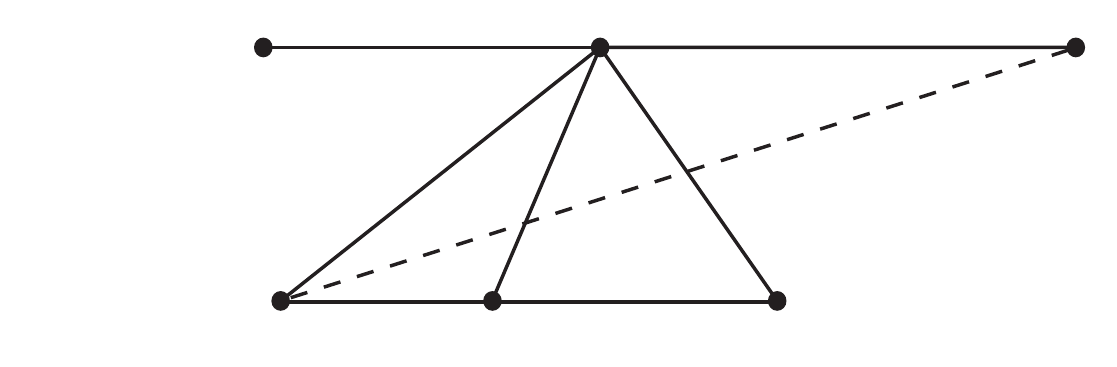}}
	
	\put(240,8){$c$}		
	\put(100,87){$e$}	
	\put(160,6){$x$}	
	\put(103,6){$a$}	
	\put(192,87){$b$}	
	\put(315,87){$d$}

	\end{picture}
	\caption{}
	\label{feasiblecrossbowfigure}
\end{figure}

\begin{proof}
	See Figure \ref{feasiblecrossbowfigure}. Let $d = doub(a, mid(b,c))$ and let $e=doub(d,b)$. First we want to show that $cbd < xbd$. Note $cabd$ is a parallelogram and thus a convex quadrilateral. There by Theorem \ref{convexquadthm} we have $Int(c, abd)$. We also have $Int(x,abc)$ by Axiom \ref{betweensideint}. By observing the angle orientations of various angles, one can show $Int(c, xbd)$. Thus $cbd < xbd$. One can then modify these methods to show that $abe < xbe$. By the converse of the Alternate Interior Angle Theorem we know  $cbd \cong bcx$, $abe \cong bax$, $axb \cong dbx$, and $cxb \cong ebx$. Thus we have $xab < cxb$ and $xcb < axb$. There are three cases to consider. If $xab \cong xcb$, then $xcb < cxb$ and by the previous lemma above $bx < bc$. If $xab < xcb$, then by the Theorem \ref{angleinequalitytransthm} above we have $xab < axb$. Thus $bx < ba$. If $xcb < xab$, then we have $xcb < cxb$. This implies $bc < bc$. 
\end{proof}

\begin{observation}
	In the previous proof it was shown that $cab \cong eba$ and $acb \cong dba$. If one so chooses they could defined the angles $eba$, $abc$, and $dbc$ as a so called linear triple and observe that the angles of the triangle are congruent to these angles in a one to one correspondence. This would be our analog to the result that the angles of a triangle sum to 180 degrees. 
\end{observation}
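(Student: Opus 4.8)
The final statement records that, in the configuration built in the proof of Theorem \ref{triangelsegmentinequality} (with $d = doub(a,mid(b,c))$ and $e = doub(d,b)$), the three angles of triangle $abc$ are congruent, in a one-to-one correspondence, to the three angles $eba$, $abc$, $dbc$: explicitly $cab \cong eba$, the middle correspondence $abc \cong abc$ being trivial, and $acb \cong dbc$ (the ``$dba$'' appearing in the statement should read ``$dbc$'', as any picture of the construction shows). The plan is to obtain the two nontrivial congruences by transporting, along the sides of the triangle through $a$ and through $c$, the two alternate-interior-angle congruences already extracted inside the proof of Theorem \ref{triangelsegmentinequality}, namely $abe \cong bax$ and $cbd \cong bcx$, where $x$ is the foot of the cevian.

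First I would promote $abe \cong bax$ to $eba \cong cab$. Since $B(a,x,c)$ we have $SD(a,x,c)$, so $x$ lies on the same side of $a$ as $c$; also $T(b,a,x)$, since a collinearity of $b,a,x$ together with the collinearity of $a,x,c$ would force $b,a,c$ collinear against $T(a,b,c)$. Hence Axiom \ref{samerayanglecongruence} gives $bax \cong bac$, and combining this with $abe \cong bax$ and the symmetry and transitivity of angle congruence yields $eba \cong cab$. Symmetrically, $B(a,x,c)$ gives $SD(c,x,a)$ and $T(b,c,x)$, so Axiom \ref{samerayanglecongruence} gives $bcx \cong bca$, and with $cbd \cong bcx$ this gives $dbc \cong acb$. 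Together with $abc \cong abc$ this is exactly the asserted one-to-one congruence between the angles of the triangle and the triple $eba,abc,dbc$.

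To justify the closing remark — that $eba$, $abc$, $dbc$ may fairly be called a ``linear triple'', and so are the right finitary stand-in for ``the angles sum to $180^{\circ}$'' — I would verify that the rays $be$, $ba$, $bc$, $bd$ emanate from $b$ in that cyclic order, so that $eba$, $abc$, $cbd$ exactly tile the straight angle at $b$. The ingredients are already at hand: $e = doub(d,b)$ gives $B(d,b,e)$; the quadrilateral $cabd$ is a non-flat parallelogram, hence a convex quadrilateral (Observation \ref{ObservationQuad2}), which places ray $bc$ interior to $\angle abd$; and $B(a,x,c)$ with Theorem \ref{betweensideint} places ray $bx$ interior to $\angle abc$. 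Propagating the relevant $SO$, $OO$, and $Int$ relations (Axioms \ref{SOray}, \ref{intersectOO}, \ref{OOOOSO}, together with the crossbar results of Section \ref{angleorientationsec}) yields that ray $ba$ is interior to $\angle ebc$ and ray $bc$ is interior to $\angle abd$; since $B(e,b,d)$, these two facts say precisely that $\angle eba$, $\angle abc$, $\angle dbc$ partition the straight angle $\angle ebd$.

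The only genuine work is this last ray-ordering bookkeeping, and it is routine given the orientation apparatus of Section \ref{angleorientationsec}; I do not expect a conceptual obstacle. Indeed, since the notion of ``linear triple'' is, as noted, optional and not yet formalized, the Observation is best read as a record that the machinery assembled for Theorem \ref{triangelsegmentinequality} already delivers the angle-sum analog for free, and the argument above is essentially the act of reading that off.
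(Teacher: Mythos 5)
Your proposal is correct and takes essentially the same route the paper intends: the Observation carries no separate proof and simply records the congruences $abe \cong bax$ and $cbd \cong bcx$ extracted via the converse Alternate Interior Angle Theorem in the proof of Theorem \ref{triangelsegmentinequality}, promoted to the full triangle angles by Axiom \ref{samerayanglecongruence} exactly as you do, with the tiling of the straight angle at $b$ following from $B(d,b,e)$ and the interiority facts ($Int(c,abd)$ from the convex quadrilateral $cabd$) already established there. Your reading of ``$dba$'' as a typo for ``$dbc$'' is also right, since $Int(c,abd)$ forces $dbc < dba$, so the stated congruence could not hold literally.
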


\subsection{The Triangle Inequality Theorem}

In Section \ref{CompassConstructions} we will need to apply a version of the Triangle Inequality Theorem. Before we state and proof such this theorem, we will need to state and prove our version of the Exterior Angle Theorem. 

\begin{theorem} \label{ExteriorAngleThm}
	The Exterior Angle Theorem:
	
	$T(a,b,c) \rightarrow ac (doub(b,c)) > abc \wedge ac (doub(b,c)) > acb$
\end{theorem}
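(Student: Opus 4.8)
The plan is to run Euclid's classical argument (\emph{Elements} I.16) inside the orientation-based framework. Write $e := doub(b,c)$, so that $B(b,c,e)$ and the ray from $c$ through $e$ is opposite the ray from $c$ through $b$; then $ac(doub(b,c)) = ace$ is the exterior angle of the triangle at $c$, and I will show it exceeds each of the two remote interior angles, those at $a$ and $b$ (i.e.\ $cab$ and $abc$). The printed statement compares with $abc$ and $acb$; I read this as the standard Exterior Angle Theorem, and if the exterior angle at $a$ rather than at $c$ is intended, the argument below applies verbatim after relabelling. Both halves have the same shape: introduce a midpoint, double a vertex across it, apply SAS and the vertical-angle theorem (both carried over from \cite{HartshorneBook}) to show one remote interior angle is congruent to a sub-angle of (a copy of) the exterior angle, and then conclude via the remark following the definition of angle inequality that a sub-angle of an angle is smaller than it.

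For the angle at $a$: put $m := mid(a,c)$ and $f := doub(b,m)$. Since $mid(b,f) = m = mid(a,c)$ we get $P(a,b,c,f)$, and with the hypothesis $T(a,b,c)$ this is a non-flat parallelogram $\widehat{P}(a,b,c,f)$; by Observation~\ref{ObservationQuad2}, $abcf$ is a convex quadrilateral, so all four vertex-interiority relations for it are available. From $B(a,m,c)$, $B(b,m,f)$, $am\cong cm$, $bm\cong fm$ and the vertical angles at $m$, SAS gives triangle $amb\cong$ triangle $cmf$, hence $mab\cong mcf$; sliding along the rays $a\to m\to c$ and $c\to m\to a$ via Axiom~\ref{samerayanglecongruence} yields $cab\cong mab\cong mcf\cong acf$. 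It then remains to establish $Int(f,ace)$, i.e.\ $SO(acf,ace)\wedge SO(ecf,eca)$. The first conjunct is $SS(f,e,ac)$: the parallelogram's midpoint $mid(a,c)=mid(b,f)$ is between both $a,c$ and $b,f$, so Axiom~\ref{intersectOO} together with Theorem~\ref{oppositesidethm2} gives $OS(b,f,ac)$, while $B(b,c,e)$ with Axiom~\ref{intersectOO} gives $OS(b,e,ac)$, and Theorem~\ref{oppositesidethm}(3) then delivers $SS(f,e,ac)$. The second conjunct, after identifying the line through $c$ and $e$ with the line through $b$ and $c$ and shifting the defining pair via Theorems~\ref{sideslinethm1} and~\ref{sideslinethm2}, is exactly the conjunct $SO(bca,bcf)$ of the relation $Int(a,bcf)$ supplied by the convex quadrilateral $abcf$. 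Hence $Int(f,ace)$, so $acf<ace$, and therefore $cab\cong acf<ace$.

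For the angle at $b$: the exterior angle at $c$ does not ``open toward'' $b$, so I route through its vertical angle. Put $\bar a := doub(a,c)$, so $B(a,c,\bar a)$ and, by the vertical-angle theorem, $ace\cong bc\bar a$. Now put $n := mid(b,c)$ and $g := doub(a,n)$; then $mid(a,g)=n=mid(b,c)$, so $P(a,b,g,c)$, and $T(a,b,g)$ follows from $T(a,b,c)$ by repeated use of Axiom~\ref{uniqueline}, so $\widehat{P}(a,b,g,c)$ and $abgc$ is a convex quadrilateral. As before, SAS on the vertical angles at $n$ gives triangle $bna\cong$ triangle $cng$, whence $abc\cong bcg$ after ray-sliding via Axiom~\ref{samerayanglecongruence}. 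One then shows $Int(g,bc\bar a)$ by the same bookkeeping: $g$ and $\bar a$ lie on the same side of line $bc$ (both on the side opposite $a$, using the parallelogram diagonal $ag$ together with $B(a,c,\bar a)$, Axiom~\ref{intersectOO}, and Theorems~\ref{oppositesidethm},~\ref{oppositesidethm2}), and $g$ and $b$ lie on the same side of the line through $a$ and $c$ (the conjunct $SO(acb,acg)$ of the relation $Int(b,gca)$ coming from the convex quadrilateral $abgc$, shifted to the line through $c$ and $\bar a$). Hence $bcg<bc\bar a$, and so $abc\cong bcg<bc\bar a\cong ace$, using that a congruent angle may be substituted on either side of a $<$ comparison (immediate from the uniqueness of the angle-transport construction, Axiom~\ref{atsunique}).

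The routine parts — the colinearity checks that put all auxiliary triples into the $T(\cdot,\cdot,\cdot)$ form needed by the axioms, the SAS and vertical-angle applications, and the ray-sliding — are exactly the reasoning already exercised in the congruence and midpoint sections. The real work, and the step most prone to error, is the orientation bookkeeping behind the two interiority claims $Int(f,ace)$ and $Int(g,bc\bar a)$: unpacking ``interior of an angle'' into a pair of $SO$-assertions, recognizing each as a ``same side of a line'' relation, and chaining Axiom~\ref{intersectOO} (a point and a $doub$-extension through a vertex sit on opposite sides of a line through that vertex) with the side-of-line transfer results (Theorems~\ref{sideslinethm1},~\ref{sideslinethm2},~\ref{oppositesidethm},~\ref{oppositesidethm2}) and the convex-quadrilateral facts (Observation~\ref{ObservationQuad2} and the interiority theorem for convex quadrilaterals). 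This is the same flavour of argument carried out in the feasibility proof of Theorem~\ref{triangelsegmentinequality}; I expect it to be the main obstacle — nothing in it is deep, but keeping the vertex of each $SO$-triple straight through the chain of applications is where care is required.
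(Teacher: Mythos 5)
Your proof is correct and is essentially the paper's own proof: the paper simply states that the argument is Euclid's \emph{Elements} I.16 with the same-side verifications ``verifiable in our system and the details omitted,'' and you have supplied exactly those details (the midpoint/doubling SAS step plus the $Int(\cdot,\cdot)$ bookkeeping via Axiom~\ref{intersectOO}, the side-transfer theorems, and convex-quadrilateral interiority). You were also right to read past the apparent typo in the statement ($acb$ where the remote interior angle $bac$ must be intended, since the exterior angle need not exceed its adjacent interior angle) and prove the standard I.16 claim.
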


\begin{proof}
	The proof is identical to Euclid's proof of Proposition 16 in Book I of \textit{The Elements}. The one critical issue is to verify that certain points are on the same side of a segment when claiming that a particular angle is less than another. These claims are verifiable in our system and the details are omitted. 
\end{proof}

Below we have stated our version of the Triangle Inequality Theorem. The key distinction between our statement is that we have avoided defining the sum operation on the congruence classes of line segments. Because of this, we have written out the details of the proof even though the proof is relatively standard. 

\begin{theorem} \label{TriangleInequalityThm}
	The Triangle Inequality Theorem:
	
	$T(a,b,c) \rightarrow a (ext(ab,bc)) > ac$
\end{theorem}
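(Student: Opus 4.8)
The plan is to mimic Euclid's proof of Proposition 20 in Book I of \textit{The Elements}, carried out with the constructions and theorems already available in this system. Recall that $ext(ab,bc)$ is, by Axiom \ref{extbetween}, a point $d$ with $B(a,b,d)$ and, by Axiom \ref{extcong}, with $C(bd,bc)$; so the claim $a(ext(ab,bc)) > ac$ asserts that segment $ad$ is longer than segment $ac$, where $d$ lies beyond $b$ on ray $ab$ at distance $bc$ from $b$. First I would set $d = ext(ab,bc)$ and establish the basic configuration: since $T(a,b,c)$ and $B(a,b,d)$, the points $a$, $c$, $d$ are non-colinear (otherwise $L(a,b,c)$ by Axiom \ref{uniqueline}), so $T(a,c,d)$, and likewise $T(b,c,d)$.

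The heart of the argument is that triangle $bcd$ is isosceles with $bc \cong bd$, hence its base angles are congruent: $bdc \cong bcd$. For this I would use the triangle congruence theorems (Side-Angle-Side applied to $bcd$ and $bdc$, or the isosceles-triangle result), which the excerpt tells us carry over from Hartshorne's Section~10. Next I would locate $c$'s position relative to angle $adc$: since $B(a,b,d)$, the point $b$ is between $a$ and $d$, and with $T(a,c,d)$ one gets $B(a,b,d)$ giving $Int(b, \text{angle } acd)$ or, more directly, that $b$ is interior to angle $acd$ via Theorem \ref{betweensideint} (reading $B(a,b,d)$ as putting $b$ between the endpoints of side $ad$ of triangle $acd$). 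This yields the strict angle inequality $bcd < acd$, i.e. $bdc = bcd < acd = adc$ — wait, I must be careful about which vertex; the correct chain is: angle $acd$ strictly contains angle $bcd$ in its interior, so $acd > bcd \cong bdc = adc$. Therefore in triangle $acd$ the angle at $c$ is strictly greater than the angle at $d$.

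Finally I would apply Lemma \ref{LengthSidesTriangle} (the greater side is opposite the greater angle) to triangle $acd$: since angle $acd > $ angle $adc$, the side opposite the larger angle, namely $ad$, is longer than the side opposite the smaller angle, namely $ac$; that is, $ac < ad = a(ext(ab,bc))$, which is exactly the desired conclusion (after noting $ac < ad$ is the same as $ad > ac$ by definition of the segment inequality). The main obstacle I anticipate is purely bookkeeping: verifying the interiority/same-side claim that puts $b$ strictly inside angle $acd$ so that the angle inequality is \emph{strict} rather than merely non-strict, and confirming that $Int$ and the angle-inequality definition line up with the hypotheses of Lemma \ref{LengthSidesTriangle}. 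All of this reduces to the angle-orientation machinery of Section \ref{angleorientationsec} together with Theorem \ref{betweensideint}, so no genuinely new idea is needed; one just has to track the configuration carefully, exactly as the excerpt's remark after Theorem \ref{ExteriorAngleThm} warns.
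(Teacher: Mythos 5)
Your proposal is correct, but it is not the route the paper takes. You give (in effect) Euclid's proof of Proposition I.20: set $d=ext(ab,bc)$, use the isosceles triangle $bcd$ (with $bd\cong bc$ by Axiom \ref{extcong}) to get $bdc\cong bcd$, use Theorem \ref{betweensideint} to place $b$ in the interior of angle $acd$ so that $bcd<acd$, transfer $bdc$ to $adc$ via $SD(d,a,b)$ and Axiom \ref{samerayanglecongruence}, and conclude $ac<ad$ from Lemma \ref{LengthSidesTriangle}. The paper instead drops a perpendicular from $b$ to $ac$ with foot $d$ and argues by a three-way case distinction on the position of $d$ relative to $a$ and $c$, invoking the Exterior Angle Theorem (Theorem \ref{ExteriorAngleThm}) in the two exterior cases and, in the case $B(a,d,c)$, explicitly laying off segments $s$ and $t$ on the line $ac$ with $ds\cong ab$ and $dt\cong bc$ to exhibit $st\cong a(ext(ab,bc))$ containing $ac$. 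Your argument is uniform (no case split) and arguably more economical, needing only the isosceles base-angle theorem, the interior/angle-inequality machinery of Section \ref{angleorientationsec}, and Lemma \ref{LengthSidesTriangle}; the one point you should make explicit is the mixed transitivity step $adc\cong bdc\cong bcd < acd \Rightarrow adc<acd$, which is a mild strengthening of Theorem \ref{angleinequalitytransthm} (congruence composed with strict inequality) but is standard Hilbert-plane material that the paper already imports from Hartshorne. The paper's perpendicular-foot approach buys nothing essential here beyond reusing the dropped-perpendicular construction it has just set up; both proofs avoid defining sums of segment congruence classes, which is the constraint the paper flags as the reason for writing the proof out in full.
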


\begin{proof}
	
	Let $d$ be the point constructed by dropping a perpendicular from $b$ to $ac$. There are three cases to consider. 
	
	Case 1: If $B(a,c,d)$, then since $adb$ is right by Theorem \ref{ExteriorAngleThm} $abd > adb$. Thus $ab >ad$ by Lemma \ref{LengthSidesTriangle}. Therefore $a (ext(ab,bc)) > ac$. 
	
	Case 2: If $B(d,a,c)$ then a similar argument to the previous case will show $a (ext(ab,bc)) > ac$.
	
	Case 3: If $B(a,d,c)$ then by Lemma \ref{LengthSidesTriangle} $ab > ad$ and $bc > dc$. We can use the layoff construction to construct points $s$ and $t$ such that $\widetilde{L}(s,a,c)$, $ds \cong ab > da$, $\widetilde{L}(t,a,c)$, $dt \cong bc > dc$. We can then infer $B(s,a,d)$ and $B(a,d,t)$. We then have $st \cong a (ext(ab,bc))$. Therefore $a (ext(ab,bc)) > ac$.

\end{proof}

\subsection{Equidistant Segments and Extensions of Non-equidistant Segments}

In this section we will derive an analogous result to the idea that two line are parallel if and only if they are equidistant. We will also show that if two segments are not equidistant then when one extends one of the line segments in one direction the segments grow more distant while growing closer or crossing when that segment is extended the other direction. 

Traditionally in axiomatic geometry it is said that parallel lines are equidistant and that lines that are equidistant are parallel. The concept of two lines $\ell_1$ and $\ell_2$ being equidistant is formalized as follows: line $\ell_1$ is equidistant to line $\ell_2$ if for any points $a$ and $b$ on $\ell_1$ when one drops perpendiculars from $a$ and $b$ down to $\ell_2$ intersecting $\ell_2$ at $x$ and $y$ respectively $ax \cong by$.




Some modification must be made to translate this concept into our system. Importantly we can not make a statement such as 'for all points'. We can capture this idea in our system as follows. 

\begin{theorem} \label{parallelequidistance}
	Consider points $a$, $b$, $c$, and $d$ such that $T(b,c,d)$ and $T(a,c,d)$. Let $x$ be the point constructed when dropping the perpendicular from $a$ to $cd$ and let $y$ be the point constructed when dropping the perpendicular from $b$ to $cd$. Then $ab \parallel cd$ if and only if $SS(a,b,cd)$ and $ax \cong by$.
\end{theorem}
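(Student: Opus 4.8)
The plan is to prove the two directions separately, handling the "only if" direction ($ab \parallel cd \Rightarrow SS(a,b,cd) \wedge ax \cong by$) by building a suitable rectangle, and the "if" direction by the uniqueness of the perpendicular and angle congruence. For the forward direction, suppose $ab \parallel cd$. The first subgoal is $SS(a,b,cd)$: from $ab \parallel cd$ we get (part 2 of Theorem \ref{parallelsegthm}) that $a$, $b$, $c$, $d$ are distinct and non-collinear in the relevant triples, and using the parallelogram $P(a,b,\cdot,\cdot)$ structure together with Theorem \ref{betweensideint} and Axiom \ref{SOray} one shows that neither $c$ nor $d$ lies "between" the configuration in a way that would put $a$ and $b$ on opposite sides of $cd$; concretely, if we had $OS(a,b,cd)$ the crossbow construction $cb(\cdot, \cdot)$ would produce a point collinear with $cd$ and between $a$ and $b$, and chasing that point through Axiom \ref{uniqueline} contradicts $T(a,b,c)$ (this mirrors the side-computation already used in the proof of the Converse of the Alternate Interior Angle Theorem). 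With $SS(a,b,cd)$ in hand, the segments $ax$ and $by$ are perpendiculars dropped to $cd$ on the \emph{same} side, so $axyb$ (in the appropriate vertex order) is a quadrilateral with right angles at $x$ and $y$.

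The heart of the forward direction is then showing $ax \cong by$. Here I would invoke the Converse of the Alternate Interior Angle Theorem: since $ax \perp cd$ and $by \perp cd$, and both $a,b$ are on the same side of $cd$, the segments $ax$ and $by$ are themselves parallel (two segments perpendicular to a common segment, on the same side, are parallel — provable from the uniqueness part of angle transport, Axiom \ref{atsunique}, combined with Theorem \ref{conganglessamesidesameray}). Then $axyb$ is a non-flat parallelogram $\widehat{P}$: both pairs of opposite sides $ab \parallel xy$ and $ax \parallel by$. By Theorem \ref{oppositesidesanglesnonflatparallelogram}, opposite sides of a non-flat parallelogram are congruent, so $ax \cong by$ (and also $ab \cong xy$). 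The main obstacle I anticipate is the bookkeeping needed to certify that $axyb$ really is a parallelogram in the sense of Definition (i.e. $mid(a,y) = mid(x,b)$) rather than merely "looks like one" — this requires knowing $ab \parallel xy$, which in turn needs that $x$ and $y$ are distinct and that the foot-of-perpendicular construction lands them in the right order on $cd$; establishing $ab \parallel xy$ from $ab \parallel cd$ and $\widetilde{L}(c,d,x)$, $\widetilde{L}(c,d,y)$ uses Theorem \ref{parallelsegthm2} part 1 applied twice plus part 4 of Theorem \ref{parallelsegthm}, so the distinctness $x \neq y$ is the real pressure point (if $x = y$ then $a$, $b$, and the common foot would be collinear, forcing $ab$ to hit $cd$, contradicting $ab \parallel cd$).

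For the converse, assume $SS(a,b,cd)$ and $ax \cong by$, where $x, y$ are the respective perpendicular feet. Then $axyb$ has right angles at $x$ and $y$ and congruent sides $ax \cong by$, and both $a, b$ lie on the same side of line $xy$ (which is collinear with $cd$). This is exactly the Saccheri-quadrilateral-type configuration: I would apply Theorem \ref{convexquadthm} part 1 (a convex quadrilateral with congruent opposite sides is a non-flat parallelogram), after first verifying convexity of $axyb$ via Observation \ref{ObservationQuad1}/\ref{ObservationQuad2} and the right-angle data — here one checks $Int$ conditions using that the feet $x,y$ are positioned so that $a$ is interior to $\angle bxy$ and $b$ interior to $\angle ayx$, which follows from $SS(a,b,cd)$ and both angles at $x,y$ being right. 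Actually, to match the exact hypothesis form of Theorem \ref{convexquadthm} one needs opposite sides $ax$ and $by$ congruent and the quadrilateral convex; alternatively, drop a diagonal $ay$, use SAS on the two right triangles $\triangle axy$ and $\triangle byx$ (sharing $xy$, with $ax \cong by$ and right angles at $x, y$) to get $ay \cong bx$ and the alternate interior angles equal, then apply the Alternate Interior Angle Theorem directly to conclude $ab \parallel xy$, hence $ab \parallel cd$ by Theorem \ref{parallelsegthm2}. I expect the converse to be the easier half; the SAS-plus-AIAT route avoids any delicate side-of-line casework beyond what $SS(a,b,cd)$ already grants.
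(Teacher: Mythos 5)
Your overall shape (forward: build a parallelogram on $a,x,y,b$; converse: a Saccheri-type argument) is reasonable, but both directions have gaps at exactly the points where the paper has to work hardest. In the forward direction, the step ``$ab\parallel xy$ and $ax\parallel by$, therefore $axyb$ is a non-flat parallelogram'' is not licensed by anything in the paper: here a parallelogram is \emph{defined} by $mid(a,y)=mid(x,b)$, and the only implications available run from the midpoint condition to parallel sides (Theorem \ref{parallelsegthm}, part 1) or from a convex quadrilateral with \emph{congruent} opposite sides or angles to the midpoint condition (Theorem \ref{convexquadthm}); ``two pairs of parallel sides'' is neither. You flag this as the main obstacle but then locate the remaining work in showing $ab\parallel xy$ and $x\neq y$ --- that is not where the difficulty lies. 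What is actually needed is to construct the unique fourth vertex $z=doub(x,mid(a,y))$, use Lemma \ref{parallelimplieslinear} to get $\widetilde{L}(a,z,b)$, and then force $z=b$; the paper does precisely this, pinning $z=b$ with the right angles at $x$ and $y$, the same-side bookkeeping, and Theorem \ref{conganglessamesidesameray}. (Your crossbow argument for $SS(a,b,cd)$ has a similar soft spot: a point $p$ with $B(a,p,b)$ and $\widetilde{L}(c,p,d)$ shares only the single point $p$ with the triple $c,d$, so Axiom \ref{uniqueline} yields no contradiction with $T(a,b,c)$; one needs a separate ``parallel segments do not meet'' argument. The paper sidesteps this by constructing $t=doub(a,mid(b,y))$ and reading off $SS(a,b,yt)$ from the convexity of the non-flat parallelogram $abty$.)

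The converse is where the proposal actually fails. Your first route cannot be run: Theorem \ref{convexquadthm}, part 1, requires \emph{both} pairs of opposite sides congruent, and you only have $ax\cong by$ --- the congruence $ab\cong xy$ is part of what is being proved. Your second route also does not close: SAS on $\triangle axy$ and $\triangle byx$ yields $ay\cong bx$ and $\angle ayx\cong\angle bxy$, but these two angles sit at the feet of the two \emph{different} diagonals; they are not an alternate-interior-angle pair for any single transversal of $ab$ and $xy$. The pair you would need for AIAT with transversal $ay$ is $\angle bay\cong\angle ayx$, and this does not follow from SAS/SSS: in a Saccheri quadrilateral over a general Hilbert plane the diagonal does not make congruent alternate angles with summit and base (the two triangles it cuts off have unequal defect), so any proof of that congruence must pass through the parallel (bisymmetry) axiom, which your derivation never touches. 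The paper's converse instead constructs $s=doub(x,mid(a,y))$, obtains $sy\cong ax\cong by$ and that $\angle xys$ is right from the parallelogram $axys$, and concludes $s=b$ from the uniqueness of angle transport (Axiom \ref{atsunique}); that identification of $b$ with the constructed fourth vertex is the step your argument is missing.
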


\begin{proof}
	$(\Longrightarrow)$ Suppose that $ab \parallel cd$. By Theorem \ref{parallelsegthm2} we have $ab \parallel xy$. First we will show that $SS(a,b,cd)$. Let $doub(a,mid(b,y))=t$. (See Figure \ref{equidistantfoward}.) Then $abty$ is a non-flat parallelogram and thus $ab \parallel xy$ and $ab \parallel ty$. From Theorem \ref{parallelimplieslinear} we can infer $\widetilde{L}(y,x,t)$. Since $abty$ is a non-flat parallelogram we can infer $SS(a,b,yt)$ from Observations \ref{ObservationQuad1} and \ref{ObservationQuad2}. Therefore $SS(a,b,yx)$ and $SS(a,b,cd)$ by Theorem \ref{sideslinethm2}.  
	
	Now let $doub(x,mid(a,y))=z$. Note that $axyz$ is a non-flat parallelogram. Thus $az \parallel xy$ and therefore $az \parallel cd$. Since $ab \parallel cd$ and $az \parallel cd$ by using Theorem \ref{parallelimplieslinear} we can infer $\widetilde{L}(a,z,b)$.  
	
	
	
	
	
	Suppose $b \not= z$. Since $az \parallel cd$ and $ab \parallel cd$ we know $a \not= z$ and $a \not= b$ and thus we have $L(a,z,b)$ 
	
	Using the definition of right angles and the converse of the Alternate Interior Angle Theorem we can show that since angle $yxa$ is a right angle $xyz$ is also a right angle. From above we have $SS(a,b,xy)$. Since $axzy$ is a non-flat parallelogram we have $SS(a,z,xy)$. Therefore by Theorem \ref{sideslinethm1} we have $SS(z,b,xy)$. Since we additionally know that $xyb$ and $xyz$ are both right angles and therefore congruent, we have $SD(y,z,b)$ by Theorem \ref{conganglessamesidesameray}. Since we are supposing that $b \not= z$, we have $L(y,z,b)$.  Additionally since  $L(a,z,b)$ we can infer $L(a,z,y)$ which is a contradiction. Thus $b =z$. Therefore $axyb$ is a strict parallelogram and $ax \cong by$ by Theorem \ref{convexquadthm}. 
	
	\begin{figure}[h!]
		\begin{picture}(216,110)
		\put(25,0){\includegraphics[scale=.9]{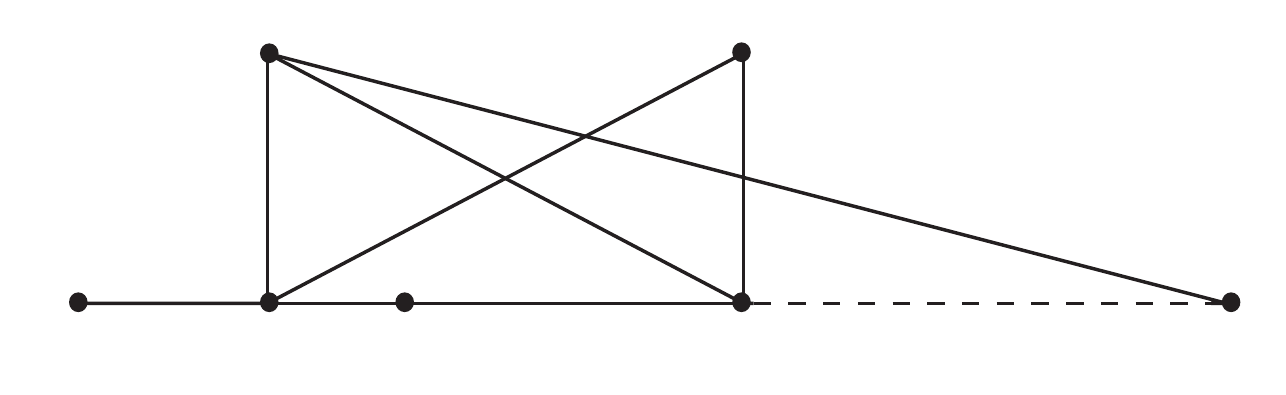}}

		\put(90,13){$x$}
		\put(130,13){$c$}	
		\put(40,11){$d$}	
		\put(217,13){$y$}	
		\put(347,13){$t$}
		\put(221,88){$b=z$}		
		\put(90,92){$a$}	
		
		\end{picture}
		\caption{}
		\label{equidistantfoward}
	\end{figure}
	
	$(\Longleftarrow)$ Assume that $SS(a,b,cd)$ and $ax \cong by$. We want to show that $ab \parallel cd$. Let $doub(x, mid(a,y))=s$. (See Figure \ref{equidistantbackwards}.) Since $axys$ is a strict parallelogram we know $SS(a,s,xy)$ and thus $SS(a,s,cd)$. Also we know that $ax \cong sy$ and thus $sy \cong by$. Furthermore, since angle $yxa$ is right angle, we have that $xys$ is a right angle. Thus by the uniqueness of angle construction (Axiom \ref{atsunique} ) we have $b=s$. Thus $axyb$ is a strict parallelogram and in particular $ab \parallel xy$. Therefore $ab \parallel cd$.
	
	 	\begin{figure}[h!]
	 	\begin{picture}(216,110)
	 	\put(85,0){\includegraphics[scale=.9]{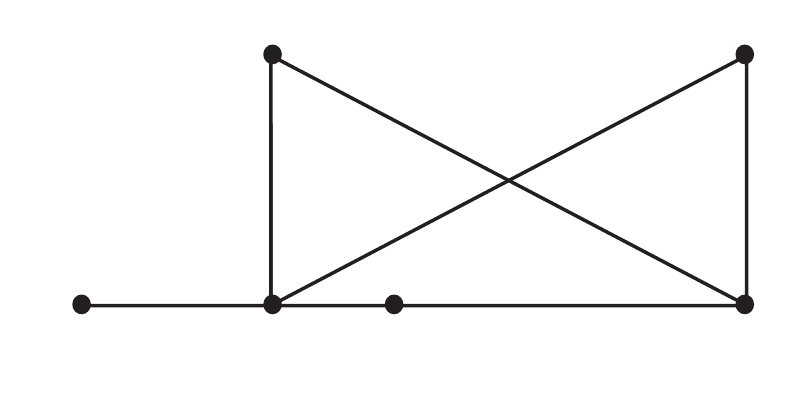}}

	 	\put(150,13){$x$}
	 	\put(187,13){$c$}	
	 	\put(100,11){$d$}	
	 	\put(277,13){$y$}	

	 	\put(282,88){$b=s$}		
	 	\put(150,92){$a$}	
	 	
	 	\end{picture}
	 	\caption{}
	 	\label{equidistantbackwards}
	 \end{figure}

\end{proof}



Let $a$, $b$, $c$, $d$, $x$, and $y$ be defined as they were in the previous theorem with $a$ and $b$ on the same side of $cd$. If $ab \nparallel cd$, then by that theorem we can infer that $ax \not \cong by$. Without loss of generality we can say that $ax < by$. We wish to show that if we extend the segment $ab$ in the direction of $a$ to some point $e$ and $z$ is the point constructed by dropping a perpendicular from $e$ to $cd$, then either $a$ and $e$ are on opposite sides of $cd$, $e$ is co-linear with $c$ and $d$, or $ez < ax$. Also we wish to show that if $ab$ is extended in the direction of $b$ to some point $g$ and $w$ is the point constructed by dropping a perpendicular from $g$ to $cd$, then $gw > by$. 

The following theorem proves these results.

\begin{theorem}
	Let $a$, $b$, $c$, $d$, $x$, and $y$ be defined as they were in Theorem \ref{parallelequidistance} where $SS(a,b, cd)$ and $ax < bx$. Let $B(e,a,b)$ and $B(a,b,g)$. Then either $OS(a,e,cd)$, $\widetilde{L}(c,d,e)$, or $SS(a,e,cd)$. If $SS(a,e,cd)$ and $z$ is the point constructed from dropping a perpendicular from $e$ to $cd$, then $ez <ax$. We can also conclude that $SS(b,g,cd)$ with $gw > by$ where $w$ is the point constructed from dropping a perpendicular from $w$ to $cd$.

\end{theorem}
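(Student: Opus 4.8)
The plan is to push everything down to betweenness relations along two auxiliary ``parallels'' to $cd$: one through $a$ at the perpendicular distance $ax$, one through $b$ at distance $by$, each produced from the parallelogram construction applied to midpoints. Write $ax<by$ for the hypothesis (so in particular $\neg(ax\cong by)$, whence $ab\nparallel cd$ by the forward direction of Theorem~\ref{parallelequidistance}). First I would dispose of the exceptional case $x=y$: then $a,b,x$ all lie on the perpendicular to $cd$ at $x$, so $a,b,e,g$ are collinear with $cd$ meeting that line only at $x$, and all three conclusions follow from line separation (Theorem~\ref{lineseparationthm}) together with the remark after Definition~\ref{segmentinequalitydef} that $B(p,q,r)$ forces $pq<pr$. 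Assume henceforth $x\neq y$; a short uniqueness-of-perpendiculars argument then shows all perpendicular feet occurring below ($x,y,z,w$) are pairwise distinct. Part~1 is immediate: either $\widetilde{L}(c,d,e)$, or $e$ is distinct from $c,d$ and not collinear with them, hence $T(c,d,e)$, and since also $T(c,d,a)$ the trichotomy $\widetilde{L}(c,d,e)\lor SS(a,e,cd)\lor OS(a,e,cd)$ is just plane separation (the theorem preceding Theorem~\ref{oppositesidethm}).

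For the quantitative parts set $a^{*}=doub(x,mid(a,y))$ and $b^{*}=doub(y,mid(b,x))$. By the parallelogram construction (\cite{SuppesAxioms}, Theorem~7), $P(a,x,y,a^{*})$ and $P(b,y,x,b^{*})$; since angles $axy$ and $byx$ are right (a ray along $cd$ makes a right angle with the dropped perpendicular), both are non-flat, so $\widehat{P}(a,x,y,a^{*})$ and $\widehat{P}(b,y,x,b^{*})$ are rectangles. Reading off opposite sides and opposite parallel sides (Theorems~\ref{oppositesidesanglesnonflatparallelogram} and \ref{parallelsegthm}, then Theorem~\ref{parallelsegthm2} to trade $xy$ for $cd$) gives: $a^{*}y\cong ax$ perpendicular to $cd$ at $y$, $aa^{*}\parallel cd$, $SS(a,a^{*},cd)$; symmetrically $b^{*}x\cong by$ perpendicular to $cd$ at $x$, $bb^{*}\parallel cd$, $SS(b,b^{*},cd)$. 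Since $a^{*}$ and $b$ both lie on the perpendicular to $cd$ at $y$ on the side of $a$ (so $SD(y,a^{*},b)$ by uniqueness of the right angle on a side, Theorem~\ref{conganglessamesidesameray}) and $a^{*}y\cong ax<by$, we get $B(y,a^{*},b)$; symmetrically $B(x,a,b^{*})$. The interior angle of each rectangle at $a^{*}$ (resp.\ $b^{*}$) being right, $a^{*}y$ is perpendicular to line $aa^{*}$ at $a^{*}$, so $B(y,a^{*},b)$ puts $b$ and $y$ on opposite sides of line $aa^{*}$; as $cd\parallel aa^{*}$, the whole line $cd$ lies on the $y$-side, strictly opposite $b$. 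Symmetrically $a$ lies strictly on the $cd$-side of line $bb^{*}$.

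Now extend. From $B(e,a,b)$ with $a$ on line $aa^{*}$, $e$ and $b$ are on opposite sides of line $aa^{*}$, so $e$ lies on the $cd$-side of that line (not on it, since line $ab$ meets line $aa^{*}$ only at $a\neq e$). In the case $SS(a,e,cd)$, drop the perpendicular from $e$ to $cd$ with foot $z$ and set $e'=doub(x,mid(a,z))$, giving a rectangle $\widehat{P}(a,x,z,e')$ with $e'z\cong ax$, $e'z$ perpendicular to $cd$ at $z$, and $ae'\parallel cd$. Since $aa^{*}\parallel cd$ and $ae'\parallel cd$ both issue from $a$, the theorem that parallels to a fixed segment through a fixed point are collinear (the theorem preceding Lemma~\ref{parallelimplieslinear}) forces $\widetilde{L}(a,a^{*},e')$, so $e'$ lies on line $aa^{*}$ and is therefore the unique meeting point of line $aa^{*}$ with the perpendicular to $cd$ at $z$. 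On that perpendicular $z\in cd$ and $e$ lie on the same side of $e'$, so $SD(e',e,z)$, and ruling out $B(e',z,e)$ (which would put the line-$cd$ point $z$ strictly between two points on one side of $cd$) gives $B(z,e,e')$; hence $ez\cong ze<ze'\cong ax$, i.e.\ $ez<ax$. The $g$ case mirrors this: $B(a,b,g)$ with $b$ on line $bb^{*}$ puts $g$ on the side of line $bb^{*}$ opposite $a$, hence opposite $cd$; dropping the perpendicular from $g$ to $cd$ with foot $w$ and setting $b_{w}=doub(y,mid(b,w))$ gives a rectangle $\widehat{P}(b,y,w,b_{w})$ with $b_{w}w\cong by$ perpendicular to $cd$ at $w$ and $bb_{w}\parallel cd$; uniqueness of parallels through $b$ places $b_{w}$ on line $bb^{*}$, so $b_{w}$ is where line $bb^{*}$ meets the perpendicular to $cd$ at $w$, and since $w$ and $g$ lie on opposite sides of line $bb^{*}$ the segment $wg$ contains $b_{w}$, i.e.\ $B(w,b_{w},g)$. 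This yields $by\cong wb_{w}<wg$, hence $gw>by$, and (by the same betweenness-versus-line argument) $SS(b_{w},g,cd)$, whence $SS(b,g,cd)$ by transitivity of ``same side'' (Theorem~\ref{sideslinethm1}).

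The main obstacle is not a single deep step but the sustained side-of-a-line bookkeeping: confirming the feet $x,y,z,w$ are genuinely distinct, that the rectangle vertices $a^{*},b^{*},e',b_{w}$ are non-degenerate and land on the predicted sides and lines, and that each ``a point of line $cd$ cannot lie strictly between two points on one side of $cd$'' reduction goes through — all of which is routine with the apparatus of Section~\ref{Pasch} and Theorems~\ref{sideslinethm1}--\ref{sideslinethm2}. The one substantive external input beyond that apparatus and the elementary rectangle computations is the uniqueness of the parallel to $cd$ through a given point, which is exactly the theorem stated just before Lemma~\ref{parallelimplieslinear}.
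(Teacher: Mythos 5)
Your proposal is correct and follows essentially the same route as the paper: the auxiliary points $a^{*}=doub(x,mid(a,y))$ and $e'=doub(x,mid(a,z))$ are exactly the paper's $s$ and $t$, the key step $B(y,a^{*},b)$ from $ax<by$ is the paper's $B(y,s,b)$, and both arguments reduce to establishing $B(z,e,e')$ so that $ez<e'z\cong ax$. The only divergence is bookkeeping — the paper eliminates $B(z,t,e)$ by a reductio on sides of the segment $at$, while you pin down $SD(e',z,e)$ via sides of the line $aa^{*}$ and then exclude $B(e',z,e)$ via sides of $cd$ — and your explicit handling of the degenerate case $x=y$ is a small bonus the paper leaves implicit.
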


	\begin{figure}[h!]
	\begin{picture}(216,110)
	\put(40,0){\includegraphics[scale=.9]{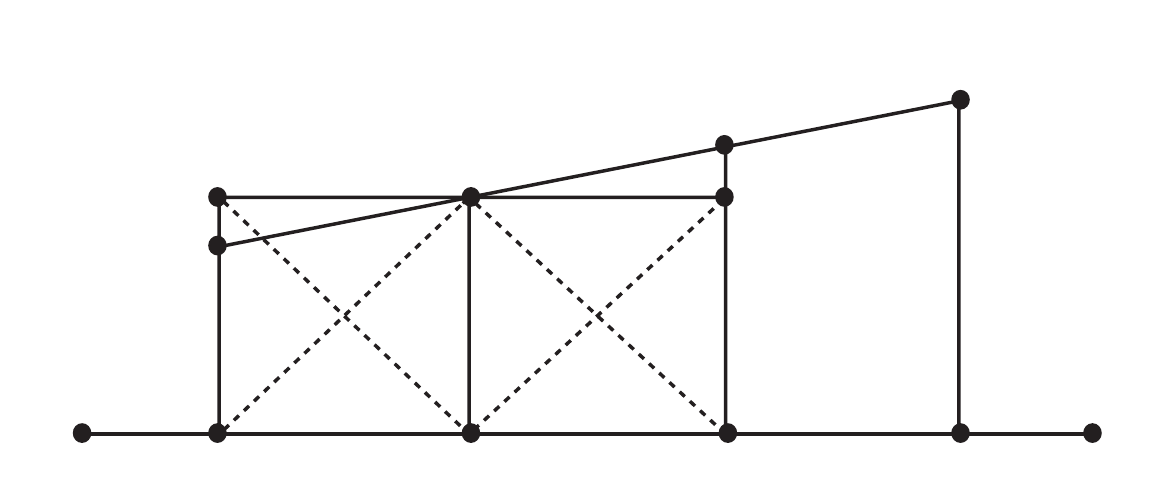}}
	
			\put(55,8){$c$}	
		\put(95,8){$z$}	
	\put(160,8){$x$}
	\put(227,8){$y$}	
	\put(287,8){$w$}		
	\put(325,8){$d$}	

	\put(158,85){$a$}	
		\put(95,85){$t$}	
	\put(232,78){$s$}	
	
	\put(230,96){$b$}	
	
	\put(294,105){$g$}	
	
	\put(87,63){$e$}

	\end{picture}
	\caption{}
	\label{nonparallelsegmentextensions}
\end{figure}

\begin{proof}
	To prove the first conclusion note that the following three relations are mutually exclusive:  $OS(a,e,cd)$, $\widetilde{L}(c,d,e)$, and $SS(a,e,cd)$. We will suppose that $a$ and $e$ are not on opposite sides of $cd$ and $c$, $d$, and $e$ are not (non-strict) colinear. We can then infer that $SS(a,e,cd)$. We now desire to show that $ez < ax$. Let $doub(x, mid(a,z))=t$. (See Figure \ref{nonparallelsegmentextensions}.) By methods used in the proof of Theorem \ref{parallelequidistance} we can infer that $SD(z,e,t)$. If $e=t$ then $ae \parallel xz$ and thus $ab \parallel cd$. This is a contradiction. Thus $e \not = t$. So either $B(z,e,t)$ or $B(z,t,e)$. Suppose $B(z,t,e)$. By axiom \ref{intersectOO} we have $OS(z,e,at)$. Let $doub(x, mid(a,y))=s$. We can show that $SD(y,s,b)$ and since $ax < by$ and $ax \cong  sy$ we know that $B(y,s,b)$. Thus $OS(y,b,sa)$ and therefore $OS(y,b,at)$. By theorem \ref{parallelogramtrans} we know $tzys$ is a non-flat parallelogram and thus $SS(y,z,at)$. We then can infer $SS(e,b,at)$. This contradicts axiom \ref{intersectOO}. Thus we have $B(z,e,t)$ and since $tz \cong ax$ we can infer that $ez < ax$. Using similar methods one can show that $gw > by$. 
\end{proof}

The following theorem shows us that as the sides of an angle are extended the distance between the sides increases. 

\begin{theorem} \label{angleopensthm}
	Given $T(b,a,d)$, $B(a,b,c)$, $bd \parallel ce$, and $L(a,d,e)$, $bd < ce$. 
\end{theorem}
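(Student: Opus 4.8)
Here is my plan of attack.

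The plan is to turn the inequality $bd < ce$ into a betweenness claim that can be read off from Pasch's Theorem (Theorem~\ref{Paschthm}), once we know that the hypotheses force $e$ to lie on the ray from $a$ through $d$ and beyond $d$, i.e.\ $B(a,d,e)$. First I would record an auxiliary parallelogram. Put $f := doub(b, mid(d,c))$; by the basic identities for $mid$ and $doub$ we have $mid(b,f) = mid(d,c)$, hence $P(c,b,d,f)$, and since $bd \parallel ce$ forces $T(b,d,c)$, this is a non-flat parallelogram $\widehat{P}(c,b,d,f)$. Theorem~\ref{oppositesidesanglesnonflatparallelogram} then gives $bd \cong cf$, and Theorem~\ref{parallelsegthm}(1) (together with part~(3)) gives $bd \parallel cf$ and $df \parallel cb$. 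Now $e$ and $f$ lie on the same non-strict line through $c$: both $bd \parallel ce$ and $bd \parallel cf$ say, by the very definition of $\parallel$, that each of $e,f$ is $\widetilde{L}$ with $c$ and the single point $g := doub(d, mid(b,c))$; since $c \ne g$ (because $b \ne d$), $c \ne e$, $c \ne f$, Axiom~\ref{uniqueline} yields $L(c,f,e)$ — here $f \ne e$, for otherwise $df \parallel cb$ would place the line through $d$ and $e$ (which passes through $a$) and the line through $c$ and $b$ (which also passes through $a$) in the parallel relation, which is impossible.

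With $L(c,f,e)$ in hand, the whole theorem reduces to proving $B(c,f,e)$: that gives $cf < ce$ by the remark following Definition~\ref{segmentinequalitydef}, and $bd \cong cf$ transfers this to $bd < ce$ (the relation $<$ is unchanged if the smaller segment is replaced by a congruent one, via Axiom~\ref{extunique}). So the remaining tasks are (i) establishing $B(a,d,e)$, and (ii) the Pasch argument.

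The heart of the matter, and the step I expect to be the main obstacle, is (i). The points $a,d,e$ are distinct and colinear, so I must exclude $B(a,e,d)$ and $B(d,a,e)$. This is precisely where the bisymmetry (parallel) axiom is indispensable: a pure betweenness/incidence analysis cannot pin down the position of $e$, and one must use the affine development of $\parallel$. Informally, the iterated midpoint/doubling machinery underlying the parallel relation forces the manner in which $b$ divides the segment from $a$ to $c$ to be matched by the manner in which $d$ divides the segment from $a$ to $e$, and that division is strictly interior because $B(a,b,c)$ holds. Concretely, assuming $B(a,e,d)$ or $B(d,a,e)$, I would reconstruct the parallelogram witnessing $bd \parallel ce$ and collide it either with the uniqueness of the parallel to $bd$ through $c$ (Lemma~\ref{parallelimplieslinear}) or with the line-separation results of Section~\ref{lineseparation}, deriving a contradiction in each case.

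Once $B(a,d,e)$ is available, one checks $T(a,c,e)$ (the three points are distinct and non-colinear, since neither $d$ nor $e$ lies on line $ac$), and then the hypotheses of Pasch's Theorem for the triangle $ace$, with $d$ on side $ae$ and the line determined by $d$ and $f$, are all routine to verify. Because $df \parallel cb$ and $cb$ lies on line $ac$, this line cannot cross the interior of side $ac$; hence the Pasch alternative forces it to cross the interior of side $ce$, and the crossing point, being on line $cf$ (which equals line $ce$), must be $f$ itself. This gives $B(c,f,e)$, and with the reduction above we conclude $bd < ce$.
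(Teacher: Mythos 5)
Your overall strategy is the right one, and it is essentially the paper's: translate $bd$ by a parallelogram to a congruent segment emanating from an endpoint of $ce$ (you build $cf$ with $\widehat{P}(c,b,d,f)$; the paper builds $ex$ with the parallelogram $edbx$), observe that the translated endpoint is colinear with $c$ and $e$, and reduce everything to a single betweenness statement ($B(c,f,e)$ for you, $B(e,x,c)$ in the paper), after which congruence of opposite sides finishes the argument. That part of your proposal is sound.

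The genuine gap is exactly the step you flag as "the main obstacle": you never actually prove $B(a,d,e)$, you only announce that you would "reconstruct the parallelogram witnessing $bd \parallel ce$ and collide it" with Lemma \ref{parallelimplieslinear} or the line-separation results. As written that is a statement of intent, not an argument; ruling out $B(d,a,e)$ requires showing $d$ and $e$ lie on the same side of the line $abc$, and ruling out $B(a,e,d)$ amounts to showing that the parallel segments $bd$ and $ce$ cannot cross — neither of which follows from betweenness and incidence alone, and neither of which you derive. Your subsequent Pasch step also silently uses that line $df$ cannot meet line $ac$ because $df \parallel cb$, which is likewise not a stated theorem of the system and needs its own justification. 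The paper's proof shows that this entire detour is avoidable: it never establishes $B(a,d,e)$ and never invokes Theorem \ref{Paschthm}. Instead it gets the betweenness directly from the sides-of-a-line machinery: $B(a,b,c)$ together with $T(a,c,x)$ gives $OS(a,c,bx)$ by Axiom \ref{intersectOO}; the parallelism $bx \parallel ae$ gives $SS(a,e,bx)$ by Theorem \ref{parallelequidistance}; combining these yields $OS(e,c,bx)$, and since $x$ is the unique point of line $bx$ on line $ec$, this forces $B(e,x,c)$. If you want to salvage your version, the analogous move is to show $OS(a,e,cf)$-type and $SS$-type relations about the segment $cf$ rather than trying to pin down the order of $a$, $d$, $e$ first.
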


\begin{figure}[h!]
	\begin{picture}(216,110)
	\put(90,0){\includegraphics[scale=.9]{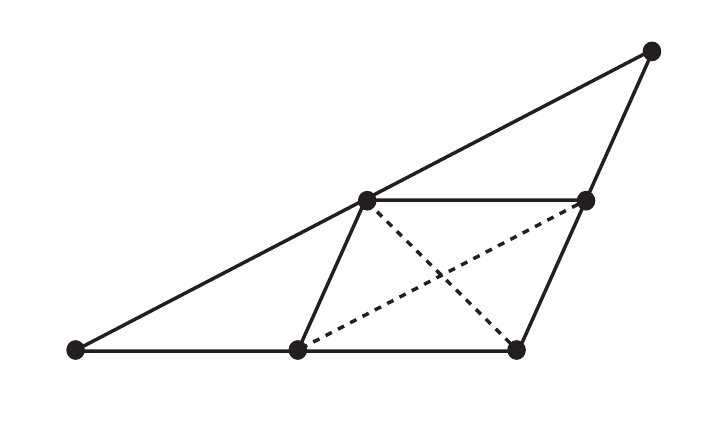}}
	
		\put(165,9){$d$}
		\put(223,9){$e$}	
		\put(100,13){$a$}	
	
		\put(177,62){$b$}	
			\put(264,96){$c$}

	\put(248,58){$x$}	

	\end{picture}
	\caption{}
	\label{angleopensfigure}
\end{figure}

\begin{proof}
	Given the assumptions, one can show $T(b,d,e)$. Thus we can construct a non-flat parallelogram $edbx$ where $doub(d,mid(b,e))=x$. (See Figure \ref{angleopensfigure}.) Note that $\widetilde{L}(e,x,c)$ since $db \parallel ec$. If $x=c$ then $bc \parallel de$ and thus $ac \parallel de$. This is a contradiction so $x \not = c$. Since $edbx$ is a non-flat parallelogram $x \not= e$. Thus $L(e,x,c)$. One can show $T(a,x,c)$. Thus by Axiom \ref{intersectOO} we have $OS(a,c,bx)$. Since $bx \parallel de$ we have $bx \parallel ae$. Therefore by Theorem \ref{parallelequidistance} we have $SS(a,e,bx)$. So $OS(e,c,bx)$. From this we can conclude that $B(e,x,c)$ and thus $ex < ec$. We know that $db \cong ex$. Thus we can conclude that $db < ec$. 
\end{proof}

\subsection{Segment Division}

In his work \textit{The Methods}, Descartes describes a method of defining addition, subtraction, multiplication, and division of magnitudes of line segments via geometry procedures. Although we are not considering interpretations of our constructions as total functions or total operations, the procedures for addition and subtraction can be carried out in our system by use of the extension and laying off constructions. The necessary property of Euclidean Geometry that proves that Descartes' procedure for multiplication is well defined relies of the Parallel Postulate. Our system is does not have the necessary constructions needed to produce the required points (In Euclidean Geometry these points are understood as the intersections of non-parallel lines.) Interestingly Decartes' procedure for division is accomplishable in our system. This leads to the potential for interesting philosophical discussion as to the geometrization of arithmetic and to potential argument that multiplication does not satisfy feasibility properties that the other three operations do. 

Here we will use Descartes' procedure for division to define a uniform construction for trisecting a line segment. This trisection construction alludes to a method for constructing points on a given line segment that divide the segment up into $n$ congruent parts. Our language does not encoded the natural numbers so a separate proof would be needed for each $n$. This construction is closely related to Suppes's trapezoid construction discussed in \cite{SuppesAxioms}. 

\begin{theorem}
	Given a line segment $ab$ there is a uniform construction for constructing points $c$ and $d$ such that $ac \cong cd \cong db$. 
\end{theorem}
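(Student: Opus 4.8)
The plan is to carry out Descartes' division procedure for $n=3$, supplying the single unavoidable line--line intersection through the crossbow construction (Axiom~\ref{crossbow}) rather than through an inadmissible line-intersection primitive. If $a=b$ the statement holds trivially with $c=d=a$, so assume $a\neq b$ and fix a point $e$ with $T(a,b,e)$ --- such a point is available uniformly from $a$, $b$ and the constants of Axiom~\ref{3noncolinearpoints}, e.g.\ by transporting the right angle of Theorem~\ref{rightangleconstructionthm} onto the ray from $a$ toward $b$. Using only the doubling construction, place three equally spaced colinear points on the auxiliary line through $a$ and $e$: set $p_1:=e$, $p_2:=doub(a,e)$, $p_3:=doub(p_1,p_2)$, so that $B(a,p_1,p_2)$, $B(p_1,p_2,p_3)$, $B(a,p_1,p_3)$, $ap_1\cong p_1p_2\cong p_2p_3$, and all four points are colinear; since $e$ lies off the line $ab$, the line $ae$ meets the line $ab$ only at $a$, so $T(a,b,p_i)$ for $i=1,2,3$.

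Next I build a segment through $p_1$ parallel to $p_3b$ and cross it with $ab$ using Pasch's axiom. By the parallelogram construction (Suppes, \cite{SuppesAxioms}), put $q:=doub(p_3,mid(a,b))$, so $P(a,p_3,b,q)$ and hence $aq\parallel p_3b$ by Theorem~\ref{parallelsegthm}(1) (using $T(a,p_3,b)$); then put $q':=doub(a,mid(q,p_1))$, so $P(a,q,q',p_1)$ and $p_1q'\parallel aq$, whence $p_1q'\parallel p_3b$ by Theorem~\ref{parallelsegthm}(5). One checks $T(a,p_3,q')$ and $T(q',p_1,b)$, so Pasch's axiom (Theorem~\ref{Paschthm}) applies to the triangle $ap_3b$ with $p_1$ on the interior of side $ap_3$ and auxiliary point $q'$: it yields $(P\wedge R)\lor(Q\wedge S)$ in the notation there. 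The disjunct $Q$ would place $cb(q',p_3\,p_1\,b)$ --- a point of the line $p_1q'$ --- strictly between $p_3$ and $b$, contradicting $p_1q'\parallel p_3b$ (parallel segments are disjoint, by Theorem~\ref{parallelequidistance}); hence $P$ holds, and setting $c:=cb(q',a\,p_1\,b)$ we obtain $\widetilde L(p_1,c,q')$ and $B(a,c,b)$, and then $p_1c\parallel p_3b$ by Theorem~\ref{parallelsegthm2}(1). Running the same steps with $p_2$ in place of $p_1$ produces $d$ with $B(a,d,b)$ and $p_2d\parallel p_3b$, so $p_1c\parallel p_2d\parallel p_3b$.

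It remains to show $ac\cong cd\cong db$ with $a,c,d,b$ in this order. This is the intercept theorem, applied to the two transversals $ae$ (carrying the equally spaced $a,p_1,p_2,p_3$) and $ab$ (carrying $a,c,d,b$, cut off by the parallels $p_1c,\ p_2d,\ p_3b$ together with the degenerate ``line through $a$''). In this system it is proved by the classical argument: complete non-flat parallelograms on the relevant segments, apply the Alternate Interior Angle Theorem and its converse, the congruence of opposite sides of a parallelogram (Theorem~\ref{oppositesidesanglesnonflatparallelogram}), and the triangle-congruence theorems of Section~5, and transport the equalities $ap_1\cong p_1p_2\cong p_2p_3$ across; the ordering $B(a,c,d)\wedge B(c,d,b)$ falls out of the same picture (or from $B(a,c,b)$, $B(a,d,b)$ via line separation, Theorem~\ref{lineseparationthm}). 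Incidentally $d=doub(a,c)$ and $b=doub(c,d)$, so the two points may alternatively be obtained by one crossbow construction and two doublings.

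I expect the main obstacle to be establishing the intercept theorem inside the system --- it is not stated in the earlier sections, so it must be proved here, and while the argument is entirely standard once the parallelogram and congruence machinery is in hand (as the introduction to this subsection already notes, Descartes' division goes through), it involves a fair amount of work. The secondary difficulty is the non-degeneracy bookkeeping: checking the several $T(\cdot,\cdot,\cdot)$ hypotheses so that Pasch's axiom and the crossbow construction genuinely apply, and confirming that the $Q$-alternative of Theorem~\ref{Paschthm} is excluded by the disjointness of parallel segments; this is tedious but routine, using Theorem~\ref{betweensideint}, Pasch's axiom, and the parallelism facts above. A last loose end, which I have passed over lightly, is pinning down a fully case-free construction of the auxiliary off-line point $e$ from the constants $\alpha,\beta,\gamma$.
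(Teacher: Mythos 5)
Your proposal follows essentially the same route as the paper's proof: equally spaced points produced by doubling along an auxiliary ray through $a$, parallels to the closing segment obtained from the parallelogram (doubling-of-midpoint) construction, the crossbow construction justified by a Pasch-type interior argument to cut $ab$, and a final chain of parallelogram, alternate-interior-angle, and triangle-congruence facts. The ``intercept theorem'' you defer is exactly what the paper proves inline --- it establishes $a\gamma c\cong dyc\cong\gamma\gamma' d\cong bxd$, transports the unit length across via congruence of opposite sides of non-flat parallelograms, and closes with Angle-Angle-Side --- so your sketch matches the paper's argument in substance.
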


\begin{figure}[h!]
	\begin{picture}(216,190)
	\put(60,0){\includegraphics[scale=.9]{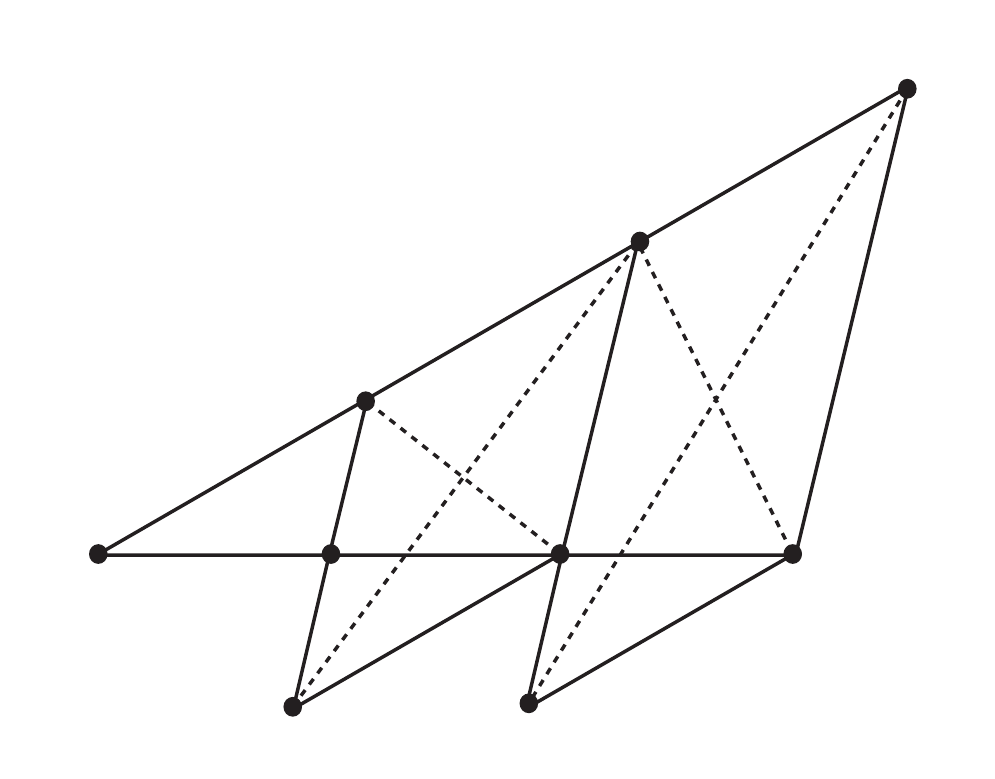}}
	
	\put(135,9){$y$}
	\put(193,9){$x$}	
	\put(75,50){$a$}

		\put(150,61){$c$}	
	\put(210,61){$d$}	
		\put(270,55){$b$}	
	
	\put(150,105){$\gamma$}	
	\put(223,148){$\gamma'$}	
	\put(299,180){$\gamma''$}

	\end{picture}
	\caption{}
	\label{trisectfigure}
\end{figure}

\begin{proof}
	Consider two distinct points $a$ and $b$. By Axiom \ref{3noncolinearpoints} we know that one of the points $\alpha$, $\beta$, or $\gamma$ must be distinct and non-colinear to $a$ and $b$. Without loss of generality let $T(a,b,\gamma)$. Let $doub(a,\gamma)=\gamma'$ and let $doub(\gamma,\gamma')=\gamma''$. (See Figure \ref{trisectfigure}.) Also let $doub(\gamma'',mid(\gamma',b))=x$. Since $b \gamma'' \gamma' x$ is a non-flat parallelogram we have $OS(\gamma'', x, \gamma'b)$. Additionally we can show $T(a,\gamma'',x)$ and $T(\gamma',x,b)$. 
	By lemma \ref{Paschlemma} either $Int(x,b\gamma'\gamma'')$ or $Int(x,a\gamma'b)$. If $Int(x,b\gamma'\gamma'')$, then we have $SS(\gamma'', x, \gamma'b)$. This is a contradiction. Thus $Int(x,a\gamma'b)$. Therefore $B(a,cb(x,a\gamma'b),b)$ and $SD(\gamma', d,x)$ (Theorem \ref{crossbartheorem}). Let $cb(x,a\gamma',b)=d$. We now wish to show that $B(\gamma',d,x)$. We will accomplish this by showing that $\gamma'd <\gamma'x$. Note that $\gamma'x \cong \gamma''b$. Thus we want that $\gamma'd < \gamma''b$. This can be shown by an application of of Theorem \ref{angleopensthm}. 
	
	Next we will construct the point $doub(\gamma', mid(\gamma,d))=y$. Using similar methods as in the previous paragraph we can construct a point $c$ such $B(\gamma,c,y)$ and $B(a,c,d)$. By the converse of the Alternate Interior Angle Theorem and Theorem \ref{convexquadthm} one can conclude $a\gamma c \cong dyc \cong \gamma \gamma' d \cong bxd$. Give how $\gamma'$ and $\gamma$ were constructed by Theorem \ref{convexquadthm} we can show $a\gamma \cong \gamma' \gamma'' \cong dy \cong \gamma' \gamma'' \cong bx$. Using the observation that vertical angles are congruent and the converse of the Alternate Interior Angle Theorem again we can show $ac\gamma \cong ycd \cong \gamma' dc \cong xdb$. Lastly, using the Angle-Angle-Side triangle congruence on triangles $ac \gamma$, $dcy$, and $bdx$ we can conclude that $ac \cong cd \cong db$. 
\end{proof}





\section{Compass Constructions and Circle-Circle Continuity} \label{CompassConstructions}


Circles and constructions involving them were central to Euclid's \textit{Methods}. One of the earliest theorems proved in Euclid's text was the construction of an equilateral triangle give a base. This construction was heavily reliant on facts about the continuity of circles which Euclid did not correctly assume. In section 11 of \cite{HartshorneBook}, Hartshorne discusses circle-circle continuity and line-circle continuity. These continuity properties determine when, how often, and under what conditions circles intersect circles and lines intersect circles. 

One version of stating circle-circle continuity is as follows: If $\Lambda$ and $\Delta$ are two circles and there are points $a$ and $b$ such that $a$ and $b$ are on $\Delta$, $a$ is inside of $\Lambda$, and $b$ is outside of $\Lambda$ then there exist two points which are on both $\Lambda$ and $\Delta$. Line-circle continuity can be stated as follows: If $\Delta$ is a circle, $\ell$ is a line, and $a$ is a point on $\ell$ which is also inside of $\Delta$, then there exit two points which are on $\ell$ and $\Delta$. 

Assuming the axioms for a Hilbert plane, Hartshorne proves line-circle continuity from circle-circle continuity. He also states that the equivalence of both over an arbitrary Hilbert plane follows from a classification theorem of Pejas of Cartesian planes over fields. Hartshorne states that he is unaware of any direct proof. We will assume an analog to circle-circle continuity and modify Hartshore's proof to derive an analog to line-circle continuity. 

It is worth pointing out for the reader that given Hartshornes definition of segment inequality and it use in defining when a point is inside of a circle, Hartshorne's statement of circle-circle continuity does not allow for $a$ to be the center of $\Lambda$. Furthermore this issue causes problems in his proof of line-circle continuity. Our definition of segment inequality does away with the issues of stating circle-circle continuity and our proof of line-circle continuity is both modified for our system and corrects the error of Hartshorne's proof. 

In Tarski's system a different continuity property was assumed. Possibly because lines are not objects in Tarski's system a segment-circle continuity axiom was assumed. Segment-circle continuity states that if $a$ is a point inside of a circle and $b$ is a point outside of the circle, then there exists a point between $a$ and $b$ which is on the circle. In \cite{Beeson} Beeson states all three continuity axioms in the language of Tarski's system using existential quantifiers. He points out that line-circle and segment-circle continuity are equivalent in Tarski's system without invoking a parallel postulate. The proof of this fact is highly non-trivial. He then shows how circle-circle continuity is derivable from line-circle continuity. Again the proof of this observation is complex. Beeson does not point out that line-circle continuity is derivable from circle-circle continuity. The reason for this might lie in the fact that Beeson uses line-circle continuity to define the construction of perpendiculars and uses perpendiculars to define a construction for midpoints. The construction of midpoints and perpendiculars are central to Hartshorne's proof of deriving line-circle continuity from circle-circle continuity. 

In this section we will introduce an undefined construction for producing a point which can be interpreted as an intersection of two circles given the conditions similar to those stated for circle-circle continuity above but modified for the language of our system. We will then provide a uniform construction of a second point of intersection. We will then derive an analog of line-circle continuity by defining a uniform construction for producing two distinct points which can be interpreted as the intersection of the line and the circle. Lastly we will derive an analog of segment-circle continuity by defining a uniform construction. The details of these results will be laid out since the use of undefined constructions, the absence of existential quantifiers, and the focus on uniform constructions differs from \cite{Beeson} and \cite{HartshorneBook} to a high enough degree. 

We introduce an undefined construction $circlecircleintersect(c_1,a,b,c_2,d)$ which will be shortened to $cci(c_1,a,b,c_2,d)$. We want $cci(c_1,a,b,c_2, d)$ to have the interpretation of a point of intersection of two circles where the first circle has center $c_1$ and radius $c_1a \cong c_1b$, the second circle has center $c_2$ with radius $c_2d$ and where $c_2a < c_2d$ ($a$ is inside the second circle) and $c_2d<c_2b$ ($b$ is outside the second circle). [See Figure \ref{cciconstructionfig}.] As one will note in the physical world, there are two points that this undefined construction could be referring to. In the language of our system there is no distinct between the two points. We will show that given one such point we can construct the other in a uniform fashion. This strategy is needed if we wish to avoid the use of existential quantifiers and have all of our constructions be uniform with no case distinctions. 

\begin{figure}[h!]
	\begin{picture}(216,155)
	\put(93,0){\includegraphics[scale=1.7]{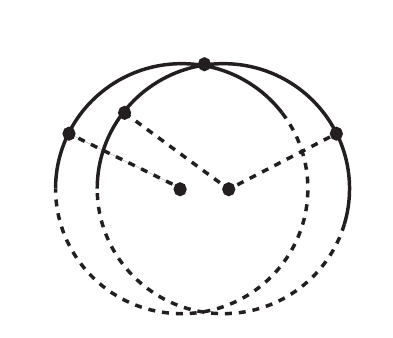}}

	\put(207,64){$c_1$}
	\put(174,64){$c_2$}

	\put(265,98){$b$}
	\put(115,97){$d$}
	\put(143,107){$a$}
	\put(193,143){$cci(c_1,a,b,c_2,d)$}

	\end{picture}
	\caption{}
	\label{cciconstructionfig}
\end{figure}

What follows is the only axiom pertaining to the $circlecircleintersect$ construction. 

\begin{axiom} \label{circircont} $c_1a \cong c_1b \wedge c_2a < c_2d \wedge c_2d < c_2b \rightarrow c_1cci(c_1,a,b,c_2,d) \cong c_1a \wedge c_2cci(c_1,a,b,c_2,d) \cong c_2d$
\end{axiom}

\begin{theorem} \label{SecondPointCircleIntersection}
	Given $c_1a \cong c_1b$, $c_2a < c_2d$, and $c_2d < c_2b$, there is a uniform construction of a point $x$ such that $c_1x \cong c_1a$, $c_2x \cong c_2d$, and $x \not= cci(c_1,a,b,c_2,d)$. 
\end{theorem}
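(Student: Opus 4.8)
The plan is to use the fact that the two circles meet at exactly the two points that are mirror images of one another across the line through the centers $c_1$ and $c_2$; given the point $p := cci(c_1,a,b,c_2,d)$ furnished by Axiom \ref{circircont} (so that $c_1 p \cong c_1 a$ and $c_2 p \cong c_2 d$), the second intersection point is the reflection of $p$ across $c_1 c_2$. Concretely I would take the uniform, case-free construction
$x := ato(c_2 c_1 p,\ c_2 c_1 p)$,
which transports angle $c_2 c_1 p$ to the opposite side of the line $c_1 c_2$ along the ray $c_1 c_2$. Unfolding Definition \ref{atodef} and applying Axiom \ref{atsproperties}, this $x$ satisfies $c_2 c_1 x \cong c_2 c_1 p$, $c_1 x \cong c_1 p$, and $SS(x, doub(p,c_1), c_1 c_2)$. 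Before this construction is legitimate I must know $T(c_2,c_1,p)$.

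First I would record the elementary distinctness facts needed throughout: $c_1 \neq c_2$ (otherwise $c_1 a = c_2 a < c_2 d < c_2 b = c_1 b$ would give $c_1 a < c_1 b$, contradicting $c_1 a \cong c_1 b$), together with $a \neq c_1$, $d \neq c_2$, $p \neq c_1$, $p \neq c_2$ (via Axiom \ref{circircont} and the fact that a null segment is not $<$ a non-null one and is congruent only to null segments). The heart of the proof is then to show $\lnot\widetilde{L}(c_1,c_2,p)$, hence $T(c_2,c_1,p)$. I would argue by contradiction: assume $L(c_1,c_2,p)$. Two applications of the Triangle Inequality Theorem \ref{TriangleInequalityThm} to the triangles $c_2 c_1 a$ and $c_2 c_1 b$ (allowing the degenerate colinear cases, and using segment subtraction from \cite{HartshorneBook}, Section~5, plus transitivity of $<$ and $\cong$) give the metric bounds: the ``difference'' of $c_1 c_2$ and $c_1 a$ is $\le c_2 a < c_2 d$, and $c_2 d < c_2 b \le c_1 c_2 + c_1 a$. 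On the other hand, by Theorem \ref{lineseparation1} and the order axioms a point $p$ on line $c_1 c_2$ with $c_1 p \cong c_1 a$ satisfies one of $B(p,c_1,c_2)$, $B(c_1,p,c_2)$, $B(c_1,c_2,p)$ (the endpoint cases being already excluded), and the segment-addition axiom then forces $c_2 p$ to equal either $c_1 c_2 + c_1 a$ or the positive difference of $c_1 c_2$ and $c_1 a$. Since $c_2 p \cong c_2 d$, this contradicts the strict bounds above in both cases. Thus $T(c_2,c_1,p)$, and also $T(c_2,c_1,doub(p,c_1))$ since $doub(p,c_1)$ lies on line $c_1 p$, which is not line $c_1 c_2$.

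It remains to verify the three conclusions for $x$. From $B(p,c_1,doub(p,c_1))$ and $T(p,doub(p,c_1),c_2)$, Axiom \ref{intersectOO} gives $OS(p,doub(p,c_1),c_1 c_2)$; combining this with $SS(x,doub(p,c_1),c_1 c_2)$ via Theorems \ref{sideslinethm1} and \ref{oppositesidethm} yields $OS(x,p,c_1 c_2)$. In particular $x \neq p$, for $x = p$ would give $OO(c_1 c_2 p, c_1 c_2 p)$, contradicting $SO(c_1 c_2 p, c_1 c_2 p)$ (Axiom \ref{SOreflexive}, Definition \ref{OOdef}). Next, $c_1 x \cong c_1 a$ follows from $c_1 x \cong c_1 p$ and $c_1 p \cong c_1 a$. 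Finally, for $c_2 x \cong c_2 d$ I would apply the Side-Angle-Side triangle congruence (which, as noted in Section~5, carries over from \cite{HartshorneBook}) to triangles $c_2 c_1 x$ and $c_2 c_1 p$: they share side $c_1 c_2$, have $c_1 x \cong c_1 p$, and have congruent included angles (angle $x c_1 c_2 \cong$ angle $p c_1 c_2$, from $c_2 c_1 x \cong c_2 c_1 p$ and reversal of triples), both being genuine angles by $T(c_2,c_1,x)$ and $T(c_2,c_1,p)$. Hence $c_2 x \cong c_2 p \cong c_2 d$. The main obstacle is the non-colinearity step $\lnot\widetilde{L}(c_1,c_2,p)$: it is the one place where genuine metric reasoning — the triangle inequality and its reverse form — is required, whereas everything else is routine bookkeeping with the orientation and congruence apparatus of Sections \ref{angleorientationsec} and \ref{congruence}.
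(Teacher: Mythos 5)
Your proposal is correct, but it diverges from the paper's proof at both of its main steps, so it is worth comparing the two. For the key non-colinearity claim $T(c_2,c_1,p)$ (the paper writes $y$ for the point $cci(c_1,a,b,c_2,d)$), the paper argues by an explicit case split on the betweenness configuration of $c_1$, $c_2$, $y$: the case $B(c_1,y,c_2)$ is handled with an erected perpendicular, the crossbow construction, and the Exterior Angle Theorem, while $B(y,c_1,c_2)$ is handled with the Triangle Inequality Theorem. Your single metric argument --- triangle inequality plus its reverse form, together with segment addition along the line through the centers --- covers all three betweenness cases at once, which is cleaner, but it leans on the segment-difference and order-compatibility machinery of Hartshorne's Section 8 that the paper imports only implicitly, including a reverse triangle inequality ($u < v + w$ implies the difference of $u$ and $v$ is less than $w$) that would need its own short derivation in this system; you should also note, as you do, that the degenerate case $\widetilde{L}(c_1,c_2,a)$ or $\widetilde{L}(c_1,c_2,b)$ turns these strict inequalities into non-strict ones without harming the contradiction. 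For the construction of the second point, the paper drops a perpendicular from $y$ to $c_1c_2$ at a foot and doubles $y$ through that foot, then applies Side-Angle-Side to the two right triangles so obtained; your $x = ato(c_2c_1p,\,c_2c_1p)$ realizes the same reflection across $c_1c_2$ directly via the angle-transport axiom, and your single Side-Angle-Side application to triangles $c_2c_1x$ and $c_2c_1p$ is if anything slightly more economical, with the distinctness $x \neq p$ coming from the same-side/opposite-side bookkeeping rather than from a betweenness fact. Both routes are uniform and case-free as the theorem requires, and both correctly reduce the problem to (i) establishing that the given intersection point is not colinear with the two centers and (ii) reflecting it across the line of centers.
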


\begin{figure}[h!]
	\begin{picture}(216,140)
	\put(76,3){\includegraphics[scale=1.7]{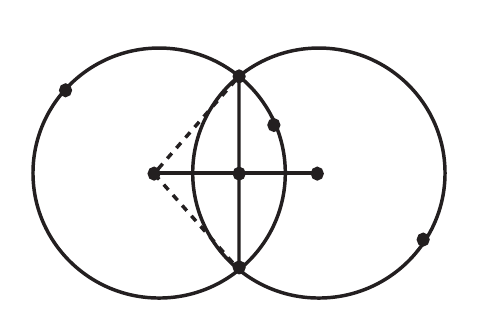}}
	
	\put(200,125){$y$}
	\put(215,100){$a$}

	\put(197,70){$p$}

	\put(145,69){$c_1$}
	\put(235,68){$c_2$}
	\put(290,40){$d$}

	\put(97,120){$b$}
	
	\put(189,21){$x$}
	\end{picture}
	\caption{}
	\label{circirintsecondpointfig}
\end{figure}

\begin{proof}
	First note that the assumptions imply $c_1 \not= c_2$. Suppose $c_1 = c_2$. Since $c_2a < c_2d$ and $c_1a = c_1b$, we have $c_2b < c_2d$. This is a contradiction. 
	
	Let $y = cci(c_1, a, b, c_2, d)$. It can also be shown $T(c_1, y, c_2)$. Suppose $\lnot T(c_1, y, c_2) \equiv \widetilde{L}(c_1, y, c_2)$, then $L(c_1, y, c_2)$. There are 3 cases to consider. 
	
	Case 1: Let $B(c_1, y, c_2)$. Note $a \not= y$ and $b \not= y$ by the uniqueness of extension construction. Let $d$ be the point uniformly constructed such that $c_1yd$ is right and $SS(a,d,c_1y)$. We claim $SS(c_1, a, yd)$. If $\widetilde{L}(a,y,d)$, then by Lemma \ref{LengthSidesTriangle} we have $c_1a > c_2y$. Note $c_1ay < c_1ya$ by the Exterior Angle Theorem (Theorem \ref{ExteriorAngleThm}). This is a contradiction. If $OS(c_1, a, yd)$, then we can use the crossbow construction to construct a point $z$ such that $B(c_1, z, a)$ and $\widetilde{L}(z,y,d)$. By similar reasoning to the previous step we have $c_1z > c_1y$. Since $c_1a > c_1z$ we have $c_1a>c_1y$. This is a contradiction. Therefore $SS(c_1, a, yd)$. Similarly we can prove $SS(c_2, a, yd)$ since $c_2 < c_2d = c_2y$. Since $B(c_1,y,c_2)$ and $T(c_1, y, d)$ we have $OS(c_1, c_2, yd)$. This is a contradiction.
	
	 \begin{figure}[h!]
	 	\begin{picture}(216,50)
	 	\put(55,0){\includegraphics[scale=.9]{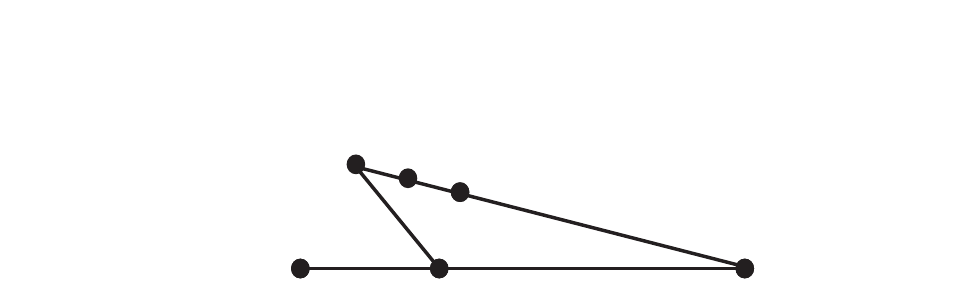}}

	 	\put(120,3){$y$}

	 	\put(177,32){$c'$}	
	 	
	 	\put(255,3){$c_2$}
	 	
	 	\put(170,0){$c_1$}
	 	
	 	\put(140,40){$b$}
	 	
	 	\put(160,35){$d'$}
	 		 	
	 	\end{picture}
	 	\caption{}
	 	\label{TriInequalityInCircProof}
	 \end{figure}
	
	Case 2: Let $B(y, c_1, c_2)$. (Reference Figure \ref{TriInequalityInCircProof}.) Note $c_1y \cong c_1b$ and $c_2y \cong c_2d$. Recall $c_2d < c_2b$. Thus we can construct a point $d'$ such that $SD(c_2, d', b)$ and $c_2d' \cong c_2d$. Note $B(c_2, d', b)$. Similarly we can construct a point $c_1'$ such that $B(c_2, c_1', d)$ and $c_2c_2 \cong c_2c_1'$. Notice $c_2y \cong c_2ext(c_2c_1, c_1b) \cong c_2d'$ where $B(c_2, d', b)$. Thus $c_2ext(c_2,c_1, c_1b) < c_2b$. This contradicts the Triangle Inequality Theorem (Theorem \ref{TriangleInequalityThm}). 
	
	Therefore we can conclude $T(c_1,y,c_2)$. 
	
	We may now construct the point $x$. Let $p$ be the point constructed by dropping a perpendicular from $y$ to $c_1c_2$ and let $x = doub(y,p)$. [See Figure \ref{circirintsecondpointfig}] We know that $ypc_1$ is a right angle. Therefore $ypc_1 \cong xpc_1$. We also know that $yp \cong xp$. Therefore by Side-Angle-Side triangle congruence theorem we have $c_1y \cong c_1x$. Since $c_1y \cong c_1a$, we have $c_1x \cong c_1a$. By similar methods we can show $c_2x \cong c_2d$. Lastly since $x = doub(y,p)$ we have $B(y,p,x)$. Thus $x \not= y$. 
\end{proof}

We now state a prove our version of line-circle continuity. 

\begin{theorem}
	Given $ca < cd$, $c \not=d$, and $a \not= b$, there is a uniform construction of two distinct points $x$ and $y$ such that $\widetilde{L}(a,b,x)$, $\widetilde{L}(a,b,y)$, and $cd \cong cy \cong cx$. 
\end{theorem}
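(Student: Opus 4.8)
The plan is to mimic Hartshorne's derivation of line--circle continuity from circle--circle continuity, replacing each existential appeal by a construction already available. Assume first that $\widetilde{L}(a,b,c)$ fails, i.e. $T(a,b,c)$ (the colinear case is a separate, easier wrinkle discussed at the end). Reflect $c$ across the line $ab$ by setting $c' = ato(abc,abc)$; as in the proof of Theorem~\ref{dropperpthm} this is the mirror image of $c$ in line $ab$, so $c$ and $c'$ lie on opposite sides of $ab$, $c \ne c'$, and the foot $p := cb(b,cac')$ of the perpendicular from $c$ satisfies $\widetilde{L}(a,p,b)$, $B(c,p,c')$, $cp \cong c'p$ (hence $p = mid(c,c')$ by uniqueness of midpoints, and in fact $c' = doub(c,p)$ by Axiom~\ref{extunique}), and line $ab$ is perpendicular to $cc'$ at $p$.

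Next I build the two ``witness'' points that feed circle--circle continuity. Let $\Gamma$ be the circle with center $c$ and radius $cd$, and $\Gamma'$ the circle with center $c'$ and the same radius. Put $u := lf(c'c,cd)$ and $v := ext(cc',cd)$. From the layoff and extension axioms (\ref{extcong}, \ref{extbetween}) one gets $c'u \cong cd \cong c'v$ (so $u$ and $v$ lie on $\Gamma'$, giving the hypothesis $c'u \cong c'v$) and $B(c,c',v)$, whence $cv > c'v \cong cd$ so $v$ is outside $\Gamma$; moreover, using $SD(c',u,c)$ together with $c'u \cong cd$ and the bound $0 < cc' = 2\,cp < 2\,cd$ (valid since $cp \le ca < cd$, the foot being no farther from $c$ than $a$, by Lemma~\ref{LengthSidesTriangle} applied to the right triangle at $p$), one gets $cu < cd$, so $u$ is inside $\Gamma$. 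Thus the hypotheses of Axiom~\ref{circircont} hold with $c_1 := c'$, $c_2 := c$, radius points $u,v$ and $d$, so $x := cci(c',u,v,c,d)$ satisfies $c'x \cong c'u \cong cd$ and $cx \cong cd$; Theorem~\ref{SecondPointCircleIntersection} then supplies a second point $y \ne x$ with $c'y \cong cd$ and $cy \cong cd$.

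It remains to show $\widetilde{L}(a,b,x)$ and $\widetilde{L}(a,b,y)$. Each of $x$ and $y$ is equidistant from $c$ and $c'$ (both distances $\cong cd$), and $p = mid(c,c')$ is also equidistant from them and lies on line $ab$ with $ab \perp cc'$ at $p$. Hence each of $x,y$ lies on the line through $p$ perpendicular to $cc'$: if the point is $p$ this is immediate, and otherwise it forms an isosceles triangle with $c$ and $c'$ whose median from the apex down to $p$ is the perpendicular bisector of $cc'$. By uniqueness of the perpendicular to $cc'$ at $p$, that line is exactly line $ab$, so $\widetilde{L}(a,b,x) \wedge \widetilde{L}(a,b,y)$, while $cd \cong cx \cong cy$ and $x \ne y$, as required; set the claimed points to be $x$ and $y$.

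The main obstacle is precisely the last step --- showing the constructed intersection points land on line $ab$ --- and this is where choosing the reflected center $c'$ (rather than an arbitrary second circle) is essential, since it forces the common chord of $\Gamma$ and $\Gamma'$ to be line $ab$; it leans on the uniqueness of perpendiculars and the characterization of perpendicular bisectors as equidistant loci, which are inherited from the earlier Hilbert-plane development. A secondary point, which must be dispatched for genuine uniformity, is the degenerate case $\widetilde{L}(a,b,c)$: there $ato(abc,abc)$ is undefined and line $ab$ runs through the center, so instead one lays off $cd$ from $c$ in the two directions along line $ab$; folding this into a single construction term with no case split is the only fiddly part.
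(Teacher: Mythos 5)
Your construction is essentially the paper's own: reflect $c$ over line $ab$ to obtain $c'$ with foot $p$, lay off the radius $cd$ from $c'$ toward and away from $c$ to get an inside and an outside witness point, apply the $cci$ construction, and identify the resulting intersection points as lying on the perpendicular to $cc'$ at $p$, i.e.\ on line $ab$ (the paper's $g=ext(ec',cd)$ coincides with your $u$, and it obtains the second point as $doub(x,p)$ rather than via Theorem~\ref{SecondPointCircleIntersection}, but these choices are interchangeable). The degenerate case $\widetilde{L}(a,b,c)$ that you flag is likewise left untreated in the paper's proof, which silently drops a perpendicular from $c$ to $ab$ and thus also presupposes $T(a,b,c)$.
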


\begin{figure}[h!]
	\begin{picture}(216,150)
	\put(60,0){\includegraphics[scale=1.7]{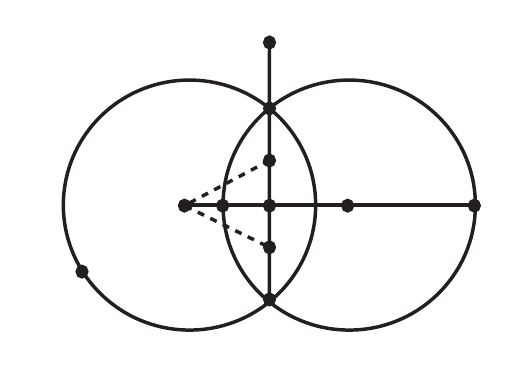}}

	\put(200,122){$x$}
	\put(197,97){$a$}
	\put(197,53){$a'$}
	\put(197,70){$p$}
	\put(172,71){$g$}
	\put(145,69){$c$}
	\put(235,68){$c'$}
	\put(297,71){$e$}
	\put(87,40){$d$}
	\put(197,155){$b$}

	\put(189,21){$y$}
	\end{picture}
	\caption{}
	\label{line-circlecontinuityfig}
\end{figure}

\begin{proof}
	We begin by defining points satisfying the conditions of Axiom \ref{circircont}. Let $p$ be the produced by dropping a perpendicular from $c$ to $ab$. Let $c' = doub(c,p)$. Let $e= ext(cc', cd)$ and $g = ext(ec',cd)$. See Figure \ref{line-circlecontinuityfig}. 
	
	We will now show that $cg <cd'$ and $ce< cd$. Thus $g$ will be `inside' and $e$ will be `outside of the `circle' with center $c$ and radius $cd$. Given properties of the extension construction we know  $B(c, c', e)$. Thus $c$ and $e$ are on opposite sides of $c'$. Given how $g$ was constructed we know $B(g,c',e)$. Thus $e$ and $g$ are on opposite sides of $c'$. Therefore by Theorem \ref{lineseparationthm} we have $SD(c, g, c')$. Let $a'=doub(a,p)$. Note $T(a,a',c)$ and $B(a,p,a')$. Thus by the Theorem \ref{triangelsegmentinequality} we have $cp < ca$ or $cp < ca'$. By Side-Angle-Side triangle congruence we have $ca \cong ca'$. Thus $cp < ca$. Since $ca < cd \cong c$ we have $cp < cd$. Since $c'p \cong cp$ we have $c'p < cd \cong c'g'$. Thus $B(c',p,g')$. Since $B(c',p,c)$ we have $SD(p, g',c)$. If $g'=c$, then $cg' < cd$ since $c \not= d$. Thus either $B(c,g',p)$ or $B(g',c,p)$. If $B(c,g',p)$, then $cg'<cp<cd$. If $B(g',c,p)$ we can infer $B(g',c,c')$ by Theorem \ref{lineseparationthm} since we have $SD(c,p,c')$. So $cg'<c'g'\cong cd$. Since $B(c,c',e)$ we have $ce>c'e \cong cd$. 
	
	Let $cci(c',e,g,c,d) = x$. By Axiom \ref{circircont} we know $cx \cong cd \cong c'x$. We wish to show $T(c,c',x)$. If $\lnot T(c,c',x)$, then $L(c,c',x)$ since all three points are distinct. Since $c'x \cong cd$, by the uniqueness of the extension construction either $x=e$ or $x=g$. In either case it can be shown that $cx \not= cd$. This is a contradiction. Thus $T(c,c',x)$. By Side-Angle-Side triangle congruence we have $xpc \cong xpc'$. Thus $xpc$ is a right angle. Therefore one can show $L(a,p,x)$ (since $a \not= p$) whether $a$ and $x$ are on the same or opposite side(s) of $cc'$. Thus $\widetilde{L}(a,b,x)$. 
	
	Let $y = doub(x,p)$. By methods used in the proof of the previous theorem we can show $x \not= y$, $\widetilde{L}(a,b,y)$, and $cy \cong cd$. 
	
\end{proof}
	
	We now prove our version of segment-circle continuity. (It should be pointed out that Tarski's system uses a non-strict between relation to define segment-circle continuity. Thus our version differs slightly.)
	
\begin{theorem}
	If $ca<cd$ and $cd<cb$, then there is a uniform construction of a point $z$ such that $B(a,z,b)$ and $cz \cong cd$. 
\end{theorem}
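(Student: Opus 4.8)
The plan is to get the two intersection points of line $ab$ with the ``circle'' of centre $c$ and radius $cd$ from the line--circle continuity theorem just proved, show that exactly one of them lies strictly between $a$ and $b$, and then pick out that one by a single lay-off, so that no case distinction enters the construction itself. First I would record the non-degeneracy facts: from $ca<cd$ and $cd<cb$ and transitivity of the segment-inequality relation we get $ca<cb$, hence $a\neq b$; and $ca<cd$ forces $c\neq d$ (a null segment is less than nothing). So line--circle continuity applies and produces distinct points $x,y$ with $\widetilde{L}(a,b,x)$, $\widetilde{L}(a,b,y)$ and $cx\cong cy\cong cd$. Since $cx\cong cd$ while $ca<cd$ and $cd<cb$, neither $x$ nor $y$ equals $a$ or $b$ (a segment cannot be both congruent to and $<$-related to another), so $a,b,x,y$ are four distinct colinear points. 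Put $p:=mid(x,y)$; then $p$ is colinear with all of them, $px\cong py$, and $B(x,p,y)$.

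Next I would locate $a$ and $b$ among $x,y$. I claim $B(x,a,y)$. If not, then $B(a,x,y)$ or $B(x,y,a)$; treat the first (the second is symmetric). The angles $cxa$ and $cxy$ are supplementary, so they are not both smaller than a right angle. If $cxy$ is not smaller than a right angle, then in triangle $cxy$ it strictly exceeds $cyx$ (a triangle has at most one angle that is not smaller than a right angle --- a consequence of the Exterior Angle Theorem, Theorem~\ref{ExteriorAngleThm}), so $cx<cy$ by Lemma~\ref{LengthSidesTriangle}, contradicting $cx\cong cy$. If instead $cxa$ is not smaller than a right angle, then in triangle $cxa$ it exceeds $cax$, so $cx<ca$, contradicting $ca<cd\cong cx$. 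Hence $B(x,a,y)$. By the same mechanism $\lnot B(x,b,y)$: were $b$ strictly between $x$ and $y$, the non-``acute'' member of the supplementary pair $cbx$, $cby$ would, via Lemma~\ref{LengthSidesTriangle}, force $cb<cx$ or $cb<cy$, contradicting $cd<cb$ and $cx\cong cy\cong cd$. Since $b\notin\{x,y\}$ this leaves $B(b,x,y)$ or $B(x,y,b)$. Combined with $B(x,a,y)$ these give $B(a,x,b)$ in the first case and $B(a,y,b)$ in the second, so exactly one of $x,y$ lies in the open segment $(a,b)$; and since $b$ lies outside $[x,y]$ it lies on the same side of the interior point $p$ as it does of $a$, so that intersection point is precisely the one of $x,y$ on the side of $p$ toward $b$. (When $c$ is colinear with $a$ and $b$ the triangles above degenerate; then $p=c$ and the order analysis is a one-dimensional check, and $z=lf(cb,cd)$ works directly.)

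Finally, the uniform construction is $z:=lf(pb,px)$. By the characterization of the lay-off construction, $z$ lies on the ray from $p$ through $b$ with $pz\cong px$, and it is colinear with $p,b$, hence with $x$ and $y$ as well. The only points of line $ab$ at distance $px$ from $p$ are $x$ and $y$ (each ray out of $p$ carries a unique such point, by the uniqueness of the extension construction, Axiom~\ref{extunique}), so $z\in\{x,y\}$; and since $SD(p,z,b)$ holds, $z$ is the one of $x,y$ on the side of $p$ toward $b$. By the previous paragraph $B(a,z,b)$, and $cz\cong cx\cong cd$, which is the claim.

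The main obstacle is the middle paragraph: showing that $a$ falls inside, and $b$ outside, the chord that line $ab$ cuts on the circle. The key idea there is the ``greater side opposite greater angle'' principle (Lemma~\ref{LengthSidesTriangle}) applied to the triangles $cxy$, $cxa$, $cbx$ (resp. $cby$), together with the fact that a point strictly interior to a side of such a triangle sees the opposite vertex at an angle that is not smaller than a right angle; one must also take care of the degenerate configuration where $c$ is colinear with $a$ and $b$, in which the argument collapses to a one-dimensional ordering check. Everything else is bookkeeping with the crossbow/between axioms and with the uniqueness of the extension construction.
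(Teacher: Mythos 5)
Your proof is correct, and it departs from the paper's in a way that actually matters. The paper takes $p$ to be the foot of the perpendicular from $c$ to $ab$ and sets $z=lf(pa,px)$, i.e., it lays off the radius from $p$ \emph{toward $a$}; you take $p=mid(x,y)$ (the same point when $T(a,b,c)$) and lay off \emph{toward $b$}. Your choice is the right one: the point at distance $px$ from $p$ on the $a$-side of $p$ is the intersection point lying beyond $a$ on the ray from $p$ through $a$, and in any configuration with $B(a,p,b)$ that point is not between $a$ and $b$ at all --- the chord from $a$ to $b$ crosses the circle on the other side of $p$. The paper's step ``use Theorem~\ref{lineseparationthm} to show $SD(a,z,b)$'' has no justification there, whereas your order analysis ($B(x,a,y)$ because $a$ is inside, $b$ outside the segment $xy$, hence the crossing point is the one of $x,y$ on the ray from $p$ toward $b$) is exactly what singles out $lf(pb,px)$; note also that $p\neq b$ always, so your layoff is defined even when $a=p$, where $lf(pa,px)$ is not. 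Your middle paragraph re-derives, via supplements and the Exterior Angle Theorem, what Theorem~\ref{triangelsegmentinequality} already gives directly: apply it to the triangle with vertices $a,c,y$ and the point $x$ with $B(a,x,y)$ to refute $B(a,x,y)$, and to the triangle $x,c,y$ with $B(x,b,y)$ to refute $B(x,b,y)$; that would shorten the argument considerably. Two small cautions. First, the parenthetical fallback $z=lf(cb,cd)$ for colinear $c$ would be a case distinction and hence a non-uniform construction, which the paper forbids; it is also unnecessary, since $lf(pb,px)$ with $p=mid(x,y)$ already reduces to it when $p=c$. The genuine residual issue is that the line--circle construction you invoke begins by dropping a perpendicular from $c$ to $ab$ (Theorem~\ref{dropperpthm}), which presupposes $T(a,b,c)$ --- a limitation the paper's own proof shares, so it is not a defect of your argument relative to the paper.
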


\begin{proof}
	By the proof of the previous theorem we know that there is a uniform construction of a point $x$ such that $cx \cong cd$. Let $z = lf(pa,px)$ where $p$ is defined as it was in the previous proof. (It may be that $z=x$, but in order to avoid case distinctions we will construct $z$ as stated.) Since $ca<cz$ we have $a \not= z$. Thus $L(p,a,z)$. If $B(p,z,a)$, we would have $T(p,a,c)$, $cz<cp$, and $cz<ca$ which would contradict Theorem \ref{triangelsegmentinequality}. Thus $B(p,a,z)$. We can then use Theorem \ref{lineseparationthm} to show $SD(a,z,b)$. If $B(a,b,z)$, we will have $cb<ca$ and $cb<cz$ which is a contradiction. Thus $B(a,z,b)$. 
\end{proof}

As we have seen, it is possible for two distinct points, $x$ and $y$, to both be equidistant to two distinct points, $c_1$ and $c_2$. The only case that has been considered up to this point is when both $x$ and $y$ are not colinear with $c_1$ and $c_2$. The following theorem states that if $x$ is equidistant and colinear with $c_1$ and $c_2$ and $y$ is some point equidistant to $c_1$ and $c_2$, then $x$ and $y$ are the same point. Note that $x= mid(c_1, c_2)$ would satisfy the conditions for $x$ in the following theorem.

\begin{theorem}
	$L(c_1, c_2, x) \wedge c_1x \cong c_1y \wedge c_2x \cong c_2y \rightarrow x=y$
\end{theorem}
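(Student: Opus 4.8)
The plan is to argue by contradiction, assuming $x \neq y$. Note first that $L(c_1,c_2,x)$ already forces $c_1,c_2,x$ to be pairwise distinct, in particular $c_1 \neq c_2$. The guiding picture is that both $c_1$ and $c_2$ are equidistant from $x$ and $y$ and so ``lie on the perpendicular bisector of $xy$''; since $c_1 \neq c_2$ that bisector must coincide with the line through $c_1$ and $c_2$; but $x$ lies on that line, and the only point of line $xy$ equidistant from $x$ and $y$ is $mid(x,y)$, so $x = mid(x,y)$, whence $x=y$. The work is to render this with the paper's relations rather than with lines. As a preliminary reduction, the degenerate possibility $c_1 = y$ or $c_2 = y$ is excluded: then $c_1x\cong c_1y$ would assert the congruence of a non-null segment (we know $c_1\neq x$) with a null one, which contradicts the standard comparison properties of segments (a null segment is strictly shorter than any non-null one, and nothing is both congruent to and strictly shorter than another). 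So we may assume $c_1\neq y$ and $c_2\neq y$, and we split on whether $x,y$ are colinear with $c_1$.

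If $L(c_1,x,y)$, then repeated use of Axiom \ref{uniqueline} with the hypothesis $L(c_1,c_2,x)$ makes all of $c_1,c_2,x,y$ pairwise colinear, in particular $L(c_2,x,y)$ and $c_2\neq y$. On a line exactly one of three distinct points lies between the other two; were $x$ or $y$ the middle one, the remark after Definition \ref{segmentinequalitydef} (that $B(a,b,c)$ gives $ab<ac$) together with trichotomy of segment comparison would contradict $c_1x\cong c_1y$. Hence $B(x,c_1,y)$, and by the same argument $B(x,c_2,y)$. But the point lying between $x$ and $y$ and equidistant from them is unique (it is $mid(x,y)$), so $c_1=c_2$, a contradiction.

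Otherwise $T(c_1,x,y)$, and the same colinearity bookkeeping with Axiom \ref{uniqueline} forces $T(c_2,x,y)$ as well. Set $m=mid(x,y)$, so $B(x,m,y)$ and $xm\cong ym$, and one checks $c_i\neq m$ and $T(c_i,x,m)$. By the SSS congruence theorem, triangles $c_1xm$ and $c_1ym$ are congruent, hence $\angle c_1 m x \cong \angle c_1 m y$; since these two angles are supplementary (the definition of supplementary angles, via $B(x,m,y)$ and $T(x,y,c_1)$), $\angle c_1 m x$ is a right angle, and identically so is $\angle c_2 m x$. Now compare $c_1,c_2$ relative to the segment $xm$: either $SS(c_1,c_2,xm)$, in which case the two right angles are congruent (all right angles are congruent) and Theorem \ref{conganglessamesidesameray} yields $SD(m,c_1,c_2)$, hence $L(c_1,c_2,m)$; or $OS(c_1,c_2,xm)$, in which case one passes to $c_1'=doub(c_1,m)$ — which lies on the side of $xm$ opposite $c_1$ (Axiom \ref{intersectOO}), hence on the same side as $c_2$ (Theorem \ref{oppositesidethm}), and for which $\angle x m c_1'$, being the supplement of the right angle $\angle x m c_1$, is itself right — so Theorem \ref{conganglessamesidesameray} gives $SD(m,c_2,c_1')$, which with $B(c_1,m,c_1')$ again yields $L(c_1,c_2,m)$. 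In either subcase $L(c_1,c_2,m)$; combined with $L(c_1,c_2,x)$, Axiom \ref{uniqueline} forces $L(c_1,x,m)$, contradicting $T(c_1,x,m)$. Either case being impossible, $x=y$.

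The main obstacle is the last step of the non-colinear case: showing that $mid(x,y)$ lies on the line through $c_1$ and $c_2$. This is the line-free substitute for ``the perpendicular to a line at a point is unique'', and it is precisely here that one must manage the same-side/opposite-side dichotomy carefully and invoke the fact that a supplement of a right angle is right; the crossbow/orientation machinery of Section \ref{Pasch} and the side-relation theorems of Section \ref{sidesofline} are what make this routine rather than delicate. A minor further nuisance is the null-segment degeneracy handled in the preliminary reduction, which needs the segment-comparison properties.
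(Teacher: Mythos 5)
Your proof is correct and follows essentially the same perpendicular-bisector argument that the paper itself defers to (it gives no in-system proof, citing only the first half of Hartshorne's Proposition 11.4); your version has the added merit of actually carrying that argument out in the paper's line-free, quantifier-free vocabulary, with the same-side/opposite-side split and Theorem \ref{conganglessamesidesameray} doing the work of ``uniqueness of the perpendicular at a point.'' The only things you lean on without proof --- incompatibility of $<$ with $\cong$, uniqueness of the equidistant point on a segment, congruence of all right angles, and that a supplement of a right angle is right --- are exactly the facts the paper elsewhere treats as imported from Hartshorne, so this is consistent with the paper's conventions.
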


\begin{proof}
	The proof is given by Hartshorne in the first half of the proof of Proposition 11.4 in \cite{HartshorneBook}. 
\end{proof}

There is a well known result in Euclidean Geometry that any three distinct non colinear points, $a$, $b$, and $c$, lie on a unique circle with a unique center at the intersection of the perpendicular bisectors of $ab$ and $bd$. In order to construct this center one must invoke a parallel postulate to find the intersection of the two perpendicular bisectors. Given that we classify these type of constructions as non feasible, we are not able to define such a construction. What we are able to prove is that three distinct points cannot be equidistant to two different points. First we state our analog to the result that if $c$ is equidistant to distinct points $a$ and $b$, then $c$ lies on the perpendicular bisector of $ab$. 

\begin{lemma}Let $x=mid(a,b)$ and $a \not= b$. If $ca \cong cb$, then $c = x \lor axc$ is right where  
\end{lemma}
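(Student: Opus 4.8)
The plan is to split on whether $c$ is colinear with $a$ and $b$. Note first that since $ca \cong cb$ and $a \neq b$, we must have $c \neq a$ and $c \neq b$, because a null segment is congruent only to a null segment.

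Case 1: $L(a,b,c)$. By Axiom \ref{onlyonebetween} and the definition of $L$, exactly one of the three distinct points $a$, $b$, $c$ lies between the other two. If $B(a,b,c)$ then $ab < ac$ and, reversing, $cb < ca$; but $cb < ca$ and $ca \cong cb$ cannot both hold, so this case is impossible, and $B(b,a,c)$ is ruled out symmetrically. Hence $B(a,c,b)$. Since the midpoint axiom gives $B(a,x,b)$, both $SD(a,c,b)$ and $SD(a,b,x)$ hold, so Theorem \ref{lineseparationthm} (part 3) yields $SD(a,c,x)$; as $a \neq c$ this forces $c = x$, $B(a,c,x)$, or $B(a,x,c)$. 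If $B(a,c,x)$, then $ac < ax$, while standard betweenness gives $B(c,x,b)$ and hence $bx < bc$; combined with $ax \cong bx$ this yields $ca < cb$, contradicting $ca \cong cb$. The case $B(a,x,c)$ is symmetric. Therefore $c = x$, which is one of the two desired conclusions.

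Case 2: $\lnot L(a,b,c)$, so $T(a,b,c)$, and we may assume $c \neq x$. The midpoint axiom gives $B(a,x,b)$, so $x \neq a$ and $x \neq b$; and if $L(a,x,c)$ then, since $L(a,x,b)$, Axiom \ref{uniqueline} would give $L(a,b,c)$, a contradiction. Hence $T(a,x,c)$ and likewise $T(b,x,c)$. Now $ax \cong bx$ by the midpoint axiom, $cx \cong cx$, and $ca \cong cb$, so by the Side-Side-Side triangle congruence theorem (carried over from Section 10 of \cite{HartshorneBook}) triangles $axc$ and $bxc$ are congruent; in particular, angle $axc \cong$ angle $bxc$.

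Finally, since $B(a,x,b)$ and $T(a,b,c)$, angles $cxa$ and $cxb$ are supplementary by definition. Because angle congruence is symmetric and unaffected by reversing a triple, angle $cxa \cong$ angle $cxb$, so angle $cxa$ is congruent to one of its supplements; hence angle $cxa$ — equivalently angle $axc$ — is a right angle, completing the proof. The only real bookkeeping obstacle is Case 1: one must combine the line-separation theorem with the segment-comparison facts (notably the remark ``$B(a,b,c) \to ab < ac$'' and transitivity of $<$) to exclude every non-midpoint colinear position of $c$; Case 2 is then a direct application of SSS together with the definitions of supplementary and right angles.
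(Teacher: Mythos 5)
Your proof is correct, but Case 2 takes a genuinely different route from the paper's. The paper drops a perpendicular from $c$ to $ab$, obtaining a foot $y$; it then uses Lemma \ref{LengthSidesTriangle} and Theorem \ref{triangelsegmentinequality} to force $B(a,y,b)$, applies Angle-Angle-Side (via $cab \cong cba$ from the isosceles hypothesis) to get $ya \cong yb$, and concludes $y = x$, so the right angle at the foot is the right angle at the midpoint. You instead work directly at the midpoint: after establishing $T(a,x,c)$ and $T(b,x,c)$, you apply Side-Side-Side to triangles $axc$ and $bxc$ and observe that the congruent angles $cxa$ and $cxb$ are supplements, so each is right by definition. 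Your argument is more elementary in that it avoids the perpendicular-drop construction and the foot-location lemmas entirely, at the cost of needing the non-degeneracy bookkeeping $T(a,x,c)$, $T(b,x,c)$ (which you supply via Axiom \ref{uniqueline}). Your Case 1 is also more explicit than the paper's one-line appeal to ``properties of the midpoint construction and the uniqueness of the extension construction'': you rule out $B(a,b,c)$ and $B(b,a,c)$ by segment inequality and then pin $c$ down with Theorem \ref{lineseparationthm}, though you do lean on a few facts the paper leaves tacit (incompatibility of $<$ with $\cong$, four-point betweenness transitivity to get $B(c,x,b)$, and that a non-null segment is not congruent to a null one --- the last of which the paper's own proof also implicitly assumes when it passes from $\widetilde{L}(a,c,b)$ to $L(a,c,b)$). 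These are standard and derivable in the system, so I would count your proof as complete.
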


\begin{proof}
	There are two cases to consider. 
	
	Case 1) If $\widetilde{L}(a,c,b)$, then $L(a,c,b)$ given that $a \not= b$. Thus $c = x$ by the properties of the midpoint construction and the uniqueness of the extension construction. 
	
	Case 2) If $T(a,c,b)$, then let y be the point constructed by dropping a perpendicular from $c$ to $ab$. First we will show $B(a,y,b)$. Suppose $\lnot B(a,y,b)$. Then either $B(y,a,b)$ or $B(a,b,y)$. Without loss of generality let $B(y,a,b)$. By Lemma \ref{LengthSidesTriangle} $ca > cy$ and by Theorem \ref{triangelsegmentinequality} $cy > ca$ or $cb > ca$. Therefore, $cb > ca$. This is a contradiction. Thus $B(a,y,b)$. Since $ca \cong cb$ we know $cab \cong cba$. We can then use Angle-Angle-Side triangle congruence (proof found in Euclid's Proposition 26 in Book I) to show $ya \cong yb$. We can then infer $x=y$. 
\end{proof}


	
	



\begin{theorem}
	Let $a \not= b$, $a \not= d$, and $b \not= d$. Then
	
	$c_1a \cong c_1b \cong c_1d \wedge c_2a \cong c_2b \cong c_2d \rightarrow c_1 = c_2$. 
\end{theorem}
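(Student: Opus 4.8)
The plan is to argue by contradiction: assume $c_1 \neq c_2$ and derive a contradiction, the underlying picture being that $c_1$ and $c_2$ would both be ``circumcenters'' of $a,b,d$, forcing the line through $c_1$ and $c_2$ to be the perpendicular bisector of two different sides, and hence to meet two distinct lines each perpendicular to it. First I would record the non-degeneracy facts. If $a=c_1$ then $c_1a$ is a null segment, so by $c_1a\cong c_1b$ the segment $c_1b$ is null and $b=c_1=a$, against $a\neq b$; thus $a,b,d$ are all distinct from $c_1$ and from $c_2$. Next, using the uniqueness theorem proved just before the preceding Lemma (a point colinear with $c_1,c_2$ is determined by its two distances to them): since $a,b,d$ are pairwise distinct but each lies at distance $c_1a$ from $c_1$ and at distance $c_2a$ from $c_2$, none of them is colinear with $c_1$ and $c_2$ --- otherwise two of $a,b,d$ would coincide. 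Hence $T(c_1,c_2,a)$, $T(c_1,c_2,b)$, $T(c_1,c_2,d)$, so the triangles $a c_1 c_2$, $b c_1 c_2$, $d c_1 c_2$ are legitimate.

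Now fix a pair, say $a,b$. From $c_1a\cong c_1b$, $c_2a\cong c_2b$ and the shared side $c_1c_2$, Side-Side-Side gives $\triangle a c_1 c_2\cong\triangle b c_1 c_2$, so $AC(a c_1 c_2, b c_1 c_2)$. Since $a$ and $b$ both lie off the line through $c_1,c_2$, the dichotomy theorem for sides of a segment gives either $SS(a,b,c_1c_2)$ or $OS(a,b,c_1c_2)$. In the first case the congruent angles $\angle a c_1 c_2\cong\angle b c_1 c_2$ share their initial side and have terminal sides on the same side of $c_1c_2$, so Theorem \ref{conganglessamesidesameray} yields $SD(c_1,a,b)$; combined with $c_1a\cong c_1b$ (using uniqueness of the extension construction, together with the incompatibility of $<$ and $\cong$ ruling out $B(c_1,a,b)$ and $B(c_1,b,a)$ via the remark after Definition \ref{segmentinequalitydef}) this forces $a=b$, a contradiction. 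In the second case the crossbow construction (Axiom \ref{crossbow}) produces a point $z$ with $B(a,z,b)$ that is non-strictly colinear with $c_1,c_2$; splitting on whether $z$ equals $c_1$, equals $c_2$, or lies strictly on the line, one gets $\triangle a c_1 z\cong\triangle b c_1 z$ by Side-Angle-Side (the angle at $c_1$ coming from $\angle a c_1 c_2\cong\angle b c_1 c_2$, up to supplements), hence $za\cong zb$ and $\angle a z c_1\cong\angle b z c_1$; since $B(a,z,b)$ these are supplementary and congruent, hence right, and $z=mid(a,b)$. So for the pair $a,b$ we either finish outright or conclude that $mid(a,b)$ lies on the line through $c_1,c_2$ and that the angle from $a$ to $mid(a,b)$ to a suitable point of $\{c_1,c_2\}$ is a right angle.

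Running the identical argument for the pair $a,d$, I either finish or obtain that $mid(a,d)$ lies on the line through $c_1,c_2$ with a right angle at $mid(a,d)$ in the triangle with apex $a$. But $mid(a,b)\neq mid(a,d)$ since $b\neq d$ (injectivity of the midpoint construction in its second argument, established earlier), and $a$ is not on the line through $c_1,c_2$, so $a,\,mid(a,b),\,mid(a,d)$ form a triangle. Transferring the two right angles along the line through $c_1,c_2$ --- invoking Axiom \ref{uniqueline} to see all the relevant points colinear, then Axiom \ref{samerayanglecongruence} and congruence of supplements --- shows this triangle has right angles at both $mid(a,b)$ and $mid(a,d)$, contradicting the Exterior Angle Theorem (Theorem \ref{ExteriorAngleThm}). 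Hence $c_1=c_2$.

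I expect the main obstacle to be the bookkeeping in the ``opposite side'' step: handling the coincidences where the crossbow point equals $c_1$ or $c_2$, tracking which angles are congruent versus supplementary to which as $z$ moves along the line, and verifying the side-of-segment hypotheses needed to invoke the crossbow axiom, Theorem \ref{conganglessamesidesameray}, and Theorem \ref{oppositesidethm}. The conceptual core --- two perpendiculars to a common line cannot meet --- is immediate once one has that no triangle has two right angles; everything else is the standard synthetic side-of-line and triangle-congruence plumbing already developed in Sections \ref{Pasch} and \ref{congruence}.
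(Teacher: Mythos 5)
Your proposal is correct, and its conceptual core --- both alleged centers lie on the perpendicular bisectors of two sides of the triangle $abd$, and two perpendiculars to a common line cannot behave consistently --- is exactly the paper's. The execution differs in two ways. First, the paper leans on the Lemma proved immediately before the theorem (if $ca\cong cb$ then $c=mid(a,b)$ or $a\,mid(a,b)\,c$ is right, established by dropping a perpendicular from $c$ to $ab$), so it can immediately assert that $c_1xa$, $c_2xa$, $c_1yd$, $c_2yd$ are right for $x=mid(a,b)$, $y=mid(b,d)$, and then use uniqueness of angle construction to place $x$ and $y$ on the line through $c_1,c_2$. You instead re-derive this fact inline via the $SS$/$OS$ dichotomy, Theorem \ref{conganglessamesidesameray}, the crossbow construction, and SSS/SAS --- sound, but it duplicates work the paper has already packaged, and it is where most of your acknowledged bookkeeping lives. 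Second, the closing contradictions differ: the paper applies the Alternate Interior Angle Theorem to the two right angles at $x$ and $y$ to conclude $ab\parallel bd$, which is absurd because the definition of parallel segments would require $T(a,b,b)$; you instead form the triangle with apex $a$ and feet $mid(a,b)$, $mid(a,d)$ and invoke the Exterior Angle Theorem to rule out two right angles in a triangle. Both closings are legitimate in this system; the paper's is slightly slicker because it avoids having to verify that the two feet and $a$ form a genuine triangle, while yours avoids the side-of-line hypothesis implicitly needed to invoke the AIAT. Either way, the required colinearity and distinctness bookkeeping ($x\neq y$, neither of $a,b,d$ on the line $c_1c_2$, case splits when a constructed point coincides with $c_1$ or $c_2$) is the same in substance as the paper's three-case analysis.
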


\begin{proof}
	Suppose $c_1 \not= c_2$. Let $x = mid(a,b)$ and let $y=mid(b,d)$. By the previous lemma $c_1xa$, $c_1yd$, $c_2xa$, and $c_2yd$ are all right. Also note that $x \not= y$. There are 3 cases to consider. 
	
	Case 1) Let $c_1$ and $c_2$ both not equal $x$ or $y$. By the uniqueness of angle constructions we have $L(x, c_1, c_2)$ and $L(y, c_1, c_2)$. Thus $L(x,y,c_1)$ Therefore, $yxa$ and $xyd$ are both right. Thus by the Alternate Interior Angle Theorem, $ab \parallel bd$. This is a contradiction. 
	
	Case 2) Let $c_1$ equal $x$ or $y$. Without loss of generality let $c_1 =x$. Note $L(a,c_1,b)$ . Therefore $yc_1a$ and $c_1yd$ are both right. Thus by the Alternate Interior Angle Theorem, $ab \parallel bd$. This is a contradiction. 
	
	Case 3) Let $c_1$ equal $x$ or $y$. This case is analogous to the previous case. 
	
	Therefore $c_1 = c_2$.  

\end{proof}


\section{Coplanar Relation}\label{SoildGeo}

In this section we aim to develop an analog to part of Book XI of Euclid's Elements in which solid geometry is discussed. Similar to how we avoid introducing lines as objects by introducing a colinear relation, we will avoid discussing planes directly by introducing a coplanar relation. We introduce the relation $PL(a, b, c, d)$, with a condensed written form of $PL_{abc}(d)$, to have the interpretation that $d$ is coplanar with $a$, $b$, and $c$ where $a$, $b$, and $c$ are not colinear. Additionally we have the need to introduce a new construction called the orthogonal construction which is denoted $o(a,b,c)$. The point $o(a,b,c,)$ (where $a \not= b$ and $c \not= b$) is understood to be a point such that the angles $abo(a,b,c)$ and $cbo(a,b,c)$ are right angles.  To display the feasibility of this construction we have designed a tool we call the orthogonator which will produce such a point given three points $a$, $b$ and $c$ (where $a \not= b$ and $c \not= b$). In figure \ref{Orthogonator}, we have displayed a diagram of such a tool. The instrument has three segments all with a common endpoint. The segments 1 and 3 form a (fixed) right angle. Segment 2 also forms a right angle with segment 3 while being able to swing a full rotation is such a way that segments 1 and 2 act much like the hands of a clock (where segment 1 is fixed). The length of the three segments is not stipulated. Label the endpoints of the first segment $y$ and $x$. Label the endpoints of the second segment $y$ and $z$. Label the endpoints of the third segment $y$ and $o$. The tool is put into use by placing $y$ at $b$, placing $x$ at $lf(ba, yx)$, and placing $z$ at $lf(bc, yz)$. The resulting position of endpoint $o$ is the desired point. 

\begin{figure}[h!]
	\begin{picture}(216,145)
	\put(-40,0){\includegraphics[scale=1.8]{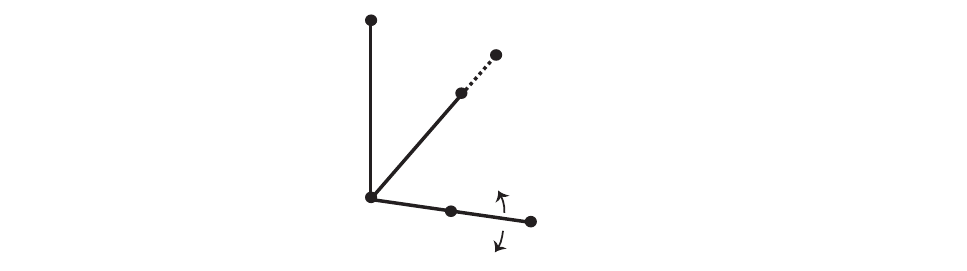}}
	
	\put(120,33){$y=b$}
	\put(140,127){$o$}
	\put(207,88){$a$}
	\put(225,110){$x=lf(ba,yx)$}
	\put(195,36){$c$}
	\put(243,24){$z=lf(bc,yz)$}
	
	\end{picture}
	\caption{}
	\label{Orthogonator}
\end{figure}

We have five axioms pertaining to coplanar relation and the orthogonal construction. 
\begin{axiom}\label{coplanarnotcolinear}
	$PL_{abc}(d) \rightarrow T(a,b,c)$
\end{axiom}

\begin{axiom}\label{bcoplanar}
	$T(a,b,c) \rightarrow PL_{abc}(b)$
\end{axiom}

\begin{axiom}\label{coplanartrans}
	$PL_{abc}(d) \wedge PL_{abc}(e) \wedge PL_{abc}(f) \wedge T(d,e,f) \wedge PL_{abc}(x)\rightarrow PL_{def}(x)$
\end{axiom}

\begin{axiom}\label{orthogonatorisright}
	$ a \not= b \wedge c \not= b \rightarrow abo(a,b,c)$ is right $\wedge$ $cbo(a,b,c)$ is right.
\end{axiom}

\begin{axiom}\label{PerptoOrthogisCoplanar}
	$T(a,b,c) \wedge dbo(a,b,c)$ is right $\rightarrow PL_{abc}(d)$
\end{axiom}
Axiom \ref{coplanarnotcolinear} states that for $d$ to be coplanar with $a$, $b$ and $c$, $a$, $b$ and $c$ must not be colinear.  Axiom \ref{bcoplanar} simply states that if $a$, $b$ and $c$ are non colinear, then $b$ is coplanar with $a$, $b$ and $c$. Axiom \ref{coplanartrans} states that if three non colinear points $d$, $e$, and $f$ are all coplanar with $a$, $b$ and $c$ and $x$ is a point which is coplanar with $a$, $b$, and $c$ then $x$ is coplanar with $d$, $e$ and $f$. Axiom \ref{orthogonatorisright} states that if $a \not= b$ and $c \not= b$ then the segment $b(o(a,b,c))$ is perpendicular to both $ab$ and $cb$. Lastly, Axiom \ref{PerptoOrthogisCoplanar} states that if $a$, $b$ and $c$ are non colinear and segment $bd$ is perpendicular to segment $b(o(a,b,c))$, then $d$ is coplanar with $a$, $b$, and $c$. 

Surprisingly there are minimal modifications that need to be made to previous axioms and theorems to fully incorporate the results from previous sections about planar geometry into the setting of solid geometry. Before we introduce those modifications, we will work out a set of results which follow without these modifications. 

Hilbert's fourth through eighth incidence axioms deal with planes. His fourth and fifth axioms state that three non colinear points determine a plane and that only one plane can contain these three points. Our use of a coplanar relation and Axiom \ref{coplanarnotcolinear} will account for analogous features. Hilbert's sixth incidence axiom states that if two points of a line are contained in a plane then the entire line is contained in the plane. We will prove an analogous result in Theorem \ref{ColinearCoplanar}. Hilbert's seventh incidence axiom states that if two (distinct) planes have a point in common, then they have another point in common. This axiom of Hilbert's is nonconstructive. In Theorem \ref{planesecondpointofintersection} we will provide a uniform construction for creating such a point. Lastly, Hilbert's eighth incidence axiom states that there exist at least four points not lying in the same plane. In Theorem \ref{4noncoplanarpoints} we will prove that for three non colinear points $a$, $b$ and $c$ (in particular $\alpha$, $\beta$ and $\gamma$), $o(a,b,c)$ is not coplanar with $a$, $b$ and $c$. 

Euclid did not clearly state all the assumptions he made about planes. Meanwhile Hilbert did not develop many results pertaining to solid geometry from his axioms. In this section we will develop numerous results analogous to propositions of Euclid's while also proving results analogous to Hilbert's axioms. 

\begin{theorem}\label{accoplanar}
	$ T(a,b,c) \rightarrow PL_{abc}(a) \wedge PL_{abc}(c)$
\end{theorem}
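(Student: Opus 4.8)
The plan is to prove $PL_{abc}(a)$ and $PL_{abc}(c)$ by exhibiting, for each, an explicit point whose perpendicularity to $b(o(a,b,c))$ is forced by the axioms, and then invoking Axiom \ref{PerptoOrthogisCoplanar}. We already have $PL_{abc}(b)$ from Axiom \ref{bcoplanar}. For $PL_{abc}(a)$ the natural witness is $a$ itself: we need $abo(a,b,c)$ to be a right angle, which is exactly the first conjunct of Axiom \ref{orthogonatorisright} (using that $T(a,b,c)$ gives $a \not= b$ and $c \not= b$). Since $T(a,b,c)$ holds and $abo(a,b,c)$ is right, Axiom \ref{PerptoOrthogisCoplanar} with $d := a$ yields $PL_{abc}(a)$. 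Symmetrically, the second conjunct of Axiom \ref{orthogonatorisright} gives that $cbo(a,b,c)$ is right; applying Axiom \ref{PerptoOrthogisCoplanar} with $d := c$ (again using $T(a,b,c)$) gives $PL_{abc}(c)$.

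The only subtlety is a definitional one: Axiom \ref{PerptoOrthogisCoplanar} is stated with the phrasing ``$dbo(a,b,c)$ is right,'' and one must make sure this matches ``$abo(a,b,c)$ is right'' (respectively ``$cbo(a,b,c)$ is right'') as produced by Axiom \ref{orthogonatorisright} — i.e., that the role of the middle vertex $b$ is consistent and that the construction $o(a,b,c)$ is the same symbol in both axioms. It is, so the substitution $d \mapsto a$ and $d \mapsto c$ is immediate. One should also confirm that $o(a,b,c)$ is a legitimate term here, which it is because $T(a,b,c)$ guarantees the side conditions $a \not= b$ and $c \not= b$ needed for Axiom \ref{orthogonatorisright} to apply in the first place.

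I expect no real obstacle; this theorem is essentially an unpacking of Axioms \ref{bcoplanar}, \ref{orthogonatorisright}, and \ref{PerptoOrthogisCoplanar}. If anything, the ``hard part'' is purely bookkeeping: verifying that the right-angle hypotheses line up syntactically with the way right angles were defined earlier (an angle being congruent to one of its supplements) so that ``$abo(a,b,c)$ is right'' really does discharge the hypothesis ``$dbo(a,b,c)$ is right'' of Axiom \ref{PerptoOrthogisCoplanar}. Thus the proof is two one-line applications, and it serves as the base case for the later incidence-style results (Theorem \ref{ColinearCoplanar} and onward) that build up the coplanar relation.
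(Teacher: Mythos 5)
Your proof is correct and follows exactly the paper's argument: apply Axiom \ref{orthogonatorisright} (using that $T(a,b,c)$ gives $a \neq b$ and $c \neq b$) to obtain that $abo(a,b,c)$ and $cbo(a,b,c)$ are right, then apply Axiom \ref{PerptoOrthogisCoplanar} with $d$ instantiated as $a$ and as $c$. The paper's proof is the same two-step application, just stated more tersely.
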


\begin{proof}
	An application of Axiom \ref{orthogonatorisright} followed by an application of Axiom \ref{PerptoOrthogisCoplanar}. 
\end{proof}

\begin{theorem} \label{COPlanarPermute}
	$PL_{abc}(x) \rightarrow PL_{bac}(x) \wedge PL_{cba}(x)$
\end{theorem}

\begin{proof}
	An application of Axiom \ref{coplanarnotcolinear} to obtain $T(a,b,c)$. Then apply Axiom \ref{bcoplanar} and Theorem \ref{accoplanar} followed by Axiom \ref{coplanartrans}.
\end{proof}

The previous theorem implies that we can also obtain $PL_{acb}(x)$, $PL_{bca}(x)$ and $PL_{cab}(x)$ given $PL_{abc}(x)$. Thus we obtain all permutations of $a$, $b$ and $c$ for the coplanar relation notation. 

\begin{theorem}\label{coplanartransbothways}
		$PL_{abc}(d) \wedge PL_{abc}(e) \wedge PL_{abc}(f) \wedge T(d,e,f) \rightarrow [PL_{abc}(x)\leftrightarrow PL_{def}(x)$]
\end{theorem}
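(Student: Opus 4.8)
The plan is to establish the biconditional by proving each direction separately, with the forward direction being essentially immediate from Axiom \ref{coplanartrans} and the reverse direction requiring an application of the symmetry of the coplanar relation together with one more use of Axiom \ref{coplanartrans}.

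First, assume the hypotheses $PL_{abc}(d)$, $PL_{abc}(e)$, $PL_{abc}(f)$, and $T(d,e,f)$. For the forward implication, suppose $PL_{abc}(x)$. Then all five premises of Axiom \ref{coplanartrans} are satisfied, so we immediately conclude $PL_{def}(x)$. This direction costs nothing beyond citing the axiom.

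For the reverse implication, suppose $PL_{def}(x)$; we must show $PL_{abc}(x)$. The idea is to reverse the roles of $\{a,b,c\}$ and $\{d,e,f\}$ and reapply Axiom \ref{coplanartrans}. To do this I would first note that since $PL_{abc}(d)$, $PL_{abc}(e)$, and $PL_{abc}(f)$ hold with $T(a,b,c)$ (from Axiom \ref{coplanarnotcolinear}), and since $T(d,e,f)$ is assumed, we can apply Axiom \ref{coplanartrans} three times --- once with witness $x$ replaced by each of $a$, $b$, and $c$ in turn, using $PL_{abc}(a)$, $PL_{abc}(b)$, and $PL_{abc}(c)$ (which hold by Axiom \ref{bcoplanar} and Theorem \ref{accoplanar}) --- to obtain $PL_{def}(a)$, $PL_{def}(b)$, and $PL_{def}(c)$. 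Now we have $PL_{def}(a) \wedge PL_{def}(b) \wedge PL_{def}(c)$ together with $T(a,b,c)$ and the assumed $PL_{def}(x)$; a final application of Axiom \ref{coplanartrans} (with $\{d,e,f\}$ in the role of the base triple and $\{a,b,c\}$ in the role of the coplanar triple) yields $PL_{abc}(x)$, as desired.

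The only mild obstacle is bookkeeping: Axiom \ref{coplanartrans} is stated with a fixed ordering of which triple is the "base" and which is "new," so one must be careful to check that at each invocation the correct three points are verified coplanar with the correct base triple and that the non-colinearity hypothesis on the new triple is the one actually available (this is why Theorem \ref{accoplanar} and Axiom \ref{bcoplanar} are needed in the reverse direction). No genuinely new ideas are required; it is a matter of applying the transitivity axiom four times with the roles swapped. I would present it compactly, since the computation is routine once the symmetry pattern is identified.
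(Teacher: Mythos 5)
Your proposal is correct and follows essentially the same route as the paper's proof: the forward direction is a single application of Axiom \ref{coplanartrans}, and the reverse direction first derives $PL_{def}(a)$, $PL_{def}(b)$, $PL_{def}(c)$ via Axiom \ref{bcoplanar}, Theorem \ref{accoplanar}, and Axiom \ref{coplanartrans}, then applies Axiom \ref{coplanartrans} once more with the triples swapped. No differences worth noting.
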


\begin{proof}
	Assume $PL_{abc}(d) \wedge PL_{abc}(e) \wedge PL_{abc}(f) \wedge T(d,e,f)$.
	
	$(\Longrightarrow)$ Given $PL_{abc}(x)$, we obtain $PL_{def}(x)$ by an application of Axiom \ref{coplanartrans}.
	
	$(\Longleftarrow)$ Given the assumptions above we obtain $PL_{abc}(a)$, $PL_{abc}(b)$ and $PL_{abc}(c)$ by Axiom \ref{bcoplanar} and Theorem \ref{accoplanar}. From Axiom \ref{coplanartrans} we have  $PL_{def}(a)$, $PL_{def}(b)$ and $PL_{def}(c)$ By Axiom \ref{coplanarnotcolinear} we also have $T(a,b,c)$. Thus another application of Axiom \ref{coplanartrans} give us $PL_{def}(x) \rightarrow PL_{abc}(x)$. 
\end{proof}

The next Theorem is our analog to Hilbert's axiom that if two points of a line are contained in a plane then the entire line is contained in the plane. 

\begin{theorem}\label{ColinearCoplanar}
	$PL_{abc}(d) \wedge PL_{abc}(e) \wedge L(d,e,f) \rightarrow PL_{abc}(f)$
\end{theorem}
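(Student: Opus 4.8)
The plan is to reduce the claim to the special case in which $f$ lies on the line through two of the three vertices of a triangle, and then to obtain coplanarity from the orthogonal construction via Axiom \ref{PerptoOrthogisCoplanar}.

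First I would record the routine preliminaries: from $L(d,e,f)$ the points $d,e,f$ are pairwise distinct (Axiom \ref{betweenuniquepoint}), and from $PL_{abc}(d)$ we get $T(a,b,c)$ (Axiom \ref{coplanarnotcolinear}), hence $PL_{abc}(a)$, $PL_{abc}(b)$, and $PL_{abc}(c)$ by Axiom \ref{bcoplanar} and Theorem \ref{accoplanar}. Next I would produce an auxiliary point $g \in \{a,b,c\}$ with $T(d,e,g)$: if each of $a,b,c$ were equal to $d$, equal to $e$, or colinear with $d$ and $e$, then repeated use of Axiom \ref{uniqueline} forces $L(a,b,c)$, contradicting $T(a,b,c)$; the argument is a short but slightly tedious split on how many of $a,b,c$ lie in $\{d,e\}$. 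Having chosen such a $g$, Theorem \ref{coplanartransbothways} applied to the triangle $d,e,g$ gives $PL_{abc}(x) \leftrightarrow PL_{deg}(x)$ for every point $x$, so it now suffices to prove $PL_{deg}(f)$.

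To prove $PL_{deg}(f)$, put $o = o(d,e,g)$. Since $d \not= e$ and $g \not= e$, Axiom \ref{orthogonatorisright} makes angle $deo$ a right angle, so in particular $T(d,e,o)$ and thus $o \not= e$ and $\lnot L(d,e,o)$. Combining $\lnot L(d,e,o)$ with $L(d,e,f)$ through Axiom \ref{uniqueline} shows $f \not= o$ and $\lnot L(f,e,o)$, hence $T(f,e,o)$. From $L(d,e,f)$ we have either $SD(e,f,d)$ or $B(f,e,d)$ by Theorem \ref{lineseparation1}. In the first case, passing to $SD(e,d,f)$ by Theorem \ref{lineseparationthm} and applying Axiom \ref{samerayanglecongruence} gives angle $oed \cong$ angle $oef$; in the second case angle $oed$ and angle $oef$ are supplementary (after checking $T(f,d,o)$, once more via Axiom \ref{uniqueline}), and a right angle is congruent to its supplement. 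In either case angle $feo$ is congruent to the right angle $deo$, hence is itself right (congruent-to-a-right-angle-is-right, which follows from the supplement facts of Hartshorne cited earlier). Finally Axiom \ref{PerptoOrthogisCoplanar} applied to the triangle $d,e,g$ with the right angle $feo(d,e,g)$ yields $PL_{deg}(f)$, and therefore $PL_{abc}(f)$.

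The main obstacle I anticipate is the non-degeneracy bookkeeping: one must verify that $o(d,e,g)$ is not colinear with $d$ and $e$ and that the triples $feo$, $oed$, $oef$, and $fdo$ used in the angle manipulations are genuine triangles. Each of these verifications ultimately rests on playing $\lnot L(d,e,o)$ (coming from angle $deo$ being right) off against $L(d,e,f)$ using Axiom \ref{uniqueline}, but they have to be discharged one at a time with some care. Selecting the auxiliary vertex $g$ is the only other place needing attention, and there the work is purely combinatorial.
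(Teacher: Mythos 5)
Your proposal is correct and follows essentially the same route as the paper's proof: choose a vertex $g\in\{a,b,c\}$ forming a triangle with $d$ and $e$, apply the orthogonal construction to that triangle, show the angle from $f$ to the orthogonal point is right, and conclude via Axiom \ref{PerptoOrthogisCoplanar} followed by coplanar transitivity. The only differences are cosmetic (the paper places the orthogonal vertex at $d$ rather than $e$ and transfers back with Axiom \ref{coplanartrans} instead of Theorem \ref{coplanartransbothways}) plus the fact that you spell out the ray/supplement case analysis for the right angle at $f$, which the paper asserts without detail.
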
 

\begin{proof}
	By Axiom \ref{coplanarnotcolinear} we have $T(a,b,c)$. Because of this we have $T(a,d,e)$, $T(b,d,e)$ or $T(c,d,e)$. Without loss of generality let $T(a,d,e)$. By Axiom \ref{orthogonatorisright}, angle $edo(a,d,e)$ is right. Since $L(d,e,f)$, note $f \not= d$, $fdo(a,d,e)$ is right. Thus by Axiom \ref{PerptoOrthogisCoplanar} we obtain $PL_{ade}(f)$. Lastly, since $PL_{abc}(a)$, $PL_{abc}(d)$, $PL_{abc}(e)$, $T(a,d,e)$ and $PL_{ade}(f)$ we have $PL_{abc}(f)$ by Axiom \ref{coplanartrans}.
\end{proof}


\subsection{Modifications to Previous Axioms and Theorems}

We will now discuss the modifications that are needed to lift our system from one about planar geometry to one about solid geometry. Surprisingly, there are only two modifications that need to be made before compass constructions are introduced. We modify Axiom \ref{SOtriangle} by stimulating that angles $abc$ and $def$ not only be non colinear triples (as is stated in the original version), but also that $d$, $e$, and $f$ are also coplanar with $a$, $b$, and $c$. In a similar vein, we stipulate that  $d$, $e$, and $f$ are also coplanar with $a$, $b$, and $c$ in the Definition \ref{OOdef}. 

\begin{modified axiom 6}
		$SO(abc,def) \rightarrow T(a,b,c) \wedge T(d,e,f) \wedge PL_{abc}(d) \wedge PL_{abc}(e) \wedge PL_{abc}(f)$
\end{modified axiom 6}

\begin{modified def 4}
	$OO(abc,def) \equiv \lnot SO(abc,def) \wedge T(a,b,c) \wedge T(d,e,f) \wedge PL_{abc}(d) \wedge PL_{abc}(e) \wedge PL_{abc}(f)$ 
\end{modified def 4}

From these two modification many statements in Sections 2 through 8 inherently imply that the points spoken about are coplanar with the same triple. For example if two points are said to be on the same side of a segment, then the definition of same side will imply that they are coplanar. If a point is said to be in the interior of an angle, then the definition of interior will imply that the point is coplanar with the angle. Coplanar relations can be shown in the statments of Alternate Interior Angle Theorem and its converse. One can also verify that the constructions discussed are producing points which are coplanar with the starting points. For example the $ats$ construction's definition implies this. Theorem \ref{ColinearCoplanar} is critical in justifying certain coplanar realtions. We can use Theorem \ref{ColinearCoplanar} to make the following observation. 

\begin{observation def 18}
$ab \parallel cd \rightarrow PL_{abc}(d)$
\end{observation def 18}

In Section 9 we have need to modify both the premise and conclusion of Axiom 31 to state that all points introduced and constructed are coplanar with the same triple. This fact will be used in the proof of Theroem \ref{planesecondpointofintersection}.

\begin{modified axiom 31}
	$c_1a \cong c_1b \wedge c_2a < c_2d \wedge c_2d < c_2b \wedge PL_{ac_1b}(d) \wedge PL_{ac_1b}(c_2) \rightarrow c_1cci(c_1,a,b,c_2,d) \cong c_1a \wedge c_2cci(c_1,a,b,c_2,d) \cong c_2d \wedge PL_{ac_1b}(cci(c_1,a,b,c_2,d))$
\end{modified axiom 31}

We can then add $PL_{ac_1b}(x)$ to the conclusion of Theorem 41. 

\begin{modified thm 41}
	Given $c_1a \cong c_1b$, $c_2a < c_2d$, and $c_2d < c_2b$, there is a uniform construction of a point $x$ such that $c_1x \cong c_1a$, $c_2x \cong c_2d$, and $x \not= cci(c_1,a,b,c_2,d)$. 
\end{modified thm 41}

\subsection{Results Reliant on Modifications:}

We will now be assuming the modifications stated above in our proofs. First we will state and prove a theorem that implies that there are four points for which one is not coplanar with the other three. 

\begin{theorem} \label{4noncoplanarpoints}
	$T(a,b,c) \rightarrow \lnot PL_{a, b, c}(o(a, b, c))$ [In particular $\lnot PL_{\alpha, \beta, \gamma}(o(\alpha, \beta, \gamma))$.]
\end{theorem}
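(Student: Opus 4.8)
The plan is to assume, toward a contradiction, that $o := o(a,b,c)$ is coplanar with $a$, $b$, $c$, and to exploit uniqueness of perpendicular rays: inside the plane $PL_{abc}$ the ray $bo$ is perpendicular to $ba$ at $b$ and also perpendicular to $bc$ at $b$, so after matching the relevant sides of $bo$ the rays $ba$ and $bc$ must coincide, forcing $L(a,b,c)$ and contradicting $T(a,b,c)$. The only real subtlety is that $a$ and $c$ need not lie on the same side of $bo$, so one extra reflection is required.

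First I would record the setup. Since $T(a,b,c)$ gives $a\neq b$ and $c\neq b$, Axiom \ref{orthogonatorisright} makes $abo$ and $cbo$ right angles; in particular $T(a,b,o)$ and $T(c,b,o)$ (an angle is a non-colinear triple), hence also $T(b,o,a)$ and $T(b,o,c)$. Next comes the coplanarity bookkeeping that keeps the orientation/side relations well-formed under the modified Axiom \ref{SOtriangle} and Definition \ref{OOdef}: from $PL_{abc}(a)$, $PL_{abc}(b)$, $PL_{abc}(c)$ (Theorem \ref{accoplanar}, Axiom \ref{bcoplanar}) and the standing assumption $PL_{abc}(o)$, Axiom \ref{coplanartrans} applied with $(d,e,f)=(a,b,o)$ and $x=c$ yields $PL_{abo}(c)$, and then Theorem \ref{COPlanarPermute}, Axiom \ref{bcoplanar}, Theorem \ref{accoplanar}, and Theorem \ref{ColinearCoplanar} supply $PL_{boa}(b)$, $PL_{boa}(o)$, $PL_{boa}(c)$, and coplanarity of the reflected point introduced below.

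The main case split is on whether $SO(boa,boc)$ holds, i.e.\ whether $a$ and $c$ lie on the same side of $bo$. In Case A, $SO(boa,boc)$, hence $SO(oba,obc)$ by Theorem \ref{sideslinethm1}(3); the angles $oba$ ($\cong abo$) and $obc$ ($\cong cbo$) are right, so they are congruent (``all right angles are congruent''), and Theorem \ref{conganglessamesidesameray} with shared side $bo$ gives $SD(b,a,c)$, whence $L(a,b,c)$, contradicting $T(a,b,c)$. In Case B, $\lnot SO(boa,boc)$; then by the modified Definition \ref{OOdef} together with the $T(\cdot)$ and $PL(\cdot)$ facts above, $OO(boa,boc)$, i.e.\ $a$ and $c$ lie on opposite sides of $bo$. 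Put $a' := doub(a,b)$, so $B(a,b,a')$ and $L(a,b,a')$; one checks $T(a,a',o)$ (else $L(a,a',o)$ with $L(a,a',b)$ would give $L(a,b,o)$ via Axiom \ref{uniqueline}). From $B(a,b,a')$ and $T(a,a',o)$, Axiom \ref{intersectOO} puts $a$ and $a'$ on opposite sides of $bo$, so plane separation (Theorem \ref{oppositesidethm}, parts 1--3) puts $a'$ and $c$ on the \emph{same} side of $bo$. Meanwhile $oba'$ is a supplement of the right angle $oba$, hence itself right, hence congruent to the right angle $obc$; Theorem \ref{conganglessamesidesameray} with shared side $bo$ now gives $SD(b,a',c)$, so $L(a',b,c)$, and combining with $L(a,b,a')$ via Axiom \ref{uniqueline} yields $L(a,b,c)$, again contradicting $T(a,b,c)$. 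Both cases being impossible, $\lnot PL_{abc}(o(a,b,c))$; specializing $(a,b,c)$ to the constants $(\alpha,\beta,\gamma)$ of Axiom \ref{3noncolinearpoints} gives the parenthetical statement.

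I expect the obstacle here to be organizational rather than conceptual. One must (i) keep track of which triples are non-colinear so that every angle-orientation and same-/opposite-side relation appearing above is actually defined — especially $T(a,a',o)$ and the $PL$ facts forced by the solid-geometry modifications — and (ii) use the imported Hartshorne-style facts (``supplements of congruent angles are congruent'' and ``all right angles are congruent'') to conclude $oba\cong obc$ and $oba'\cong obc$. No continuity or parallelism input is needed; the sole substantive planar ingredient is Theorem \ref{conganglessamesidesameray} (uniqueness of the ray realizing a given angle on a given side), which is applied once in each case.
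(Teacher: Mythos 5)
Your proposal is correct and follows essentially the same route as the paper: assume $PL_{abc}(o)$, get two right angles $abo$ and $cbo$ from Axiom \ref{orthogonatorisright}, split on whether $a$ and $c$ are on the same or opposite side(s) of $bo$, and derive $L(a,b,c)$ from uniqueness of angle constructions. The paper compresses both side-cases into one appeal to that uniqueness, whereas you spell out the opposite-sides case via the reflection $a'=doub(a,b)$ and supplements of right angles — a detail the paper leaves implicit but which is exactly what is needed.
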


\begin{proof}
Let $o = o(a,b,c)$. Suppose $\lnot PL_{abc}(o)$. Since $T(a,b,c)$ we have $a \not= b$ and $c \not= b$. Thus by Axiom \ref{orthogonatorisright} we have $abo$ is right and $cbo$ is right. Since $abo$ is right we have $T(o,b,a)$. Additionally, since $PL_{abc}(o)$ we can infer $PL_{abc}(o)$, $PL_{abc}(b)$, and $PL_{abc}(a)$. Likewise we can obtain $T(o,b,c)$, $PL_{abc}(o)$, $PL_{abc}(b)$, and $PL_{abc}(c)$. From Modified Axiom 6 and Modified Definition 4 we can conclude $SS(oba,obc)$ or $OS(oba,obc)$. Therefore $SS(a,c,ob)$ or $OS(a,c,ob)$. Since both $abo$ and $cbo$ are right, by the uniqueness of angle constructions we have $L(a,b,c)$. This is a contradiction. Thus $\lnot PL_{abc}(o)$. 
\end{proof}

The following theorem states that given a point which is coplanar with $a$, $b$ and $c$ and $d$, $e$, and $f$ there is a uniform construction for producing another point which is also coplanar with $a$, $b$ and $c$ and $d$, $e$, and $f$. This is our analog for Hilbert's seventh incidence axiom. 

\begin{theorem} \label{planesecondpointofintersection}
	Given $PL_{abc}(x)$ and $PL_{def}(x)$ there is a uniform construction of a point $p_{(abc,def,x)}$ such that $PL_{abc}(p)$, $PL_{def}(p)$ and $p \not= x$ where $p = p_{(abc,def,x)}$. 
\end{theorem}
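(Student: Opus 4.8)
The plan is to use the orthogonal construction to produce a line that is perpendicular to a well-chosen segment inside the first plane, then show this line lies entirely in the second plane as well, giving us infinitely many candidate points from which we pick one distinct from $x$. Concretely, since $PL_{abc}(x)$, the point $x$ is coplanar with $a$, $b$, $c$; by Axiom \ref{coplanarnotcolinear} we have $T(a,b,c)$ and similarly $T(d,e,f)$. Using Axiom \ref{bcoplanar}, Theorem \ref{accoplanar}, and Theorem \ref{coplanartransbothways} I can freely change the generating triple of a plane among any non-colinear coplanar triple; in particular, after renaming, I will first arrange that $x$ plays the role of one of the generators of \emph{both} planes — i.e.\ replace the triples $abc$ and $def$ by triples $xb'c'$ and $xe'f'$ respectively where the new generators are coplanar with the originals. (If $x$ happens to be colinear with two of $a,b,c$ one uses Theorem \ref{ColinearCoplanar} and picks a different coplanar point; the non-colinearity bookkeeping here is routine given Axiom \ref{3noncolinearpoints} and Axiom \ref{uniqueline}.)

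Next I would build the ``common line.'' Pick a point $g$ coplanar with both planes other than $x$ if one is already visible (e.g.\ from the generators); more robustly, set $o_1 = o(b', x, c')$ relative to the first plane, so by Axiom \ref{orthogonatorisright} both $b'xo_1$ and $c'xo_1$ are right, and by Axiom \ref{PerptoOrthogisCoplanar} (with $T(b',x,c')$) we get $PL_{b'xc'}(o_1)$; do the same with $o_2 = o(e',x,f')$ in the second plane. The key claim is then: a point $q$ with $q x o_i$ right and $q$ not forcing colinearity must be coplanar with the $i$-th plane, by Axiom \ref{PerptoOrthogisCoplanar}. So if I can exhibit a single point $q$, distinct from $x$, that is simultaneously perpendicular-at-$x$ to $o_1$ (placing it in plane one) and to $o_2$ (placing it in plane two), I am done after checking $q \neq x$, which follows because $qxo_1$ being a genuine right angle forces $T(q,x,o_1)$ hence $q\neq x$.

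The main obstacle — and the step I expect to require the real work — is producing that common perpendicular point $q$ uniformly, i.e.\ showing $o_1$ and $o_2$ (equivalently, the two planes' normal directions through $x$) admit a common point perpendicular to both at $x$, and doing so with a case-free construction. The natural move is: the segment from $x$ in the direction of $o_1$ and the segment from $x$ toward $o_2$ are two segments through $x$; apply $o(o_1, x, o_2)$ — by Axiom \ref{orthogonatorisright} the resulting point $q$ satisfies $o_1 x q$ right and $o_2 x q$ right, so $PL_{b'xc'}(q)$ and $PL_{e'xf'}(q)$ by two applications of Axiom \ref{PerptoOrthogisCoplanar}, and hence $PL_{abc}(q)$ and $PL_{def}(q)$ by Theorem \ref{coplanartransbothways}. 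Setting $p_{(abc,def,x)} = q$ finishes the proof; the remaining chore is verifying all the $T(\cdot,\cdot,\cdot)$ side conditions needed to invoke \ref{orthogonatorisright} and \ref{PerptoOrthogisCoplanar} (e.g.\ $o_1 \neq x$, $o_2 \neq x$, and $o_1, x, o_2$ not colinear — the last of which may itself split into a degenerate subcase when the two planes coincide or $o_1$ lands on the line $xo_2$, which can be absorbed by choosing a slightly different auxiliary point such as $\mathrm{ext}$ of $xo_1$ rather than $o_1$ itself). I expect handling that potential colinearity of $o_1, x, o_2$ uniformly — rather than by a forbidden case distinction — to be the crux, and the honest fix is to feed the orthogonator points on already-separated rays constructed via $lf$ from the two normals so that non-colinearity is guaranteed by construction.
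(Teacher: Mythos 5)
Your core construction is the same as the paper's: re-generate each plane by a non-colinear triple with vertex $x$, take the normal to each plane at $x$ via the orthogonal construction, and then apply the orthogonator once more to the two normals at $x$ --- the paper's final point is literally $p=o(o(uxv),x,o(sxt))$, which is your $o(o_1,x,o_2)$, and the justifications via Axiom \ref{orthogonatorisright}, Axiom \ref{PerptoOrthogisCoplanar}, and Axiom \ref{coplanartrans}, including the argument that $p\neq x$ because $pxo_1$ is a genuine right angle, match the paper step for step. One remark in your favor: your closing worry about $o_1$, $x$, $o_2$ being colinear is moot, since Axiom \ref{orthogonatorisright} only requires the two outer points to differ from the vertex, not that the input triple form a triangle; no $lf$-based repair is needed there.

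The genuine gap is in the step you dismiss as ``routine bookkeeping.'' Producing, \emph{uniformly and without case distinctions}, a triple $T(x,u,v)$ with $PL_{abc}(u)$ and $PL_{abc}(v)$ is where most of the paper's proof actually lives: your fallback ``if $x$ happens to be colinear with two of $a,b,c$ one \ldots picks a different coplanar point'' is precisely the kind of case split the theorem's statement (a uniform construction) forbids, and simply ``renaming'' the generators does not work when $x$ is colinear with a pair of them or coincides with one. The paper's fix is a single case-free recipe: erect a coplanar point $y$ with $aby$ right and $by\cong ab$, set $u=ext(ba,xy)$, and prove $x\neq u$ by deriving a colinearity contradiction from the isosceles triangles that would arise if $x=u$; then obtain $v$ non-colinear with $x$ and $u$ via the circle-circle intersection construction (cf.\ Theorem \ref{SecondPointCircleIntersection}), with coplanarity propagated by Theorem \ref{ColinearCoplanar}. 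Without some such uniform device your argument does not yet meet the theorem's requirements, even though the orthogonator endgame is exactly right.
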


\begin{proof}
	
	(As was stated above the constructions used will result in certain coplanar relations. We will clearly state this relations.) 
	
	We will being by uniformly constructing two points $u$ and $v$ such that $T(x,u,v)$, $PL_{abc}(u)$ and $PL_{abc}(v)$. Let $y$ be a point uniformly constructed such that $PL_{abc}(y)$, $aby$ is right and $by=ab$. Let $u=ext(ba,xy)$. Note $PL_{abc}(u)$ by Theorem \ref{ColinearCoplanar}. We will show that $x \not= u$. 
	
\begin{figure}[h!]
	\begin{picture}(216,100)
	\put(90,0){\includegraphics[scale=.9]{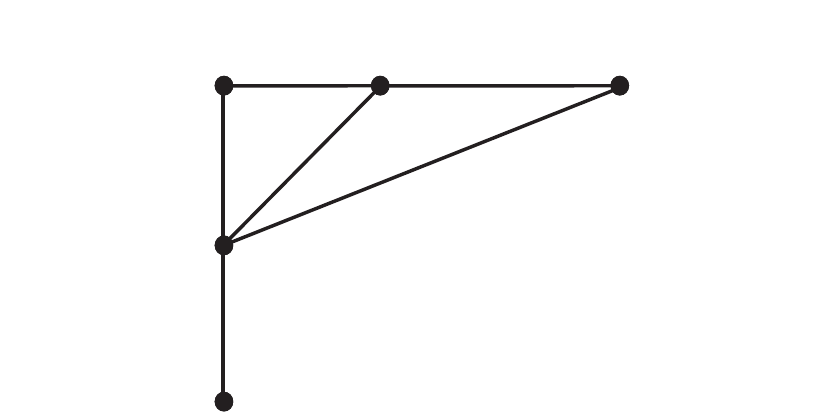}}

	\put(137,40){$y$}
	
	\put(137,1){$h$}
	
	\put(137, 80){$b$}
	
	\put(260,85){$x=u$}

	\put(190,90){$a$}

	\end{picture}
	\caption{}
	\label{x=uproof}
\end{figure}
		
	Suppose $x=u=ext(ba,xy)$.(Reference Figure \ref{x=uproof}.) Note $ax \cong yx$ and $B(b,a,x)$. Since triangle $aby$ is isosceles, $bay \cong bya$. Similarly $xay \cong xya$. Note $yab$ is supplemental to $yax$. Let $h=doub(b,t)$. Note $PL_{abc}(h)$ by Theorem \ref{ColinearCoplanar}. By Axiom \ref{intersectionoppositeorientation} we have $OS(b,h,da)$. We also have $ayb$ supplemental to $ayh$ with $ayb \cong yab$. Thus $ayx \cong ayh$. By the uniqueness of angle constructions we then have $L(t,h,x)$ and thus $L(b,y,x)$. From this we can infer that $L(b,a,y)$. This contradicts how $y$ was constructed. Therefore $x \not= u$ and with $PL_{abc}(u)$. 
	
	Let $v = cci(x, xu, u, ux)$. We can show $PL_{abc}(x)$, $PL_{abc}(u)$, $PL_{abc}(v)$ and $T(x,u,v)$. In a similar fashion we can uniformly construct points $s$ and $t$ such that $PL_{def}(x)$, $PL_{def}(s)$, $PL_{def}(t)$ and $T(x,s,t)$. 
	
	Let $p_{(abc,def,x)}=o(o(uxv), x,o(sxt))$. By Axiom \ref{orthogonatorisright},  $pxo(uxv)$ is right and $pxo(sxt)$ is right. Therefore by Axiom \ref{PerptoOrthogisCoplanar} we have $PL_{uxv}(p)$ and $PL_{sxt}(p)$. Via an application of Axiom \ref{coplanartrans} we can obtain $Pl_{abc}(p)$ and $PL_{def}(p)$. Lastly, since $pxo(uxv)$ is right we know $p \not= x$. 
	
\end{proof}	

The following theorem is our analog to the statement that distinct planes that intersect have a line as their intersection (Proposition 3 of Book XI). 

\begin{theorem} \label{PlanesInterectInLines}
	$PL_{abc}(x) \wedge PL_{def}(x) \wedge PL_{abc}(y) \wedge PL_{def}(y) \wedge \lnot PL_{abc}(d) \rightarrow \widetilde{L}(x,y,p_{(abc,def,x)})$
\end{theorem}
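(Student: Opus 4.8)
The plan is to argue by contradiction, using the two-way coplanar transitivity theorem to show that if $x$, $y$, and $p \coloneqq p_{(abc,def,x)}$ were \emph{not} collinear they would force the two planes to coincide. First I would record what is known about $p$: since $PL_{abc}(x)$ and $PL_{def}(x)$ are among the hypotheses, Theorem \ref{planesecondpointofintersection} applies and gives $PL_{abc}(p)$ and $PL_{def}(p)$ (and $p\neq x$, which will not actually be needed). So $x$, $y$, $p$ are three points each lying in the plane of $a,b,c$ and in the plane of $d,e,f$.

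Now suppose, toward a contradiction, that $\lnot\widetilde{L}(x,y,p)$. Unwinding Definition \ref{nonstrictcolinear}, this says exactly that $x$, $y$, $p$ are pairwise distinct and non-collinear, i.e. $T(x,y,p)$. I would then invoke Theorem \ref{coplanartransbothways} twice with the triple $x,y,p$ in the role of ``$d,e,f$'': with base triple $a,b,c$, the facts $PL_{abc}(x)$, $PL_{abc}(y)$, $PL_{abc}(p)$, $T(x,y,p)$ give $PL_{abc}(w)\leftrightarrow PL_{xyp}(w)$ for every $w$; with base triple $d,e,f$, the facts $PL_{def}(x)$, $PL_{def}(y)$, $PL_{def}(p)$, $T(x,y,p)$ give $PL_{def}(w)\leftrightarrow PL_{xyp}(w)$ for every $w$. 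Chaining the two biconditionals yields $PL_{abc}(w)\leftrightarrow PL_{def}(w)$ for all $w$.

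To close the contradiction I would specialize $w\coloneqq d$. From $PL_{def}(x)$ and Axiom \ref{coplanarnotcolinear} we get $T(d,e,f)$, so Theorem \ref{accoplanar} gives $PL_{def}(d)$; hence $PL_{abc}(d)$, contradicting the hypothesis $\lnot PL_{abc}(d)$. Therefore $\lnot\widetilde{L}(x,y,p)$ is impossible and $\widetilde{L}(x,y,p)$ holds, which is the claim.

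I do not expect a real obstacle here: the construction of $p$ (Theorem \ref{planesecondpointofintersection}) and two-way transitivity of coplanarity (Theorem \ref{coplanartransbothways}) already do all the work. The only points requiring a little care are the routine reduction of $\lnot\widetilde{L}(x,y,p)$ to $T(x,y,p)$ and checking that both applications of Theorem \ref{coplanartransbothways} genuinely have all their side-hypotheses available (the three coplanarity facts plus $T(x,y,p)$ in each case); the hypothesis $\lnot PL_{abc}(d)$ enters only at the final step, precisely to exclude the degenerate case in which the two ``planes'' are the same.
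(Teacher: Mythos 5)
Your proposal is correct and follows essentially the same route as the paper's proof: assume $\lnot\widetilde{L}(x,y,p)$ to get $T(x,y,p)$, use the coplanarity of $x$, $y$, $p$ in both planes together with the transitivity of the coplanar relation to transfer $d$ from the plane of $d,e,f$ to the plane of $a,b,c$, and contradict $\lnot PL_{abc}(d)$. The only cosmetic difference is that you chain two applications of Theorem \ref{coplanartransbothways} into a biconditional, whereas the paper applies Axiom \ref{coplanartrans} once forward and Theorem \ref{coplanartransbothways} once backward; the substance is identical.
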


\begin{proof}
	Let $p = p_{(abc,def,x)}$. Suppose $\lnot \widetilde{L}(x,y,p)$. Then $T(x,y,p)$ (note $PL_{abc}(x)$, $PL_{abc}(y)$ and $PL_{abc}(p)$). Since $PL_{def}(x)$, $PL_{def}(y)$, $PL_{def}(p)$ and $PL_{def}(d)$ we have $PL_{xyp}(d)$ by Axiom \ref{coplanartrans}. Since $PL_{abc}(x)$, $PL_{abc}(y)$, $PL_{abc}(p)$ and $PL_{xyp}(d)$, we have $PL_{abc}(d)$ by Theorem \ref{coplanartransbothways}. This is a contradiction. Thus $\widetilde{L}(x,y,p)$. 
	
\end{proof}

We will need the following two lemmas in order to prove the next theorem. 

\begin{lemma} \label{verticallemma1}
	If $abc$ and $a'b'c'$ are vertical angles [$B(a,b,a')$, $B(c,b,c')$, and $T(a,b,c)$], $T(a,b,x)$ and $T(c,b,x)$, then $Int(x, abc)$, $Int(x,abc')$, $Int(x,a'bc)$, or $Int(x,a'bc')$. 
\end{lemma}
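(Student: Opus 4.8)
The plan is to work with the orientation apparatus of Section~\ref{angleorientationsec} and reduce the claim to a parity argument about which ``side'' $x$ lies on with respect to the two lines $ab$ (equivalently $a'b$) and $cb$ (equivalently $c'b$). First I would record the basic facts: from $B(a,b,a')$ and $T(a,b,c)$ we get $T(a',b,c)$ and, by Axiom~\ref{intersectOO}, $OO(cba,cba')$; likewise from $B(c,b,c')$ we get $T(a,b,c')$ and $OO(abc,abc')$. Since $T(a,b,x)$, the triple $x$ is either on the same side of the line through $a,b$ as $c$ or on the opposite side, i.e.\ $SS(x,c,ab)$ or $OS(x,c,ab)$ (using Theorem~\ref{sideslinethm1}'s companion about $OS$, or just that $\lnot SO(abx,abc)$ gives $OO$ by Modified Axiom~6/Def~4). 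Similarly, since $T(c,b,x)$, either $SS(x,a,cb)$ or $OS(x,a,cb)$.

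Next I would treat the four combinations. In each case I want to identify one of $c$, $c'$ for the ``$cb$-side'' of $x$ and one of $a$, $a'$ for the ``$ab$-side'' of $x$, and then show $x$ is interior to the corresponding angle. Concretely: if $SS(x,c,ab)$ set $p:=a$, and if $OS(x,c,ab)$ set $p:=a'$ — in the latter case $OS(x,c,ab)$ together with $OO(cba,cba')$ and Theorem~\ref{SOOOOO}/Axiom~\ref{OOOOSO} gives $SS(x,c,a'b)$. Symmetrically, if $SS(x,a,cb)$ set $q:=c$ and if $OS(x,a,cb)$ set $q:=c'$, again converting via $OO(abc,abc')$ so that $SS(x,a,c'b)$ holds. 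Then by construction $SS(x,q,pb)$ and $SS(x,p,qb)$, which after using Theorem~\ref{sideslinethm1} part~3 to flip endpoints and the definition of $Int$ yields $Int(x,pbq)$, one of the four stated interiors. I would also need $T(p,b,q)$ in each case, which follows from $T(a,b,c)$ and Axiom~\ref{uniqueline} since $p\in\{a,a'\}$, $q\in\{c,c'\}$ are each colinear with $b$ and the respective original vertex.

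The main obstacle is bookkeeping rather than conceptual: making sure the ``side'' conversions ($OS(x,c,ab)$ and $OO(cba,cba')$ imply $SS(x,c,a'b)$, etc.) are stated with the triples in exactly the order the axioms want, and that the coplanarity hypotheses carried by Modified Axiom~6 / Modified Definition~4 are available (they are, since $T(a,b,x)$, $T(c,b,x)$ and the vertical-angle hypotheses force everything into the plane $PL_{abc}$, via Theorem~\ref{ColinearCoplanar} for the colinear triples $a,b,a'$ and $c,b,c'$). I would also double-check the degenerate possibility that $x$ is colinear with $a,b$ or with $c,b$ — but this is excluded outright by the hypotheses $T(a,b,x)$ and $T(c,b,x)$. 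So the proof is a clean four-case split; I expect no case to present a genuine difficulty once the orientation lemmas of Section~\ref{angleorientationsec} are invoked in the right form.
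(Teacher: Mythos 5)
Your overall strategy is the paper's: split into four cases according to whether $x$ is on the same or opposite side of line $ab$ as $c$, and the same or opposite side of line $cb$ as $a$, then convert each ``opposite'' into a ``same'' by passing to a reflected vertex. But your conversion step contains a genuine error, and it sends you to the wrong angle in two of the four cases. When $OS(x,c,ab)$ holds you set $p:=a'$ and claim that $OS(x,c,ab)$ together with $OO(cba,cba')$ yields $SS(x,c,a'b)$. That implication is false: since $B(a,b,a')$ the points $a$, $b$, $a'$ are colinear, so ``line $ab$'' and ``line $a'b$'' are the same line, and Theorem~\ref{oppositesidethm2} gives $OS(x,c,a'b)$, not $SS(x,c,a'b)$. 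Moreover $OO(cba,cba')$ is a side relation with respect to the line $cb$, so none of the transitivity-style results let you combine it with a side relation taken with respect to $ab$. The correct move is to replace $c$ by $c'$, not $a$ by $a'$: from $B(c,b,c')$ and Axiom~\ref{intersectOO} one gets $OS(c,c',ab)$, and composing this with $OS(x,c,ab)$ via Theorem~\ref{oppositesidethm} part~3 yields $SS(x,c',ab)$.

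Concretely, in the case $OS(x,c,ab)\wedge SS(x,a,cb)$ the conclusion is $Int(x,abc')$ (the paper's Case~2), not $Int(x,a'bc)$ as your recipe produces; symmetrically $SS(x,c,ab)\wedge OS(x,a,cb)$ gives $Int(x,a'bc)$. The rule is: which of $c,c'$ appears as an arm of the angle is decided by which side of $ab$ the point $x$ lies on (because $c$ and $c'$ are separated by that line), and which of $a,a'$ appears is decided by which side of $cb$ it lies on --- you have this pairing reversed. Your fourth case lands on the correct angle $a'bc'$ only because both substitutions happen simultaneously. Once the roles are swapped, the rest of what you describe --- the non-colinearity checks, and the use of Theorem~\ref{sideslinethm1} and Theorem~\ref{sideslinethm2} to pass between the coincident lines $cb$ and $c'b$ (resp.\ $ab$ and $a'b$) --- goes through and coincides with the paper's proof.
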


\begin{proof}
	Note $T(x,b,a)$ and $T(c,b,a)$. Thus either $SS(x,c,ba)$ or $OS(x,c,ba)$. Likewise, since $T(x,b,c)$ and $T(a,b,c)$, either $SS(x,a,bc)$ or $OS(x,a,bc)$. There are then four possibilities: 
	
	Case 1: Suppose $SS(x,c,ba)$ and $SS(x,a,bc)$. By definition we $Int(x,a,bc)$. 
	
	Case 2: Suppose $OS(x,c,ba)$ and $SS(x,a,bc)$. Note $SS(x,a,bc')$ by Theorem \ref{sideslinethm2}. Also note that $OS(c',c,ba)$ by Axiom \ref{intersectOO}. Thus, by Theorem \ref{oppositesidethm} we have $SS(x,c',ba)$. By definition we then have $Int(x, abc')$. 
	
	Case 3: Suppose $SS(x,c,ba)$ and $OS(x,a,bc)$. Using similar methods to the previous case we can show $SS(x,c,ba')$ and $SS(x,a',bc)$ and this infer that $Int(x, a'bc)$. 
	
	Case 4: Suppose $OS(x,c,ba)$ and $OS(x,a,bc)$. Then we have $SS(x,c',ba)$ and $SS(x,a',bc)$ which implies $SS(x,c',ba')$ and $SS(x,a',bc')$. Thus we obtain $Int(x,a'bc')$.

\end{proof}

\begin{lemma}\label{verticallemma2}
	If $abc$ and $a'b'c'$ are vertical angles [$B(a,b,a')$, $B(c,b,c')$, and $T(a,b,c)$], $Int(x,abc)$ and $B(x,b,x')$, then $Int(x',a'bc')$. 
\end{lemma}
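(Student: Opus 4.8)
The plan is to reduce everything to orientation bookkeeping. By the definition of $Int$, the hypothesis $Int(x,abc)$ is literally $SO(cbx,cba)\wedge SO(abx,abc)$, and the desired conclusion $Int(x',a'bc')$ is literally $SO(c'bx',c'ba')\wedge SO(a'bx',a'bc')$. So it suffices to transform $SO(cbx,cba)$ into $SO(c'bx',c'ba')$ and $SO(abx,abc)$ into $SO(a'bx',a'bc')$, i.e.\ to push the three "primes" onto the rays one at a time.

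The single tool I would isolate is a \emph{flip lemma}: if $\alpha,\beta$ are angles at a common vertex $b$, if $\alpha^{*}$ is obtained from $\alpha$ by replacing one of its two outer points $p$ with a point $p^{*}$ satisfying $B(p,b,p^{*})$ (the other outer point unchanged), then $OO(\alpha,\alpha^{*})$ holds; consequently, if $SO(\alpha,\beta)$ then $OO(\alpha^{*},\beta)$, and if $OO(\alpha,\beta)$ then $SO(\alpha^{*},\beta)$. The basic fact $OO(\alpha,\alpha^{*})$ is Axiom \ref{intersectOO} applied with vertex $b$, betweenness $B(p,b,p^{*})$, and the unchanged outer point as the apex of the required triangle (up to the reversal symmetry of Lemma \ref{SOOOreverseboth} when $p$ is written first). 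The two consequences then follow from Theorem \ref{SOOOOO} (to get $OO$ from $SO\wedge OO$) and Axiom \ref{OOOOSO} (to get $SO$ from $OO\wedge OO$), using that $SO$ and $OO$ are symmetric in their two angle-arguments. So each elementary flip of one ray of one angle toggles the relation between $SO$ and $OO$.

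With the flip lemma in hand I would perform four flips on each of the two given relations. From $SO(cbx,cba)$: flip $x\mapsto x'$ in the first angle, giving $OO(cbx',cba)$; flip $c\mapsto c'$ in the first angle, giving $SO(c'bx',cba)$; flip $c\mapsto c'$ in the second angle, giving $OO(c'bx',c'ba)$; flip $a\mapsto a'$ in the second angle, giving $SO(c'bx',c'ba')$. Symmetrically, from $SO(abx,abc)$: flip $x\mapsto x'$, then $a\mapsto a'$ in the first angle, then $a\mapsto a'$ in the second angle, then $c\mapsto c'$ in the second angle, arriving at $SO(a'bx',a'bc')$. An even number of toggles returns us to $SO$ in each case, and conjoining the two outputs is exactly $Int(x',a'bc')$ by the definition of $Int$.

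The main obstacle is purely clerical: each of the eight flips requires its own triangle-nondegeneracy hypothesis for Axiom \ref{intersectOO} — namely that the flipped pair together with the unchanged outer point is a triangle (for instance $T(x,x',c)$, $T(c,c',a)$, $T(a,a',c')$, and their analogues). All of these follow uniformly from $T(a,b,c)$, the betweenness hypotheses $B(a,b,a')$, $B(c,b,c')$, $B(x,b,x')$, the triangle conditions $T(a,b,x)$ and $T(c,b,x)$ extracted from $Int(x,abc)$ via Axiom \ref{SOtriangle}, and repeated use of the uniqueness-of-line consequence of Axiom \ref{uniqueline} (any collapse of one of these triples forces $a,b,c$ to be colinear). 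I would package these into one short preliminary sublemma so the eight flips go through mechanically. As an alternative to the direct argument, Lemma \ref{verticallemma1} already confines $x'$ to the interior of one of $abc$, $abc'$, $a'bc$, $a'bc'$, and one could then use orientation to exclude the other three; but the flip argument above is more self-contained.
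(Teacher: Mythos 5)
Your proposal is correct and is essentially the paper's own argument: the paper likewise reduces $Int(x,abc)$ to side/orientation statements and toggles them one point at a time using Axiom \ref{intersectOO} together with Theorem \ref{SOOOOO} and Axiom \ref{OOOOSO}, tracking the parity of the flips. The only difference is bookkeeping: the paper phrases the chain in terms of $SS$/$OS$ (e.g.\ $SS(x,a,bc)\to OS(x',a,bc)\to SS(x',a',bc)\to SS(x',a',bc')$) and handles the base change $bc\mapsto bc'$ by citing Theorem \ref{sideslinethm2}, whereas you keep every angle anchored at the vertex $b$ and absorb that step into two additional elementary flips, which is marginally more self-contained but not a different method.
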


\begin{proof}
	Since $Int(x, abc)$ we have $SS(x,a,bc)$ and $SS(x,c,ba)$. By methods use in the previous proof we know $SS(x,a,bc)$ implies $OS(x',a,bc)$. Which in turn implies $SS(x',a',bc)$. Which finally implies $SS(x',a',bc')$. Similarly $SS(x,c,ba)$ implies $SS(x',c',ba')$. Thus $Int(x',a'bc')$. 
\end{proof}

We now turn our attention to the interplay between the orthogonal construction and our version of parallel segments. In particular we will be focusing on results related to Propositions 4 through 8  and Proposition 13 for Book XI of \textit{The Elements}. The following theorem has a corollary which is a converse of sorts to Axiom \ref{orthogonatorisright} and is closely related to Proposition 4 from Book XI of Euclid's Elements.

\begin{theorem} \label{Prep2PerpAllPlane}
	$abd \text{ is right} \wedge cbd \text{ is right} \wedge PL_{abc}(x) \wedge x \not= b \rightarrow xbd$ is right 
\end{theorem}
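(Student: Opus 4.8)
The plan is to show that the set of points $d'$ with $d'bd$ being right (together with $d'=b$ or $d' = d$... more precisely, those $d'$ on the line through $b$ perpendicular to segments $ab$ and $cb$) "spans" the plane $abc$, and then use the uniqueness of angle construction. First I would set up notation: assume $abd$ is right and $cbd$ is right, and let $x$ be any point with $PL_{abc}(x)$ and $x \neq b$. Note that since $abd$ is right, $T(a,b,d)$, and since $cbd$ is right, $T(c,b,d)$; in particular $d \neq b$. The key geometric fact I want to exploit is that $o(a,b,c)$ has exactly the defining property that $abo(a,b,c)$ and $cbo(a,b,c)$ are right (Axiom \ref{orthogonatorisright}), so $bd$ and $b\,o(a,b,c)$ are "the same ray direction" — made precise by the uniqueness of angle constructions applied on both sides of the relevant segments. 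So the first real step is to prove $SD(b, d, o(a,b,c))$ (i.e.\ $d$ and $o(a,b,c)$ lie on a common ray from $b$): both $d$ and $o(a,b,c)$ make right angles with $ab$ at $b$, and one checks they are on the same side of $ab$ (handling the opposite-side case by reflecting, via $ato$ and Theorem \ref{conganglessamesidesameray}), and then the uniqueness axiom \ref{atsunique} forces them onto the same ray. Once $SD(b,d,o(a,b,c))$ is established, Axiom \ref{samerayanglecongruence} gives that $xbd$ is right iff $xb\,o(a,b,c)$ is right, so it suffices to prove $xb\,o(a,b,c)$ is right.

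Next I would invoke Axiom \ref{PerptoOrthogisCoplanar} in the contrapositive direction together with Theorem \ref{4noncoplanarpoints}: we know $o(a,b,c)$ is \emph{not} coplanar with $a,b,c$, while $x$ \emph{is}. The heart of the argument is then: for a point $x$ coplanar with $a,b,c$ and distinct from $b$, the segment $bx$ lies in the plane $abc$, and $b\,o(a,b,c)$ is orthogonal to that plane at $b$, hence orthogonal to $bx$. To turn this into a formal proof in the system, I would reduce to the two "axis" cases using that $x$ is coplanar with $a,b,c$: since $T(a,b,c)$, the point $x$ together with $b$ determines a direction in the plane, and I would use the interior-of-angle / same-side machinery (Theorems \ref{betweensideint}, \ref{crossbar}, \ref{SOint}, and the crossbow construction Axiom \ref{crossbow}) to produce a point $w$ with $SD(b,x,w)$ or $B(x,b,w)$ where $w$ lies on segment $ac$ or on an extension making $w$ between $a$ and $c$ after suitable relabeling. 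Then $bw$ makes a right angle with $b\,o(a,b,c)$ by an addition/subtraction-of-angles argument (the results of Hartshorne sections 8–9 that the excerpt imports) applied to the two known right angles $ab\,o(a,b,c)$ and $cb\,o(a,b,c)$, using that $w$ is interior to angle $abc$ (or $a$ interior to $wbc$, etc.). Finally, since $SD(b,x,w)$, Axiom \ref{samerayanglecongruence} transfers "$wb\,o(a,b,c)$ is right" to "$xb\,o(a,b,c)$ is right", and combined with $SD(b,d,o(a,b,c))$ we conclude $xbd$ is right.

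The main obstacle I anticipate is the angle-addition step in the plane: proving that if $b o$ makes right angles with both $ba$ and $bc$ (where $o = o(a,b,c)$), then it makes a right angle with $bw$ for any $w$ interior to angle $abc$. In a Hilbert-style development this is the statement that the foot-of-perpendicular relation is "linear" across an angle, and it is usually proved by case-splitting on whether $w$ is interior to $abc$ versus $a$ interior to $wbc$ versus $c$ interior to $wba$ (Theorem \ref{SOint} gives exactly this trichotomy once we know the relevant triangles are non-degenerate), and in each case adding or subtracting the angle $ab w$ (or $cb w$) to/from one of the known right angles and checking the sum is again right. The bookkeeping of which angles add and which subtract, and verifying the needed same-side hypotheses so that the additivity theorems apply, is where essentially all the work lies; the coplanarity hypothesis is used precisely to guarantee that $w$ (equivalently $x$) lands in the angular region spanned by $ba$ and $bc$ rather than off the plane. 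A secondary, more routine obstacle is the degenerate cases $x$ on line $ab$ or on line $bc$, and $x = d$ or $x$ on line $bd$, which are dispatched directly: e.g.\ if $\widetilde{L}(a,b,x)$ then $SD(b,x,a)$ or $B(x,b,a)$, and Axiom \ref{samerayanglecongruence} together with the supplement-of-a-right-angle-is-right fact finishes it immediately.
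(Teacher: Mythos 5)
Your reduction to a point $w$ in the angular region spanned by $ba$ and $bc$ (via the vertical-angle/interior machinery and the crossbow construction) matches the setup in the paper's proof, and your handling of the degenerate cases $\widetilde{L}(a,b,x)$, $\widetilde{L}(c,b,x)$ is exactly how the paper disposes of them. But the step you yourself identify as the heart of the argument --- concluding that $wbd$ is right by ``adding or subtracting the angle $abw$ (or $cbw$) to/from one of the known right angles'' --- cannot work. The segments $ba$, $bw$, $bd$ are not coplanar: $w$ lies in the plane of $a$, $b$, $c$ while $d$ does not (if it did, the two right angles $abd$ and $cbd$ would force $L(a,b,c)$ by uniqueness of angle constructions, cf.\ the proof of Theorem \ref{4noncoplanarpoints}). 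The angle sum/difference results imported from Hartshorne are planar statements requiring the angles to share a side and lie in a common plane with the appropriate interior relations, so angle $dbw$ is not obtainable from $dba$ and $abw$ by addition. This is precisely why Euclid's Proposition XI.4 --- which is exactly the statement you are trying to prove --- is established by a chain of triangle congruences (laying off $be \cong bc$, joining $ec$, taking $y$ on $ec$ along ray $bx$ and its vertical counterpart, then SAS/SSS several times to get $dy \cong dy'$ and hence $dby$ right) rather than by angle arithmetic. The paper's proof constructs this configuration with $lf$, $doub$, Lemmas \ref{verticallemma1} and \ref{verticallemma2}, and the crossbow construction, and then runs Euclid's XI.4 argument; some such spatial congruence argument is unavoidable here.

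A second, independent problem is your opening reduction from $d$ to $o(a,b,c)$. Establishing $SD(b,d,o(a,b,c))$ (equivalently $\widetilde{L}(b,d,o(abc))$) is the content of the \emph{next} theorem in the paper (the analog of Euclid XI.13), and the paper proves that theorem \emph{from} the present one, so assuming it here is circular unless you supply an independent proof. Your proposed justification --- both points make a right angle with $ab$ at $b$ and are on the same side of $ab$, so uniqueness of angle construction puts them on one ray --- does not go through: in space the locus of points making a right angle with $ab$ at $b$ is a whole plane of rays, and Axiom \ref{atsunique} only pins down a point within a fixed plane because its same-side hypothesis $SS(f,x,de)$ requires (after Modified Axiom 6) that the points be coplanar. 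You would first need $d$ coplanar with $a$, $b$, $o(a,b,c)$, which is again essentially the fact you are trying to prove. The reduction is in any case unnecessary: the paper's proof works directly with $d$ and never mentions $o(a,b,c)$.
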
 

\begin{proof}
	If $\widetilde{L}(b,a,x)$ or $\widetilde{L}(b,c,x)$ then we are done. Suppose $\lnot \widetilde{L}(b,a,x)$ and $\lnot \widetilde{L}(b,c,x)$. Let $e=lf(ba,bc)$. Since $PL_{abc}(x)$ we have $T(a,b,c)$. Thus, $T(e,b,c)$. Let $e'=doub(eb)$ and $c'=doub(cb)$. Then by Lemma \ref{verticallemma1} we have $Int(x,ebc) \vee Int(x, e'bc) \vee Int(x, cbc') \vee Int(x, e'bc')$. Without loss of generality let $Int(x,ebc)$. Also let $x'=doub(xb)$. by Lemma \ref{verticallemma2} $Int(x', e'bc')$. Let $y=cb(x,ebc)$ and $y'=cb(x', e'bc')$. 
	
	To summarize we have $PL_{abc}(c)$,  $PL_{abc}(c')$,  $PL_{abc}(e)$,  $PL_{abc}(e')$,  $PL_{abc}(y)$,  $PL_{abc}(y')$, $PL_{abc}(b)$, $ebc$ is vertical to $e'bc'$, $B(y,b,y')$, $B(c,y,e)$, $B(c',y',e')$, and $ebd$, $cbd$, $e'bd$, $c'bd$ are all right. The proof can then be finished off by slightly modifying Euclid's proof of Proposition 4 from Book XI.  
\end{proof}
 From the previous theorem we obtain the following corollary. 

\begin{corollary} \label{OrthogonalPerpAllPlane}
	$PL_{abc}(x) \wedge x \not= b \rightarrow xbo(abc) \text{ is right}$
\end{corollary}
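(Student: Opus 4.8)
The plan is to deduce this corollary directly from Theorem \ref{Prep2PerpAllPlane} by instantiating the point $d$ there as $o(a,b,c)$. To do that I must produce the two ``right angle'' hypotheses of that theorem, namely that $abo(a,b,c)$ is right and $cbo(a,b,c)$ is right; these are exactly the conclusion of Axiom \ref{orthogonatorisright}, provided its non-degeneracy hypotheses $a \neq b$ and $c \neq b$ hold.

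First I would observe that from $PL_{abc}(x)$ and Axiom \ref{coplanarnotcolinear} we obtain $T(a,b,c)$, so that $a$, $b$, and $c$ are pairwise distinct; in particular $a \neq b$ and $c \neq b$. Hence Axiom \ref{orthogonatorisright} applies and gives that both $abo(a,b,c)$ and $cbo(a,b,c)$ are right angles. Now, together with the given hypotheses $PL_{abc}(x)$ and $x \neq b$, all four premises of Theorem \ref{Prep2PerpAllPlane} with $d = o(a,b,c)$ are satisfied, and its conclusion yields that $xbo(a,b,c)$, i.e.\ $xbo(abc)$, is right, which is precisely the claim.

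Since every step is a direct appeal either to an axiom or to the immediately preceding theorem, there is essentially no obstacle; the only point deserving a moment's care is the bookkeeping that the conditions $a \neq b$ and $c \neq b$ required to invoke Axiom \ref{orthogonatorisright} are furnished by the coplanarity hypothesis through $T(a,b,c)$.
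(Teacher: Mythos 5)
Your proof is correct and is exactly the argument the paper intends: the corollary is obtained by specializing Theorem \ref{Prep2PerpAllPlane} to $d = o(a,b,c)$, with the two right-angle hypotheses supplied by Axiom \ref{orthogonatorisright} (whose non-degeneracy conditions follow from $T(a,b,c)$ via Axiom \ref{coplanarnotcolinear}). The paper leaves these details implicit, merely saying the corollary follows from the previous theorem, so your write-up simply makes the same derivation explicit.
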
 

Euclid's Proposition 5 from Book XI is captured by our Axiom \ref{PerptoOrthogisCoplanar}. The next result is analogous to Proposition 13 in Book XI.  (It can be noted that Euclid should have proved Proposition 13 before Proposition 6.)

\begin{theorem}
	$T(a,b,c) \wedge abd \text{ is right} \wedge cbd \text{ is right} \rightarrow \widetilde{L}(b,d,o(abc))$ 
\end{theorem}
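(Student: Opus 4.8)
The plan is to show that if $abd$ and $cbd$ are both right angles with $T(a,b,c)$, then $d$ lies on the line through $b$ and $o(abc)$ — in the non-strict sense $\widetilde{L}(b,d,o(abc))$. The natural strategy is a uniqueness argument about perpendiculars: Corollary \ref{OrthogonalPerpAllPlane} already tells us that every point $x$ coplanar with $a$, $b$, $c$ (and distinct from $b$) satisfies ``$xbo(abc)$ is right,'' so the segment $b\,o(abc)$ is perpendicular to \emph{every} direction in the plane of $abc$. On the other hand, the hypothesis says $bd$ is perpendicular to the two directions $ba$ and $bc$. If $d$ were \emph{not} on the line $b\,o(abc)$, then $d$ would be off that line, and I want to derive a contradiction by producing two distinct lines through $b$ both perpendicular to the plane of $abc$ — or more precisely, a single plane containing $b$, $d$, and $o(abc)$ in which two distinct rays from $b$ are both perpendicular to a common segment, violating the uniqueness of angle construction (via Theorem \ref{conganglessamesidesameray} or Axiom \ref{atsunique}).

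Concretely, first I would dispose of the degenerate situations: if $\widetilde{L}(b,d,o(abc))$ already holds we are done, so assume $T(b,d,o(abc))$, which in particular forces $d \not= b$ and $o(abc) \not= b$. By Axiom \ref{PerptoOrthogisCoplanar} applied to the hypothesis ($abd$, $cbd$ right and $T(a,b,c)$), we get $PL_{abc}(d)$. Then by Corollary \ref{OrthogonalPerpAllPlane}, since $PL_{abc}(d)$ and $d \not= b$, the angle $dbo(abc)$ is right. Now I have a triangle $T(b,d,o(abc))$ in which both legs at... no — the cleaner route: consider instead the point $o = o(abc)$ and apply Theorem \ref{Prep2PerpAllPlane} in a transferred plane. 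Since $dbo$ is right and, say, $abo$ is right (this is Corollary \ref{OrthogonalPerpAllPlane} with $x=a$), and these two right angles share the ray $bo$, if I can show $T(a,b,d)$ then by the uniqueness of angle construction on the two sides of line $bo$ the rays $ba$ and $bd$ are forced into a rigid position; combining with the third perpendicular $cbo$ and the coplanarity $PL_{abc}(a,b,c,d)$ should collapse the configuration. The precise contradiction I expect: $b$, $d$, $o(abc)$ would be coplanar (they form a triangle, using Axiom \ref{bcoplanar} and Theorem \ref{accoplanar} in the plane $PL_{bdo}$), but also $PL_{abc}(d)$ and $PL_{abc}(o(abc))$ would fail — indeed Theorem \ref{4noncoplanarpoints} says $\lnot PL_{abc}(o(abc))$, yet we would be building $o(abc)$ into the plane of $abc$ through the colinear/coplanar transfer axioms.

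The main obstacle I anticipate is making the ``two perpendiculars to $bo$ at $b$, on the same side, must coincide'' step rigorous in the quantifier-free language: this requires identifying the correct plane in which to run the uniqueness-of-angle-transport argument (the axioms \ref{atsunique} and Theorem \ref{conganglessamesidesameray} are stated for the planar setting, so after the Modified Axiom 6 / Modified Definition 4 all side and interior relations carry coplanarity hypotheses), and then checking that $d$ and $a$ (or $d$ and some auxiliary point such as $lf(ba,bc)$) are on a controllable side of $bo$. I would handle this by first fixing the plane $\pi$ through $a$, $b$, $o(abc)$, noting $PL_{abc}(o(abc))$ is \emph{false} so this is genuinely a different plane from the plane of $abc$, restricting attention to $\pi$, and there invoking that both $ba$ and the projection of $d$ give right angles with $bo$ hence lie on the same line — the expected payoff being that $d$ is colinear with $b$ and $o(abc)$ after all, contradicting $T(b,d,o(abc))$. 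A slicker alternative, which I would try first, is simply: by Corollary \ref{OrthogonalPerpAllPlane}, \emph{every} point of the plane of $abc$ makes a right angle at $b$ with $bo(abc)$, so $bo(abc)$ plays the role of ``$o$'' for the pair $(a,c)$; but $bd$ is \emph{also} perpendicular to both $ba$ and $bc$ by hypothesis, so $bd$ plays that same role; since the direction perpendicular to two independent plane directions at a point is unique up to the line (this is exactly what Euclid's Book XI Prop.\ 13 asserts, and what Theorem \ref{Prep2PerpAllPlane} encodes), $\widetilde{L}(b,d,o(abc))$ follows — and I would extract the uniqueness from Theorem \ref{Prep2PerpAllPlane} applied with $d$ in place of the arbitrary coplanar point, reading off that $d$ cannot be off the line $b\,o(abc)$ without making some coplanar point fail to have a right angle there.
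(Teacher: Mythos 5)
There is a genuine gap here, and it starts at your first concrete step. Axiom \ref{PerptoOrthogisCoplanar} has the form ``$T(a,b,c)$ and $dbo(a,b,c)$ is right $\rightarrow PL_{abc}(d)$''; the hypotheses of the theorem give you $abd$ and $cbd$ right, not $dbo(abc)$ right, so the axiom does not apply and you cannot conclude $PL_{abc}(d)$. Worse, $PL_{abc}(d)$ is actually \emph{false} under the hypotheses (except in degenerate cases): if $d$ were coplanar with $a,b,c$ with $abd$ and $cbd$ both right, the uniqueness of perpendiculars within that plane would force $L(a,b,c)$, contradicting $T(a,b,c)$ --- this is exactly the argument of Theorem \ref{4noncoplanarpoints}. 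Everything you build on ``$dbo(abc)$ is right'' therefore collapses. Your ``slicker alternative'' is circular: the claim that ``the direction perpendicular to two independent plane directions at a point is unique up to the line'' \emph{is} the statement being proved (the paper explicitly identifies this theorem as the analog of Euclid XI.13). Theorem \ref{Prep2PerpAllPlane} does not encode that uniqueness; it only says that a segment perpendicular to two directions of a plane at $b$ is perpendicular to every direction of that plane at $b$, which by itself says nothing about colinearity with $b\,o(abc)$.

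The missing idea is the choice of auxiliary point. Assuming $T(b,d,o)$ for contradiction (with $o = o(abc)$), the paper constructs $p = p_{(abc,\,obd,\,b)}$ via Theorem \ref{planesecondpointofintersection}: a point distinct from $b$ lying in \emph{both} the plane of $a,b,c$ and the plane of $o,b,d$. Because $PL_{abc}(p)$, Theorem \ref{Prep2PerpAllPlane} (applied to the hypotheses $abd$, $cbd$ right) gives $pbd$ right, and Corollary \ref{OrthogonalPerpAllPlane} gives $pbo$ right. Now $b$, $d$, $o$, $p$ all lie in the plane $obd$, and within a single plane two distinct rays $bd$ and $bo$ from $b$ cannot both be perpendicular to $bp$ unless $\widetilde{L}(b,d,o)$ --- contradicting $T(b,d,o)$. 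You gesture at ``identifying the correct plane,'' but the plane you propose ($a$, $b$, $o(abc)$) is the wrong one, since $d$ need not lie in it; the argument has to run in the plane $bdo$, and the whole point of the construction $p_{(abc,obd,b)}$ is to produce a common direction of the two planes on which both perpendicularity facts can be brought to bear simultaneously.
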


\begin{proof}
	Let $o=o(abc)$ and suppose $\lnot \widetilde{L}(b,d,o) \equiv T(b,d,o)$. Let $p = p_{(abc,obd, b)}$. Since $PL_{abc}(p)$, Theorem \ref{Prep2PerpAllPlane} implies $pbd$ is right.  Likewise by Corollary \ref{OrthogonalPerpAllPlane} we have $pbo$ is right. Since $PL_{obd}(b)$, $PL_{obd}(d)$, $PL{obd}(o)$, and $PL_{obd}(p)$ with  $pbd$ is right and $pbo$ is right  and $\lnot \widetilde{L}(b,d,o)$ we have a contradiction. Thus $\widetilde{L}(b,d,o(abc))$.
\end{proof}

The following Theorem is our analog to Proposition 6 from Book XI. This proof is an analogous version of Euclid's proof of Proposition 6, but the details of the proof are provided given there is enough of a distinction between our proof and Euclid's. 

\begin{theorem} \label{OrthogonalsPara}
	$PL_{abc}(d) \wedge PL_{abc}(e) \wedge PL_{abc}(f) \wedge T(d,e,f) \rightarrow bo(abc) \parallel eo(def)$
\end{theorem}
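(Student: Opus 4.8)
The plan is to adapt Euclid's proof of Proposition~6 of Book~XI, with the orthogonal construction playing the role of ``perpendicular to a plane.'' Write $o := o(abc)$ and $o' := o(def)$. Since $d,e,f$ are coplanar with $a,b,c$ and $T(d,e,f)$, Theorem~\ref{coplanartransbothways} shows that $PL_{abc}$ and $PL_{def}$ name the same plane; in particular $PL_{def}(b)$ and $PL_{abc}(e)$. I would work under the tacit hypothesis $b \neq e$ (if $b=e$ the two segments are colinear and $\parallel$ genuinely fails, as one checks from the ``$\widetilde{L}(b,d,o(abc))$'' theorem proved just above). Then Corollary~\ref{OrthogonalPerpAllPlane} gives that $ebo$ is right (from $PL_{abc}(e)$, $e\neq b$) and that $beo'$ is right (from $PL_{def}(b)$, $b\neq e$).

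Next I would reconstruct Euclid's auxiliary point: erect a perpendicular to $be$ at $e$ (on the side of whichever of $a,c$ is off the line $be$ --- one of them is, since $T(a,b,c)$) and lay off along it a segment of length $bo$, obtaining a point $g$ in the plane $abc$ with $geb$ right and $eg\cong bo$. Side-Angle-Side on triangles $obe$ and $geb$ ($ob\cong ge$, the included right angles, $be\cong eb$) gives $oe\cong gb$; Side-Side-Side on triangles $obg$ and $geo$ ($ob\cong ge$, $bg\cong eo$, $og\cong go$) gives that $obg$ is congruent to $geo$. But $obg$ is right, since $g$ lies in the plane $abc$ and $g\neq b$ (Corollary~\ref{OrthogonalPerpAllPlane}), so $geo$ is right. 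Combined with $geb$ right (by construction) and $geo'$ right (since $g$ lies in the plane $def$ and $g\neq e$, again Corollary~\ref{OrthogonalPerpAllPlane}), this exhibits the segment $eg$ as perpendicular at $e$ to each of $eo$, $eb$, and $eo'$.

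The crucial step --- Euclid's use of XI.5 --- is to promote this to coplanarity of $o'$ with $o,b,e$. From $oeg$ right, $beg$ right, and $T(o,e,b)$, the theorem ``$T(a,b,c)\wedge abd\text{ right}\wedge cbd\text{ right}\rightarrow\widetilde{L}(b,d,o(a,b,c))$'' applied to the triple $o,e,b$ gives $\widetilde{L}(e,g,o(o,e,b))$; hence $eg$ and the segment from $e$ to $o(o,e,b)$ lie on a single line through $e$, and since $o'eg$ is right, $o'e\,o(o,e,b)$ is right too (by Axiom~\ref{samerayanglecongruence} if $g$ and $o(o,e,b)$ lie on the same ray from $e$, or as a supplement of a right angle if $e$ separates them). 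Axiom~\ref{PerptoOrthogisCoplanar}, applied with the triple $o,e,b$, then yields $PL_{oeb}(o')$. Now $o,b,e,o'$ all lie in one plane, so the modified planar theory applies to them and the same-side/opposite-side relations used below are well-formed.

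It remains to read off parallelism inside the plane $oeb$ from the right angles $ebo$ and $beo'$ at the ends of the transversal $be$. If $OS(o,o',be)$, the Alternate Interior Angle Theorem applies directly, the alternate interior angles $ebo$ and $beo'$ being congruent (both right). If instead $SS(o,o',be)$, I would replace $o'$ by $o'':=doub(o',e)$: then $beo''$ is right (supplement of a right angle), $OS(o,o'',be)$ by Axiom~\ref{intersectOO} and Theorem~\ref{SOOOOO}, so the Alternate Interior Angle Theorem gives $bo\parallel eo''$, and since $o',e,o''$ are colinear, Theorem~\ref{parallelsegthm2} returns $bo\parallel eo'$. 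The main obstacle is the coplanarity step of the third paragraph: keeping the ``perpendicular to the orthogonal construction'' bookkeeping straight and disposing of the small ray-versus-supplement case split so that Axiom~\ref{PerptoOrthogisCoplanar} can be invoked; the remaining work is routine triangle congruence and side-of-a-line reasoning of the kind already developed in Sections~4--8.
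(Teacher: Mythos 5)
Your proposal follows essentially the same route as the paper's proof: the same Euclid XI.6 adaptation with the auxiliary point $g$ perpendicular to $be$ at $e$ with $eg \cong bo$, the same Side-Angle-Side then Side-Side-Side chain to show $geo$ is right, the same use of the Proposition-13 analog together with Axiom \ref{PerptoOrthogisCoplanar} to place $o$, $b$, $e$, $o'$ in one plane (the paper shows $PL_{beo'}(o)$ where you show $PL_{oeb}(o')$, a symmetric variant), and the same Alternate Interior Angle Theorem finish. You are in fact slightly more careful than the paper in two places it glosses over: the degenerate case $b=e$ (where the conclusion as literally stated fails) and the $OS(o,o',be)$ hypothesis needed to invoke the Alternate Interior Angle Theorem, which you handle by doubling $o'$ through $e$ when necessary.
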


\begin{figure}[h!]
	\begin{picture}(216,120)
	\put(60,0){\includegraphics[scale=.9]{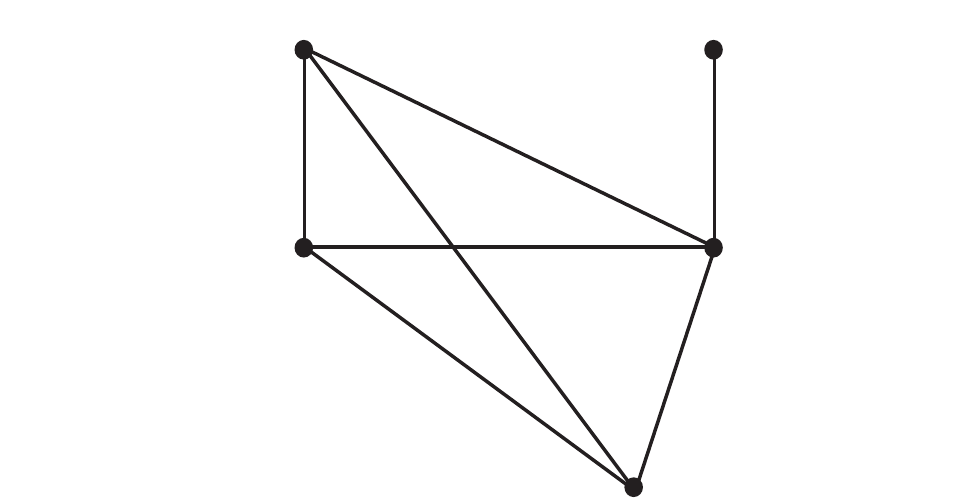}}
	
	\put(130,55){$b$}
	
	\put(130, 117){$o$}

	\put(230,1){$g$}

	\put(250,117){$o'$}

	\put(250,60){$e$}

	\end{picture}
	\caption{}
	\label{Prop6}
\end{figure}

\begin{proof}
	(Reference Figure \ref{Prop6}.) Let $g$ be the point uniformly constructed so that $beg$ is right, $eg \cong ab$, and $PL_{abc}(g)$. Let $o(abc)$ be $o$ and let $o(def)$ be $o'$. By Corollary \ref{OrthogonalPerpAllPlane} $gbo$ and $ebo$ are right. By Axiom \ref{coplanartrans} $PL_{def}(b)$ and $PL_{def}(g)$. Thus $beo'$ and $geo'$ are right. Since $bo \cong eg$, $eb \cong be$, and $obe \cong geb$, an application of Side-Angle-Side implies triangle $ebo$ is congruent to triangle $beg$. We can then infer $eo \cong bg$. Since $bg \cong eo$, $ob \cong ge$, and $go \cong og$, and application of Side-Side-Side implies triangle $obg$ is congruent to triangle $geo$. We can then infer $obg \cong geo$. Therefore $geo$ is right. By the previous theorem, we have $\widetilde{L}(e,g,o(beo'))$. Since $oeg$ is right, we have $oeo(beo')$ is right. Therefore $PL_{beo'}(o)$. By the Alternate Interior Angle Theorem [Theorem \ref{AIAT}], we have $bo \parallel bo'$. 
\end{proof}

In Book XI Euclid uses Proposition 7 to prove Proposition 8. The analog of Proposition 7 which is needed for us to prove out analog of Proposition 8 is the observation we made about Definition 18 when discussing modifications earlier in this section. This observation was the following implication: $ab \parallel cd \rightarrow PL_{abc}(d)$. The next Theorem is our analog to Proposition 8. 

\begin{theorem} \label{ParatoPlaneimpliesRight}
	$PL_{abc}(d) \wedge bo(abc) \parallel de \rightarrow [PL_{abc}(x) \rightarrow xde \text{ is right}]$
\end{theorem}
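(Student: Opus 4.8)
The plan is to reduce the statement to a situation where Theorem \ref{Prep2PerpAllPlane} (two perpendiculars at $b$ to $bd$ force everything coplanar to be perpendicular to $bd$) can be applied, and then transport that perpendicularity across the parallel segment using the Alternate Interior Angle machinery. Concretely: assume $PL_{abc}(d)$, $bo(abc) \parallel de$, and $PL_{abc}(x)$; we must show $xde$ is right. Write $o = o(abc)$. First I would record the coplanarity facts we have available: from $bo \parallel de$ and the Observation about Definition 18, $PL_{bod}(e)$, hence $d$, $e$, $b$, $o$ are all coplanar with each other (using Axiom \ref{coplanartrans} and the permutation theorem \ref{COPlanarPermute}); and from $PL_{abc}(d)$ together with the fact that $o$ and $b$ are coplanar with $a,b,c$ (Corollary \ref{OrthogonalPerpAllPlane}'s setup, Axiom \ref{bcoplanar}, Theorem \ref{accoplanar}), the planes spanned by $\{a,b,c\}$ and by $\{b,o,d\}$ actually coincide in the sense that $PL_{abc}(x) \leftrightarrow PL_{bod}(x)$ via Theorem \ref{coplanartransbothways} (one needs $T(b,o,d)$, which holds because $o$ is not coplanar with $a,b,c$ by Theorem \ref{4noncoplanarpoints} while $d$ is, so $o,b,d$ are non-colinear). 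So the real content is: $bo \parallel de$, $x$ coplanar with $b, o, d$, and we want $xde$ right.

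Next I would get the two perpendiculars at $d$. Since $bo \parallel de$ and $b,o,d$ are positioned as above, and $dbo$ is right (because $o = o(abc)$ and $b$ is coplanar with $a,b,c$ — here one must check the orthogonal construction gives $obd$ right; $bo \perp$ the plane $abc$ which contains $d$, so $obd$ is right by Corollary \ref{OrthogonalPerpAllPlane}), the Converse of the Alternate Interior Angle Theorem applied to the (non-flat) parallelogram on $b, o, d$ and the fourth parallelogram vertex yields that the angle $ode$ is right — i.e. $de$ makes a right angle with $do$. More carefully: form the parallelogram with sides $bo$ and $bd$, i.e. the point $doub(o, mid(b, \cdot))$-style fourth vertex, use Theorem \ref{oppositesidesanglesnonflatparallelogram} that opposite angles are congruent, so the right angle at $b$ (angle $obd$) is congruent to the opposite angle, giving $ed$ perpendicular to $do$; simultaneously $ed \parallel ob$ so $ed \perp db$ as well. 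Thus at the point $d$ we have two distinct lines through $d$ — namely $do$ and $db$ — each perpendicular to $de$. Equivalently, $ebd$ is right and $ebo$... no: I want the perpendiculars to sit at $d$ with common ray $de$. Restating: $deb$ is right and $deo$ is right, with $b \neq o$ and $b, o, x$ all coplanar with the plane through $d, e, b$ (that plane is again the plane $abc$).

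Now Theorem \ref{Prep2PerpAllPlane} applies with the roles $a \leadsto b$, $c \leadsto o$, $b \leadsto d$, $d \leadsto e$: from "$deb$ is right $\wedge$ $deo$ is right $\wedge$ $PL_{bod}(x) \wedge x \neq d$" we conclude $xde$ is right. The only loose end is the degenerate case $x = d$, where "$xde$" is not an angle; as elsewhere in the paper this instance is simply outside the scope (one may note $\widetilde{L}(d,d,\cdot)$ holds trivially), so we may assume $x \neq d$. The main obstacle I expect is bookkeeping the coplanarity hypotheses rigorously — specifically establishing $T(b, o, d)$ and the plane-coincidence $PL_{abc}(x) \leftrightarrow PL_{bod}(x)$ so that Theorem \ref{Prep2PerpAllPlane} is legitimately applicable — together with carefully deriving the two right angles at $d$ from $bo \parallel de$ via the parallelogram/AIAT argument rather than by a naive "alternate angles" appeal, since our parallel relation is about segments and behaves differently from classical parallel lines (as the paper has already emphasized).
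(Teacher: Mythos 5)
Your overall plan (produce two right angles with common ray $de$ at the vertex $d$, then invoke Theorem \ref{Prep2PerpAllPlane}) is the right shape, and your first right angle, $bde$, is obtained essentially as in the paper (from $dbo$ right and $bo \parallel de$ via the Observation about Definition 18 and the Converse of the Alternate Interior Angle Theorem). But the proof breaks at the point where you choose $do$ as the second perpendicular and identify the plane through $b,o,d$ with the plane through $a,b,c$. Two things go wrong. First, $ode$ is not right: $o=o(abc)$ sits off the plane directly ``above'' $b$, so the ray from $d$ to $o$ is the hypotenuse of the right triangle $obd$, not a normal direction, and it is not perpendicular to $de$ (the parallelogram on $bo$ and $bd$ gives a right angle at its \emph{fourth vertex} $doub(o,mid(b,d))$-style point on the ray $de$, not at $o$ itself — you have conflated that fourth vertex with $o$). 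Second, and independently fatal, the claimed equivalence $PL_{abc}(x)\leftrightarrow PL_{bod}(x)$ is false, and your own justification cannot go through: Theorem \ref{coplanartransbothways} requires all three of the new base points to be coplanar with $a,b,c$, whereas $\lnot PL_{abc}(o(abc))$ by Theorem \ref{4noncoplanarpoints}. The plane determined by $b,o,d$ contains the normal $bo$ and meets the plane $abc$ only along the line through $b$ and $d$, so a generic $x$ with $PL_{abc}(x)$ does not satisfy $PL_{bod}(x)$, and Theorem \ref{Prep2PerpAllPlane} cannot be applied with base $b,d,o$.

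The repair is exactly the extra step in the paper's proof that your proposal is missing: one must manufacture a second perpendicular to $de$ at $d$ whose far endpoint lies \emph{in} the plane $abc$. The paper constructs $f$ with $PL_{abc}(f)$, $bdf$ right and $df\cong bo$, and then runs the Euclid XI.8-style congruence argument (Side-Angle-Side on triangles $fdb$ and $obf$ to get $do\cong bf$, then Side-Side-Side to get $fdo\cong fbo$, hence $fdo$ right, and finally Theorem \ref{Prep2PerpAllPlane} in the plane $dbo$ to get $fde$ right). Only then are both feet $b$ and $f$ of the perpendiculars at $d$ coplanar with $a,b,c$, so that $PL_{abc}(x)$ yields $PL_{bdf}(x)$ by Axiom \ref{coplanartrans} and Theorem \ref{Prep2PerpAllPlane} delivers $xde$ right. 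Without some such in-plane auxiliary point, the argument cannot close.
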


\begin{figure}[h!]
	\begin{picture}(216,120)
	\put(60,0){\includegraphics[scale=.9]{Prop6.pdf}}
	
	\put(130,55){$b$}
	
	\put(130, 117){$o$}

	\put(230,1){$f$}

	\put(250,117){$e$}

	\put(250,60){$d$}

	\end{picture}
	\caption{}
	\label{Prop8}
\end{figure}

\begin{proof}
	(Reference Figure \ref{Prop8}.) Let $o = o(abc)$. Since $PL_{abc}(d)$, by Corollary \ref{OrthogonalPerpAllPlane} we have $dbo$ is right. By the observation made about Definition 18 (discussed before the statement of the theorem) and the Converse of the Alternate Interior Angle Theorem [Theorem \ref{converseAIAT}] we can infer $bde$ is right also. 
	
	Similar to the construction used in the proof of the previous theorem, construct a point $f$ such that $bdf$ is right, $df \cong bo$, and $PL_{abc}(f)$. Note $fbo$ is right. Using similar reasoning as in the proof of the previous theorem, we can use Side-Angle-Side to conclude triangle $fdb$ is congruent to triangle $obf$, thus obtaining $do \cong bf$. We can then use Side-Side-Side to conclude triangle $obf$ is congruent to triangle $fdo$, thus obtaining $fbo \cong fdo$. Therefore $fdo$ is right. By the observation made about Definition 18 we know $PL_{dbo}(e)$. Therefore by Theorem \ref{Prep2PerpAllPlane} we have $fde$ is right. 
	
	Since $PL_{abc}(b)$, $PL{abc}(f)$, $PL_{abc}(d)$ and $T(b,f,d)$ by Axiom \ref{coplanartrans} we have $PL_{bdf}(x)$. Since $bde$ and $fde$ are right, by Theorem \ref{Prep2PerpAllPlane} we have $xde$ is right. 
\end{proof}

Lastly we have a theorem which states something analogous to the idea that if two planes have parallel normals and share a point in common then they are the same plane. 

\begin{theorem}
	$bo(abc) \parallel eo(def) \wedge PL_{abc}(x) \wedge PL_{def}(x) \rightarrow [PL_{abc}(y) \rightarrow PL_{def}(y)]$
\end{theorem}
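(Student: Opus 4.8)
The plan is to show that the planes determined by $a,b,c$ and by $d,e,f$ actually coincide, so that $PL_{abc}$ and $PL_{def}$ hold of exactly the same points. Write $P_1$ for the plane of $a,b,c$ and $P_2$ for that of $d,e,f$; by Axiom~\ref{coplanarnotcolinear} both $T(a,b,c)$ and $T(d,e,f)$ hold, and by Axiom~\ref{bcoplanar} and Theorem~\ref{accoplanar} each triple lies in its own plane. The geometric content is that $bo(abc)$ and $eo(def)$ are normals to $P_1$ and $P_2$, so $bo(abc)\parallel eo(def)$ says the two planes share a normal direction; since they also share $x$, they must be equal. The device for turning this into a proof is to erect the normal to each plane \emph{at the common point} $x$ and show the two segments so obtained are colinear, after which Corollary~\ref{OrthogonalPerpAllPlane} and Axiom~\ref{PerptoOrthogisCoplanar} let us move between ``perpendicular to that segment at $x$'' and ``coplanar''.

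In the principal case $x\notin\{b,e\}$ I would first pick $p,q$ among $a,b,c$ with $T(p,x,q)$ and $p',q'$ among $d,e,f$ with $T(p',x,q')$; such choices exist because $a,b,c$ (resp.\ $d,e,f$) are not colinear, so $x$ cannot be colinear with two of the three pairs simultaneously. Set $o_1=o(p,x,q)$ and $o_2=o(p',x,q')$. By Theorem~\ref{coplanartransbothways} the plane of $p,x,q$ is $P_1$ and that of $p',x,q'$ is $P_2$, and by Theorem~\ref{OrthogonalsPara} (applicable since $x\neq b$ and $x\neq e$) we get $bo(abc)\parallel xo_1$ and $eo(def)\parallel xo_2$. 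Combining $bo(abc)\parallel eo(def)$ with $eo(def)\parallel xo_2$ via symmetry and transitivity of $\parallel$ (Theorem~\ref{parallelsegthm}, parts~3 and~5) yields $bo(abc)\parallel xo_2$; the non-degeneracy hypothesis $T(b,o(abc),x)$ needed for part~5 holds because $b,x\in P_1$ while $o(abc)\notin P_1$ by Theorem~\ref{4noncoplanarpoints}, so by Theorem~\ref{ColinearCoplanar} those three points are not colinear. Now $xo_1$ and $xo_2$ are segments out of the common point $x$, each parallel to $bo(abc)$; unfolding the definition of $\parallel$, both $o_1$ and $o_2$ are non-strictly colinear with $x$ and the single fixed point $doub(o(abc),mid(b,x))$, so Axiom~\ref{uniqueline} forces $\widetilde{L}(x,o_1,o_2)$.

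With $\widetilde{L}(x,o_1,o_2)$ in hand, take arbitrary $y$ with $PL_{abc}(y)$. If $y=x$ we are done. Otherwise $yxo_1$ is right by Corollary~\ref{OrthogonalPerpAllPlane} applied in the plane of $p,x,q$. Since $o_1\neq x$ and $o_2\neq x$ (each is an output of the orthogonal construction with vertex $x$), the relation $\widetilde{L}(x,o_1,o_2)$ means $o_2$ lies on the same ray from $x$ as $o_1$, on the opposite ray, or equals $o_1$; in each case $yxo_2$ is congruent to $yxo_1$ (Axiom~\ref{samerayanglecongruence}) or is its supplement, hence is right as well (a supplement of a right angle is right). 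Then Axiom~\ref{PerptoOrthogisCoplanar} in the plane of $p',x,q'$ gives $PL_{p'xq'}(y)$, i.e.\ $PL_{def}(y)$, as required.

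Finally the two exceptional cases. If $x=e$ then $PL_{abc}(e)$ holds (it is $PL_{abc}(x)$), so I apply Theorem~\ref{ParatoPlaneimpliesRight} to plane $abc$ with the normal $eo(def)=xo(def)$: the hypothesis $bo(abc)\parallel eo(def)$ is exactly what is needed, and we conclude $yeo(def)$ is right for every $PL_{abc}(y)$, whence $PL_{def}(y)$ by Axiom~\ref{PerptoOrthogisCoplanar}. If $x=b$ then $b\in P_2$, so I apply Theorem~\ref{ParatoPlaneimpliesRight} to plane $def$ with the normal $bo(abc)$ (using symmetry of $\parallel$), obtaining that $zbo(abc)$ is right for $z=d,e,f$, hence $d,e,f\in P_1$ by Axiom~\ref{PerptoOrthogisCoplanar}, and then $PL_{abc}(y)\leftrightarrow PL_{def}(y)$ by Theorem~\ref{coplanartransbothways}. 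I expect the main obstacle to be precisely the bookkeeping around these degeneracies: $\parallel$ is vacuously false as soon as a triple collapses, and Theorem~\ref{OrthogonalsPara} tacitly requires the middle point of its sub-triple to differ from $b$, so one must be disciplined about when the normal-at-$x$ segments are genuinely constructed versus when a normal already passing through $x$ is available for free.
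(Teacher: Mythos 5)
Your proposal is correct and follows essentially the same route as the paper's proof: replace the triples by sub-triples having $x$ as vertex, use Theorem~\ref{OrthogonalsPara} to get both normals at $x$ parallel to $bo(abc)$, conclude $\widetilde{L}(x,o_1,o_2)$ from transitivity/uniqueness of parallels, and transfer ``right angle with the normal at $x$'' across via Corollary~\ref{OrthogonalPerpAllPlane} and Axiom~\ref{PerptoOrthogisCoplanar}. Your explicit treatment of the degenerate cases $x=b$ and $x=e$ is a genuine improvement, since the paper's ``without loss of generality'' silently permits $x=b$, where $bo(abc)\parallel xo(axc)$ would collapse to the false statement $bo(abc)\parallel bo(abc)$.
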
 

\begin{proof}
	Since $T(a,b,c)$, then $T(a,b,x)$ or $T(a,x,c)$ or $T(x,b,c)$. Without loss of generality let $T(a,x,c)$. Likewise since $T(d,e,f)$, then $T(d,e,x)$ or $T(d,x,f)$ or $T(x,e,f)$. Without loss of generality let $T(d,x,f)$. By Theorem \ref{OrthogonalsPara} $bo(abc) \parallel xo(axc)$ and $eo(def) \parallel xo(dxf)$. Note $xo(axc) \nparallel  xo(dxf)$ since $x$ is an endpoint of both segments. Therefore by Lemma \ref{parallelimplieslinear} we have $\widetilde{L}(x, o(axc), o(dxf))$. Since $PL_{abc}(y)$ by Corollary \ref{OrthogonalPerpAllPlane} we have $ybo(abc)$ is right. By Theorem \ref{ParatoPlaneimpliesRight} we have $yxo(axc)$ is right. Since $\widetilde{L}(x, o(axc), o(dxf))$, we have $yxo(dxf)$ is right. By invoking Theorem \ref{ParatoPlaneimpliesRight} we have $yxo(def)$ is right. Therefore $PL_{def}(y)$. 
\end{proof}

From this theorem one can prove the following Corollary. 

\begin{corollary}
	$bo(abc) \parallel eo(def) \wedge PL_{abc}(x) \wedge \not PL_{def}(x) \rightarrow \lnot [PL_{abc}(y) \wedge PL_{def}(y)]$
\end{corollary}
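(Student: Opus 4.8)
The plan is to obtain this corollary from the preceding theorem by a short proof by contradiction, exploiting the fact that a free variable in a quantifier-free theorem is implicitly universally quantified. Read that way, the preceding theorem says: whenever $bo(abc) \parallel eo(def)$, $PL_{abc}(x)$, and $PL_{def}(x)$ all hold, then $PL_{abc}(z) \rightarrow PL_{def}(z)$ holds for every point $z$. So the strategy is to use a hypothetical common point $y$ of the two ``planes'' as the distinguished common point to which that theorem applies.

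Concretely, assume the hypotheses of the corollary: $bo(abc) \parallel eo(def)$, $PL_{abc}(x)$, and $\lnot PL_{def}(x)$. Suppose toward a contradiction that some point $y$ satisfies $PL_{abc}(y) \wedge PL_{def}(y)$. Then $y$ is a point coplanar with $a,b,c$ and coplanar with $d,e,f$ while the normals $bo(abc)$ and $eo(def)$ are parallel, so the preceding theorem applies with $y$ playing the role of its distinguished common point. It follows that $PL_{abc}(z) \rightarrow PL_{def}(z)$ for every $z$; instantiating $z := x$ and using $PL_{abc}(x)$ yields $PL_{def}(x)$, contradicting $\lnot PL_{def}(x)$. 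Hence no such $y$ exists, i.e. $\lnot[PL_{abc}(y) \wedge PL_{def}(y)]$ holds (with $y$ free, hence universally).

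I expect essentially no genuine obstacle here: all the geometric content already lives in the proof of the preceding theorem (through Theorem \ref{OrthogonalsPara}, Lemma \ref{parallelimplieslinear}, Corollary \ref{OrthogonalPerpAllPlane}, and Theorem \ref{ParatoPlaneimpliesRight}). The only point requiring mild care is the bookkeeping of free variables — making sure the parallel-normals hypothesis $bo(abc) \parallel eo(def)$ is carried across verbatim and that the substitution $z := x$ is a legitimate instantiation of the universally quantified conclusion of the preceding theorem.
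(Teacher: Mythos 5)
Your proof is correct and matches the paper's intent: the paper gives no explicit argument, stating only that the corollary follows from the preceding theorem, and the variable swap you describe (using the hypothetical common point $y$ as the theorem's distinguished point and then instantiating its conclusion at $x$) is exactly the deduction required. The contrapositive/contradiction step is also intuitionistically unobjectionable, since $\lnot P$ is just $P \rightarrow \bot$.
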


\subsection{Sides of a Plane} 

In Theorem 7 of \cite{HilbertFoundation}, Hilbert states a separation property for space in which a plane partitions all points not on the plane into two sets called sides. Two points are said to be on the same side of the plane if the segment between them does not intersect the plane and are said to be on opposite sides if the segment between them does intersect the plane. We will develop our analog results by first defining what it will mean for two points to be on opposite sides of three non coplanar points $a$, $b$, and $c$. This approach differs from how angle orientation and sides of a line were defined in Section \ref{Pasch} where same orientation/side was defined before opposite orientation/side.

\begin{definition} \label{OSPlanDef}
	$OS(x,y,abc) \equiv T(a,b,c) \wedge T(x,b,y) \wedge \lnot PL_{abc}(x) \wedge \lnot PL_{abc}(y) \wedge OS(x,y,bp)$ (where $p=p_{(abc,xby,b)}$ )
\end{definition}

We then define what is meant by two points being on the same side of three non coplanar points in terms of the previous definition. 
\begin{definition}
	$SS(x,y,abc) \equiv T(a,b,c) \wedge \lnot PL_{abc}(x) \wedge \lnot PL_{abc}(y) \wedge \lnot OS(x,y,abc) $
\end{definition}
Note that is may be the case $\widetilde{L}(x,b,y)$. 

The following theorem shows that the coplanar relation in some sense preserves the planar opposite sides relation. 

\begin{theorem} \label{OSTrans}
	$PL_{abc}(d) \wedge PL_{abc}(e) \wedge PL_{abc}(f) \wedge T(d,e,f) \wedge OS(x,y,abc) \rightarrow OS(x,y,def)$
\end{theorem}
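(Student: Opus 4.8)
The plan is to unpack both occurrences of the planar "opposite sides" relation (Definition \ref{OSPlanDef}) and show that the data defining $OS(x,y,abc)$ can be transported to data defining $OS(x,y,def)$. So assume the hypotheses: $PL_{abc}(d)$, $PL_{abc}(e)$, $PL_{abc}(f)$, $T(d,e,f)$, and $OS(x,y,abc)$. Writing out the last hypothesis via Definition \ref{OSPlanDef}, we get $T(a,b,c)$, $T(x,b,y)$, $\lnot PL_{abc}(x)$, $\lnot PL_{abc}(y)$, and $OS(x,y,bp)$ where $p = p_{(abc,xby,b)}$. Our goal is to produce the analogous package with $def$ in place of $abc$: namely $T(d,e,f)$ (a hypothesis, so free), $T(x,b',y)$ for the appropriate vertex, $\lnot PL_{def}(x)$, $\lnot PL_{def}(y)$, and $OS(x,y,b'q)$ for $q = p_{(def,xb'y,b')}$. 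The immediate subtlety is that Definition \ref{OSPlanDef} is written with $b$ as a distinguished vertex of the triple, so the intended reading is presumably that $b$ is shared — i.e. we should think of $OS(x,y,def)$ as also being built around the vertex $b$; this forces us first to observe, using $PL_{abc}(b)$ (Axiom \ref{bcoplanar}) together with the hypotheses $PL_{abc}(d),PL_{abc}(e),PL_{abc}(f)$ and $T(d,e,f)$, that $PL_{def}(b)$ by Theorem \ref{coplanartransbothways}, so $b$ is legitimately coplanar with $d,e,f$ and can play the distinguished role. (If instead $b$ is not among the relevant vertices, one reduces to the shared-vertex case by Theorem \ref{COPlanarPermute}, permuting $d,e,f$.)

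The key steps, in order: (1) From $PL_{abc}(d),PL_{abc}(e),PL_{abc}(f),T(d,e,f)$ and Theorem \ref{coplanartransbothways}, establish the biconditional $PL_{abc}(z) \leftrightarrow PL_{def}(z)$ for all $z$. This instantly converts $\lnot PL_{abc}(x)$ into $\lnot PL_{def}(x)$ and likewise for $y$, and converts $PL_{abc}(b)$ into $PL_{def}(b)$. (2) The relation $T(x,b,y)$ is unchanged — it does not mention the triple. (3) Now handle the crucial clause $OS(x,y,bp)$. Here $p = p_{(abc,xby,b)}$ is the "second intersection point" of the planes through $a,b,c$ and through $x,b,y$ produced by Theorem \ref{planesecondpointofintersection}, so $PL_{abc}(p)$, $PL_{xby}(p)$, and $p \neq b$. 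Using step (1), $PL_{abc}(p)$ gives $PL_{def}(p)$. Thus $p$ is a point coplanar with both $def$ and $xby$ and distinct from $b$; so $p$ serves as a witness — possibly not literally equal to $q = p_{(def,xby,b)}$, but by Theorem \ref{PlanesInterectInLines} (or the uniqueness-of-line behaviour it encodes) $p$, $b$, and $q$ are collinear in the shared line, i.e. $\widetilde{L}(b,p,q)$. (4) Finally, transfer the segment-sides relation: from $OS(x,y,bp)$ and $\widetilde{L}(b,p,q)$ (with $b \neq p$, $b\neq q$) apply Theorem \ref{oppositesidethm2} to conclude $OS(x,y,bq)$. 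Assembling $T(d,e,f)$, $T(x,b,y)$, $\lnot PL_{def}(x)$, $\lnot PL_{def}(y)$, and $OS(x,y,bq)$ yields $OS(x,y,def)$ by Definition \ref{OSPlanDef}.

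The main obstacle I anticipate is step (3): carefully matching the witness point $p$ produced from the $abc$-side construction with the canonical point $q = p_{(def,xby,b)}$ demanded by Definition \ref{OSPlanDef}. Because $p_{(\cdot,\cdot,\cdot)}$ is a specific uniform construction rather than an existentially quantified point, one cannot simply say "take $p$"; one must argue that $p$ and $q$ lie on the common intersection line of the two planes (via Theorem \ref{PlanesInterectInLines}, which needs the non-coplanarity hypothesis $\lnot PL_{def}(x)$ — available from step (1) — to guarantee the planes are genuinely distinct), and then use the collinearity-invariance of the opposite-sides relation (Theorem \ref{oppositesidethm2}) to move from $OS(x,y,bp)$ to $OS(x,y,bq)$. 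A secondary bookkeeping point is confirming that the distinctness side-conditions ($b \neq p$, $b \neq q$) needed to invoke Theorem \ref{oppositesidethm2} hold; these follow from the $p \neq b$ clause in Theorem \ref{planesecondpointofintersection} applied to each construction. Everything else is routine unwinding of definitions plus the coplanar-transitivity machinery already developed.
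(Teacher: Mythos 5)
There is a genuine gap, and it comes from how you unpack the target relation $OS(x,y,def)$. Definition \ref{OSPlanDef} is a substitution schema: writing it out for the triple $def$ yields the conjuncts $T(d,e,f)$, $T(x,e,y)$, $\lnot PL_{def}(x)$, $\lnot PL_{def}(y)$, and $OS(x,y,ep')$ with $p' = p_{(def,xey,e)}$ --- the distinguished vertex is always the \emph{middle letter of the written triple}, here $e$, not $b$. Your plan keeps $b$ as the distinguished vertex and aims at $T(x,b,y)$ and $OS(x,y,bq)$ with $q = p_{(def,xby,b)}$; but $b$ is in general not among $d,e,f$ at all (they are arbitrary non-collinear points coplanar with $a,b,c$), and your parenthetical fallback of ``permuting $d,e,f$'' cannot introduce $b$ into the triple. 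So in the generic case your argument never produces the conjuncts that $OS(x,y,def)$ actually requires. In particular $T(x,e,y)$ is a nontrivial claim that your proposal does not address: it is not carried over from the hypotheses, and it can fail to be obvious since $e$ is an essentially unrelated point of the plane.

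The missing idea, which is the heart of the paper's proof, is to extract from $OS(x,y,bp)$ the actual crossing point of the segment $xy$ with the common plane: let $z = cb(p,xby)$, so $B(x,z,y)$ and $\widetilde{L}(b,z,p)$, whence $PL_{abc}(z)$ by Theorem \ref{ColinearCoplanar} and then $PL_{def}(z)$. From $\lnot PL_{def}(x)$ one gets $\lnot\widetilde{L}(x,z,e)$, which forces $T(x,e,y)$, and then Axiom \ref{intersectOO} applied to $B(x,z,y)$ gives $OS(x,y,ez)$ directly --- no transport of the old relation $OS(x,y,bp)$ along a line is needed. Finally, since $z$ lies in both the plane $def$ and the plane $xey$, Theorem \ref{PlanesInterectInLines} gives $\widetilde{L}(e,z,p')$, and Theorem \ref{oppositesidethm2} converts $OS(x,y,ez)$ into the required $OS(x,y,ep')$. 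Your steps (1) and (2) (the coplanarity bookkeeping via Theorem \ref{coplanartransbothways}) are fine and match the paper; it is steps (3)--(4) that need to be replaced by the crossbow-point argument above.
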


\begin{proof}
	
	We need to show $\lnot PL_{def}(x)$ and $\lnot PL_{def}(y)$. If these were false, then we could use Theorem \ref{coplanartrans} to show $\lnot PL_{abc}(d)$ and  $\lnot PL_{abc}(e)$ which would be a contradiction. Thus all we have left to show is $T(x,e,y)$ and $OS(x,y,ep')$ where $p'=p(def,xey,e)$. 
	
	Since $OS(x,y,abc)$, we have $OS(x,y,bp)$. Let $z= cb(p,xby)$. Then $B(x,z,y)$ and $\widetilde{L}(b,z,p)$. Since $\widetilde{L}(b,z,p)$, by Theorem \ref{ColinearCoplanar} we have $Pl_{abc}(z)$. By Axiom \ref{coplanartrans} we obtain $PL_{def}(z)$. We claim $\lnot \widetilde{L}(x,z,e)$. 
	
	If $\widetilde{L}(x,z,e)$, then $PL_{def}(x)$ by Theorem \ref{ColinearCoplanar}. This is a contradiction. Thus we have  $\lnot \widetilde{L}(x,z,e)$. This implies $T(x,e,y)$ and by Axiom \ref{intersectOO} we also have $OS(x,y,ez)$. 
	
	Note $PL_{xey}(z)$ since $L(x,z,y)$. Also recall $PL_{def}(z)$. Thus by Theorem \ref{ColinearCoplanar} we have $\widetilde{L}(e,p',z)$. Then by Theorem \ref{oppositesidethm2} we obtain $OS(x,y,ep')$. Finally by Definition \ref{OSPlanDef} we have $OS(x,y,def)$. 
\end{proof}

We now justify an analogous theorem for the planar same side relation. 

\begin{theorem}
$PL_{abc}(d) \wedge PL_{abc}(e) \wedge PL_{abc}(f) \wedge T(d,e,f) \wedge SS(x,y,abc) \rightarrow SS(x,y,def)$
\end{theorem}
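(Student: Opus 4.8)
The plan is to reduce the same-side statement to the opposite-side statement (Theorem \ref{OSTrans}) by unwinding the two definitions. Assume $PL_{abc}(d)$, $PL_{abc}(e)$, $PL_{abc}(f)$, $T(d,e,f)$, and $SS(x,y,abc)$. By definition of $SS(x,y,abc)$ we immediately have $T(a,b,c)$, $\lnot PL_{abc}(x)$, $\lnot PL_{abc}(y)$, and $\lnot OS(x,y,abc)$. To conclude $SS(x,y,def)$ we must produce three things: first $T(d,e,f)$, which is an assumption; second $\lnot PL_{def}(x)$ and $\lnot PL_{def}(y)$; and third $\lnot OS(x,y,def)$.

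First I would handle the coplanarity clauses. Suppose toward a contradiction that $PL_{def}(x)$. Since $PL_{abc}(d)$, $PL_{abc}(e)$, $PL_{abc}(f)$, $T(d,e,f)$, and $PL_{def}(x)$, Axiom \ref{coplanartrans} gives $PL_{abc}(x)$, contradicting $\lnot PL_{abc}(x)$. Hence $\lnot PL_{def}(x)$, and symmetrically $\lnot PL_{def}(y)$.

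Next I would establish $\lnot OS(x,y,def)$. This is the step where I would argue by contraposition and invoke the companion theorem: suppose $OS(x,y,def)$. The key observation is that the hypotheses are symmetric enough to run Theorem \ref{OSTrans} in the reverse direction. Indeed, from $PL_{abc}(d)$, $PL_{abc}(e)$, $PL_{abc}(f)$, and $T(d,e,f)$, together with Axioms \ref{bcoplanar} and Theorem \ref{accoplanar} (or more directly Theorem \ref{coplanartransbothways}), one gets $PL_{def}(a)$, $PL_{def}(b)$, $PL_{def}(c)$, and $T(a,b,c)$. So $a$, $b$, $c$ is a non-colinear triple all of whose members are coplanar with $d$, $e$, $f$. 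Applying Theorem \ref{OSTrans} with the roles of $(a,b,c)$ and $(d,e,f)$ exchanged — which is legitimate precisely because we have just verified its hypotheses with the roles swapped — we obtain from $OS(x,y,def)$ that $OS(x,y,abc)$. This contradicts $\lnot OS(x,y,abc)$. Therefore $\lnot OS(x,y,def)$, and collecting the three pieces we conclude $SS(x,y,def)$.

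The main obstacle I anticipate is verifying that Theorem \ref{OSTrans} really does apply with the triples interchanged: one must check carefully that $PL_{def}(a) \wedge PL_{def}(b) \wedge PL_{def}(c) \wedge T(a,b,c)$ follows from the stated hypotheses, which is exactly the content packaged into Theorem \ref{coplanartransbothways} and Theorem \ref{accoplanar}. Once that symmetry of the coplanar-triangle relation is in hand, the rest is a routine definitional unwinding, entirely parallel to the proof of Theorem \ref{OSTrans} itself.
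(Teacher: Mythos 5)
Your proposal is correct and follows essentially the same route as the paper: establish $PL_{def}(a)$, $PL_{def}(b)$, $PL_{def}(c)$ and $T(a,b,c)$ via Theorem \ref{coplanartransbothways} and Axiom \ref{coplanarnotcolinear}, then suppose $OS(x,y,def)$ and apply Theorem \ref{OSTrans} with the triples interchanged to contradict $\lnot OS(x,y,abc)$. You are in fact slightly more careful than the paper in explicitly discharging $\lnot PL_{def}(x)$ and $\lnot PL_{def}(y)$ (note only that the step from $PL_{def}(x)$ to $PL_{abc}(x)$ needs the biconditional of Theorem \ref{coplanartransbothways}, or Axiom \ref{coplanartrans} with the roles already swapped, rather than the axiom in its stated direction).
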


\begin{proof}
	By Theorem \ref{coplanartransbothways} we have $PL_{def}(a)$, $PL_{def}(b)$ and $PL_{def}(c)$. By Axiom \ref{coplanarnotcolinear} we have $T(a,b,c)$. Suppose $OS(x,y,def)$. Then by the previous theorem we would have $OS(x,y,abc)$. This is a contradiction. Thus $\lnot OS(x,y,def)$. Therefore by Definition \ref{OSPlanDef} we have $SS(x,y,def)$. 
\end{proof}

The two previous theorem are planar analogs for Theorem \ref{sideslinethm2} and Theorem \ref{oppositesidethm2}. The following Theorem is an analog to Theorem \ref{oppositesidethm}. 

\begin{theorem} \hspace{.1in}
	\begin{enumerate}
		\item $OS(x,y,abc) \rightarrow OS(y,x,abc)$
		\item $OS(x,y,abc) \rightarrow OS(x,y,bac) \wedge OS(x,y,cba)$
		\item $OS(x,y,abc) \wedge OS(y,z,abc) \rightarrow SS(x,z,abc)$
	\end{enumerate}
\end{theorem}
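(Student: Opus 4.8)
The plan is to mirror closely the proof strategy of Theorem \ref{oppositesidethm} (the planar "opposite sides of a line segment" theorem), pushing each assertion down to the witnessed point on the plane and then invoking the already-established planar lemmas about $OS(\cdot,\cdot,\cdot\,\cdot)$ for line segments. Throughout, $OS(x,y,abc)$ unpacks via Definition \ref{OSPlanDef} as: $T(a,b,c)$, $T(x,b,y)$, $\lnot PL_{abc}(x)$, $\lnot PL_{abc}(y)$, and $OS(x,y,bp)$ where $p=p_{(abc,xby,b)}$. So each part reduces to verifying the triangle/non-coplanarity hypotheses plus an $OS$ statement for a segment, which is territory covered by Section \ref{Pasch} and by Theorem \ref{OSTrans} proved just above.

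For part (1), I would start from $OS(x,y,abc)$ and note that the symmetry of the planar relations is almost immediate: $T(a,b,c)$ is unchanged; $T(y,b,x)$ follows from $T(x,b,y)$; the non-coplanarity clauses $\lnot PL_{abc}(y)$, $\lnot PL_{abc}(x)$ just swap. The only subtlety is that the witness point $p_{(abc,ybx,b)}$ used in $OS(y,x,abc)$ is a priori a different point from $p_{(abc,xby,b)}$. Here I would argue that $z=cb(p,xby)$ satisfies $B(x,z,y)$ hence $B(y,z,x)$ and $\widetilde{L}(b,z,p)$, so $p$ still works as a valid witness line for the pair $(y,x)$; then $OS(y,x,bp)$ follows from $OS(x,y,bp)$ by part 1 of Theorem \ref{oppositesidethm}, and one checks the construction $p_{(abc,ybx,b)}$ lands on the same line $\widetilde{L}(b,z,p)$ by Theorem \ref{PlanesInterectInLines}, so the two candidate witnesses are colinear with $b$ and $z$ and Theorem \ref{oppositesidethm2} transfers the relation. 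Part (2) is handled by the same bookkeeping together with Theorem \ref{COPlanarPermute}: permuting $a,b,c$ leaves $PL_{abc}(\cdot)$ invariant, leaves $T(a,b,c)$ invariant up to permutation, and leaves the witnessed segment relation intact, so $OS(x,y,bac)$ and $OS(x,y,cba)$ drop out directly.

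Part (3) is the real content and the main obstacle: from $OS(x,y,abc)$ and $OS(y,z,abc)$ we must produce $SS(x,z,abc)$, i.e. conclude $\lnot OS(x,z,abc)$ after first checking $\lnot PL_{abc}(x)$ and $\lnot PL_{abc}(z)$ (both immediate from the hypotheses). The strategy is to find a single plane-section line that works for all three points simultaneously and then invoke part 3 of Theorem \ref{oppositesidethm}. Concretely: let $z_1=cb(p_1,xby)$ with $p_1=p_{(abc,xby,b)}$ and $z_2=cb(p_2,ybz)$ with $p_2=p_{(abc,ybz,b)}$; both $z_1,z_2$ are coplanar with $a,b,c$ by Theorem \ref{ColinearCoplanar}, and $B(x,z_1,y)$, $B(y,z_2,z)$. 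Using Theorem \ref{OSTrans} I can re-base the plane-side relations onto a common triple $d,e,f$ of coplanar points chosen so that $b$, $z_1$, $z_2$ all lie in convenient position; then the witnessed segments $bp_1$ and $bp_2$ can be compared inside a single plane via Theorem \ref{oppositesidethm2} and the line-separation machinery of Theorem \ref{lineseparationthm}. Once everything is reduced to one fixed segment $\ell$ in one fixed plane, $OS(x,y,\ell)\wedge OS(y,z,\ell)\Rightarrow SS(x,z,\ell)$ is exactly part 3 of Theorem \ref{oppositesidethm}, and pulling $SS(x,z,\ell)$ back up to $SS(x,z,abc)$ is done by unwinding Definition \ref{OSPlanDef} (if $OS(x,z,abc)$ held, the witnessed segment would force $OS(x,z,\ell)$, contradicting the planar trichotomy). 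The delicate point to get right — and where I expect to spend the most effort — is showing that the three separate witness points $p_{(abc,xby,b)}$, $p_{(abc,ybz,b)}$, $p_{(abc,xbz,b)}$ can be organized into colinear (or coplanar-with-$b$) configurations so that Theorem \ref{oppositesidethm2} legitimately transports the segment relations; this requires a careful application of Theorem \ref{PlanesInterectInLines} to identify that all these "plane-intersection" points lie on lines through $b$ inside the plane $abc$, after which the planar results finish the job.
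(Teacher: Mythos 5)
Your treatment of parts (1) and (2) matches the paper: both reduce to part 1 of Theorem \ref{oppositesidethm} and to the permutation/transfer machinery (Theorem \ref{COPlanarPermute}, Theorem \ref{OSTrans}), and your observation that the witness point for the pair $(y,x)$ lands on the same intersection line as the one for $(x,y)$ is exactly the needed bookkeeping.

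Part (3) is where your plan breaks down. You propose to reduce all three pairwise relations to a single segment $\ell$ in a single plane and then invoke part 3 of Theorem \ref{oppositesidethm}. But under the modified definitions of Section 10, the planar relation $OS(x,y,bp)$ is only meaningful when $x$, $y$, $b$, $p$ are coplanar; the three pairs $(x,y)$, $(y,z)$, $(x,z)$ live in the three planes $xby$, $ybz$, $xbz$, which are pairwise distinct whenever $x$, $y$, $z$, $b$ are not coplanar. Their intersections with the plane $abc$ are three \emph{different} lines through $b$, so there is no common segment $\ell$ for which $OS(x,y,\ell)$, $OS(y,z,\ell)$, $OS(x,z,\ell)$ are all defined, and Theorem \ref{OSTrans} cannot help: it re-bases the defining triple of the plane, not the witness segment. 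Your single-segment reduction works only in the degenerate case where $x$, $y$, $z$ are colinear (so that everything collapses into one plane and the three witness points coincide), which is precisely the paper's Case 2. For the generic case the paper argues by contradiction with an entirely different device: assuming $OS(x,z,abc)$, the three crossbow points $u=cb(p_{xy},xby)$, $v=cb(p_{yz},ybz)$, $w=cb(p_{xz},xbz)$ lie in the interiors of the three sides of the triangle $xyz$ and also in the plane $abc$, hence are non-strict colinear by Theorem \ref{PlanesInterectInLines}; but Lemma \ref{orientlemma3} says a line cannot meet the interiors of all three sides of a triangle, giving $T(u,v,w)$ and a contradiction. That three-dimensional triangle argument is the missing idea in your proposal, and without it the generic case of part (3) does not go through.
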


\begin{proof}
	Part 1) is justified by part 1) of Theorem \ref{oppositesidethm}. Part 2) is justified with Axiom \ref{coplanarnotcolinear}, Axiom \ref{bcoplanar}, Theorem \ref{accoplanar}, Theorem \ref{COPlanarPermute} and Thereom \ref{OSTrans}. 
	
	The proof of Part 3) is a bit more work. we have $T(a,b,c)$ from $OS(x,y,abc)$. Also since $OS(x,y,abc)$ we have $\lnot PL_{abc}(x)$. Similarly since $OS(y,z,abc)$ we have $\lnot PL_{abc}(z)$. Thus all we need to prove is $\lnot OS(x,z,abc)$. Suppose $OS(x,z,abc)$. Since $OS(x,z,abc)$, we have $T(x,b,z)$ and $OS(x,z,bp_{(abc,xbz,b)})$. Likewise we have $T(x,b,y)$, $OS(x,y,bp_{(abc,xby,b)})$, $T(y,b,z)$, and $OS(y,z,bp_{(abc,ybz,b)})$. Let $u=cb(p_{(abc,xby,b)},xby)$, $v=cb(p_{(abc,ybz,b)},ybz)$, and $w=cb(p_{(abc,xbz,b)}, xbz)$. There are two cases to consider.
	
	Case 1) Let $T(x,y,z)$. Since $B(x,u,y)$, $B(y,v,z)$, and $B(x,w,z)$, we have $PL_{xyz}(u)$, $PL_{xyz}(v)$, $PL_{xyz}(w)$ by Theorem \ref{ColinearCoplanar}. One can also show $PL_{abc}(u)$, $PL_{abc}(v)$, $PL_{abc}(w)$. Thus by Theorem \ref{PlanesInterectInLines} we can infer $\widetilde{L}(u,v,w)$. If $u=v$, $u=w$ or $v=w$, then $\lnot T(x,y,z)$. This is  contradiction. If $u \not= v$, $u \not= w$, and $v \not= w$, then by Lemma \ref{orientlemma3} we have $T(u,v,w)$. This is a contradiction. Thus $\lnot OS(x,z,abc)$. 
	
	Case 2) Let $\widetilde{L}(x,y,z)$. By Theorem \ref{ColinearCoplanar} we can infer $PL_{xby}(z)$. One can then infer that $b$, $u$, $v$, and $w$ are all non-strict colinear by Theorem \ref{PlanesInterectInLines}. By using Axiom \ref{uniqueline} we can conclude $u=v=w$. Using Theorem \ref{oppositesidethm} and Theorem \ref{oppositesidethm2} we can make the following implications: $OS(x,y,bw)$, $OS(y,z,bw)$, and $SS(x,z,bw)$. But this would imply $SS(x,z,bp_{(abc,xbz,b)})$ which is a contradiction. 	
\end{proof}

The next Theorem is an analog to Theorem \ref{sideslinethm1}. 

\begin{theorem} \hspace{.1in}
	\begin{enumerate}
		\item $T(a,b,c) \wedge \lnot PL_{abc}(x) \rightarrow SS(x,x,abc)$
		\item $SS(x,y,abc) \rightarrow SS(y,x,abc)$
		\item $SS(x,y,abc) \rightarrow SS(x,y,bac) \wedge SS(x,y,cba)$
		\item $SS(x,y,abc) \wedge SS(y,z,abc) \rightarrow SS(x,z,abc)$
	\end{enumerate}
\end{theorem}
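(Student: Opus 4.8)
For parts 1, 2, and 3 I would simply unwind the definition $SS(u,v,abc)\equiv T(a,b,c)\wedge\lnot PL_{abc}(u)\wedge\lnot PL_{abc}(v)\wedge\lnot OS(u,v,abc)$ and lean on the properties of $OS(\cdot,\cdot,abc)$ already proved. Part 1 is immediate: $OS(x,x,abc)$ cannot hold because its defining clause $T(x,b,x)$ is false, hence $\lnot OS(x,x,abc)$, and the other three conjuncts are hypotheses. For part 2 the only nontrivial conjunct of $SS(y,x,abc)$ is $\lnot OS(y,x,abc)$, whose contrapositive $OS(y,x,abc)\rightarrow OS(x,y,abc)$ is the symmetry part of the preceding $OS$-theorem. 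For part 3, $\lnot PL_{bac}(x)$ and $\lnot PL_{bac}(y)$ (and the $cba$ versions) follow from $\lnot PL_{abc}(x),\lnot PL_{abc}(y)$ by the contrapositive of Theorem \ref{COPlanarPermute}, $T(b,a,c)$ is a permutation of $T(a,b,c)$, and $\lnot OS(x,y,bac)$ follows because $OS(x,y,bac)\rightarrow OS(x,y,abc)$ by the $a,b,c$-permutation part of the $OS$-theorem applied and permuted back; the $cba$ case is identical.

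Part 4 is the real content, and I would handle it by a reduction from space to a plane. Assume $SS(x,y,abc)$ and $SS(y,z,abc)$; then $T(a,b,c)$ and neither $x$, $y$, nor $z$ is coplanar with $a,b,c$, and it remains to show $\lnot OS(x,z,abc)$, so suppose toward a contradiction that $OS(x,z,abc)$. Writing $p=p_{(abc,xbz,b)}$ and $m=cb(p,xbz)$, Axiom \ref{crossbow} gives $B(x,m,z)$ and $\widetilde{L}(b,m,p)$, and Theorem \ref{ColinearCoplanar} (using $PL_{abc}(b)$ and $PL_{abc}(p)$) gives $PL_{abc}(m)$ — exactly the manipulation already carried out in the proof of Theorem \ref{OSTrans}. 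I would first isolate the dictionary lemma behind this: for $u,v$ with $\lnot PL_{abc}(u)$, $\lnot PL_{abc}(v)$ and $T(u,b,v)$, one has $OS(u,v,abc)$ if and only if segment $uv$ contains a point coplanar with $a,b,c$; the forward direction is the computation just made, and the backward direction uses that the plane of $a,b,c$ meets the plane of $u,b,v$ in the single line through $b$ and $p_{(abc,ubv,b)}$ (Theorems \ref{planesecondpointofintersection} and \ref{PlanesInterectInLines}), so that any crossing point lies on that line and Axiom \ref{intersectOO} together with the colinear-side-preservation Theorem \ref{oppositesidethm2} deliver $OS(u,v,bp)$.

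With the dictionary lemma in hand I would split on whether $T(x,y,z)$. If $x,y,z$ are non-colinear, the plane through them is distinct from that of $a,b,c$ (since $x$ is not coplanar with $a,b,c$); the point $m$ lies in both planes, so by Theorems \ref{planesecondpointofintersection} and \ref{PlanesInterectInLines} their intersection is a single line $\ell$ through $m$, and by Axiom \ref{intersectOO} applied to $B(x,m,z)$ the points $x$ and $z$ lie on opposite sides of $\ell$ inside the plane through $x,y,z$, while $y$ lies off $\ell$ (as $\ell$ lies in the plane of $a,b,c$). The exhaustiveness statement for sides of a line segment and the $SO$/$OO$ algebra (Theorem \ref{SOOOOO}, Axiom \ref{OOOOSO}) then force $y$ to share a side of $\ell$ with exactly one of $x,z$; the remaining pair is then on opposite sides of $\ell$, its segment meets the plane of $a,b,c$, and the dictionary lemma gives $OS(x,y,abc)$ or $OS(y,z,abc)$, contradicting $SS(x,y,abc)$ or $SS(y,z,abc)$. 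If instead $L(x,y,z)$, then $b$ is off the line through $x,y,z$ (since $T(x,b,z)$), and $m,x,y,z$ are four colinear points; the hypotheses $\lnot OS(x,y,abc)$ and $\lnot OS(y,z,abc)$ force (via the dictionary lemma) that $m$ lies between neither $x,y$ nor $y,z$, and then the order machinery — Axiom \ref{onlyonebetween} and Theorem \ref{lineseparationthm} — shows $m$ lies between neither $x,z$, contradicting $B(x,m,z)$.

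I expect the main obstacle to be the degenerate configurations in which $b$ happens to lie on segment $xy$ or on segment $yz$, so that the triangle hypothesis $T(\cdot,b,\cdot)$ needed to invoke the dictionary lemma for that pair fails. In those configurations the relevant segment's line meets the plane of $a,b,c$ only at $b$, so one must argue directly from the definition of $OS(\cdot,\cdot,abc)$ — reworking the line-separation step inside the plane through $x,b,z$ rather than through $x,y,z$ — and care is needed to see that the argument still closes; this is the place where a fully rigorous write-up, or a strengthening of the hypotheses, would have to be pinned down.
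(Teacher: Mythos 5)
Parts 1--3 of your proposal are fine and match the paper (which writes out only part 1 and omits 2 and 3). For the generic configuration in part 4 your route is legitimately different: you separate $x$ and $z$ inside the plane through $x,y,z$ using the trace of the plane $abc$ there, whereas the paper case-splits on $\widetilde{L}(x,y,b)$ and $\widetilde{L}(y,z,b)$ and works inside the planes $xby$, $ybz$, $xbz$, using the segments $b\,p_{(abc,\cdot,b)}$ as traces and Theorem \ref{PlanesInterectInLines} to identify them; both reductions rest on the same dictionary between $OS(\cdot,\cdot,abc)$ and planar $OS(\cdot,\cdot,bp)$ that Theorem \ref{OSTrans} computes, and for non-degenerate data either one closes.

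The obstacle you flag at the end is a genuine gap, and you are right not to wave it away: the paper's own proof does not survive it either. In the paper's Case 2 the step ``since $\widetilde{L}(x,y,b)$ we can conclude $SS(x,z,bp_{(abc,ybz,b)})$'' tacitly assumes $SD(b,x,y)$; but the definition of $SS(x,y,abc)$ (the paper explicitly remarks that $\widetilde{L}(x,b,y)$ is permitted) also admits $B(x,b,y)$, in which case Axiom \ref{intersectOO} puts $x$ and $y$ on \emph{opposite} sides of the trace line and the same chain of inferences yields $OS(x,z,bp)$, i.e.\ $OS(x,z,abc)$, the negation of the goal. Concretely, under the intended interpretation take $x$ above the plane, $y$ below it with $B(x,b,y)$, and $z$ on $y$'s side with $T(x,b,z)$: then $OS(x,y,abc)$ fails only because $T(x,b,y)$ fails, so $SS(x,y,abc)$ and $SS(y,z,abc)$ both hold while $OS(x,z,abc)$ holds --- a counterexample to part 4 as stated. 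So the defect is not in your argument but in the definition of $SS(\cdot,\cdot,abc)$, which misclassifies the colinear-through-$b$ configuration; transitivity can only be recovered by repairing that definition (e.g.\ declaring $x,y$ with $B(x,b,y)$ to be on opposite sides) or by adding the non-colinearity hypotheses you suggest.
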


\begin{proof}
	To justify part 1) we only need to show $\lnot OS(x,x,abc)$. Note that $\lnot T(x,x,b)$. Thus $\lnot OS(x,x,abc)$. The proofs of part 2) and 3) are omitted. 
	
	Thus we turn to the proof of part 4). Since $SS(x,y,abc)$ we have $T(a,b,c)$. Also since $SS(x,y,abc)$ we have $\lnot PL_{abc}(x)$ and since $SS(y,z,abc)$ we have $\lnot PL_{abc}(z)$. Thus we only need to show $\lnot OS(x,z,abc)$. We consider four cases. 
	
	Case 1) Let $\widetilde{L}(x,y,b)$ and $\widetilde{L}(y,z,b)$. One can show $\widetilde{L}(x,b,z)$. Thus $\lnot T(x,b,z)$ and therefore $\lnot OS(x,z,abc)$. 
	
	Case 2) Let $\widetilde{L}(x,y,b)$ and $\lnot \widetilde{L}(y,z,b)$. Since $T(y,b,z)$, we can construct $p_{(abc,ybz,b)}$. Given $SS(y,z,abc)$ we can infer $SS(y,z,bp_{(abc,ybz,b)})$. Since $\widetilde{L}(x,y,b)$ we can conclude $SS(x,z,bp_{(abc,ybz,b)})$ and $PL_{yzb}(x)$. Given $PL_{yzb}(x)$ we can conclude $PL_{yzb}(p_{(abc,xzb.b)})$ and by using Theorem \ref{PlanesInterectInLines} infer $\widetilde{L}(b,p_{(abc,ybz,b)},p_{(abc,xzb.b)})$. Therefore we have $SS(x,z,bp_{(abc,xzb.b)})$ which implies $\lnot OS(x,z,abc)$. 
	
	Case 3) Let $\lnot \widetilde{L}(x,y,b)$ and $\widetilde{L}(y,z,b)$. The justification is similar to Case 2. 
	
	Case 4) Let $\lnot \widetilde{L}(x,y,b)$ and $\lnot \widetilde{L}(y,z,b)$. If $L(x,z,b)$ then $\lnot OS(x,z,abc)$ and we are done. Suppose $\lnot L(x,z,b) \equiv T(x,b,z)$. Note we can construct the following points: $p_{xy}=p{(abc,xby,b)}$, $p_{yz} = p_{(abc,ybx,b)}$, and $p_{xz}=p_{(abc,xbz,b)}$. Also observe that we have $SS(x,y,bp_{xy})$ and  $SS(y,z,bp_{yz})$. Let $u=cb(p_{xz}, xbz)$. Note that $PL_{abc}(u)$. 
	
	Suppose $OS(y,z,bu)$. Then $cb(u,ybz)$ is (non-strict) colinear with $b$ and $p_{yz}$. This would imply $OS(y,z,bp_{yz})$ which is a contradiction. Thus $\lnot OS(y,z,bu)$ and since $T(y,z,b)$ and $T(y,z,u)$ (because $PL_{abc}(u)$) we can conclude $SS(y,z,bu)$. Similarly we can conclude $SS(x,y,bu)$. We can then infer $SS(x,z,bu)$. From this we can then show $SS(x,z,bp_{xz})$. Finally we can conclude $\lnot OS(x,z,abc)$. 
\end{proof}

The following theorem is provable from parts 2) and 4) of the previous theorem. 

\begin{theorem}
	\item $SS(x,y,abc) \wedge OS(y,z,abc) \rightarrow SS(x,z,abc)$
\end{theorem}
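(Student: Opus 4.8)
The plan is to follow the pointer in the text and argue from parts 2) and 4) of the immediately preceding theorem — the symmetry $SS(x,y,abc) \rightarrow SS(y,x,abc)$ and the transitivity $SS(x,y,abc) \wedge SS(y,z,abc) \rightarrow SS(x,z,abc)$ of the planar same-side relation — by a reductio aimed directly at the displayed conclusion $SS(x,z,abc)$. First I would extract the easy conjuncts of that conclusion: $T(a,b,c)$ is supplied by either hypothesis, $\lnot PL_{abc}(x)$ comes from $SS(x,y,abc)$, and $\lnot PL_{abc}(z)$ comes from $OS(y,z,abc)$ via Definition \ref{OSPlanDef}. Since both $x$ and $z$ are off the plane, the same-side and opposite-side relations for the pair $x,z$ are mutually exclusive (from $SS(x,z,abc) \equiv \dots \wedge \lnot OS(x,z,abc)$) and exhaustive, so the only remaining content of the target is which of the two relations the pair $x,z$ satisfies.

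Next I would attempt to close the reductio on $SS(x,z,abc)$. The natural move is to suppose the pair $x,z$ fails the opposite-side relation; then, both points being off the plane, $SS(x,z,abc)$ holds, and combining this with $SS(x,y,abc)$ through symmetry (part 2) gives $SS(z,x,abc)$, while transitivity (part 4) applied to $SS(z,x,abc) \wedge SS(x,y,abc)$ yields $SS(z,y,abc)$, whence a second symmetry gives $SS(y,z,abc)$. Because $OS(y,z,abc)$ entails $\lnot SS(y,z,abc)$ straight from the definition of the planar same-side relation, this is a contradiction. So this chain forces the pair $x,z$ into the opposite-side relation, not the same-side relation.

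The main obstacle is therefore bookkeeping in the statement rather than the argument: the reductio above is exactly the proof the author advertises as coming ``from parts 2) and 4)'', yet it closes only when the negated hypothesis is $SS(x,z,abc)$, so what is genuinely established is $\lnot SS(x,z,abc)$, i.e. $OS(x,z,abc)$. One can check that $SS(x,z,abc)$ cannot in fact be derived: the hypotheses are consistent with $OS(x,z,abc)$, since applying part 3) of the preceding opposite-side theorem to $OS(x,z,abc)$ and $OS(z,y,abc)$ (the latter from $OS(y,z,abc)$ by symmetry) merely recovers the given $SS(x,y,abc)$ and produces no contradiction. I would accordingly read the printed conclusion $SS(x,z,abc)$ as a misprint for the opposite-side relation $OS(x,z,abc)$ — the natural companion of the $OS \wedge OS \Rightarrow SS$ result in part 3) of that theorem — and it is this corrected statement that the outlined reductio from parts 2) and 4) proves.
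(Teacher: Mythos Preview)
Your analysis is correct and matches the paper's own one-line justification: the paper merely states that the result ``is provable from parts 2) and 4) of the previous theorem,'' and your reductio using symmetry (part 2) and transitivity (part 4) is exactly the intended argument. You are also right that the printed conclusion $SS(x,z,abc)$ is a typo for $OS(x,z,abc)$ --- the parity reasoning and your concrete check via part 3) of the opposite-side theorem confirm this, and the advertised proof from parts 2) and 4) only closes on the opposite-side conclusion.
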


\section{Conclusion}

In \cite{VBFinitism}, Van Bendegem ask if Suppes' quantifier-free axioms for constructive affine plane geometry could be expanded all the way into a full-fledged geometric theory. We claim this work answers that question in the affirmative. Furthermore we claim that this work is the first work to build a full geometric theory which contains only feasible construction and who's concepts are in line with the bounded experience of real-world constructions.

\end{document}